\theoremstyle{plain}
\newtheorem{thm}{Theorem}[section]
\newtheorem{lem}[thm]{Lemma}
\newtheorem{prop}[thm]{Proposition}
\newtheorem{conj}[thm]{Conjecture}
\theoremstyle{definition}
\newtheorem{define}[thm]{Definition}
\theoremstyle{remark}
\newtheorem*{remark}{Remark}
\newcommand{\conditions}[2]{$#1 #2$}
\title{Interacting Particle Systems and Jacobi style identities}
\author{M\'arton Bal\'azs\(^1\)}
\address{\(^1\)University of Bristol}
\author{Dan Fretwell\(^2\)}
\author{Jessica Jay\(^2\)}
\address{\(^2\)Heilbronn Institute for Mathematical Research, University of Bristol}
\date{}
\begin{document}

\begin{abstract}
We consider the family of nearest neighbour interacting particle systems on $\mathbb{Z}$ allowing $0$, $1$ or $2$ particles at a site. We parametrize a wide subfamily of processes exhibiting product blocking measure and show how this family can be ``stood up'' in the sense of Bal\'azs and Bowen \cite{blocking}. By comparing measures we prove new three variable Jacobi style identities, related to counting certain generalised Frobenius partitions with a $2$-repetition condition. By specialising to specific processes we produce two variable identities that are shown to relate to Jacobi triple product and various other identities of combinatorial significance. The family of $k$-exclusion processes for arbitrary $k$ are also considered and are shown to give similar Jacobi style identities relating to counting generalised Frobenius partitions with a $k$-repetition condition.
\end{abstract}

\maketitle

\section{Introduction}
The Jacobi triple product is a two variable identity relating an infinite sum with an infinite product: \[\sum_{k\in\mathbb{Z}}q^{k^2}z^{k} = \prod_{i=1}^{\infty}(1-q^{2i})(1+q^{2i-1}z)(1+q^{2i-1}z^{-1}).\] It is a foundational identity that has many applications in combinatorics and number theory, e.g.\ the theory of partitions and the theory of modular forms (Jacobi forms).

While the Jacobi triple product is a classical identity and can be proved using elementary ad hoc methods, one finds that most modern proofs exhibit it as an equality of traces of operators acting on certain infinite dimensional vector spaces. For example, algebraically it is well known that the identity manifests itself in the characters of certain infinite dimensional representations of affine Lie algebras (e.g.\ it is the denominator identity for $\mathfrak{sl}_2^{(1)}$). In Physics it is well known as a consequence of the Boson-Fermion correspondence; both sides of the identity represent the same partition function, calculated in two ways by cataloguing fermionic and bosonic states by their energy and charge relative to a ground state (eigenvalues of operators acting on the two infinite dimensional Fock spaces of states).

In \cite{blocking}, Bal\'azs and Bowen found a probabilistic proof of the Jacobi Triple Product by interpreting it as an equality between  the ASEP blocking measure (product Bernoulli) and the corresponding ``stood up'' AZRP measure (product geometric). The infinite sum arises via normalization of the ASEP measure. The $\text{ASEP}$-$\text{AZRP}$ correspondence can be seen as the probabilistic analogue of the Boson-Fermion correspondence (with the conserved quantity of a state relative to the zero site being the analogue of charge relative to a ground state). 

The advantage of the algebraic/physical interpretations of the identity is that they can be vastly generalised, e.g.\ by considering other representations, other Lie algebras, other Fock spaces. Each approach provides more Jacobi style identities, e.g.\ the Macdonald identities and the the Rogers-Ramanujan identities. These lead to other interesting connections with partitions and modular forms. Given this, it is natural to ask which other interacting particle systems provide Jacobi style identities. 

Of course not all processes will work, there are restrictions. For example, in order to get Jacobi style identities it seems necessary to find processes on $\mathbb{Z}$ that can be stood up and have interesting blocking measures (product or near product). The only such example for $0$-$1$ processes is ASEP (as seen in \cite{blocking}). 

In this paper we parametrise a family of (non-degenerate) $0$-$1$-$2$ systems on $\mathbb{Z}$ with product blocking measure, purely in terms of conditions on the rates. We then show that they are a valid family to consider in the above sense, giving precise details of stationary blocking measures and stood up processes. The corresponding measures then depend on three parameters $\tilde{q},t$ and $c$ and viewing these as variable over the family we prove the following three variable Jacobi style identities (letting $z = \Tilde{q}^{-c}$):

\begin{restatable}{thm}{main}\label{main} 
\par For $0<\Tilde{q}<1$, $t\geq 1$ and $z> 0$
\begin{align*}
2\sum_{\ell\in\mathbb{Z}} S_{\text{even}}(\Tilde{q},t) \Tilde{q}^{\ell(\ell+1)} z^{2\ell} &= \prod_{i\geq 1}(1+tz\Tilde{q}^i+z^2\Tilde{q}^{2i})(1+tz^{-1}\Tilde{q}^{i-1}+z^{-2}\Tilde{q}^{2(i-1)}) \\ &+ \prod_{i\geq 1}(1-tz\Tilde{q}^i+z^2\Tilde{q}^{2i})(1-tz^{-1}\Tilde{q}^{i-1}+z^{-2}\Tilde{q}^{2(i-1)}) \\
2t\sum_{\ell\in\mathbb{Z}} S_{\text{odd}}(\Tilde{q},t) \Tilde{q}^{(\ell+1)^2} z^{2\ell+1} &= \prod_{i\geq 1}(1+tz\Tilde{q}^i+z^2\Tilde{q}^{2i})(1+tz^{-1}\Tilde{q}^{i-1}+z^{-2}\Tilde{q}^{2(i-1)}) \\ &- \prod_{i\geq 1}(1-tz\Tilde{q}^i+z^2\Tilde{q}^{2i})(1-tz^{-1}\Tilde{q}^{i-1}+z^{-2}\Tilde{q}^{2(i-1)}),
\end{align*}
\end{restatable}

The functions $S_{\text{even}}(\Tilde{q},t)$ and $S_{\text{odd}}(\Tilde{q},t)$ are normalising factors for measures on the two irreducible components of the stood up process: \begin{align*}
S_{\text{even}}(\Tilde{q},t) &= \sum\limits_{\underline{\omega}\in \mathcal{H}^e}\Tilde{q}^{\sum\limits_{\textrm{ }i \text{ odd}}i\omega_{-i}+\sum\limits_{i \text{ even}}
  i(\omega_{-i}-1)}t^{2\left(\sum\limits_{i \text{ odd}}\mathbb{I}\{\omega_{-i}\geq 1\}-\sum\limits_{i \text{ even}}\mathbb{I}\{\omega _{-i}=0 \}\right)} \\  S_{\text{odd}}(\Tilde{q},t) &= \sum\limits_{\underline{\omega}\in \mathcal{H}^o}\Tilde{q}^{\sum\limits_{\textrm{ }i \text{ even}}i\omega_{-i}+\sum\limits_{i \text{ odd}}
  i(\omega_{-i}-1)}t^{2\left(\sum\limits_{i \text{ even}}\mathbb{I}\{\omega_{-i}\geq 1\}-\sum\limits_{i \text{ odd}}\mathbb{I}\{\omega _{-i}=0 \}\right)}.
  \end{align*} 
 Here the state space $\mathcal{H}^e$ is the set of sequences $(\omega_{-i})_{i\geq 1}$ of non-negative integers with no two consecutive zeroes and agreeing with the sequence $(0,1,0,1,...,)$ far enough to the right. The state space $\mathcal{H}^o$ is similar but agrees with $(1,0,1,0,...)$ for enough to the right. 

The normalising factors $S_{\text{even}}(\tilde{q},t)$ and $S_{\text{odd}}(\tilde{q},t)$ as written above might look unappealing (even though they do count explicit combinatorial objects). The form presented here has direct motivation from the standing up procedure of the 0-1-2 state particle systems. In its proof we demonstrate how this form came to life via such probabilistic ideas. We thank an anonymous referee for pointing out that a purely combinatorial proof is also possible, without the use of probability. We thought it is worth seeing the probability connection while proving Theorem \ref{main}, hence opted to keep our original argument.

We also thank the anonymous referee for the challenge of finding natural alternative forms for $S_{\text{even}}$ and $S_{\text{odd}}$. We spent considerable effort finding a closed form of these expressions. While we were not successful, an alternative probabilistic interpretation, giving rather different but still not closed formulas, is subject of a forthcoming paper. Nevertheless, numerical computation to high accuracy has led us to conjecture the following equalities:

\begin{conj}
\begin{align*}
S_{\text{even}}(\tilde{q},t) &= \frac{1}{\prod_{m\geq 1}(1-\tilde{q}^{2m})} + \frac{\sum_{i\geq 1}\sum_{n\geq i}(-1)^{n-i}\binom{n+i-1}{2i-1}\frac{n}{i}\tilde{q}^{n^2}t^{2i}}{\prod_{m\geq 1}(1-\tilde{q}^m)^2}\\
\\
S_{\text{odd}}(\tilde{q},t) &= \frac{\prod_{m\geq 1}(1-\tilde{q}^{2m})^3}{\prod_{m\geq 1}(1-\tilde{q}^m)^2} + \frac{\sum_{i\geq 1}\sum_{n\geq i}(-1)^{n-i}\binom{n+i}{2i}\frac{2n+1}{2i+1}\tilde{q}^{n(n+1)}t^{2i}}{\prod_{m\geq 1}(1-\tilde{q}^m)^2}.
\end{align*}
\end{conj}

The identities in Theorem \ref{main} are shown to have combinatorial significance. By adapting the ``General Principle" found in Andrews' book (\cite{frobenius}) we find that the RHS of these identities relate to generating functions for generalised Frobenius partitions (GFP's) satisfying a $2$-repetition condition on the rows and a condition on the number of non-repeated entries. The content of these identities is then that the normalising factors $S_{\text{even}}(\Tilde{q},t)$ and $S_{\text{odd}}(\Tilde{q},t)$ are really formal generating functions for such GFP's. This is not clear from their explicit definitions and we explain this in detail in Section \ref{general idenities section}.

In Section \ref{specialising section} we specialise to specific $0$-$1$-$2$ systems and recover various two variable identities, also of combinatorial significance. The ASEP$(q,1)$ process of Redig et al.\ (found in \cite{redig}) is an extension of classical ASEP to allow two particles. It gives identities relating directly to Jacobi triple product. A ($3$-state) asymmetric particle-antiparticle exclusion process is considered and gives identities relating directly to the square of Jacobi triple product (which is also seen to have a combinatorial interpretation in terms of $2$-coloured GFP's). The $2$-exclusion process gives identities relating to GFP's with only a $2$-repetition condition on the rows.

Finally, in Section \ref{k-exc section} we generalise the last example to the entire family of $k$-exclusion processes on $\mathbb{Z}$. These are not $0$-$1$-$2$ systems and so we have to adapt our techniques further. However they are sufficiently nicely behaved to allow product blocking measure and have stood up processes, which we describe in detail. The identities we recover are as follows.

\begin{restatable}{thm}{kexc}\label{k-exc main} For $0<q<1$, $z\neq 0$ and $m\in\{0,1,...,k-1\}$

\[k\sum\limits_{\ell \in \mathbb{Z}}S_{-m}^{(k)}(q) q^{\frac{k\ell(\ell+1)}{2}-m\ell}z^{k\ell-m} = \sum_{r=0}^{k-1}\zeta_k^{-rm}\left(\prod_{i\geq 1}\left(\sum_{\alpha=0}^k \zeta_k^{-\alpha r}q^{\alpha i}z^{\alpha}\right)\left(\sum_{\alpha=0}^k \zeta_k^{\alpha r}q^{\alpha (i-1)}z^{-\alpha}\right)\right).\]
\end{restatable}

The functions $S^{(k)}_{-m}(q)$ for $m\in\{0,1,...,k-1\}$ are normalising factors for measures on the $k$ irreducible components of the stood up process: \[S_{-m}^{(k)}(q)=\sum\limits_{\underline{\omega}\in\mathcal{H}^{-m}}q^{\sum\limits_{i \notin k \mathbb{Z}+m}i\omega_{-i}+\sum\limits_{i \in k \mathbb{Z}+m}i(\omega_{-i}-1)}.\] Here the state space $\mathcal{H}^{-m}$ is the set of sequences $(\omega_{-i})_{i\geq 1}$ of non-negative integers that have no $k$ consecutive zeroes and satisfy $\omega_{-i} = \mathbb{I}\{i\equiv m \bmod k\}$ for large enough $i$ (in analogy with $\mathcal{H}^e$ and $\mathcal{H}^o$). The ``General Principle" once again explains the combinatorial nature of these identities. The content is that the normalising factors $S_{-m}^{(k)}(q)$ are formal generating functions for GFP's with $k$-repetition condition in the rows (functions studied in detail in Andrews' book \cite{frobenius}).

Throughout the paper we discuss all of the above in both probabilistic and combinatorial detail, giving full justification where possible and otherwise outlining the mysteries implied by one approach to the other.

\section*{Acknowledgements}
\par \vspace{1mm} \noindent The authors are grateful for discussions with J\'anos Engl\"ander regarding the parity of the sum of Bernoulli random variables and for suggestions of an anonymous referee regarding the combinatorial background of our identities. M.\ Bal\'azs was partially supported by the EPSRC EP/R021449/1 Standard Grant of the UK. This study did not involve any underlying data.

\section{Interacting particle systems with product blocking measure}\label{blocking meaures}
We recall the family of particle systems with product blocking measure introduced by Bal\'azs and Bowen in \cite{blocking}. For possibly infinite integers, $-\infty\leq \ell \leq 0\leq \mathfrak{r}\le \infty$, we define $\Lambda:=\{i : \ell-1<i<\mathfrak{r}+1\} \subseteq \mathbb{Z}$, and for two other possibly infinite integers $-\infty\leq \omega^\textrm{min} \leq 0 < \omega^\textrm{max}\leq \infty $, we define $I:=\{z:\omega^\textrm{min}-1<z<\omega^\textrm{max}+1\}\subseteq \mathbb{Z}$. We consider interacting particle systems on the state space $\Omega=\{\underline{\eta} \in I^\Lambda: (\ell >-\infty \textrm{ or } N_{p}(\underline{\eta})< \infty)\textrm{ and }(\mathfrak{r}<\infty \textrm{ or } N_h(\underline{\eta})<\infty)\}$, where $N_p,N_h:I^\Lambda \rightarrow \mathbb{Z}_{\geq 0}\cup \{\infty\}$ are defined by,
$$N_p(\underline{\eta})= \sum\limits_{i=\ell}^0 (\eta_i-\omega^\textrm{min})\hspace{5mm} \textrm{ and } \hspace{5mm} N_h(\underline{\eta})=\sum\limits_{i=1}^\mathfrak{r}(\omega^\textrm{max}-\eta_i)$$
(when $\omega^\textrm{min}=0$ and $\omega^{\textrm{max}}<\infty$ this is just the number of particles to the left and holes to right of $\frac{1}{2}$). Given a state $\underline{\eta}\in\Omega$ the state $\underline{\eta}^{(i,j)}\in\Omega$ obtained by a particle jumping from site $i$ to $j$ is given by 
$$\left(\underline{\eta}^{(i,j)}\right)_k=\begin{cases}
\eta_k &\textrm{if } k \neq i,j\\
\eta_i-1 &\textrm{if } k=i \\
\eta_j+1 &\textrm{if }k=j.
\end{cases}$$
\par \noindent We will only consider processes with nearest neighbour interactions, i.e.\ the only states we can reach from $\underline{\eta}$ are $\underline{\eta}^{(i,i+1)}$ or $\underline{\eta}^{(i+1,i)}$. The system then evolves according to Markov generators; in the `bulk', i.e.\ $\ell \leq i\leq \mathfrak{r}-1$ the generator has the form 
$$\left(L^\textrm{bulk}\varphi\right)(\underline{\eta})=\sum\limits_{i=\ell}^{\mathfrak{r}-1}\left\{p(\eta_i,\eta_{i+1})\left(\varphi(\underline{\eta}^{(i,i+1)})-\varphi(\underline{\eta})\right)+q(\eta_i,\eta_{i+1})\left(\varphi(\underline{\eta}^{(i+1,i)})-\varphi(\underline{\eta})\right)\right\}$$
\par \noindent for some cylinder function $\varphi:\Omega \rightarrow \mathbb{R}$ and functions $p,q:I^2\rightarrow [0,\infty)$ (the right and left jump rates respectively). If $\ell>-\infty$ we consider an open left boundary with boundary jump rates $p_\ell,q_\ell :I \rightarrow [0,\infty)$. We introduce the notation $\underline{\eta}^{(\ell-1,\ell)}$ to denote the state reached from $\underline{\eta}$ by a particle entering the system through the left boundary and similarly $\underline{\eta}^{(\ell,\ell-1)}$ to be the state where a particle has left through the boundary. So the left boundary generator is of the form 
$$\left(L^\ell \varphi\right)(\underline{\eta})=p_\ell(\eta_\ell)\left(\varphi(\underline{\eta}^{(\ell-1,\ell)})-\varphi(\underline{\eta})\right)+q_\ell(\eta_\ell)\left(\varphi(\underline{\eta}^{(\ell,\ell-1)})-\varphi(\underline{\eta})\right).$$
\par \noindent Similarly if $\mathfrak{r}<\infty$ we consider an open right boundary with boundary jump rates $p_\mathfrak{r}, q_\mathfrak{r}:I \rightarrow [0,\infty)$. We let $\underline{\eta}^{(\mathfrak{r}+1,\mathfrak{r})}$  denote the state reached from $\underline{\eta}$ when a particle enters the system through the right boundary and similarly $\underline{\eta}^{(\mathfrak{r},\mathfrak{r}+1)}$ the state where a particle has left through the boundary. So the right boundary generator is of the form 
$$\left(L^\mathfrak{r} \varphi\right)(\underline{\eta})=p_\mathfrak{r}(\eta_\mathfrak{r})\left(\varphi(\underline{\eta}^{(\mathfrak{r},\mathfrak{r}+1)})-\varphi(\underline{\eta})\right)+q_\mathfrak{r}(\eta_\mathfrak{r})\left(\varphi(\underline{\eta}^{(\mathfrak{r}+1,\mathfrak{r})})-\varphi(\underline{\eta})\right).$$
\par In order to be a member of the blocking family the jump rates of the system  must obey the following conditions:
\begin{enumerate}[label=(\conditions{B}{{\arabic*}})]
    \item For $-\infty<\omega^{\textrm{min}}< \omega^{\textrm{max}}<\infty$ we have that 
    $$p(\omega^{\textrm{min}}, \cdot)=p(\cdot, \omega^{\textrm{max}})=q(\omega^{\textrm{max}}, \cdot)=q(\cdot, \omega^{\textrm{min}})=0.$$
    \par \noindent If $\ell>-\infty$, 
    $$q_\ell(\omega^\textrm{min})=p_\ell(\omega^\textrm{max})=0$$
    \par \noindent and if $\mathfrak{r}<\infty$,
    $$q_\mathfrak{r}(\omega^\textrm{max})=p_\mathfrak{r}(\omega^\textrm{min})=0.$$
    \item The system is attractive; that is $p(\cdot,\cdot)$ is non-decreasing in the first variable and non-increasing in the second whilst $q(\cdot,\cdot)$ is non-increasing in the first variable and non-decreasing in the second. If $\ell>-\infty$, $p_\ell$ is non-increasing and $q_\ell$ non-decreasing. If $\mathfrak{r}<\infty$, $p_\mathfrak{r}$ is non-decreasing and $q_\mathfrak{r}$ non-increasing.
    \item There exist $p_{\text{asym}},q_{\text{asym}}\in\mathbb{R}$ satisfying $\frac{1}{2}<p_{\textrm{asym}}=1-q_{\textrm{asym}} \leq 1$, and functions $f: I \rightarrow [0,\infty)$ and $s: I \times I \rightarrow [0,\infty)$ such that,
    $$p(y,z)=p_{\textrm{asym}}\cdot s(y,z+1)\cdot f(y) \hspace{5mm}\textrm{ and } \hspace{5mm}q(y,z)=q_{\textrm{asym}}\cdot s(y+1,z)\cdot f(z).$$
    If $\omega^\textrm{min}$ is finite then $f(\omega^\textrm{min})=0$, and if $\omega^\textrm{max}$ is finite  we extend the domain of $s$ and require that $s(\omega^\textrm{max}+1,\cdot)=s(\cdot,\omega^\textrm{max}+1)=0$. (Note that attractivity implies that $s$ is non-increasing in both of its variables and $f$ is non-decreasing). 
\end{enumerate}

A priori the above definition allows many of the rates to be zero. However in this paper we will assume that all rates, except those in $(B1)$, are non-zero. Roughly speaking this allows us to ignore degenerate processes and assume that $f(z)>0$ if $z>\omega^{\text{min}}$ (which we will do from now on).

\par In general there are many stationary distributions for systems satisfying the above. In this paper we will be interested in the following one-parameter family of product stationary blocking measures, written explicitly in terms of $p_\textrm{asym}$, $q_\textrm{asym}$ and $f$.
\begin{thm}\label{stationary dist}\emph{(\cite{blocking}, Theorem $3.1$)}
\par \noindent  For each $c\in\mathbb{R}$ there is a product stationary blocking measure $\underline{\mu}^c$ on $\Omega$, given by the marginals 
$$\mu_i^c(z)=\frac{1}{Z_i^c}\frac{\left(\frac{p_\textrm{asym}}{q_\textrm{asym}}\right)^{(i-c)z}}{f(z)!}\hspace{5mm} \textrm{ for } i\in \Lambda \text{ and } z \in I,$$
\par \noindent where $Z_i^c$ is the normalising factor and $f(z)!:=\begin{cases} \prod\limits_{y=1}^zf(y) &\textrm{for } z>0 \\
1 &\textrm{for } z=0 \\
\frac{1}{\prod\limits_{y=z+1}^{0}f(y)} &\textrm{for } z<0.
\end{cases}$
\end{thm}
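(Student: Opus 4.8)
The plan is to prove the stronger statement that each $\underline{\mu}^c$ is \emph{reversible} for the dynamics, from which stationarity follows immediately. Since the generator is a sum of bond generators and $\underline{\mu}^c$ is a product measure, it suffices to verify detailed balance across each bond separately: a single nearest-neighbour jump alters only two coordinates, so the global reversibility identity collapses to a two-site identity involving only the marginals at those two sites. Writing $\rho := p_{\text{asym}}/q_{\text{asym}}$ (note $\rho>1$, since all the $q$-rates are nonzero), the detailed balance relation for the bulk bond $(i,i+1)$, applied to a configuration with $\eta_i=y$ and $\eta_{i+1}=z$, reads
\[
\mu_i^c(y)\,\mu_{i+1}^c(z)\,p(y,z)=\mu_i^c(y-1)\,\mu_{i+1}^c(z+1)\,q(y-1,z+1),
\]
where the right-hand side is the rate of the reverse (leftward) jump out of the post-jump configuration.

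First I would substitute the factorisation from $(B3)$, namely $p(y,z)=p_{\text{asym}}\,s(y,z+1)\,f(y)$ and $q(y-1,z+1)=q_{\text{asym}}\,s(y,z+1)\,f(z+1)$, together with the explicit marginals from the statement. The shared factor $s(y,z+1)$ and the normalising constants $Z_i^c,Z_{i+1}^c$ cancel immediately. Using $f(z)!/f(z-1)!=f(z)$ from the definition of the factorial, the ratio $f(y)!/f(y-1)!=f(y)$ cancels the prefactor $f(y)$ on the left, and likewise $f(z+1)!/f(z)!=f(z+1)$ cancels $f(z+1)$ on the right. What remains is purely a power of $\rho$: the exponent of $\rho$ on the left is $(i-c)y+(i+1-c)z$ and on the right $(i-c)(y-1)+(i+1-c)(z+1)$, whose difference is exactly $-1$. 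This single-unit mismatch is precisely compensated by the ratio $p_{\text{asym}}/q_{\text{asym}}=\rho$ coming from the asymmetry constants, so both sides agree. The conceptual content is that the exponent $(i-c)z$ and the denominator $f(z)!$ are engineered so that moving one particle one site to the right costs exactly one factor of $\rho$ and exactly the right factors of $f$, which is what forces the product measure to be reversible.

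It remains to handle the open boundaries and the well-definedness of the measure. At the left boundary the reversibility identity reduces to the single-site relation $\mu_\ell^c(z)\,p_\ell(z)=\mu_\ell^c(z+1)\,q_\ell(z+1)$ (and symmetrically on the right); I would verify this from the boundary rates prescribed in \cite{blocking}, the vanishing conditions in $(B1)$ additionally guaranteeing no flux across $\omega^{\text{min}}$ and $\omega^{\text{max}}$ so that the dynamics preserve the support $\Omega$. Finiteness of $Z_i^c$ is automatic whenever $I$ is finite (as in all the $0$-$1$-$2$ and $k$-exclusion applications), and in the unbounded case follows from the growth of $f(z)!$ dominating $\rho^{(i-c)z}$. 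The main obstacle is not the algebra, which is a clean cancellation, but the measure-theoretic bookkeeping on the infinite lattice: one must check that the product measure is genuinely concentrated on the blocking state space $\Omega$ (finitely many particles to the left, finitely many holes to the right), that its marginals approach the reference background sequences fast enough, and that the generator acts on a suitable core so that reversibility upgrades to stationarity. Since these analytic points are established in \cite{blocking}, I would cite them and present the detailed-balance computation above as the heart of the result.
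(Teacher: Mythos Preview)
Your detailed-balance computation is correct and is exactly the standard route to this result. Note, however, that the paper does not supply its own proof of this theorem: it is quoted verbatim as Theorem~3.1 of \cite{blocking} and used as a black box, so there is no in-paper argument to compare against. What you have written is essentially the proof one finds in \cite{blocking}, so your proposal is both correct and aligned with the intended source.
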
 

\begin{remark}
It is clear that if $f$ and $s$ satisfy $(B3)$ then so do $\alpha f$ and $\alpha^{-1} s$ for any $\alpha>0$. This scaling simply shifts the value of $c$ in the blocking measure,
$$\mu_i^c(z)=\frac{1}{Z_i}\frac{\left(\frac{p_\textrm{asym}}{q_\textrm{asym}}\right)^{(i-c)z}}{\alpha^zf(z)!}=\frac{1}{Z_i}\frac{\left(\frac{p_\textrm{asym}}{q_\textrm{asym}}\right)^{(i-\Tilde{c})z}}{f(z)!}=\mu_i^{\Tilde{c}}(z).$$
\par \noindent So without loss of generality we can assume that $\prod\limits_{y=\omega^\textrm{min}+1}^{\omega^\textrm{max}}f(y)=1$ when $\omega^{\text{min}}$ and $\omega^{\text{max}}$ are finite.
\end{remark}

As expected, being a member of the blocking family imposes strict constraints on the jump rates. In particular for each $y,z \in I \setminus \{\omega^\textrm{min}\}$ we can use $(B3)$ to write
$$\frac{f(z)}{f(y)}=\frac{p_\textrm{asym}}{q_\textrm{asym}}\frac{q(y-1,z)}{p(y,z-1)}.$$
Setting $y=z$ and recalling that $p_{\text{asym}} = 1- q_{\text{asym}}$ gives
$$p_\textrm{asym}=\frac{p(y,y-1)}{q(y-1,y)+p(y,y-1)} \hspace{10mm} \textrm{and} \hspace{10mm} q_\textrm{asym}=\frac{q(y-1,y)}{q(y-1,y)+p(y,y-1)}.$$

\par \noindent Note that $p_{\text{asym}}>q_{\text{asym}}$ implies that $p(y,y-1)>q(y-1,y)$ for all $y \in I\setminus \{\omega^\textrm{min}\}$. This, along with $(B2)$ shows the condition $$(\text{a})\quad p(y,z)>q(z,y)\hspace{10mm} \text{for all } y\in I\setminus \{\omega^{\text{min}}\} \text{ and } z\in I\setminus\{\omega^{\text{max}}\}$$ (i.e.\ the process is asymmetric with right drift).

\par \noindent Also by assumption $\frac{1}{2} < p_{\text{asym}}$ is a constant (as is $q_{\text{asym}}<\frac{1}{2}$) and so we must have the condition that  
$$(\text{b})\quad\frac{p(y,y-1)}{q(y-1,y)}=\textrm{constant}>1 \hspace{10mm} \text{for all } y \in I\setminus \{\omega^\textrm{min}\}.$$

\par \noindent We then see that, for $y,z\in I\setminus \{\omega^{\text{min}}\}$
$$\frac{f(z)}{f(y)}=\frac{p(z,z-1)}{q(z-1,z)}\frac{q(y-1,z)}{p(y,z-1)}.$$
Since $\frac{f(y)}{f(z)}\cdot\frac{f(z)}{f(y)} = 1$ this gives the condition
$$(\text{c})\quad\frac{p(z,z-1)p(y,y-1)q(y-1,z)q(z-1,y)}{q(z-1,z)q(y-1,y)p(y,z-1)p(z,y-1)}=1 \hspace{10mm} \text{for all } y,z\in I\setminus \{\omega^{\text{min}}\}.$$
It is then clear that the function $s$ is uniquely determined as follows, for $y,z \in I\setminus \{\omega^\textrm{min}\}$
$$s(y,z)=\frac{p(y,z-1)\left(q(y-1,y)+p(y,y-1)\right)}{f(y)p(y,y-1)}=\frac{q(y-1,z)\left(q(y-1,y)+p(y,y-1)\right)}{f(z)q(y-1,y)}.$$

To summarise, conditions (a), (b) and (c) are necessary conditions on the rates that are implied by being a member of the blocking family. However in general a process with rates only satisfying $(B1)$, $(B2)$ and these three conditions is not expected to be a member of the blocking family (the quantities $p_{\text{asym}}$ and $q_{\text{asym}}$ as written above are well defined and satisfy $\frac{1}{2}<p_{\textrm{asym}}=1-q_{\textrm{asym}} \leq 1$, but it is not clear that the functions $f$ and $s$ should exist purely from these conditions).

\section{General $0$-$1$-$2$ systems on $\mathbb{Z}$ with blocking measure}\label{general section}
\par For the choices $I = \{0,1\} \textrm{ and }\Lambda={\mathbb{Z}}$ the only member of the blocking family is ASEP, handled in \cite{blocking}. In this section we will consider the case of $I= \{0,1,2\} \textrm{ and } \Lambda=\mathbb{Z}$. In this case the conditions $(B1)$, $(B2)$, $(B3)$ translate into the following conditions on the rates (using the assumptions and discussions in Section $2$):
\begin{enumerate}[label=(\conditions{B}{{\arabic*}})]
    \item  
    $$p(0, \cdot)=p(\cdot, 2)=q(2, \cdot)=q(\cdot, 0)=0,$$
    \item  \begin{align*} p(2,\cdot)&\geq p(1,\cdot) > 0\\
         p(\cdot,0)&\geq p(\cdot,1) > 0\\
         q(0,\cdot)&\geq q(1,\cdot) > 0\\
         q(\cdot,2) &\geq q(\cdot,1) > 0,
    \end{align*}
    \item For all $y,z\in\{0,1,2\}$ we have
    
    $$p(y,z)=p_{\textrm{asym}}\cdot s(y,z+1)\cdot f(y) \hspace{5mm}\textrm{ and } \hspace{5mm}q(y,z)=q_{\textrm{asym}}\cdot s(y+1,z)\cdot f(z)$$
    
\par \noindent    with $p_{\text{asym}},q_{\text{asym}}$ given by
    
     $$p_\textrm{asym}=\frac{p(1,0)}{q(0,1)+p(1,0)} >\frac{1}{2} \hspace{5mm} \hspace{5mm} q_\textrm{asym}=\frac{q(0,1)}{q(0,1)+p(1,0)}< \frac{1}{2}$$
     
     \par \noindent and $f,s$ given by
     
    \[
     f(z):=
    \begin{cases}
     0 \hspace{2mm} \textrm{if } z=0  \\
     \\
    t^{-1} \hspace{2mm}\textrm{if } z=1  \\
      \\
    t \hspace{2mm}\textrm{if } z=2
    \end{cases} 
    \qquad 
    s(y,z):=
    \begin{cases}
       t\left(q(0,1)+p(1,0)\right) &\textrm{if } y=z=1  \\
       \\
       \frac{tp(1,1)\left(q(0,1)+p(1,0)\right)}{p(1,0)}&\textrm{if } y=1 \textrm{ and } z=2 \\
       \\
       \frac{p(2,0)\left(q(0,1)+p(1,0)\right)}{p(1,0)t}  &\textrm{if } y=2 \textrm{ and } z=1 \\
       \\
       \frac{p(2,1)\left(q(0,1)+p(1,0)\right)}{p(1,0)t} &\textrm{if } y=z=2 \\
       \\
       0 &\textrm{if } y=3 \textrm{ or } z=3,
    \end{cases}
   \]
   for some $t\geq 1$ (recall that we can assume $f(1)f(2)=1$ without loss of generality).
   \end{enumerate}
   
   \begin{remark} Note that, since $\frac{f(2)}{f(1)} = t^2 = \frac{p(1,0)q(0,2)}{q(0,1)p(1,1)}$ we see that $t=\left(\frac{p(1,0)q(0,2)}{q(0,1)p(1,1)}\right)^{\frac{1}{2}}\geq 1$ is uniquely determined.\end{remark}
 
 \par \noindent As previously mentioned $(B1),(B2)$ and $(B3)$ imply the following necessary conditions for the rates:
    \begin{enumerate}[(a)]
        \item $p(y,z)>q(z,y)$ for all $y\in\{1,2\}$ and $z\in\{0,1\},$
        \item $\frac{p(1,0)}{q(0,1)}=\frac{p(2,1)}{q(1,2)},$
        \item $\frac{p(1,0)p(2,1)q(1,1)q(0,2)}{q(0,1)q(1,2)p(2,0)p(1,1)}=1.$
    \end{enumerate}
    
Unlike the general case these conditions are now also sufficient, in the sense that any process with $I = \{0,1,2\}$ and $\Lambda=\mathbb{Z}$ satisfying $(B1), (B2)$ and conditions $(a),(b)$ and $(c)$ will automatically satisfy $(B3)$ with $p_{\text{asym}},q_{\text{asym}},f$ and $s$ as given above (all well defined). Thus we have fully parametrised the family of such blocking processes using only conditions on the rates.

\par By Theorem \ref{stationary dist}, for each such process there is a one parameter family of product stationary blocking measures $\underline{\mu}^c$ given by the marginals

$$\mu_i^c(z)=\frac{t^{\mathbb{I}\{z=1\}}\Tilde{q}^{-(i-c)z}}{Z_i^c(\Tilde{q},t)} \hspace{10mm} \text{for } z\in\{0,1,2\}.$$
\par \noindent Here $0<\Tilde{q}=\frac{q_\textrm{asym}}{p_\textrm{asym}}=\frac{q(0,1)}{p(1,0)}<1$  and $Z_i^c(\Tilde{q},t):=1+t\Tilde{q}^{-(i-c)}+\Tilde{q}^{-2(i-c)}$ (the normalising factor). 
\par \noindent Explicitly: 
\[
\underline{\mu}^c(\underline{\eta})=\prod\limits_{i=-\infty}^\infty \mu_i^c(\eta_i)=\prod\limits_{i=-\infty}^0 \frac{ t^{\mathbb{I}\{\eta_i=1\}}\Tilde{q}^{-(i-c)\eta_i}}{Z_i^c(\Tilde{q},t)}\prod\limits_{i=1}^\infty \frac{ t^{\mathbb{I}\{\eta_i=1\}}\Tilde{q}^{(2-\eta_i)(i-c)}}{\Tilde{q}^{2(i-c)}Z_i^c(\Tilde{q},t)}.
\]

\begin{remark} Note that these measures only depend on the parameter $c$ and the quantities $\Tilde{q},t$ attached to the process. Roughly speaking this will be the reason for getting a three variable identity later; as we run through the whole family of such processes these parameters will be variables. Specialising to particular subfamilies of processes, for example fixing $t$ or letting $t$ and $\Tilde{q}$ be related, will give two variable identities ($c$ will still be a variable as will $\Tilde{q}$, since the rates will typically depend on an asymmetry parameter $0<q<1$).\end{remark}

\par By definition of the state space, each $\underline{\eta}\in\Omega$ has $\eta_i=0$ for all $i$ small enough and $\eta_i=2$ for all $i$ big enough. We refer to these events as having a left most particle (LMP) and right most hole (RMH). By asymmetry it then follows that the ground states of $\Omega$ (i.e.\ the most probable states) are all shifts of the following ``even" and ``odd" ground states:
\begin{align*}
    \eta^e_i=
    \begin{cases}
    2 &\textrm{ if } i\geq 1 \\
    0 &\textrm{otherwise}
    \end{cases}
    &\qquad
    \eta^o_i=
    \begin{cases}
    2 &\textrm{ if } i\geq 1 \\
    1 &\textrm{ if } i=0\\
    0 &\textrm{otherwise.}
    \end{cases}
\end{align*}
\vspace{5mm}
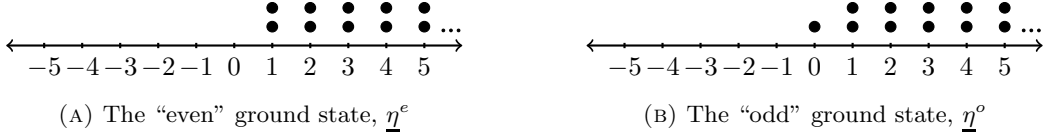
\begin{figure}[H]
    \centering 
    \begin{subfigure}[b]{0.4\textwidth}
    \centering
    \begin{tikzpicture}[scale=0.5]
\draw[thick, <->] (-6,0)--(6,0);
\foreach \x in {-5,-4,-3,-2,-1,0,1,2,3,4,5}
    \draw[thick, -](\x cm, 2pt)--(\x cm, -2pt)node[anchor=north]{$\x$};
    
\filldraw [black] (1,0.5) circle (4pt);
\filldraw [black] (1,1) circle (4pt);
\filldraw [black] (2,0.5) circle (4pt);
\filldraw [black] (2,1) circle (4pt);
\filldraw [black] (3,0.5) circle (4pt);
\filldraw [black] (3,1) circle (4pt);
\filldraw [black] (4,0.5) circle (4pt);
\filldraw [black] (4,1) circle (4pt);
\filldraw [black] (5,0.5) circle (4pt);
\filldraw [black] (5,1) circle (4pt);
\filldraw [black] (5.5,0.4) circle (1pt);
\filldraw [black] (5.7,0.4) circle (1pt);
\filldraw [black] (5.9,0.4) circle (1pt);
\end{tikzpicture}
\caption{The ``even" ground state, $\underline{\eta}^e$}
    \end{subfigure}
    \hspace{10mm}
    \begin{subfigure}[b]{0.4\textwidth}
    \centering
    \begin{tikzpicture}[scale=0.5]
\draw[thick, <->] (-6,0)--(6,0);
\foreach \x in {-5,-4,-3,-2,-1,0,1,2,3,4,5}
    \draw[thick, -](\x cm, 2pt)--(\x cm, -2pt) node[anchor=north]{$\x$};
    
\filldraw [black] (0,0.5) circle (4pt);
\filldraw [black] (1,0.5) circle (4pt);
\filldraw [black] (1,1) circle (4pt);
\filldraw [black] (2,0.5) circle (4pt);
\filldraw [black] (2,1) circle (4pt);
\filldraw [black] (3,0.5) circle (4pt);
\filldraw [black] (3,1) circle (4pt);
\filldraw [black] (4,0.5) circle (4pt);
\filldraw [black] (4,1) circle (4pt);
\filldraw [black] (5,0.5) circle (4pt);
\filldraw [black] (5,1) circle (4pt);
\filldraw [black] (5.5,0.4) circle (1pt);
\filldraw [black] (5.7,0.4) circle (1pt);
\filldraw [black] (5.9,0.4) circle (1pt);
\end{tikzpicture}
\caption{The ``odd" ground state, $\underline{\eta}^o$}
    \end{subfigure}
    \caption{The two ground states, $\underline{\eta}^e$ and $\underline{\eta}^o$, of $\Omega$.}
    \label{ground states}
\end{figure}
\par \noindent The reason for the terms ``odd" and ``even" will become clear in the next section.

\subsection{Ergodic decomposition of $\Omega$}\label{decomp} ~ 
\par For any $\underline{\eta}\in \Omega$ the quantity $N(\underline{\eta}):=\sum\limits_{i=1}^\infty (2-\eta_i)-\sum\limits_{i=-\infty}^0 \eta_i$ (as defined in \cite{blocking}) is finite and is conserved by the dynamics of the process. So we can decompose $\Omega=\bigcup_{n \in \mathbb{Z}}\Omega^n$, into irreducible components $\Omega^n:=\{\underline{\eta} \in \Omega: N(\underline{\eta})=n\}$. Note that the left shift operator $\tau$, defined by $(\tau\underline{\eta})_i=\eta_{i+1}$, gives a bijection $\Omega^n \xrightarrow[]{\tau} \Omega^{n-2}$ (i.e.\ if $\underline{\eta} \in \Omega^n$ then, $N(\tau \underline{\eta})=n-2$).
\begin{remark}
Since $N(\underline{\eta}^e)=0$, the shifts of $\underline{\eta}^e$ have even conserved quantity and give the ground states for the ``even" part $\bigcup_{n\in 2\mathbb{Z}}\Omega^n$ of $\Omega$. Similarly $N(\underline{\eta}^o)=-1$ and shifts provide the ground states for the ``odd" part $\bigcup_{n\in 2\mathbb{Z}+1}\Omega^n$. This explains the labels $\underline{\eta^e}$ and $\underline{\eta^o}$.
\end{remark}
\par We now calculate  $\underline{\nu}^{n,c}(\cdot):=\underline{\mu}^c(\cdot|N(\cdot)=n)$, the unique stationary distribution on $\Omega^n$. 
\begin{lem}\label{mu tau}
The following relation holds, 
$$\underline{\mu}^c(\tau \underline{\eta})=\Tilde{q}^{2c-N(\underline{\eta})}\underline{\mu}^c( \underline{\eta}).$$
\par \noindent This gives the recursion,
$$\underline{\mu}^c(\{N=n\})=\Tilde{q}^{n-2c}\underline{\mu}^c(\{N=n-2\}).$$
\end{lem}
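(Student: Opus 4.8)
The plan is to reduce the first identity to a single-site computation and then evaluate the resulting infinite product by a careful regularisation. First I would reindex the shifted measure: since $(\tau\underline{\eta})_i=\eta_{i+1}$, I can write $\underline{\mu}^c(\tau\underline{\eta})=\prod_i\mu_i^c(\eta_{i+1})=\prod_j\mu_{j-1}^c(\eta_j)$ after setting $j=i+1$. Comparing the site-$(j-1)$ and site-$j$ marginals evaluated at the \emph{same} value $\eta_j$, the indicator factors $t^{\mathbb{I}\{\eta_j=1\}}$ cancel and only a power of $\Tilde{q}$ and the normalisers survive, because $-(j-1-c)\eta_j+(j-c)\eta_j=\eta_j$. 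This gives the clean single-site relation
$$\frac{\mu_{j-1}^c(\eta_j)}{\mu_j^c(\eta_j)}=\Tilde{q}^{\,\eta_j}\,\frac{Z_j^c(\Tilde{q},t)}{Z_{j-1}^c(\Tilde{q},t)},$$
so that, at least formally, $\underline{\mu}^c(\tau\underline{\eta})/\underline{\mu}^c(\underline{\eta})=\prod_{j}\Tilde{q}^{\,\eta_j}\,Z_j^c/Z_{j-1}^c$.

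Next I would make sense of this product, which is the delicate point. Since $\underline{\eta}\in\Omega$ has a left-most particle and right-most hole, I choose $M$ large enough that $\eta_j=0$ for $j\le -M$ and $\eta_j=2$ for $j>M$, and work with the partial product over $[-M,M]$. The normaliser ratios telescope to $Z_M^c/Z_{-M-1}^c$, while the $\Tilde{q}$-powers collect into $\Tilde{q}^{\sum_{j=-M}^{M}\eta_j}$. The key bookkeeping step is to split the sum at the origin and rewrite it using the conserved quantity: $\sum_{j=-M}^{0}\eta_j=\sum_{i\le 0}\eta_i$ and $\sum_{j=1}^{M}\eta_j=2M-\sum_{i\ge 1}(2-\eta_i)$, whence $\sum_{j=-M}^{M}\eta_j=2M-N(\underline{\eta})$. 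Letting $M\to\infty$ and using $\Tilde{q}<1$, one has $Z_{-M-1}^c\to 1$ and $\Tilde{q}^{2M}Z_M^c=\Tilde{q}^{2M}+t\Tilde{q}^{M+c}+\Tilde{q}^{2c}\to\Tilde{q}^{2c}$, so the partial product converges to $\Tilde{q}^{-N(\underline{\eta})}\cdot\Tilde{q}^{2c}=\Tilde{q}^{2c-N(\underline{\eta})}$. This yields the first claim.

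The recursion then follows formally from summing the first identity over an irreducible component, using the bijection $\tau\colon\Omega^n\to\Omega^{n-2}$ recorded above: reindexing $\underline{\xi}=\tau\underline{\eta}$ gives $\underline{\mu}^c(\{N=n-2\})=\sum_{\underline{\eta}\in\Omega^n}\underline{\mu}^c(\tau\underline{\eta})=\Tilde{q}^{2c-n}\,\underline{\mu}^c(\{N=n\})$, since $N(\underline{\eta})=n$ is constant on $\Omega^n$, and rearranging gives the stated form. The main obstacle is purely the regularisation in the middle paragraph: neither $\prod_j\Tilde{q}^{\,\eta_j}$ nor the telescoping product $\prod_j Z_j^c/Z_{j-1}^c$ converges on its own (the former tends to $0$ through the right tail where $\eta_j=2$, the latter diverges as $Z_M^c\to\infty$), so the argument must keep them combined in a single partial product and exploit the exact cancellation $\Tilde{q}^{2M}Z_M^c\to\Tilde{q}^{2c}$; I would take care to justify that the ratio of the two genuinely convergent infinite products equals the limit of the windowed partial products, which is where the stabilisation of $\underline{\eta}$ to the ground-state pattern at $\pm\infty$ is essential.
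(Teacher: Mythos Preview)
Your proof is correct and follows essentially the same approach as the paper: reindex $\underline{\mu}^c(\tau\underline{\eta})$ via $j=i+1$, compare site by site, and collect the resulting $\Tilde{q}$-powers into $\Tilde{q}^{-N(\underline{\eta})}$ plus a boundary factor $\Tilde{q}^{2c}$, then sum over $\Omega^n$ using the bijection $\tau$ for the recursion. The only cosmetic difference is that the paper works directly with the already-regularised two-sided product form of $\underline{\mu}^c$ (so the boundary shift at $j=1$ produces $\Tilde{q}^{2c}$ immediately), whereas you make the regularisation explicit via the window $[-M,M]$ and the limit $\Tilde{q}^{2M}Z_M^c\to\Tilde{q}^{2c}$; both are equivalent.
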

\begin{proof}
(These are special cases of Lemma 6.1 and Corollary 6.2 in \cite{blocking}.)
\begin{align*}
     \underline{\mu}^c(\tau \underline{\eta})&=\prod\limits_{i=-\infty}^0\frac{ t^{\mathbb{I}\{\eta_{i+1}=1\}}\Tilde{q}^{-(i-c)\eta_{i+1}}}{Z_i^c(\Tilde{q},t)}\prod\limits_{i=1}^\infty \frac{ t^{\mathbb{I}\{\eta_{i+1}=1\}}\Tilde{q}^{(2-\eta_{i+1})(i-c)}}{\Tilde{q}^{2(i-c)}Z_i^c(\Tilde{q},t)}\\
     &=\Tilde{q}^{2c}\frac{\prod\limits_{j=-\infty}^0 t^{\mathbb{I}\{\eta_j=1\}}\Tilde{q}^{-(j-1-c)\eta_j}}{\prod\limits_{i=-\infty}^0Z_i^c(\Tilde{q},t)}\frac{\prod\limits_{j=1}^\infty  t^{\mathbb{I}\{\eta_j=1\}}\Tilde{q}^{(2-\eta_j)(j-1-c)}}{\prod\limits_{i=1}^\infty \Tilde{q}^{2(i-c)}Z_i^c(\Tilde{q},t)}\\
     &=\Tilde{q}^{2c+\sum\limits_{j=-\infty}^0\eta_j-\sum\limits_{j=1}^\infty(2-\eta_j)}\frac{\prod\limits_{j=-\infty}^0 t^{\mathbb{I}\{\eta_j=1\}}\Tilde{q}^{-(j-c)\eta_j}}{\prod\limits_{i=-\infty}^0Z_i^c(\Tilde{q},t)}\frac{\prod\limits_{j=1}^\infty t^{\mathbb{I}\{\eta_j=1\}}\Tilde{q}^{(2-\eta_j)(j-c)}}{\prod\limits_{i=1}^\infty \Tilde{q}^{2(i-c)}Z_i^c(\Tilde{q},t)}\\
     &= \Tilde{q}^{2c-N(\underline{\eta})}\underline{\mu}^c( \underline{\eta}).
\end{align*}
Since $N(\tau\underline{\eta})=N(\underline{\eta})-2$ we have that,
\begin{align*}
    \underline{\mu}^c(\{N=n-2\}) &= \sum_{\underline{\eta}:N(\underline{\eta})=n-2}\underline{\mu}^c(\underline{\eta}) \\
    &= \sum_{\hat{\underline{\eta}}:N(\tau\hat{\underline{\eta}})=n-2}\underline{\mu}^c(\tau \hat{\underline{\eta}})\\
    &=\sum_{\hat{\underline{\eta}}:N(\hat{\underline{\eta}})=n}\Tilde{q}^{2c -N(\hat{\underline{\eta}})}\underline{\mu}^c(\hat{\underline{\eta}})\\
    &= \Tilde{q}^{2c-n}\underline{\mu}^c(\{N=n\}). 
\end{align*}
\end{proof}
\par \noindent The general solution to the recursion above is
$$\underline{\mu}^c(\{N=n\})=\begin{cases}
\Tilde{q}^{\frac{n(n+2)}{4}-nc}\underline{\mu}^c(\{N=0\}) &\textrm{ if } n \in 2\mathbb{Z}\\
\Tilde{q}^{\frac{(n+1)^2}{4}-(n+1)c}\underline{\mu}^c(\{N=-1\}) &\textrm{ if } n \in 2\mathbb{Z}+1.
\end{cases}$$
\par \vspace{1mm}\noindent Since there is a dependence on parity we will need to calculate the probability $\underline{\mu}^c(\{N(\underline{\eta}) \in 2 \mathbb{Z}+1\})$ in order to finish our calculation of $\underline{\nu}^{n,c}$.
\begin{lem}\label{prob odd}
$$\underline{\mu}^c(\{N(\underline{\eta}) \in 2\mathbb{Z}+1\})=\frac{1-\prod\limits_{i=-\infty}^\infty\left(1-2\mu_i^c(1)\right)}{2}.$$
\end{lem}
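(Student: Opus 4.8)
The plan is to reduce the event $\{N \in 2\mathbb{Z}+1\}$ to a statement about the parity of the number of singly occupied sites, and then to apply the classical identity for the parity of a sum of independent Bernoulli variables.

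First I would compute $N(\underline{\eta}) \bmod 2$. Since $\eta_i \in \{0,1,2\}$ we have $2-\eta_i \equiv \eta_i \equiv \mathbb{I}\{\eta_i = 1\} \pmod 2$, and likewise $-\eta_i \equiv \mathbb{I}\{\eta_i=1\} \pmod 2$, so that
$$N(\underline{\eta}) = \sum_{i=1}^\infty (2-\eta_i) - \sum_{i=-\infty}^0 \eta_i \equiv \sum_{i=-\infty}^{\infty} \mathbb{I}\{\eta_i=1\} \pmod 2.$$
Both defining sums have only finitely many nonzero terms for $\underline{\eta} \in \Omega$ (eventually $\eta_i = 2$ to the right and $\eta_i = 0$ to the left), so reducing term by term is legitimate and the right-hand count is finite. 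Equivalently $(-1)^{N(\underline{\eta})} = \prod_{i=-\infty}^{\infty}(-1)^{\mathbb{I}\{\eta_i=1\}}$, i.e.\ $N$ is odd precisely when an odd number of sites carry a single particle.

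Under the product measure $\underline{\mu}^c$ the indicators $X_i := \mathbb{I}\{\eta_i = 1\}$ are independent with $\underline{\mu}^c(X_i = 1) = \mu_i^c(1) = t\Tilde{q}^{-(i-c)}/Z_i^c(\Tilde{q},t)$. From the explicit marginal one checks $\mu_i^c(1) \sim t\Tilde{q}^{\,i-c}$ as $i \to +\infty$ and $\mu_i^c(1)\sim t\Tilde{q}^{-(i-c)}$ as $i\to -\infty$, both decaying geometrically; hence $\sum_{i} \mu_i^c(1) < \infty$. By Borel--Cantelli only finitely many $X_i$ equal $1$ almost surely, so $\sum_i X_i$ and its parity are a.s.\ well defined and the infinite product $\prod_{i}(1-2\mu_i^c(1))$ converges.

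Finally I would evaluate $\mathbb{E}_{\underline{\mu}^c}[(-1)^N]$ in two ways. Using the factorisation above, independence, and dominated convergence (the partial products $(-1)^{\sum_{|i|\le n} X_i}$ are bounded by $1$ and converge a.s.\ to $(-1)^{N}$),
$$\mathbb{E}_{\underline{\mu}^c}\big[(-1)^{N}\big] = \prod_{i=-\infty}^{\infty} \mathbb{E}_{\underline{\mu}^c}\big[(-1)^{X_i}\big] = \prod_{i=-\infty}^{\infty}\big(1 - 2\mu_i^c(1)\big),$$
while on the other hand $\mathbb{E}_{\underline{\mu}^c}[(-1)^N] = \underline{\mu}^c(N \text{ even}) - \underline{\mu}^c(N \text{ odd}) = 1 - 2\,\underline{\mu}^c(\{N \in 2\mathbb{Z}+1\})$. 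Equating the two expressions and solving for $\underline{\mu}^c(\{N \in 2\mathbb{Z}+1\})$ yields the claim. The only genuine subtlety, and hence the step I would treat most carefully, is the passage to the infinite product: one must confirm summability of the $\mu_i^c(1)$ so that the parity is a.s.\ defined and the product/expectation interchange is justified. The algebraic parity reduction and the single-site Bernoulli computation $\mathbb{E}_{\underline{\mu}^c}[(-1)^{X_i}] = 1-2\mu_i^c(1)$ are then routine.
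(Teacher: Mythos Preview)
Your proof is correct and follows essentially the same route as the paper: both compute $\mathbb{E}_{\underline{\mu}^c}[(-1)^N]$ via dominated convergence on truncations and the product structure of $\underline{\mu}^c$, then equate with $1-2\underline{\mu}^c(\{N\in 2\mathbb{Z}+1\})$. Your explicit parity reduction $\eta_i\equiv\mathbb{I}\{\eta_i=1\}\pmod 2$ and the Borel--Cantelli justification for product convergence are slightly more detailed than the paper's treatment, but the argument is the same.
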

\vspace{1mm}\begin{proof}
  (We adapt the proof of \cite{janos}, Proposition 1). 
  
\par \vspace{1mm} \noindent Define the partial conserved quantity
$$N_a(\underline{\eta})=\sum\limits_{i=1}^a(2-\eta_i)-\sum\limits_{i=-a}^0\eta_i \hspace{10mm} \text{for } a\geq 1 $$
\par \noindent and note that $N_a(\underline{\eta})\rightarrow N(\underline{\eta})$ as $a \rightarrow \infty$. For each $a\geq 1$ we define the random variables
$$Y_a:=(-1)^{\sum\limits_{i=-a}^a \eta_i}=(-1)^{N_a(\underline{\eta})}.$$
\par \noindent Since, $\eta_i  \rightarrow 0$ as $i\rightarrow -\infty$ and $\eta_i  \rightarrow 2$ as $i\rightarrow \infty$, we have that $Y_a\rightarrow Y$ as $a \rightarrow \infty$, where
$$Y:=(-1)^{N(\underline{\eta})}=\begin{cases}
1 \hspace{5mm} \textrm{if } N(\underline{\eta}) \textrm{ is even}\\
-1 \hspace{3mm} \textrm{if } N(\underline{\eta}) \textrm{ is odd.}
\end{cases}$$
\par \noindent Since $|Y_a|=1$ for all $a\geq 1$, dominated convergence applies and by the product structure of $\underline{\mu}^c$ we have that 
$$\mathbb{E}^c[Y]=\lim\limits_{a\rightarrow\infty}\mathbb{E}^c[Y_a]=\prod\limits_{i=-\infty}^\infty\big(1\cdot\mu_i^c(0,2)+(-1)\cdot \mu_i^c(1)\big)=\prod\limits_{i=-\infty}^\infty\big(1-2\mu_i^c(1)\big),$$
\par \noindent where $\mathbb{E}^c$ denotes the expectation taken w.r.t. $\underline{\mu}^c$. On the other hand we have
\[
    \mathbb{E}^c[Y]=1\cdot \underline{\mu}^c\left(\{N(\underline{\eta}) \in 2\mathbb{Z}\}\right)+(-1)\cdot \underline{\mu}^c\left(\{N(\underline{\eta}) \in 2\mathbb{Z}+1\}\right)\\
    =1-2\underline{\mu}^c\left(\{N(\underline{\eta}) \in 2\mathbb{Z}+1\}\right).
\]
\par \noindent Thus
$$\underline{\mu}^c(\{N(\underline{\eta}) \in 2\mathbb{Z}+1\})= \frac{1-\prod\limits_{i=-\infty}^\infty\big(1-2\mu_i^c(1)\big)}{2}$$
\par \noindent and also 
$$\underline{\mu}^c(\{N(\underline{\eta}) \in 2\mathbb{Z}\})=1-\underline{\mu}^c(\{N(\underline{\eta}) \in 2 \mathbb{Z}+1\})=\frac{1+\prod\limits_{i=-\infty}^\infty\big(1-2\mu_i^c(1)\big)}{2}.$$
\end{proof}
\par Now by combining Lemma \ref{mu tau} and Lemma \ref{prob odd} we get the unique stationary distribution $\underline{\nu}^{n,c}$ on $\Omega^n$.
\begin{prop}\label{nu n}
For $n \in 2\mathbb{Z}$ the unique stationary distribution on $\Omega^n$ is given by
$$\underline{\nu}^{n,c}(\underline{\eta})=\frac{2\sum\limits_{\ell=-\infty}^\infty \Tilde{q}^{\ell(\ell+1)-2\ell c}\underline{\mu}^c(\underline{\eta})\mathbb{I}\{N(\underline{\eta})=n\}}{\Tilde{q}^{\frac{n(n+2)}{4}-nc}\left(1+\prod\limits_{i=-\infty}^\infty\left(1-2\mu_i^c(1)\right)\right)}.$$
\par \noindent For $n \in 2\mathbb{Z}+1$ it is given by
$$\underline{\nu}^{n,c}(\underline{\eta})=\frac{2\sum\limits_{\ell=-\infty}^\infty \Tilde{q}^{(\ell+1)^2-2(\ell+1) c} \underline{\mu}^c(\underline{\eta})\mathbb{I}\{N(\underline{\eta})=n\}}{\Tilde{q}^{\frac{(n+1)^2}{4}-(n+1)c}\left(1-\prod\limits_{i=-\infty}^\infty\left(1-2\mu_i^c(1)\right)\right)}.$$
\end{prop}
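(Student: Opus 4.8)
The plan is to reduce everything to the definition of conditional probability and then to determine a single unknown constant in each parity class. Since $\underline{\nu}^{n,c}$ is by definition the conditional measure $\underline{\mu}^c(\cdot\mid N=n)$, we have
$$\underline{\nu}^{n,c}(\underline{\eta})=\frac{\underline{\mu}^c(\underline{\eta})\,\mathbb{I}\{N(\underline{\eta})=n\}}{\underline{\mu}^c(\{N=n\})},$$
so the entire task is to compute the normalising denominator $\underline{\mu}^c(\{N=n\})$ in closed form. That this conditional measure is \emph{the} unique stationary distribution on the irreducible component $\Omega^n$ is standard: conditioning a stationary product measure on the conserved quantity $N$ yields a stationary measure on each $\Omega^n$, and irreducibility of $\Omega^n$ forces uniqueness.

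First I would invoke the general solution of the recursion from Lemma \ref{mu tau}, already displayed above, which expresses $\underline{\mu}^c(\{N=n\})$ as $\Tilde{q}^{\frac{n(n+2)}{4}-nc}\underline{\mu}^c(\{N=0\})$ for even $n$ and as $\Tilde{q}^{\frac{(n+1)^2}{4}-(n+1)c}\underline{\mu}^c(\{N=-1\})$ for odd $n$. This isolates exactly two unknown anchor probabilities, $\underline{\mu}^c(\{N=0\})$ and $\underline{\mu}^c(\{N=-1\})$, one per parity class.

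To pin these down I would sum the general solution over each parity class separately. Summing over even $n=2\ell$ gives
$$\underline{\mu}^c(\{N\in 2\mathbb{Z}\})=\underline{\mu}^c(\{N=0\})\sum_{\ell\in\mathbb{Z}}\Tilde{q}^{\ell(\ell+1)-2\ell c},$$
and summing over odd $n$ (reindexed via $n=2\ell+1$, so that $n+1=2(\ell+1)$) gives
$$\underline{\mu}^c(\{N\in 2\mathbb{Z}+1\})=\underline{\mu}^c(\{N=-1\})\sum_{\ell\in\mathbb{Z}}\Tilde{q}^{(\ell+1)^2-2(\ell+1)c}.$$
Both theta-type series converge absolutely because $0<\Tilde{q}<1$ makes the quadratic exponents tend to $+\infty$; this absolute convergence is what legitimises the term-by-term summation of the recursion, and is the only analytic point that needs a word of care. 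Lemma \ref{prob odd} then supplies the two left-hand sides as $\tfrac12\bigl(1+\prod_i(1-2\mu_i^c(1))\bigr)$ and $\tfrac12\bigl(1-\prod_i(1-2\mu_i^c(1))\bigr)$ respectively.

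Finally I would solve the two displayed equations for $\underline{\mu}^c(\{N=0\})$ and $\underline{\mu}^c(\{N=-1\})$, substitute each back into the general solution to obtain $\underline{\mu}^c(\{N=n\})$ explicitly, and divide it into $\underline{\mu}^c(\underline{\eta})\mathbb{I}\{N(\underline{\eta})=n\}$. The even anchor produces the denominator $\Tilde{q}^{\frac{n(n+2)}{4}-nc}\bigl(1+\prod_i(1-2\mu_i^c(1))\bigr)\big/\bigl(2\sum_\ell\Tilde{q}^{\ell(\ell+1)-2\ell c}\bigr)$, whose reciprocal times $\underline{\mu}^c(\underline{\eta})\mathbb{I}\{N=n\}$ is exactly the stated even formula; the odd anchor reproduces the stated odd formula identically. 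The main obstacle is purely organisational --- keeping the two parity branches and their reindexings aligned so that the theta series surviving in the numerator matches precisely the one used to express the corresponding anchor probability --- rather than any genuine analytic difficulty.
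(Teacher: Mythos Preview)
Your proposal is correct and follows exactly the approach the paper takes: the paper simply states that ``by combining Lemma \ref{mu tau} and Lemma \ref{prob odd} we get the unique stationary distribution $\underline{\nu}^{n,c}$ on $\Omega^n$'', and your write-up is precisely the unpacking of that combination --- summing the recursion solution over each parity class, matching against the parity probabilities from Lemma \ref{prob odd}, and solving for the two anchor values $\underline{\mu}^c(\{N=0\})$ and $\underline{\mu}^c(\{N=-1\})$. If anything, you have spelled out more of the bookkeeping (the convergence remark, the reindexing) than the paper does.
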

\par Notice that, $(1-2\mu_i^c(1))=\frac{1-t\Tilde{q}^{-(i-c)}+\Tilde{q}^{-2(i-c)}}{Z_i^c(\Tilde{q},t)}$, and so if we let $W_i^c(\Tilde{q},t):=1-t\Tilde{q}^{-(i-c)}+\Tilde{q}^{-2(i-c)}$ we can write $\underline{\nu}^{n,c}$ as
$$\underline{\nu}^{n,c}(\underline{\eta})=\frac{2\sum\limits_{\ell=-\infty}^\infty \Tilde{q}^{\ell(\ell+1)-2\ell c} \prod\limits_{i=-\infty}^0t^{\mathbb{I}\{\eta_i=1\}}\Tilde{q}^{-\eta_i(i-c)}\prod\limits_{i=1}^\infty t^{\mathbb{I}\{\eta_i=1\}}\Tilde{q}^{(2-\eta_i)(i-c)}}{\Tilde{q}^{\frac{n(n+2)}{4}-nc}\left(\prod\limits_{i=1}^\infty\Tilde{q}^{2(i-c)}Z^c_i(\Tilde{q},t)Z^c_{-i+1}(\Tilde{q},t)+\prod\limits_{i=1}^\infty\Tilde{q}^{2(i-c)}W^c_i(\Tilde{q},t)W^c_{-i+1}(\Tilde{q},t)\right)}\mathbb{I}\{N(\underline{\eta})=n\}$$
\par \noindent when $n$ is even and
$$\underline{\nu}^{n,c}(\underline{\eta})=\frac{2\sum\limits_{\ell=-\infty}^\infty \Tilde{q}^{(\ell+1)^2-2(\ell+1) c} \prod\limits_{i=-\infty}^0t^{\mathbb{I}\{\eta_i=1\}}\Tilde{q}^{-\eta_i(i-c)}\prod\limits_{i=1}^\infty t^{\mathbb{I}\{\eta_i=1\}}\Tilde{q}^{(2-\eta_i)(i-c)}}{\Tilde{q}^{\frac{(n+1)^2}{4}-(n+1)c}\left(\prod\limits_{i=1}^\infty\Tilde{q}^{2(i-c)}Z^c_i(\Tilde{q},t)Z^c_{-i+1}(\Tilde{q},t)-\prod\limits_{i=1}^\infty\Tilde{q}^{2(i-c)}W^c_i(\Tilde{q},t)W^c_{-i+1}(\Tilde{q},t)\right)}\mathbb{I}\{N(\underline{\eta})=n\}$$ when $n$ is odd.

\begin{remark}
 These distributions are independent of $c$, as discussed in \cite{blocking}. However later we will need to stress the dependence of both the numerator and denominator on $c$ and so we will keep $c$ in our notation.
 \end{remark}

\subsection{Standing up/Laying Down}\label{stand up}~
\par In this section we transfer the dynamics on $\Omega^n$ to that of a restricted particle system on $\mathbb{Z}_{\geq 0}^{\mathbb{Z}_{< 0}}$, in direct analogy with the ``standing up/laying down" method of \cite{blocking}. By doing this we obtain an alternative characterisation of the stationary distributions given in Proposition \ref{nu n}.
\begin{define} \label{stand up transform}
Given $\underline{\eta} \in \Omega^n$, let $S_r$ be the site of the $r^\textrm{th}$ particle when reading left to right, bottom to top. The corresponding stood up state is then $T^n(\underline{\eta})=\underline{\omega}\in \mathbb{Z}_{\geq 0}^{\mathbb{Z}_{< 0}}$, with $\omega_{-r}=S_{r+1}-S_r$.
\end{define}
\vspace{5mm}
\begin{figure}[H]
    \centering

    \begin{tikzpicture}[scale=0.55]
    \centering
    \draw[thick, <->] (-6,0.8)--(6,0.8);
\foreach \x in {-5,-4,-3,-2,-1,0,1,2,3,4,5}
    \draw[thick, -](\x cm, 0.9)--(\x cm, 0.7) node[anchor=north]{};
 
 \filldraw [black] (-5.5,1.3) circle (1pt);
\filldraw [black] (-5.7,1.3) circle (1pt);
\filldraw [black] (-5.9,1.3) circle (1pt);   
\filldraw [black] (-3,1.5) circle (4pt);
\node (a) at (-3.35,1.15) [label=\tiny{1}]{};
\node (a) at (-3,0) [label=\tiny{LMP}]{};
\filldraw [black] (-2,1.5) circle (4pt);
\node (a) at (-2.35,1.15) [label=\tiny{2}]{};
\filldraw [black] (-2,2) circle (4pt);
\node (a) at (-2.35,1.65) [label=\tiny{3}]{};
\node (a) at (0,0) [label=\tiny{0}]{};
\filldraw [black] (1,1.5) circle (4pt);
\node (a) at (0.65,1.15) [label=\tiny{4}]{};
\filldraw [black] (1,2) circle (4pt);
\node (a) at (0.65,1.65) [label=\tiny{5}]{};
\filldraw [black] (2,1.5) circle (4pt);
\node (a) at (1.65,1.15) [label=\tiny{6}]{};
\filldraw [black] (3,1.5) circle (4pt);
\node (a) at (3,0) [label=\tiny{RMH}]{};
\node (a) at (2.65,1.15) [label=\tiny{7}]{};
\filldraw [black] (4,1.5) circle (4pt);
\node (a) at (3.65,1.15) [label=\tiny{8}]{};
\filldraw [black] (4,2) circle (4pt);
\node (a) at (3.65,1.65) [label=\tiny{9}]{};
\filldraw [black] (5,1.5) circle (4pt);
\node (a) at (4.65,1.15) [label=\tiny{10}]{};
\filldraw [black] (5,2) circle (4pt);
\node (a) at (4.65,1.65) [label=\tiny{11}]{};
 \filldraw [black] (5.5,1.3) circle (1pt);
\filldraw [black] (5.7,1.3) circle (1pt);
\filldraw [black] (5.9,1.3) circle (1pt); 
\node (a) at (0,-1.5) [label=\scriptsize{(A)} \normalsize A state $\underline{\eta} \in \Omega^{-1}$.]{};
    \draw[thick, <-] (8,0.8)--(20,0.8);
\foreach \x in {9,10,11,12,13,14,15,16,17,18,19}
    \draw[thick, -](\x cm, 0.9)--(\x cm, 0.7);
    \node (a) at (19,-0.3) [label={-1}]{};
    \node (a) at (18,-0.3) [label={-2}]{};
    \node (a) at (17,-0.3) [label={-3}]{};
    \node (a) at (16,-0.3) [label={-4}]{};
    \node (a) at (15,-0.3) [label={-5}]{};
    \node (a) at (14,-0.3) [label={-6}]{};
    \node (a) at (13,-0.3) [label={-7}]{};
    \node (a) at (12,-0.3) [label={-8}]{};
    \node (a) at (11,-0.3) [label={-9}]{};
    \node (a) at (10,-0.3) [label={-10}]{};
    \node (a) at (9,-0.3) [label={-11}]{};
    
\filldraw [black] (19,1.5) circle (4pt);
\filldraw [black] (17,1.5) circle (4pt);
\filldraw [black] (17,2) circle (4pt);
\filldraw [black] (17,2.5) circle (4pt);
\filldraw [black] (15,1.5) circle (4pt);
\filldraw [black] (14,1.5) circle (4pt);
\filldraw [black] (13,1.5) circle (4pt);
\filldraw [black] (11,1.5) circle (4pt);
\filldraw [black] (9,1.5) circle (4pt);
 \filldraw [black] (8.5,1.3) circle (1pt);
\filldraw [black] (8.3,1.3) circle (1pt);
\filldraw [black] (8.1,1.3) circle (1pt);
\node (a) at (14,-1.5) [label= \scriptsize{(B)} \normalsize The stood up state $\underline{\omega}$.]{};
    \end{tikzpicture}
    \caption{An example of standing up.}
    \label{standing up}
\end{figure}
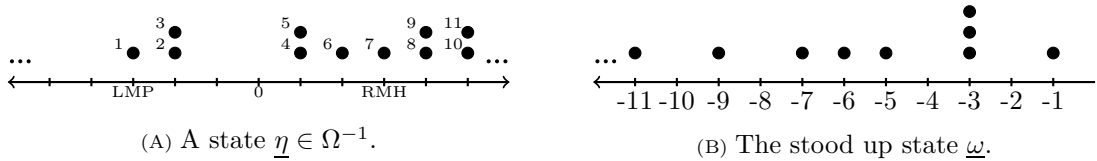
\begin{remark}
We can extend this method to stand up blocking processes with $I=\{0,1,2,..,k\}$ and $\Lambda=\mathbb{Z}$ for any $k \in \mathbb{N}$. However the corresponding particle systems are not always guaranteed to have product stationary blocking measures (this is dependent on the jump rates of the original system).  
\end{remark}
\newpage A priori the ``standing up" map $T^n$ is an injection into $\mathbb{Z}_{\geq 0}^{\mathbb{Z}_{<0}}$. However since $\eta_i\leq 2$ for all $i$ the image of $T^n$ lies in the restricted state space $$\mathcal{H}':=\{\underline{\omega} \in \mathbb{Z}_{\geq 0}^{\mathbb{Z}_{< 0}}: \hspace{2mm} \omega_{-i}=0 \Rightarrow \omega_{-i-1} \neq0, \forall i>0\}.$$

\par \noindent Since $\eta_i=2$ for $i$ large $\underline{\omega}$ must coincide far to the left with one of the following  ``even" or ``odd" states (dependent on the parity of $n$); $\underline{\omega}^e$ and $\underline{\omega}^o$ such that $\omega^e_{-i}=\mathbb{I}\{i \in 2\mathbb{Z}_{\geq 1}\}$ and $\omega^o_{-i}=\mathbb{I}\{i \in 2\mathbb{Z}_{\geq 0}+1\}$.
\vspace{5mm}
\begin{figure}[H]
    \centering 
    \begin{subfigure}[b]{0.4\textwidth}
    \centering
    \begin{tikzpicture}[scale=0.6]
\draw[thick, <-] (-11,0)--(-0.5,0);
\foreach \x in {-10,-9,-8,-7,-6,-5,-4,-3,-2,-1}
    \draw[thick, -](\x cm, 2pt)--(\x cm, -2pt) node[anchor=north]{$\x$};
    
\filldraw [black] (-2,0.5) circle (4pt);
\filldraw [black] (-4,0.5) circle (4pt);
\filldraw [black] (-6,0.5) circle (4pt);
\filldraw [black] (-8,0.5) circle (4pt);
\filldraw [black] (-10,0.5) circle (4pt);
\filldraw [black] (-10.5,0.4) circle (1pt);
\filldraw [black] (-10.7,0.4) circle (1pt);
\filldraw [black] (-10.9,0.4) circle (1pt);
\end{tikzpicture}
    \end{subfigure}
    \hspace{10mm}
    \begin{subfigure}[b]{0.4\textwidth}
    \centering
    \begin{tikzpicture}[scale=0.6]
\draw[thick, <-] (-11,0)--(-0.5,0);
\foreach \x in {-10,-9,-8,-7,-6,-5,-4,-3,-2,-1}
   \draw[thick, -](\x cm, 2pt)--(\x cm, -2pt) node[anchor=north]{$\x$};
    
\filldraw [black] (-1,0.5) circle (4pt);
\filldraw [black] (-3,0.5) circle (4pt);
\filldraw [black] (-5,0.5) circle (4pt);
\filldraw [black] (-7,0.5) circle (4pt);
\filldraw [black] (-9,0.5) circle (4pt);
\filldraw [black] (-10.5,0.4) circle (1pt);
\filldraw [black] (-10.7,0.4) circle (1pt);
\filldraw [black] (-10.9,0.4) circle (1pt);
\end{tikzpicture}
    \end{subfigure}
    \caption{The ``even" and ``odd" states $\underline{\omega}^e$ and $\underline{\omega}^o$.}
    \label{stood up ground states}
\end{figure}
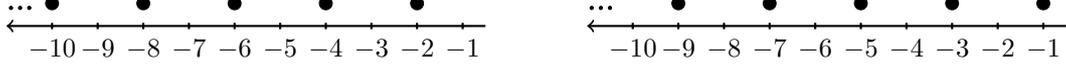
\begin{remark}
Note that all shifts of the ``even" ground state $\underline{\eta}^e\in \Omega^0$ stand up to give $\underline{\omega}^e$. Similarly all shifts of $\underline{\eta}^o \in \Omega^{-1}$ give $\underline{\omega}^o$. This is further justification for the notation.
\end{remark}
\par \noindent We now see that the image of $T^n$ lies in $\mathcal{H}:=\mathcal{H}^e \cup \mathcal{H}^o$, where the disjoint sets $\mathcal{H}^e$  and $\mathcal{H}^o$ are defined as,
\begin{align*}
    \mathcal{H}^e &=\{\underline{\omega} \in \mathcal{H}' : \exists N > 0 \hspace{2mm}\textrm{ s.t } \omega_{-i}=\omega_{-i}^e \hspace{2mm} \forall
    i \geq N\}, \\
    \mathcal{H}^o &=\{\underline{\omega} \in \mathcal{H}' : \exists N > 0 \hspace{2mm}\textrm{ s.t } \omega_{-i}=\omega_{-i}^o \hspace{2mm} \forall
    i \geq N\}.
\end{align*}

\par\noindent For $\underline{\omega}\in\mathcal{H}$ the minimum value of $N$ satisfying the above will be denoted by $E(\underline{\omega})$ or $O(\underline{\omega})$, dependent on whether $\underline{\omega}\in \mathcal{H}^e$ or $\underline{\omega}\in \mathcal{H}^o$.

\begin{lem}\label{im T}
$T^n(\Omega^n)=\begin{cases}
\mathcal{H}^e &\textrm{ if } n \in 2\mathbb{Z}\\
\mathcal{H}^o &\textrm{ if } n \in 2\mathbb{Z}+1.
\end{cases}$
\end{lem}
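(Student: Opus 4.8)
The plan is to exhibit $T^n$ as a bijection onto the claimed component by producing an explicit inverse (``laying down''), and to pin down the even/odd dichotomy through a parity count. The first observation is that the standing-up map records only the \emph{gaps} $\omega_{-r}=S_{r+1}-S_r$ between consecutive particles, and these are invariant under the shift $\tau$: if $\underline{\eta}'=\tau\underline{\eta}$ then every particle moves one site left, so $S'_r=S_r-1$ and the differences are unchanged. Two states of $\Omega^n$ with the same image therefore differ by a uniform translation $\tau^k$, and since $N(\tau^k\underline{\eta})=N(\underline{\eta})-2k$ (Section~\ref{decomp}), equality $N=n$ forces $k=0$. This re-proves injectivity of $T^n$ and shows that standing up intertwines $\tau\colon\Omega^n\to\Omega^{n-2}$ with the identity on $\mathcal{H}$.

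Next I would settle which component contains the image. From the preceding discussion the image lies in $\mathcal{H}=\mathcal{H}^e\cup\mathcal{H}^o$, so it remains to match the phase of the eventual alternating tail to the parity of $n$. Let $i_0$ be any site beyond which $\eta_i=2$, and let $r_0$ be the label of the lower of the two particles at site $i_0$; reading the gaps from $r_0$ onwards produces exactly the pattern $0,1,0,1,\dots$, which agrees with $\omega^e$ precisely when $r_0$ is odd and with $\omega^o$ precisely when $r_0$ is even. Now $r_0-1$ is the number of particles at sites $\le i_0-1$, and choosing a symmetric window $[-A,A]$ with $A\ge i_0$ one computes $N=2A-P_{\mathrm{tot}}$, where $P_{\mathrm{tot}}$ is the total particle count in the window; hence $N\equiv P_{\mathrm{tot}}\equiv r_0-1\pmod 2$. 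Thus $r_0$ is odd exactly when $n$ is even, giving $T^n(\Omega^n)\subseteq\mathcal{H}^e$ for even $n$ and $T^n(\Omega^n)\subseteq\mathcal{H}^o$ for odd $n$.

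For surjectivity I would construct the laying-down inverse. Given $\underline{\omega}\in\mathcal{H}^e$ (the case $\mathcal{H}^o$ is identical) and an integer $s$, set $S_1=s$ and $S_{r+1}=S_r+\omega_{-r}$, and let $\underline{\eta}^{(s)}$ be the configuration with these particle positions. The condition $\underline{\omega}\in\mathcal{H}'$ (no two consecutive zero gaps) guarantees $\eta_i\le 2$; agreement of $\underline{\omega}$ with $\omega^e$ beyond $E(\underline{\omega})$ forces the tail gaps to be $0,1,0,1,\dots$, so $\eta_i=2$ for all large $i$; and since the labels start at $r=1$ there is a leftmost particle at $s$, whence $\eta_i=0$ for $i<s$. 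Hence $\underline{\eta}^{(s)}\in\Omega$ and $T(\underline{\eta}^{(s)})=\underline{\omega}$ by construction. Increasing $s$ by one translates the configuration one site to the right, so $N(\underline{\eta}^{(s)})$ increases by $2$; by the parity computation above this value is always even, so as $s$ ranges over $\mathbb{Z}$ it attains every even integer exactly once. Choosing the unique $s$ with $N(\underline{\eta}^{(s)})=n$ yields a preimage in $\Omega^n$, establishing $T^n(\Omega^n)=\mathcal{H}^e$.

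The main obstacle I anticipate is the parity bookkeeping: the sums defining $N$ and the particle counts are over infinite configurations, so the identity $N\equiv r_0-1\pmod 2$ must be justified through a finite window and checked to be independent of the cutoff $A$ and of the chosen $i_0$ (the count $2A-P_{\mathrm{tot}}$ is invariant under enlarging $A$ since each extra full site contributes $+2$ to both terms). Once this phase-to-parity correspondence is secured, the shift-invariance of the gaps renders both injectivity and surjectivity essentially formal.
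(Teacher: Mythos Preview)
Your proof is correct and follows the same laying-down strategy as the paper. The paper is terser: it writes down an explicit formula for $S_1$ in terms of $n$, $E(\underline{\omega})$ (resp.\ $O(\underline{\omega})$), and the $\omega_{-i}$, and takes the parity-of-$n$/component correspondence as already established in the discussion preceding the lemma, whereas you supply that parity argument in full and obtain the correct $S_1$ non-constructively via the shift.
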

\begin{proof}~
\par It suffices to show surjectivity of $T^n$ for each $n$. 
\par If $n \in 2\mathbb{Z}$ and $\underline{\omega} \in \mathcal{H}^e$ then we construct the state $\underline{\eta}\in \Omega^n$ having leftmost particle at the site $S_1=\frac{n+E(\underline{\omega})+\mathbb{I}\{E(\underline{\omega})\in 2\mathbb{Z}+1\}}{2}-\sum\limits_{i = 1}^{E(\underline{\omega})-1}\omega_{-i}$ and $r^\textrm{th}$ particle at site $S_r=S_{r-1}+\omega_{1-r}$ for each $r\geq 2$.
\par Similarly for $n \in 2\mathbb{Z}+1$ and $\underline{\omega}\in \mathcal{H}^o$ construct the state $\underline{\eta}\in \Omega^n$ with left most particle at site $S_1=\frac{n+O(\underline{\omega})+\mathbb{I}\{O(\underline{\omega}) \in 2 \mathbb{Z}\}}{2}-\sum\limits_{i = 1}^{O(\underline{\omega})-1} \omega_{-i}$ and $r^\textrm{th}$ particle at site $S_r=S_{r-1}+\omega_{1-r}$ for each $r\geq 2$.
\par It is clear that in either case $T^n(\underline{\eta})=\underline{\omega}$ and hence $T^n$ is surjective.
\end{proof}
\par \noindent  The inverse maps described in the above proof are referred to as the ``laying down" maps.
\par Using the ``standing up" maps we define a particle system on $\mathcal{H}$ whose dynamics are inherited from those on $\Omega$. In particular right jumps in $\underline{\eta}$ correspond to right jumps in $\underline{\omega}$ and similarly for left jumps.  The explicit right/left jump rates are given in Table \ref{gen stood up rates} for $r \geq 2$.

\vspace{5mm}
\begin{table}[H]
\centering
\begin{tabular}{cc}
    \begin{tabular}{|c||c|c|}
         \hline
         & $\omega_{-r+1}=0$ & $\omega_{-r+1}\geq1$ \\
         \hline \hline
         $\omega_{-r}=0$ & $0$ & $0$\\
         \hline
        $\omega_{-r}=1$ & $p(2,1)\mathbb{I}\{\omega_{-r-1} \neq 0\}$ &$p(1,1)\mathbb{I}\{\omega_{-r-1}\neq 0\}$ \\
        \hline
        $\omega_{-r}\geq2$ & $p(2,0)$ & $p(1,0)$\\
        \hline
    \end{tabular}\\ \vspace{1mm}\\
    \begin{tabular}{|c||c|c|c|}
         \hline
        &$\omega_{-r+1}=0$ & $\omega_{-r+1}=1$ & $\omega_{-r+1}\geq2$\\
         \hline \hline
        $\omega_{-r}=0$ & $0$ & $q(1,2)\mathbb{I}\{\omega_{-r+2}\neq0\}$ & $q(0,2)$\\
        \hline
        $\omega_{-r}\geq1$ & $0$& $q(1,1)\mathbb{I}\{\omega_{-r+2}\neq0\}$  & $q(0,1)$ \\
        \hline
    \end{tabular}\\
\end{tabular}
\caption{The jump rates $p_\omega(\omega_{-r}, \omega_{-r+1})$ and $q_\omega(\omega_{-r},\omega_{-r+1})$ respectively of the stood up process.}
\label{gen stood up rates}
\end{table}
\par  \noindent Note that the jump rates $p_\omega(1,\cdot)$ and $q_\omega(\cdot,1)$ can be 0, due to the no consecutive zeroes condition, a jump that would cause $\omega_{-r}=\omega_{-r+1}=0$ for some $r \geq 2$ is blocked.  
\par Since the ``stood up " process is only defined on the negative half integer line we must consider what happens at the boundary site. We consider an open infinite type boundary, that is a reservoir of particles at ``site $0$" at which particles can enter or leave the system with the rates below.

\vspace{5mm}
\begin{table}[H]
    \centering
    \begin{tabular}{|c||c|c|}
         \hline
         &  Rate into the boundary &  Rate out of the boundary \\
         \hline \hline
          $\omega_{-1}=0$&$0$ &  $q(0,2)$\\
         \hline
       \multicolumn{1}{|c||}{ $\omega_{-1}=1$} & \multicolumn{1}{|c|}{$p(1,1)\mathbb{I}\{\omega_{-2}\neq 0\}$} & \multirow{2}{*}{$q(0,1)$} \\ \cline{1-2}
        \multicolumn{1}{|c||}{$\omega_{-1}\geq2$} & \multicolumn{1}{|c|}{$p(1,0)$} &\\
        \hline
    \end{tabular}
    \caption{Boundary jump rates for the stood up process.}
    \label{boundary rates}
\end{table}
\par \noindent We note that the dynamics at the boundary in $\underline{\omega}$ correspond exactly to that of the LMP in $\left(T^n\right)^{-1}(\underline{\omega})$.
\par To find the stationary distribution for the ``stood up" process we first consider the unrestricted process $\underline{\omega}^* \in \mathbb{Z}_{\geq 0}^{\mathbb{Z}_{<0}}$, i.e.\ the process described by the same jump rates as $\underline{\omega}$ but where the number of consecutive zeros is not restricted. It is clear to see that the unrestricted process is a member of the blocking family; $(B1)$ and $(B2)$ follow directly from the fact that the rates of the original process satisfy these conditions and similarly condition $(B3)$ is satisfied by the constants  
  $$p^*_{\textrm{asym}}=\frac{p(1,0)}{q(0,1)+p(1,0)} > \frac{1}{2} \hspace{5mm} \textrm{ and } \hspace{5mm} q^*_{\textrm{asym}}=\frac{q(0,1)}{q(0,1)+p(1,0)}< \frac{1}{2}$$
    \par \noindent and the functions
        \[
     f^*(z):=
    \begin{cases}
     0 \hspace{2mm} \textrm{if } z=0  \\
     \\
    t^{-1} \hspace{2mm}\textrm{if } z=1  \\
      \\
    t \hspace{2mm}\textrm{if } z\geq2
    \end{cases} 
    \qquad
    s^*(y,z):=
    \begin{cases}
       \frac{tp(2,1)\left(q(0,1)+p(1,0)\right)}{p(1,0)} &\textrm{if } y=z=1  \\
       \\
       \frac{tp(1,1)\left(q(0,1)+p(1,0)\right)}{p(1,0)}&\textrm{if } y=1 \textrm{ and } z\geq 2 \\
       \\
       \frac{p(2,0)\left(q(0,1)+p(1,0)\right)}{p(1,0)t}  &\textrm{if } y\geq2 \textrm{ and } z=1 \\
       \\
       \frac{q(0,1)+p(1,0)}{t} &\textrm{if } y,z\geq 2, 
    \end{cases}
   \]
 \par \noindent with $t=\left(\frac{q(0,1)p(2,0)}{p(1,0)q(1,1)}\right)^{\frac{1}{2}}>1$.
\par By Theorem \ref{stationary dist} we can find a one parameter family of product stationary blocking measures $\underline{\pi}^{*,\hat{c}}$ on $\mathbb{Z}_{\geq 0}^{\mathbb{Z}_{<0}}$ with marginals given by
$$\pi_{-i}^{*,\hat{c}}(z)=\frac{t^{(2-z)\mathbb{I}\{z \geq 1\}}\Tilde{q}^{(i+\hat{c})z}}{Z_{-i}^{*,\hat{c}}(\Tilde{q},t)} \hspace{10mm} \text{for } z\geq 0 \text{ and } i\geq 1$$
\par \noindent (here $Z_{-i}^{*,\hat{c}}(\Tilde{q},t)$ is the normalising factor). 

We fix the value of $\hat{c}$ by considering reversibility over the boundary edge $(-1,0)$. Suppose that $\underline{\pi}^{*,\hat{c}}$ satisfies detailed balance over this boundary edge:
$$\pi_{-1}^{*,\hat{c}}(y)\cdot \textrm{``rate into the boundary"}=\pi_{-1}^{*,\hat{c}}(y-1)\cdot \textrm{`` rate out of the boundary"} \hspace{5mm} \text{for all } y\geq 1.$$
\par \noindent We consider the following two cases, 
\begin{enumerate}
    \item If $y=1$ then this gives 
    \begin{align*}
    &\pi^{*,\hat{c}}_{-1}(1)p(1,1)=\pi^{*,\hat{c}}_{_1}(0)q(0,2)  \\
    \Leftrightarrow \textrm{ } &\frac{\pi^{*,\hat{c}}_{-1}(1)}{\pi_{-1}^{*,\hat{c}}(0)}=\frac{q(0,2)}{p(1,1)}=\frac{p(2,0)q(0,1)^2}{p(1,0)^2q(1,1)} \textrm{ by condition } (B3) (c)\\
   \Leftrightarrow \textrm{ } & t\Tilde{q}^{1+\hat{c}}=t^2\Tilde{q}\\
   \Leftrightarrow \textrm{ } & \Tilde{q}^{\hat{c}}=t
    \end{align*}
    \item If $y \geq 2$ then this gives
    \begin{align*}
    &\pi^{*,\hat{c}}_{-1}(y)p(1,0)=\pi^{*,\hat{c}}_{_1}(y-1)q(0,1)  \\
    \Leftrightarrow \textrm{ } &\frac{\pi^{*,\hat{c}}_{-1}(y)}{\pi_{-1}^{*,\hat{c}}(y-1)}=\frac{q(0,1)}{p(1,0)}=\Tilde{q}\\
   \Leftrightarrow \textrm{ } & t^{-1}\Tilde{q}^{1+\hat{c}}=\Tilde{q}\\
   \Leftrightarrow \textrm{ } & \Tilde{q}^{\hat{c}}=t.
    \end{align*}
\end{enumerate}
\par \noindent Thus we should choose $\hat{c}$ so that $\Tilde{q}^{\hat{c}}=t$. The marginals now become (dropping $\hat{c}$ from the notation)
$$
\pi_{-i}^*(z)=\frac{\Tilde{q}^{iz}t^{2\mathbb{I}\{z \geq 1\}}}{Z_{-i}^*(\Tilde{q},t)} \hspace{10mm} \text{for } z\geq 0 \text{ and } i\geq 1.
$$
\par Now that we have the stationary distribution for the unrestricted process we consider the restriction to $\mathcal{H}$ and find the stationary measure. Recall that $\mathcal{H}=\mathcal{H}^e\cup\mathcal{H}^o$, and note that $\mathcal{H}^e$ is the irreducible component of the ``even" ground state $\underline{\omega}^e$ and similarly $\mathcal{H}^o$ for the ``odd" ground state $\underline{\omega}^o$. We define stationary measures on these irreducible components in terms of $\underline{\pi}^*$, getting $\underline{\pi}^e$ on $\mathcal{H}^e$ and $\underline{\pi}^o$ on $\mathcal{H}^o$. It seems natural to define these measures as $\underline{\pi}^e(\cdot)=\underline{\pi}^*(\cdot|\cdot \in \mathcal{H}^e)$ and $\underline{\pi}^o(\cdot)=\underline{\pi}^*(\cdot|\cdot \in \mathcal{H}^o)$. However w.r.t $\underline{\pi}^*$ the probability of being in either irreducible component is zero and so these quantities are undefined. To rectify this we use the following formal reasoning, 
\begin{align*}
    \underline{\pi}^e(\underline{\omega})&=\underline{\pi}^*(\underline{\omega}|\underline{\omega} \in \mathcal{H}^e)\\
    &=\frac{\prod\limits_{i\geq 1}\pi_{-i}^*(\omega_{-i}) \mathbb{I}\{\underline{\omega}\in \mathcal{H}^e\}}{\sum\limits_{\underline{\omega}' \in \mathcal{H}^e}\prod\limits_{i\geq 1}\pi_{-i}^*(\omega_{-i}')} \\
    &=\frac{\prod\limits_{i\geq 1}\frac{\pi_{-i}^*(\omega_{-i})}{\pi_{-i}^*(\omega_{-i}^e)} \mathbb{I}\{\underline{\omega}\in \mathcal{H}^e\}}{\sum\limits_{\underline{\omega}' \in \mathcal{H}^e}\prod\limits_{i\geq 1} \frac{\pi_{-i}^*(\omega_{-i}')}{\pi_{-i}^*(\omega_{-i}^e)}}.
\end{align*}
\par \noindent This is now a well defined distribution since far to the left any configuration $\underline{\omega} \in \mathcal{H}^e$ agrees with $\underline{\omega}^e$, forcing the products to be finite and the denominator to no longer be 0. We apply a similar reasoning for $\underline{\pi}^o$. We then have
$$\underline{\pi}^e(\underline{\omega})=\frac{\prod\limits_{i\geq 1}\phi_{-i}^e(\omega_{-i})\mathbb{I}\{\underline{\omega}\in \mathcal{H}^e\}}{\sum\limits_{\underline{\omega}'\in \mathcal{H}^e}\prod\limits_{i\geq 1}\phi_{-i}^e(\omega'_{-i})} \hspace{5mm} \textrm{and} \hspace{5mm}\underline{\pi}^o(\underline{\omega})=\frac{\prod\limits_{i\geq 1}\phi_{-i}^o(\omega_{-i})\mathbb{I}\{\underline{\omega}\in \mathcal{H}^o\}}{\sum\limits_{\underline{\omega}' \in \mathcal{H}^o}\prod\limits_{i\geq 1}\phi_{-i}^o(\omega'_{-i})},$$
\par \noindent where $\phi_{-i}^e(\omega_{-i}) = \frac{\pi_{-i}^*(\omega_{-i})}{\pi_{-i}^*(\omega_{-i}^e)}$ and $\phi_{-i}^o(\omega_{-i}) = \frac{\pi_{-i}^*(\omega_{-i})}{\pi_{-i}^*(\omega_{-i}^o)}$, given explicitly as follows:
\begin{equation*}
    \phi_{-i}^e(\omega_{-i}):=
    \begin{cases}
    \Tilde{q}^{i\omega_{-i}}t^{2\mathbb{I}\{\omega_{-i}\geq 1\}} & i\in 2\mathbb{Z}+1 \\
    \\
    \Tilde{q}^{i(\omega_{-i}-1)}t^{-2\mathbb{I}\{\omega_{-i}=0\}}& i\in 2\mathbb{Z}
    \end{cases} 
 \qquad
    \phi_{-i}^o(\omega_{-i}):=
    \begin{cases}
   \Tilde{q}^{i(\omega_{-i}-1)}t^{-2\mathbb{I}\{\omega_{-i}=0\}}& i\in 2\mathbb{Z}+1\\  
  \\
  \Tilde{q}^{i\omega_{-i}}t^{2\mathbb{I}\{\omega_{-i}\geq 1\}} & i \in 2\mathbb{Z}
    \end{cases}
\end{equation*}

\par \noindent and so we get the following explicit formulae for $\pi^e(\omega)$ and $\pi^o(\omega)$.

\begin{prop}\label{2pi}
The unique stationary measures on $\mathcal{H}^e$ and $\mathcal{H}^o$ are given by
\begin{align*}
 \underline{\pi}^e(\underline{\omega})&=\frac{\Tilde{q}^{\sum\limits_{\textrm{ }i \text{ odd}}i\omega_{-i}+\sum\limits_{i \text{ even}}
  i(\omega_{-i}-1)}t^{2\left(\sum\limits_{i \text{ odd}}\mathbb{I}\{\omega_{-i}\geq 1\}-\sum\limits_{i \text{ even}}\mathbb{I}\{\omega _{-i}=0 \}\right)}}{S_{\text{even}}(\Tilde{q},t)}\\ 
   \underline{\pi}^o(\underline{\omega})&=\frac{\Tilde{q}^{\sum\limits_{\textrm{ }i \text{ even}}i\omega_{-i}+\sum\limits_{i \text{ odd}}
  i(\omega_{-i}-1)}t^{2\left(\sum\limits_{i \text{ even}}\mathbb{I}\{\omega_{-i}\geq 1\}-\sum\limits_{i \text{ odd}}\mathbb{I}\{\omega _{-i}=0 \}\right)}}{S_{\text{odd}}(\Tilde{q},t)}.\end{align*}
\end{prop}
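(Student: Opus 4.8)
The plan is to prove the proposition in two stages: first extract the closed-form numerators by a direct computation, and then justify that the resulting measures really are the unique stationary measures on the two irreducible components.

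For the first stage I would start from the expressions $\underline{\pi}^e(\underline{\omega})=\frac{\prod_{i\geq 1}\phi_{-i}^e(\omega_{-i})\,\mathbb{I}\{\underline{\omega}\in\mathcal{H}^e\}}{\sum_{\underline{\omega}'\in\mathcal{H}^e}\prod_{i\geq 1}\phi_{-i}^e(\omega'_{-i})}$ and the analogous one for $\underline{\pi}^o$, which are already set up immediately above the statement. The only task is then to evaluate the product $\prod_{i\geq 1}\phi_{-i}^e(\omega_{-i})$. Splitting it according to the parity of $i$ and inserting the piecewise definition of $\phi_{-i}^e$ — namely $\Tilde{q}^{i\omega_{-i}}t^{2\mathbb{I}\{\omega_{-i}\geq 1\}}$ for odd $i$ and $\Tilde{q}^{i(\omega_{-i}-1)}t^{-2\mathbb{I}\{\omega_{-i}=0\}}$ for even $i$ — and collecting the powers of $\Tilde{q}$ and of $t$ reproduces exactly the exponents in the statement. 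This product is in fact finite: for $\underline{\omega}\in\mathcal{H}^e$ we have $\omega_{-i}=\omega^e_{-i}$ for all $i\geq E(\underline{\omega})$, and one checks at once that $\phi_{-i}^e(\omega^e_{-i})=1$ for every such $i$ (the odd case $\phi_{-i}^e(0)=1$ and the even case $\phi_{-i}^e(1)=1$), so there is no convergence issue. The denominator is, by definition, the sum of these numerators over $\mathcal{H}^e$, which is precisely $S_{\text{even}}(\Tilde{q},t)$ as defined in the introduction; the odd case is identical with the roles of the two parities exchanged, giving $S_{\text{odd}}(\Tilde{q},t)$.

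The substantive part is the second stage: verifying that $\underline{\pi}^e$ and $\underline{\pi}^o$ are stationary — indeed reversible — and unique. I would argue this through detailed balance inherited from the unrestricted process. The unrestricted process $\underline{\omega}^*$ is a member of the blocking family (with $f^*,s^*$ and $\hat{c}$ as fixed above), so by Theorem \ref{stationary dist} it carries the product blocking measure $\underline{\pi}^*$, which, being a product blocking measure, is reversible for the nearest-neighbour dynamics (bulk detailed balance is a restatement of $(B3)$, and the boundary edge $(-1,0)$ was checked explicitly above to pin down $\hat{c}$). Detailed balance is a per-edge identity in which the global normalisation cancels, so it descends to the ratios $\phi_{-i}^e=\pi^*_{-i}(\cdot)/\pi^*_{-i}(\omega^e_{-i})$. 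The key structural observation is that the blocking indicators in Tables \ref{gen stood up rates} and \ref{boundary rates} (the factors $\mathbb{I}\{\omega_{-r-1}\neq 0\}$, $\mathbb{I}\{\omega_{-r+2}\neq 0\}$, etc.) vanish precisely on the transitions that would create two consecutive zeros, i.e.\ that would leave $\mathcal{H}'$; on every edge joining two states of $\mathcal{H}^e$ these indicators equal $1$, so the stood up rates there coincide with the unrestricted rates. Consequently detailed balance for $\underline{\pi}^*$ transfers verbatim to $\underline{\pi}^e$ on $\mathcal{H}^e$, making it reversible and hence stationary; uniqueness follows because $\mathcal{H}^e$ is a single irreducible component, being the image $T^n(\Omega^n)$ for $n$ even under the dynamics-conjugating bijection of Lemma \ref{im T}. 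The same argument on $\mathcal{H}^o$ handles $\underline{\pi}^o$.

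The main obstacle is conceptual rather than computational: since $\underline{\pi}^*(\mathcal{H}^e)=0$, the measure $\underline{\pi}^e$ cannot literally be obtained by conditioning, and one must replace the heuristic ``$\underline{\pi}^*(\,\cdot\mid \mathcal{H}^e)$'' by the rigorous detailed-balance argument above. The crux is exactly the verification that the blocked transitions are precisely the exits from $\mathcal{H}'$, so that passing to $\mathcal{H}^e$ neither adds nor removes any edge-balance relation among states of $\mathcal{H}^e$; once this is confirmed, the normalisation-independence of detailed balance, together with the finiteness of the defining products, does the rest.
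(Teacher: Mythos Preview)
Your proposal is correct and follows essentially the same approach as the paper: the explicit formulae are obtained by multiplying out the $\phi_{-i}^e,\phi_{-i}^o$ already introduced, and stationarity/uniqueness comes from the fact that $\underline{\pi}^e,\underline{\pi}^o$ are restrictions of the reversible measure $\underline{\pi}^*$ to irreducible components. The paper compresses your second stage into a single citation (Proposition~5.10 of \cite{liggett}, that the restriction of a reversible stationary measure is again reversible stationary), whereas you unpack this by checking directly that the blocked transitions are exactly the exits from $\mathcal{H}'$ so that detailed balance survives; both arguments are the same in substance.
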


\par \noindent Here $S_{\text{even}}(\Tilde{q},t)$ and $S_{\text{odd}}(\Tilde{q},t)$ are normalizing factors with respect to $\mathcal{H}^e$ and $\mathcal{H}^o$: 
\begin{align*}
S_{\text{even}}(\Tilde{q},t) &= \sum\limits_{\underline{\omega}'\in \mathcal{H}^e}\Tilde{q}^{\sum\limits_{\textrm{ }i \text{ odd}}i\omega_{-i}'+\sum\limits_{i \text{ even}}
  i(\omega_{-i}'-1)}t^{2\left(\sum\limits_{i \text{ odd}}\mathbb{I}\{\omega_{-i}'\geq 1\}-\sum\limits_{i \text{ even}}\mathbb{I}\{\omega _{-i}'=0 \}\right)} \\  S_{\text{odd}}(\Tilde{q},t) &= \sum\limits_{\underline{\omega}'\in \mathcal{H}^o}\Tilde{q}^{\sum\limits_{\textrm{ }i \text{ even}}i\omega_{-i}'+\sum\limits_{i \text{ odd}}
  i(\omega_{-i}'-1)}t^{2\left(\sum\limits_{i \text{ even}}\mathbb{I}\{\omega_{-i}'\geq 1\}-\sum\limits_{i \text{ odd}}\mathbb{I}\{\omega _{-i}'=0 \}\right)}.
  \end{align*}
\begin{proof} It is well known that the restriction of a reversible stationary measure on a continuous time Markov process is also reversible stationary (see Proposition 5.10 of \cite{liggett}). The result follows since $\underline{\pi}^o$ and $\underline{\pi}^e$ are restrictions of $\underline{\pi}^*$.
\end{proof}
It would be interesting to know whether the normalizing factors $S_{\textit{even}}(\Tilde{q},t)$ and $S_{\textit{odd}}(\Tilde{q},t)$ can be written as infinite products. We will see later that when specialising to certain processes (i.e.\ choosing certain values for $\Tilde{q}$ and $t$) the specialised normalizing factors appear to be products with combinatorial significance. We should compare the situation with that of \cite{blocking}, in which the authors stand up ASEP to get AZRP and get the partition function $\prod_{i\geq 1}(1-q^{2i})^{-1}$ as normalizing factor (one of the components of the product side of the Jacobi triple product). However the fact that they naturally get a product seems to be a direct consequence of the choice of ``standing up" map. Indeed it is possible to stand up ASEP in a different way, giving a normalizing factor that has a similar form to the two given above (i.e.\ not naturally given as a product) but is then recognised as the above partition function. Unfortunately we do not see an obvious way to recognise the normalizing factors $S_{\textit{even}}(\Tilde{q},t)$ and $S_{\textit{odd}}(\Tilde{q},t)$ as products.

\subsection{Identities}\label{general idenities section}~
\par By Lemma \ref{im T} the standing up transformation $T^n$ describes a bijection between $\Omega^n$ and one of the state spaces $\mathcal{H}^e$ or $\mathcal{H}^o$, depending on the parity of $n$. Since $T^n$ preserves the dynamics of the corresponding processes we get an equality of measures,  $\underline{\nu}^{n,c}(\underline{\eta}) =  \underline{\pi}^e(T^n(\underline{\eta}))$, when $n \in 2\mathbb{Z}$ and  $\underline{\nu}^{n,c}(\underline{\eta}) = \underline{\pi}^o(T^n(\underline{\eta}))$, when $n \in 2\mathbb{Z}+1$ (for all values of $c$). We will now see that evaluating these equalities at ground states leads to interesting combinatorial identities.

\par Recall that a process on $\Omega$ with blocking measure has two ground states up to shift, $\underline{\eta}^e\in\Omega^0$ and $\underline{\eta}^o\in\Omega^{-1}$, satisfying $T^0(\underline{\eta}^e) = \underline{\omega}^e$ and $T^{-1}(\underline{\eta}^o) = \underline{\omega}^o$. Thus $\underline{\nu}^{0,c}(\underline{\eta}^e) = \underline{\pi}^e(\underline{\omega}^e)$ and $\underline{\nu}^{-1,c}(\underline{\eta}^o) = \underline{\pi}^o(\underline{\omega}^o)$ and by Proposition \ref{nu n} and Proposition \ref{2pi} we get the following two identities (after rearrangement):

\begin{align*}
    2\sum\limits_{\ell=-\infty}^\infty S_{\textit{even}}(\Tilde{q},t)\,\Tilde{q}^{\ell(\ell+1)-2\ell c} &=
  \prod\limits_{i\geq 1} \Tilde{q}^{2(i-c)}Z^c_i(\Tilde{q},t)Z^c_{-i+1}(\Tilde{q},t) +\prod\limits_{i\geq 1} \Tilde{q}^{2(i-c)}W^c_i(\Tilde{q},t)W^c_{-i+1}(\Tilde{q},t) \\ 2t\sum\limits_{\ell=-\infty}^\infty S_{\textit{odd}}(\Tilde{q},t)\,\Tilde{q}^{(\ell+1)^2-(2\ell+1) c} &=
\prod\limits_{i\geq 1} \Tilde{q}^{2(i-c)}Z^c_i(\Tilde{q},t)Z^c_{-i+1}(\Tilde{q},t) -\prod\limits_{i\geq 1} \Tilde{q}^{2(i-c)}W^c_i(\Tilde{q},t)W^c_{-i+1}(\Tilde{q},t).
\end{align*}

Writing $Z_i^c(\Tilde{q},t)$ and $W_i^c(\Tilde{q},t)$ explicitly and letting $z = \Tilde{q}^{-c}$ proves the following identities.

\main*

We were unable find these three variable Jacobi-style identities explicitly written down in the literature and we believe that they are new. It is interesting to note that the above proof of these identities is purely probabilistic and makes no assumption of other classical identities, such as Jacobi triple product (although we will see later that specialising to the ASEP$(q,1)$ process gives the odd/even parts of Jacobi triple product).

We will now discuss the combinatorial nature of these identities. As is well known, the product $\prod_{i\geq 1}(1+\tilde{q}^i)$ is the generating function for partitions of $n$ with distinct parts. In a similar vein the product $\prod_{i\geq 1}(1+\tilde{q}^i + \Tilde{q}^{2i})$ is the generating function for partitions of $n$ with each part appearing at most twice. It is then clear that the two variable product $\prod_{i\geq 1}(1+t\Tilde{q}^i+\Tilde{q}^{2i}) = \sum_{n,m}a_{n,m}\Tilde{q}^nt^m$ is the generating function for such partitions of $n$ with exactly $m$ distinct parts. For example $a_{5,1}=3$ and $a_{5,2}=2$ count the partitions $[5,3+1+1,2+2+1]$ and $[4+1,3+2]$ respectively (the partitions $2+1+1+1$ and $1+1+1+1+1$ are not counted since $1$ appears more than twice). Going one step further the three variable product $\prod_{i\geq 1}(1+tz\Tilde{q}^i+z^2\Tilde{q}^{2i}) = \sum_{n,m,k}a_{n,m,k}\Tilde{q}^n t^m z^k$ is the generating function for such partitions of $n$ which have exactly $k$ parts in total. For example $a_{5,1,1}=1, a_{5,1,3}=2$ and $a_{5,2,2}=2$ count the partitions $[5], [3+1+1,2+2+1]$ and $[4+1,3+2]$ respectively. The meaning of the product $\prod_{i\geq 1}(1+tz\tilde{q}^i+z^2\Tilde{q}^{2i})(1+tz^{-1}\Tilde{q}^{i-1}+z^{-2}\Tilde{q}^{2(i-1)})$ is more subtle and relates to generalised Frobenius partitions.

A generalised Frobenius partition (GFP) of $n$ is a two row array of integers \vspace{2mm}\[\left(\begin{array}{cccc}a_1 & a_2 & ... & a_s\\ b_1 & b_2 & ... & b_s\end{array}\right)\vspace{2mm}\] such that $a_1\geq a_2\geq ... \geq a_s \geq 0$, $b_1\geq b_2\geq ... \geq b_s\geq 0$ and $s+\sum_{1\leq i\leq s}(a_i+b_i) = n$. Given an ordinary partition of $n$ we can produce a GFP of $n$ by letting $s$ be the length of the leading diagonal in the Young diagram, the $a_i$ be the lengths of rows to the right of the diagonal and the $b_i$ be the lengths of the columns under the diagonal (see Figure \ref{gfp example}). This map gives a bijection between ordinary partitions of $n$ and GFP's of $n$ with each row having distinct entries (note that GFP's in general allow repeats in the rows). Taking the conjugate of a partition becomes the natural operation of swapping rows in the corresponding GFP. This convenience was the classical motivation but GFP's are now studied as combinatorial objects in their own right.

\vspace{5mm}
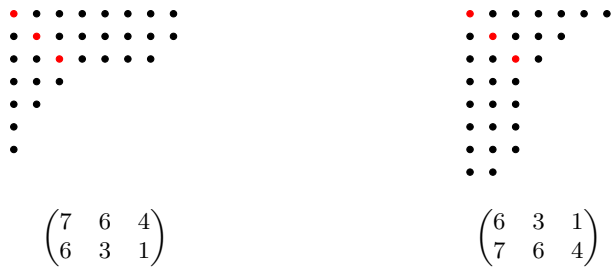
\begin{figure}[H]
    \centering
\begin{tikzpicture}[scale=0.6]
\filldraw [red] (0,3) circle (2pt);
\filldraw [black] (0.5,3) circle (2pt);
\filldraw [black] (1,3) circle (2pt);
\filldraw [black] (1.5,3) circle (2pt);
\filldraw [black] (2,3) circle (2pt);
\filldraw [black] (2.5,3) circle (2pt);
\filldraw [black] (3,3) circle (2pt);
\filldraw [black] (3.5,3) circle (2pt);
\filldraw [black] (0,2.5) circle (2pt);
\filldraw [red] (0.5,2.5) circle (2pt);
\filldraw [black] (1,2.5) circle (2pt);
\filldraw [black] (1.5,2.5) circle (2pt);
\filldraw [black] (2,2.5) circle (2pt);
\filldraw [black] (2.5,2.5) circle (2pt);
\filldraw [black] (3,2.5) circle (2pt);
\filldraw [black] (3.5,2.5) circle (2pt);
\filldraw [black] (0,2) circle (2pt);
\filldraw [black] (0.5,2) circle (2pt);
\filldraw [red] (1,2) circle (2pt);
\filldraw [black] (1.5,2) circle (2pt);
\filldraw [black] (2,2) circle (2pt);
\filldraw [black] (2.5,2) circle (2pt);
\filldraw [black] (3,2) circle (2pt);
\filldraw [black] (0,1.5) circle (2pt);
\filldraw [black] (0.5,1.5) circle (2pt);
\filldraw [black] (1,1.5) circle (2pt);
\filldraw [black] (0,1) circle (2pt);
\filldraw [black] (0.5,1) circle (2pt);
\filldraw [black] (0,0.5) circle (2pt);
\filldraw [black] (0,0) circle (2pt);
\node (a) at (2,-3) [label=\small{$\begin{pmatrix}
7&6&4\\
6&3&1
\end{pmatrix}$}]{};

\filldraw[red] (10,3) circle (2pt);
\filldraw [black] (10.5,3) circle (2pt);
\filldraw [black] (11,3) circle (2pt);
\filldraw [black] (11.5,3) circle (2pt);
\filldraw [black] (12,3) circle (2pt);
\filldraw [black] (12.5,3) circle (2pt);
\filldraw [black] (13,3) circle (2pt);
\filldraw [black] (10,2.5) circle (2pt);
\filldraw [red] (10.5,2.5) circle (2pt);
\filldraw [black] (11,2.5) circle (2pt);
\filldraw [black] (11.5,2.5) circle (2pt);
\filldraw [black] (12,2.5) circle (2pt);
\filldraw [black] (10,2) circle (2pt);
\filldraw [black] (10.5,2) circle (2pt);
\filldraw [red] (11,2) circle (2pt);
\filldraw [black] (11.5,2) circle (2pt);
\filldraw [black] (10,1.5) circle (2pt);
\filldraw [black] (10.5,1.5) circle (2pt);
\filldraw [black] (11,1.5) circle (2pt);
\filldraw [black] (10,1) circle (2pt);
\filldraw [black] (10.5,1) circle (2pt);
\filldraw [black] (11,1) circle (2pt);
\filldraw [black] (10,0.5) circle (2pt);
\filldraw [black] (10.5,0.5) circle (2pt);
\filldraw [black] (11,0.5) circle (2pt);
\filldraw [black] (10,0) circle (2pt);
\filldraw [black] (10.5,0) circle (2pt);
\filldraw [black] (11,0) circle (2pt);
\filldraw [black] (10,-0.5) circle (2pt);
\filldraw [black] (10.5,-0.5) circle (2pt);
\node (a) at (11.5,-3)[label=\small{$\begin{pmatrix}
6&3&1\\
7&6&4
\end{pmatrix}$}]{};
\end{tikzpicture}
    \caption{The conjugate partitions $30=8+8+7+3+2+1+1$ and $30=7+5+4+3+3+3+3+2$ with their corresponding GFP's.}
    \label{gfp example}
\end{figure}

Naturally, we wish to count certain families of GFP's. A ``General Principle" due to Andrews, given in \cite{frobenius}, provides a way to do this. Suppose $f_{A}(\Tilde{q},z) = \sum_{n,k}a_{n,k}\Tilde{q}^nz^k$ and $f_{B}(\Tilde{q},z) = \sum_{n,k}b_{n,k}\Tilde{q}^nz^k$ are generating functions for ordinary partitions of $n$ with $k$ parts and satisfying some conditions $A$ and $B$ respectively. Then the ``General Principle" states that the formal series $f_{A}(\Tilde{q},\Tilde{q}z)f_{B}(\Tilde{q},z^{-1}) = \sum_{k} f_{A,B,k}(\Tilde{q})z^k$ has constant term $f_{A,B,0}(\Tilde{q})$ equal to the generating function for GFP's of $n$ with first row satisfying condition $A$ and second row satisfying condition $B$. The set of such GFP's will be denoted $\text{GFP}_{A,B}(n)$. We can actually give a uniform interpretation of all of the formal series $f_{A,B,k}(\Tilde{q})$ if we generalise further to allow GFP's having rows of unequal length, i.e.\ two row arrays of integers

\vspace{3mm}
\[\left(\begin{array}{cccc}a_1 & a_2 & ... & a_{s_1}\\ b_1 & b_2 & ... & b_{s_2}\end{array}\right)\] 
\vspace{3mm}
\par \noindent such that $a_1 \geq a_2 \geq ... \geq a_{s_1} \geq 0, b_1\geq b_2\geq ... \geq b_{s_2} \geq 0$ and $s_1 + \sum_{1\leq i\leq s_1}a_i + \sum_{1\leq i \leq s_2}b_i = n$. We will refer to these as GFP's with offset $s_1-s_2$ (so that GFP's with offset $0$ are classical GFP's). The $|s_1-s_2|$ ``empty" entries in the shorter row will be labelled to the left with a dash (this will be important later). The full power of the ``General Principle" is then that the formal series $f_{A,B,k}(\Tilde{q})$ is the generating function for the sets $\text{GFP}_{A,B,k}(n)$, defined as above but for (possibly non-zero) offset $k$. A popular choice of condition on the rows is $A=B=D_r$, the condition that each part appears at most $r$ times.

One can use the ``General Principle" to give a combinatorial interpretation of the famous Jacobi Triple Product identity (written here in an equivalent form to the one given in the introduction): 
\vspace{3mm}
\[\prod_{i\geq 1}(1-\Tilde{q}^i)(1+\Tilde{q}^i z)(1+\Tilde{q}^{i-1}z^{-1}) = \sum_{k\in\mathbb{Z}}\Tilde{q}^{\frac{k(k+1)}{2}}z^k.\vspace{3mm}\] 
\par \noindent Indeed rearranging gives: 
\vspace{3mm}
\[\prod_{i\geq 1}(1+\Tilde{q}^i z)(1+\Tilde{q}^{i-1}z^{-1}) = \sum_{k\in\mathbb{Z}}\frac{1}{\prod_{i\geq 1}(1-\Tilde{q}^i)}\Tilde{q}^{\frac{k(k+1)}{2}}z^k\vspace{3mm}\] and so by applying the ``General Principle" to the LHS we see that this identity is equivalent to the equalities $f_{D_1,D_1,k}(\Tilde{q}) = \frac{1}{\prod_{i\geq 1}(1-\Tilde{q}^i)}\Tilde{q}^{\frac{k(k+1)}{2}}$ for all $k\in\mathbb{Z}$. This is true for the base case $k=0$ since we have already seen that $\text{GFP}_{D_1,D_1,0}(n)$ is in bijection with ordinary partitions of $n$. For other values of $k$ the corresponding equality follows from the equivalence of generating functions $f_{D_1,D_1,k}(\Tilde{q}) = f_{D_1,D_1,0}(\Tilde{q})\Tilde{q}^{\frac{k(k+1)}{2}}$, proved by the following bijection $\phi_k:\text{GFP}_{D_1,D_1,0}(n) \rightarrow \text{GFP}_{D_1,D_1,k}\left(n+\frac{k(k+1)}{2}\right)$ often attributed to Wright. Given an element of $\text{GFP}_{D_1,D_1,0}(n)$ we have a corresponding ordinary partition of $n$. Adjoin a right angled triangle of size $\frac{|k|(|k|+1)}{2}$ to either the left or top edge of its Young diagram, depending on whether $k\geq 0$ or $k<0$ respectively (see Figure \ref{gfps with offset example}). Then use the new leading diagonal implied by the triangle to read off an element of $\text{GFP}_{D_1,D_1,k}\left(n+\frac{k(k+1)}{2}\right)$ by letting $s_1$ be the length of the diagonal if $k\geq 0$ ($s_2$ if $k<0$), the $a_i$ be the sizes of rows to the right of the diagonal and the $b_i$ be the sizes of columns under the diagonal (the $|k|$ empty rows/columns coming from the triangle supply the required $|k|$ ``empty" entries in the corresponding GFP of offset $k$).

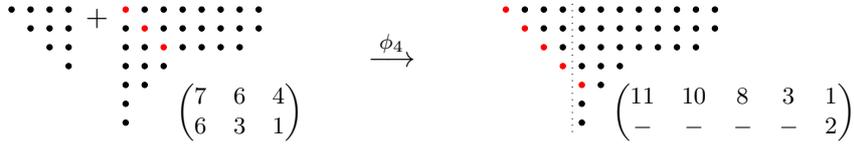
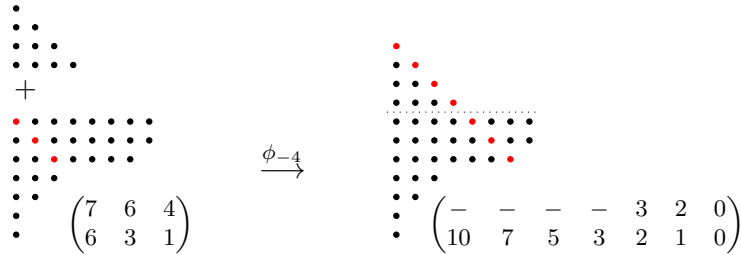
\begin{figure}[H]
    \centering
\begin{subfigure}[b]{0.75\textwidth}
    \centering
\begin{tikzpicture}[scale=0.5]
\filldraw[black](-1.5,3) circle (2pt);
\filldraw[black](-2,3) circle (2pt);
\filldraw[black](-2.5,3) circle (2pt);
\filldraw[black](-3,3) circle (2pt);
\filldraw[black](-1.5,2.5) circle (2pt);
\filldraw[black](-2,2.5) circle (2pt);
\filldraw[black](-2.5,2.5) circle (2pt);
\filldraw[black](-1.5,2) circle (2pt);
\filldraw[black](-2,2) circle (2pt);
\filldraw[black](-1.5,1.5) circle (2pt);
\node (a) at (-0.75,2)[label=\large{+}]{};
\filldraw [red] (0,3) circle (2pt);
\filldraw [black] (0.5,3) circle (2pt);
\filldraw [black] (1,3) circle (2pt);
\filldraw [black] (1.5,3) circle (2pt);
\filldraw [black] (2,3) circle (2pt);
\filldraw [black] (2.5,3) circle (2pt);
\filldraw [black] (3,3) circle (2pt);
\filldraw [black] (3.5,3) circle (2pt);
\filldraw [black] (0,2.5) circle (2pt);
\filldraw [red] (0.5,2.5) circle (2pt);
\filldraw [black] (1,2.5) circle (2pt);
\filldraw [black] (1.5,2.5) circle (2pt);
\filldraw [black] (2,2.5) circle (2pt);
\filldraw [black] (2.5,2.5) circle (2pt);
\filldraw [black] (3,2.5) circle (2pt);
\filldraw [black] (3.5,2.5) circle (2pt);
\filldraw [black] (0,2) circle (2pt);
\filldraw [black] (0.5,2) circle (2pt);
\filldraw [red] (1,2) circle (2pt);
\filldraw [black] (1.5,2) circle (2pt);
\filldraw [black] (2,2) circle (2pt);
\filldraw [black] (2.5,2) circle (2pt);
\filldraw [black] (3,2) circle (2pt);
\filldraw [black] (0,1.5) circle (2pt);
\filldraw [black] (0.5,1.5) circle (2pt);
\filldraw [black] (1,1.5) circle (2pt);
\filldraw [black] (0,1) circle (2pt);
\filldraw [black] (0.5,1) circle (2pt);
\filldraw [black] (0,0.5) circle (2pt);
\filldraw [black] (0,0) circle (2pt);
\node (a) at (3,-1)[label=\small{$\begin{pmatrix}
7&6&4\\
6&3&1
\end{pmatrix}$}]{};
\node (a) at (7,1)[label=\large{$\overset{\phi_{4}}{\longrightarrow}$}]{};
\draw [dotted] (11.75,-0.25) -- (11.75,3.25);
\filldraw[red] (10,3) circle (2pt);
\filldraw [black] (10.5,3) circle (2pt);
\filldraw [black] (11,3) circle (2pt);
\filldraw [black] (11.5,3) circle (2pt);
\filldraw [black] (12,3) circle (2pt);
\filldraw [black] (12.5,3) circle (2pt);
\filldraw [black] (13,3) circle (2pt);
\filldraw [black] (13.5,3) circle (2pt);
\filldraw [black] (14,3) circle (2pt);
\filldraw [black] (14.5,3) circle (2pt);
\filldraw [black] (15,3) circle (2pt);
\filldraw [black] (15.5,3) circle (2pt);
\filldraw [red] (10.5,2.5) circle (2pt);
\filldraw [black] (11,2.5) circle (2pt);
\filldraw [black] (11.5,2.5) circle (2pt);
\filldraw [black] (12,2.5) circle (2pt);
\filldraw [black] (12.5,2.5) circle (2pt);
\filldraw [black] (13,2.5) circle (2pt);
\filldraw [black] (13.5,2.5) circle (2pt);
\filldraw [black] (14,2.5) circle (2pt);
\filldraw [black] (14.5,2.5) circle (2pt);
\filldraw [black] (15,2.5) circle (2pt);
\filldraw [black] (15.5,2.5) circle (2pt);
\filldraw [red] (11,2) circle (2pt);
\filldraw [black] (11.5,2) circle (2pt);
\filldraw [black] (12,2) circle (2pt);
\filldraw [black] (12.5,2) circle (2pt);
\filldraw [black] (13,2) circle (2pt);
\filldraw [black] (13.5,2) circle (2pt);
\filldraw [black] (14,2) circle (2pt);
\filldraw [black] (14.5,2) circle (2pt);
\filldraw [black] (15,2) circle (2pt);
\filldraw [red] (11.5,1.5) circle (2pt);
\filldraw [black] (12,1.5) circle (2pt);
\filldraw [black] (12.5,1.5) circle (2pt);
\filldraw [black] (13,1.5) circle (2pt);
\filldraw [red] (12,1) circle (2pt);
\filldraw [black] (12.5,1) circle (2pt);
\filldraw [black] (12,0.5) circle (2pt);
\filldraw [black] (12,0) circle (2pt);
\node (a) at (16,-1)[label=\small{$\begin{pmatrix}
11&10&8&3&1\\
-&-&-&-&2
\end{pmatrix}$}]{};
\end{tikzpicture}
    \caption{An element of $\text{GFP}_{D_1,D_1,0}(30)$ and its image in $\text{GFP}_{D_1,D_1,4}(40)$ }
\end{subfigure}

\begin{subfigure}[b]{0.75\textwidth}
    \centering
    \vspace{4mm}
    \begin{tikzpicture}[scale=0.5]
\filldraw[black](0,6) circle (2pt);
\filldraw[black](0.5,5.5) circle (2pt);
\filldraw[black](0,5.5) circle (2pt);
\filldraw[black](1,5) circle (2pt);
\filldraw[black](0.5,5) circle (2pt);
\filldraw[black](0,5) circle (2pt);
\filldraw[black](1.5,4.5) circle (2pt);
\filldraw[black](1,4.5) circle (2pt);
\filldraw[black](0.5,4.5) circle (2pt);
\filldraw[black](0,4.5) circle (2pt);
\node (a) at (0.25,3.1)[label=\large{+}]{};
\filldraw [red] (0,3) circle (2pt);
\filldraw [black] (0.5,3) circle (2pt);
\filldraw [black] (1,3) circle (2pt);
\filldraw [black] (1.5,3) circle (2pt);
\filldraw [black] (2,3) circle (2pt);
\filldraw [black] (2.5,3) circle (2pt);
\filldraw [black] (3,3) circle (2pt);
\filldraw [black] (3.5,3) circle (2pt);
\filldraw [black] (0,2.5) circle (2pt);
\filldraw [red] (0.5,2.5) circle (2pt);
\filldraw [black] (1,2.5) circle (2pt);
\filldraw [black] (1.5,2.5) circle (2pt);
\filldraw [black] (2,2.5) circle (2pt);
\filldraw [black] (2.5,2.5) circle (2pt);
\filldraw [black] (3,2.5) circle (2pt);
\filldraw [black] (3.5,2.5) circle (2pt);
\filldraw [black] (0,2) circle (2pt);
\filldraw [black] (0.5,2) circle (2pt);
\filldraw [red] (1,2) circle (2pt);
\filldraw [black] (1.5,2) circle (2pt);
\filldraw [black] (2,2) circle (2pt);
\filldraw [black] (2.5,2) circle (2pt);
\filldraw [black] (3,2) circle (2pt);
\filldraw [black] (0,1.5) circle (2pt);
\filldraw [black] (0.5,1.5) circle (2pt);
\filldraw [black] (1,1.5) circle (2pt);
\filldraw [black] (0,1) circle (2pt);
\filldraw [black] (0.5,1) circle (2pt);
\filldraw [black] (0,0.5) circle (2pt);
\filldraw [black] (0,0) circle (2pt);
\node (a) at (3,-1)[label=\small{$\begin{pmatrix}
7&6&4\\
6&3&1
\end{pmatrix}$}]{};
\node (a) at (7,1)[label=\large{$\overset{\phi_{-4}}{\longrightarrow}$}]{};
\draw [dotted] (9.75,3.25) -- (13.75,3.25);
\filldraw[red](10,5) circle(2pt);
\filldraw[black](10,4.5) circle(2pt);
\filldraw[red](10.5,4.5) circle(2pt);
\filldraw[black](10,4) circle(2pt);
\filldraw[black](10.5,4) circle(2pt);
\filldraw[red](11,4) circle(2pt);
\filldraw[black](10,3.5) circle(2pt);
\filldraw[black](10.5,3.5) circle(2pt);
\filldraw[black](11,3.5) circle(2pt);
\filldraw[red](11.5,3.5) circle(2pt);

\filldraw[black] (10,3) circle (2pt);
\filldraw [black] (10.5,3) circle (2pt);
\filldraw [black] (11,3) circle (2pt);
\filldraw [black] (11.5,3) circle (2pt);
\filldraw [red] (12,3) circle (2pt);
\filldraw [black] (12.5,3) circle (2pt);
\filldraw [black] (13,3) circle (2pt);
\filldraw [black] (13.5,3) circle (2pt);
\filldraw [black] (10,2.5) circle (2pt);
\filldraw [black] (10.5,2.5) circle (2pt);
\filldraw [black] (11,2.5) circle (2pt);
\filldraw [black] (11.5,2.5) circle (2pt);
\filldraw [black] (12,2.5) circle (2pt);
\filldraw [red] (12.5,2.5) circle (2pt);
\filldraw [black] (13,2.5) circle (2pt);
\filldraw [black] (13.5,2.5) circle (2pt);
\filldraw [black] (10,2) circle (2pt);
\filldraw [black] (10.5,2) circle (2pt);
\filldraw [black] (11,2) circle (2pt);
\filldraw [black] (11.5,2) circle (2pt);
\filldraw [black] (12,2) circle (2pt);
\filldraw [black] (12.5,2) circle (2pt);
\filldraw [red] (13,2) circle (2pt);
\filldraw [black] (10,1.5) circle (2pt);
\filldraw [black] (10.5,1.5) circle (2pt);
\filldraw [black] (11,1.5) circle (2pt);
\filldraw [black] (10,1) circle (2pt);
\filldraw [black] (10.5,1) circle (2pt);
\filldraw [black] (10,0.5) circle (2pt);
\filldraw [black] (10,0) circle (2pt);
\node (a) at (15,-1)[label=\small{$\begin{pmatrix}
-&-&-&-&3&2&0\\
10&7&5&3&2&1&0
\end{pmatrix}$}]{};
\end{tikzpicture}
    \caption{An element of $\text{GFP}_{D_1,D_1,0}(30)$ and its image in $\text{GFP}_{D_1,D_1,-4}(36)$ }
\end{subfigure}
    \caption{The Wright bijections $\phi_4$ and $\phi_{-4}$ (cont).}
    \label{gfps with offset example}
\end{figure}
In comparison we can now give a combinatorial interpretation of the identities in Theorem \ref{main}. By a three variable adaptation of the ``General Principle" the formal series: \[\prod_{i\geq 1}(1+tz\Tilde{q}^i+z^2\Tilde{q}^{2i})(1+tz^{-1}\Tilde{q}^{i-1}+z^{-2}\Tilde{q}^{2(i-1)}) = \sum_{k\in\mathbb{Z}}f_{D_2,D_2,k}(\Tilde{q},t)z^k\] has coefficients $f_{D_2,D_2,k}(\Tilde{q},t) = \sum_{n,m}c_{n,m,k}\Tilde{q}^nt^m$ that are two variable generating functions for the sets $\text{GFP}_{D_2,D_2,k,m}(n)\subseteq\text{GFP}_{D_2,D_2,k}(n)$, consisting of such GFP's having a total of $m$ distinct parts (each row treated separately). The identities of Theorem \ref{main} are then equivalent to the equalities: \[f_{D_2,D_2,k}(\Tilde{q},t) = \begin{cases}S_{\text{even}}(\Tilde{q},t)\Tilde{q}^{\ell(\ell+1)} & \text{if } k=2\ell\\ tS_{\text{odd}}(\Tilde{q},t)\Tilde{q}^{(\ell+1)^2} & \text{if } k=2\ell+1 \end{cases}\] (The reason for the various sign changes in the two identities is merely to separate the cases of even and odd offset, since these behave differently). We have of course proved these equalities probabilistically but it is not explicitly clear that the normalising factors are related to counting GFP's with the $2$-repetition condition. However, we are able to give combinatorial proofs of these equalities, similar to the case of Jacobi triple product.

We start with the base cases $k=0$ and $-1$, i.e.\ that \begin{align*}S_{\text{even}}(\Tilde{q},t) &= f_{D_2,D_2,0}(\Tilde{q},t)\\ tS_{\text{odd}}(\Tilde{q},t) &= f_{D_2,D_2,-1}(\Tilde{q},t). \end{align*} Using MAGMA we were able to compute that the first few coefficients of the normalizing factors are given by: \begin{align*}S_{\text{even}}(\Tilde{q},t) &= 1 + \Tilde{q}t^2 + \Tilde{q}^2(1+2t^2) + 5\Tilde{q}^3 t^2 + \Tilde{q}^4(2+6t^2+t^4) + \Tilde{q}^5(12t^2+2t^4) \\ &+ \Tilde{q}^6(3+16t^2+5t^4) + \Tilde{q}^7(25t^2+10t^4) + \Tilde{q}^8(5+30t^2+20t^4) + ...\\ S_{\text{odd}}(\Tilde{q},t) &= 1 + 2\Tilde{q} + \Tilde{q}^2(2+t^2) + \Tilde{q}^3(4+2t^2) + \Tilde{q}^4(5+5t^2) + \Tilde{q}^5(6+10t^2)\\ &+\Tilde{q}^6(10+15t^2+t^4) + \Tilde{q}^7(12+26t^2+2t^4)+\Tilde{q}^8(15+40t^2+5t^4)+...\end{align*} (See the Appendix for the lists of GFP's of offset $0$ and $-1$ that are counted by these coefficients). Note that in both cases the exponents of $t$ are even. This is expected since $\text{GFP}_{D_2,D_2,k,m}(n)$ is empty when $m\not\equiv k\bmod 2$ (this justifies the extra $t$ in the odd case). These expansions will be useful later when specialising to particular processes. In order to prove these base cases we seek analogues of the Frobenius bijection between ordinary partitions of $n$ and elements of $\text{GFP}_{D_1,D_1,0}(n)$. However to describe these we must first generalise the notion of Young diagram to allow GFP's with the $2$-repetition condition. 

Elements of $\text{GFP}_{D_2,D_2,0}(n)$ do not correspond to ordinary partitions and so do not naturally give rise to Young diagrams. However they do naturally give rise to certain finite subsets of $C_e=\{(n_1,n_2)\in\mathbb{Z}^2\,:\, n_1+n_2 \equiv 0 \bmod 2\}$. The subset corresponding to such a GFP with $s$ columns consists of the $s$ leading diagonal points $(1,-1),...,(s,-s)$, the first $a_i$ points of $C_e$ to the right of $(i,-i)$ and the first $b_i$ points of $C_e$ under $(i,-i)$. Similarly, elements of $\text{GFP}_{D_2,D_2,-1}(n)$ will give well defined finite subsets of $C_o=\{(n_1,n_2)\in\mathbb{Z}^2\,:\, n_1+n_2 \equiv 1 \bmod 2\}$. The subset corresponding to such a GFP with $s_1$ entries on the top row contains the $s_1$ leading diagonal points $(2,-1),...,(s_1+1,-s_1)$, the first $a_i$ points of $C_o$ to the right of $(i+1,-i)$ and the first $b_i$ points of $C_o$ under $(i,-i+1)$ (the point $(1,0)$ is not included). See Figure \ref{GFP checkerboard rep} for an example of each kind of generalised Young diagram (points are labelled by black squares, the white squares are included only for aesthetics).

\vspace{5mm}
\begin{figure}[H]
\centering
\begin{tikzpicture}[scale=0.3]
\filldraw[red] (0,0) rectangle (1,1);
\draw[black] (1,0) rectangle (2,1);
\filldraw[black] (2,0) rectangle (3,1);
\draw[black] (3,0) rectangle (4,1);
\filldraw [black] (4,0) rectangle (5,1);
\draw[black] (5,0) rectangle (6,1);
\filldraw[black] (6,0) rectangle (7,1);
\draw[black] (7,0) rectangle (8,1);
\filldraw[black] (8,0) rectangle (9,1);
\draw[black](0,-1) rectangle (1,0);
\filldraw[red] (1,-1) rectangle (2,0);
\draw[black] (2,-1) rectangle (3,0);
\filldraw[black] (3,-1) rectangle (4,0);
\draw[black](4,-1) rectangle (5,0);
\filldraw[black] (5,-1) rectangle (6,0);
\draw[black] (6,-1) rectangle (7,0);
\filldraw[black] (0,-2) rectangle (1,-1);
\draw[black] (1,-2) rectangle (2,-1);
\filldraw[red] (2,-2) rectangle (3,-1);
\draw[black] (3,-2) rectangle (4,-1);
\filldraw[black] (4,-2) rectangle (5,-1);
\draw[black] (5,-2) rectangle (6,-1);
\filldraw[black] (6,-2) rectangle (7,-1);
\draw[black] (0,-3) rectangle (1,-2);
\filldraw[black](1,-3) rectangle (2,-2);
\filldraw[black] (0,-4) rectangle (1,-3);
\draw[black] (1,-4) rectangle (2,-3);
\draw[black] (0,-5) rectangle (1,-4);
\filldraw[black] (1,-5) rectangle (2,-4);
\node(a) at (5.75,-6.25)[label=\small{$\begin{pmatrix}
4&2&2\\
2&2&0
\end{pmatrix}$}]{};
\draw[red] (16,1) rectangle (17,2);
\draw[black] (16,0) rectangle (19,1);
\filldraw[red] (17,0) rectangle (18,1);
\draw[black] (18,0) rectangle (19,1);
\filldraw[black] (19,0) rectangle (20,1);
\draw[black] (20,0) rectangle (21,1);
\filldraw[black] (21,0) rectangle (22,1);
\draw[black](22,0)rectangle(23,1);
\filldraw[black](23,0)rectangle(24,1);
\draw[black](24,0)rectangle(25,1);
\filldraw[black](25,0)rectangle(26,1);
\filldraw[black](16,-1) rectangle (17,0);
\draw[black](17,-1) rectangle (18,0);
\filldraw[red] (18,-1) rectangle (19,0);
\draw[black] (19,-1) rectangle (20,0);
\filldraw[black] (20,-1) rectangle (21,0);
\draw[black] (21,-1) rectangle (22,0);
\filldraw[black] (22,-1) rectangle (23,0);
\draw[black](23,-1) rectangle (24,0);
\draw[black] (16,-2) rectangle (17,-1);
\filldraw[black] (17,-2) rectangle (18,-1);
\draw[black] (18,-2) rectangle (19,-1);
\filldraw[red] (19,-2) rectangle (20,-1);
\draw[black] (20,-2) rectangle (21,-1);
\filldraw[black] (21,-2) rectangle (22,-1);
\draw[black] (22,-2) rectangle (23,-1);
\filldraw[black] (23,-2) rectangle (24,-1);
\filldraw[black](16,-3) rectangle (17,-2);
\draw[black] (17,-3) rectangle (18,-2);
\draw[black] (16,-4) rectangle (17,-3);
\filldraw[black] (17,-4) rectangle (18,-3);
\node (a) at (23,-6) [label=\small{$\begin{pmatrix}
-&4&2&2\\
2&2&0&0
\end{pmatrix}$}]{};

\end{tikzpicture}
\caption{Generalised Young diagrams corresponding to two GFP's of $15$ with offset $0$ and $-1$.}
\label{GFP checkerboard rep}
\end{figure}
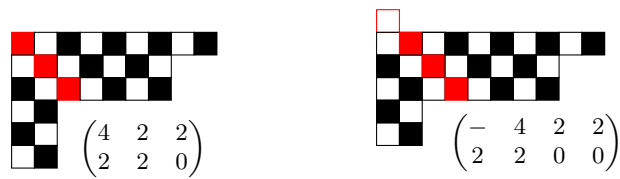
\newpage We can now return to proving the base cases. By comparing coefficients of $\Tilde{q}^nt^m$ on both sides it suffices to find bijections: \vspace{2mm}\begin{align*}\psi^e_{n,m}: \left\{\underline{\omega}\in\mathcal{H}^e\,:\,\begin{array}{c}\sum_{i \text{ odd}}i\omega_{-i} + \sum_{i \text{ even}}i(\omega_{-i}-1) = n\\ 2\left(\sum_{i \text{ odd}}\mathbb{I}\{\omega_{-i}\geq 1\}-\sum_{i \text{ even}}\mathbb{I}\{\omega_{-i}=0\}\right)=m\end{array}\right\} &\rightarrow \text{GFP}_{D_2,D_2,0,m}(n)\\ \psi^o_{n,m}: \left\{\underline{\omega}\in\mathcal{H}^o\,:\,\begin{array}{c}\sum_{i \text{ even}}i\omega_{-i} + \sum_{i \text{ odd}}i(\omega_{-i}-1) = n\\ 2\left(\sum_{i \text{ even}}\mathbb{I}\{\omega_{-i}\geq 1\}-\sum_{i \text{ odd}}\mathbb{I}\{\omega_{-i}=0\}\right)=m\end{array}\right\} &\rightarrow \text{GFP}_{D_2,D_2,-1,m+1}(n).\end{align*} \par \vspace{2mm}\noindent
These maps can be constructed as restrictions of bijections:
\vspace{2mm}\begin{align*}\psi^e_{n}: \left\{\underline{\omega}\in\mathcal{H}^e\,:\,\sum_{i \text{ odd}}i\omega_{-i} + \sum_{i \text{ even}}i(\omega_{-i}-1) = n\right\} &\rightarrow \text{GFP}_{D_2,D_2,0}(n)\\ \psi^o_{n}: \left\{\underline{\omega}\in\mathcal{H}^o\,:\,\sum_{i \text{ even}}i\omega_{-i} + \sum_{i \text{ odd}}i(\omega_{-i}-1) = n\right\} &\rightarrow \text{GFP}_{D_2,D_2,-1}(n).\end{align*}
\par \vspace{2mm} \noindent
In the following we use the notation $r_e$ to stand for a zigzag of length $r$ on $C_e$, a shift of the points of $C_e$ enclosed in the rectangle with opposite corners $(1,-1)$ and $(r,-2)$ and the notation $r_o$ for a zigzag of length $r$ on $C_o$, a shift of the points enclosed within the same rectangle on $C_o$.  Given $\underline{\omega}\in\mathcal{H}^e$ the map $\psi^e_{n}$ stacks $(\omega_{-i} - \mathbb{I}\{i \text{ even}\})$ copies of the zigzag $i_e$ (whenever this is non-negative) vertically in increasing order, and then removes a point from the bottom of each of the columns $1,2,...,i$, for each even $i$ such that $\omega_{-i}=0$ (giving the generalised Young diagram of an element of $\text{GFP}_{D_2,D_2,0}(n)$). Given $\underline{\omega}\in\mathcal{H}^o$ the map $\psi^o_{n}$ stacks $(\omega_{-i} - \mathbb{I}\{i \text{ odd}\})$ copies of the zigzag $i_o$ (whenever this is non-negative) vertically in increasing order, and then removes a point from the bottom of each of the columns $1,2,...,i$, for each odd $i$ such that $\omega_{-i}=0$ (giving the generalised Young diagram of an element of $\text{GFP}_{D_2,D_2,-1}(n)$). See Figure \ref{bijections from omega to gfp} for examples of these maps. It is possible, but tedious, to verify that these maps provide the necessary bijections. We leave this to the reader.

\begin{figure}[H]
 \begin{subfigure}[b]{0.75\textwidth}
    \centering
\begin{tikzpicture}[scale=0.55]
\draw[thick, <-] (-12,0)--(-0.5,0);
\foreach \x in {-11,-10,-9,-8,-7,-6,-5,-4,-3,-2,-1}
    \draw[thick, -](\x cm, 2pt)--(\x cm, -2pt) node[anchor=north]{$\x$};
 
\filldraw[black](-1,0.5) circle (4pt);   
\filldraw[black](-1,1) circle (4pt);  
\filldraw[black](-1,1.5) circle (4pt);
\filldraw [black] (-3,0.5) circle (4pt);
\filldraw [black] (-4,0.5) circle (4pt);
\filldraw[black](-4,1) circle (4pt);
\filldraw [black] (-5,0.5) circle (4pt);
\filldraw[black](-5,1) circle (4pt);
\filldraw [black] (-7,0.5) circle (4pt);
\filldraw [black] (-8,0.5) circle (4pt);
\filldraw[black](-10,0.5) circle (4pt);
\filldraw [black] (-11.5,0.4) circle (1pt);
\filldraw [black] (-11.7,0.4) circle (1pt);
\filldraw [black] (-11.9,0.4) circle (1pt);
\node (a) at (-6,-2.5) [label=\small{$\underline{\omega}=(3,0,1,2,2,0,1,1,0,1,0,...)$}]{};
\node(a) at(2,0)[label=\large{$\longrightarrow$}]{};
\node (a) at (-9,2)[label=\small{$n=19$}]{};
\node (a) at (-9,1.5)[label=\small{$m=4$}]{};

\draw [decorate,decoration={brace,amplitude=10pt},xshift=-4pt,yshift=0pt]
(8.75,4.25) -- (8.75,3.75) node [black,midway,xshift=-0.6cm] 
{}; 
\node(a) at (10.5,3.25) [label=\small{$ \omega_{-7}=1$}]{};
\filldraw[red] (5,4) rectangle (5.5,4.5);
\draw[black](5.5,4) rectangle (6,4.5);
\filldraw[black] (6,4) rectangle (6.5,4.5);
\draw[black] (6.5,4) rectangle (7,4.5);
\filldraw[black] (7,4) rectangle (7.5,4.5);
\draw[black] (7.5,4) rectangle (8,4.5);
\filldraw[black] (8,4) rectangle (8.5,4.5);
\draw[black] (5,3.5)rectangle(5.5,4);
\filldraw[red] (5.5,3.5) rectangle (6,4);
\draw[black](6,3.5) rectangle (6.5,4);
\filldraw[black] (6.5,3.5) rectangle (7,4);
\draw[black] (7,3.5) rectangle (7.5,4);
\filldraw[black] (7.5,3.5) rectangle (8,4);
\draw[black] (8,3.5) rectangle (8.5,4);
\draw [decorate,decoration={brace,amplitude=10pt},xshift=-4pt,yshift=0pt]
(7.75,3.25) -- (7.75,1.75) node [black,midway,xshift=-0.6cm] 
{}; 
\node(a) at (9.5,1.75) [label=\small{$ \omega_{-5}=2$}]{};
\filldraw[black] (5,3) rectangle (5.5,3.5);
\draw[black](5.5,3) rectangle (6,3.5);
\filldraw[red] (6,3) rectangle (6.5,3.5);
\draw[black] (6.5,3) rectangle (7,3.5);
\filldraw[black] (7,3) rectangle (7.5,3.5);
\draw[black] (5,2.5)rectangle(5.5,3);
\filldraw[black] (5.5,2.5) rectangle (6,3);
\draw[black](6,2.5) rectangle (6.5,3);
\filldraw[red] (6.5,2.5) rectangle (7,3);
\draw[black] (7,2.5) rectangle (7.5,3);
\filldraw[black] (5,2) rectangle (5.5,2.5);
\draw[black](5.5,2) rectangle (6,2.5);
\filldraw[black] (6,2) rectangle (6.5,2.5);
\draw[black] (6.5,2) rectangle (7,2.5);
\filldraw[red] (7,2) rectangle (7.5,2.5);
\draw[black] (5,1.5)rectangle(5.5,2);
\filldraw[black] (5.5,1.5) rectangle (6,2);
\draw[black](6,1.5) rectangle (6.5,2);
\filldraw[black] (6.5,1.5) rectangle (7,2);
\draw[black] (7,1.5) rectangle (7.5,2);
\draw [decorate,decoration={brace,amplitude=10pt},xshift=-4pt,yshift=0pt]
(7.25,1.25) -- (7.25,0.75) node [black,midway,xshift=-0.6cm] 
{}; 
\node(a) at (9,0.25) [label=\small{$ \omega_{-4}=2$}]{};
\filldraw[black] (5,1) rectangle (5.5,1.5);
\draw[black](5.5,1) rectangle (6,1.5);
\filldraw[black] (6,1) rectangle (6.5,1.5);
\draw[black] (6.5,1) rectangle (7,1.5);
\draw[black] (5,1.5)rectangle(5.5,2);
\draw[black](5,0.5) rectangle (5.5,1);
\filldraw[black] (5.5,0.5) rectangle (6,1);
\draw[black](6,0.5) rectangle (6.5,1);
\filldraw[black] (6.5,0.5) rectangle (7,1);
\draw [decorate,decoration={brace,amplitude=10pt},xshift=-4pt,yshift=0pt]
(6.75,0.25) -- (6.75,-0.25) node [black,midway,xshift=-0.6cm] 
{}; 
\node(a) at (8.5,-0.75) [label=\small{$ \omega_{-3}=1$}]{};
\filldraw[black] (5,0) rectangle (5.5,0.5);
\draw[black](5.5,0) rectangle (6,0.5);
\filldraw[black] (6,0) rectangle (6.5,0.5);
\draw[black](5,-0.5) rectangle (5.5,0);
\filldraw[black] (5.5,-0.5) rectangle (6,0);
\draw[black](6,-0.5) rectangle (6.5,0);
\draw [decorate,decoration={brace,amplitude=10pt},xshift=-4pt,yshift=0pt]
(6,-0.75) -- (6,-3.25) node [black,midway,xshift=-0.6cm] 
{}; 
\node(a) at (7.75,-2.75) [label=\small{$ \omega_{-1}=3$}]{};
\filldraw[black] (5,-1) rectangle (5.5,-0.5);
\draw[black](5,-1.5) rectangle (5.5,-1);
\filldraw[black] (5,-2) rectangle (5.5,-1.5);
\draw[black](5,-2.5) rectangle (5.5,-2);
\filldraw[black] (5,-3) rectangle (5.5,-2.5);
\draw[black](5,-3.5) rectangle (5.5,-3);
\node (a) at (8,-5.5) [label=\large{$\Big\downarrow$}]{};
\node(a) at (4.75,-4.75) [label=\footnotesize{Since $\omega_{-2}=0$, remove}]{};
\node(a) at (4.75,-5.25) [label=\footnotesize{a point from the bottom}]{};
\node(a) at (4.75,-5.75) [label=\footnotesize{of columns 1 and 2.}]{};

\filldraw[red] (5,-6.5) rectangle (5.5,-6);
\draw[black](5.5,-6.5) rectangle (6,-6);
\filldraw[black] (6,-6.5) rectangle (6.5,-6);
\draw[black] (6.5,-6.5) rectangle (7,-6);
\filldraw[black] (7,-6.5) rectangle (7.5,-6);
\draw[black] (7.5,-6.5) rectangle (8,-6);
\filldraw[black] (8,-6.5) rectangle (8.5,-6);
\draw[black] (5,-7)rectangle(5.5,-6.5);
\filldraw[red] (5.5,-7) rectangle (6,-6.5);
\draw[black](6,-7) rectangle (6.5,-6.5);
\filldraw[black] (6.5,-7) rectangle (7,-6.5);
\draw[black] (7,-7) rectangle (7.5,-6.5);
\filldraw[black] (7.5,-7) rectangle (8,-6.5);
\draw[black] (8,-7) rectangle (8.5,-6.5);
\filldraw[black] (5,-7.5) rectangle (5.5,-7);
\draw[black](5.5,-7.5) rectangle (6,-7);
\filldraw[red] (6,-7.5) rectangle (6.5,-7);
\draw[black] (6.5,-7.5) rectangle (7,-7);
\filldraw[black] (7,-7.5) rectangle (7.5,-7);
\draw[black] (5,-8)rectangle(5.5,-7.5);
\filldraw[black] (5.5,-8) rectangle (6,-7.5);
\draw[black](6,-8) rectangle (6.5,-7.5);
\filldraw[red] (6.5,-8) rectangle (7,-7.5);
\draw[black] (7,-8) rectangle (7.5,-7.5);
\filldraw[black] (5,-8.5) rectangle (5.5,-8);
\draw[black](5.5,-8.5) rectangle (6,-8);
\filldraw[black] (6,-8.5) rectangle (6.5,-8);
\draw[black] (6.5,-8.5) rectangle (7,-8);
\filldraw[red] (7,-8.5) rectangle (7.5,-8);
\draw[black] (5,-9)rectangle(5.5,-8.5);
\filldraw[black] (5.5,-9) rectangle (6,-8.5);
\draw[black](6,-9) rectangle (6.5,-8.5);
\filldraw[black] (6.5,-9) rectangle (7,-8.5);
\draw[black] (7,-9) rectangle (7.5,-8.5);
\filldraw[black] (5,-9.5) rectangle (5.5,-9);
\draw[black](5.5,-9.5) rectangle (6,-9);
\filldraw[black] (6,-9.5) rectangle (6.5,-9);
\draw[black] (6.5,-9.5) rectangle (7,-9);
\draw[black] (5,-10)rectangle(5.5,-9.5);
\draw[black](5,-10) rectangle (5.5,-9.5);
\filldraw[black] (5.5,-10) rectangle (6,-9.5);
\draw[black](6,-10) rectangle (6.5,-9.5);
\filldraw[black] (6.5,-10) rectangle (7,-9.5);
\filldraw[black] (5,-10.5) rectangle (5.5,-10);
\draw[black](5.5,-10.5) rectangle (6,-10);
\filldraw[black] (6,-10.5) rectangle (6.5,-10);
\draw[black](5,-11) rectangle (5.5,-10.5);
\filldraw[black] (5,-11.5) rectangle (5.5,-11);
\draw[black](5,-12) rectangle (5.5,-11.5);
\filldraw[black] (5,-12.5) rectangle (5.5,-12);
\draw[black](5,-13) rectangle (5.5,-12.5);
\node(a) at (1.25,-8.5) [label=\footnotesize{Since $\omega_{-6}=0$, remove}]{};
\node(a) at (1.25,-9) [label=\footnotesize{a point from the bottom}]{};
\node(a) at (1.25,-9.5) [label=\footnotesize{of columns 1 through 6.}]{};
\node(a) at(2,-10)[label=\large{$\longleftarrow$}]{};
\filldraw[red] (-8,-6.5) rectangle (-7.5,-6);
\draw[black](-7.5,-6.5) rectangle (-7,-6);
\filldraw[black] (-7,-6.5) rectangle (-6.5,-6);
\draw[black] (-6.5,-6.5) rectangle (-6,-6);
\filldraw[black] (-6,-6.5) rectangle (-5.5,-6);
\draw[black] (-5.5,-6.5) rectangle (-5,-6);
\filldraw[black] (-5,-6.5) rectangle (-4.5,-6);
\draw[black] (-8,-7)rectangle(-7.5,-6.5);
\filldraw[red] (-7.5,-7) rectangle (-7,-6.5);
\draw[black](-7,-7) rectangle (-6.5,-6.5);
\filldraw[black] (-6.5,-7) rectangle (-6,-6.5);
\draw[black] (-6,-7) rectangle (-5.5,-6.5);
\filldraw[black] (-8,-7.5) rectangle (-7.5,-7);
\draw[black](-7.5,-7.5) rectangle (-7,-7);
\filldraw[red] (-7,-7.5) rectangle (-6.5,-7);
\draw[black] (-6.5,-7.5) rectangle (-6,-7);
\filldraw[black] (-6,-7.5) rectangle (-5.5,-7);
\draw[black] (-8,-8)rectangle(-7.5,-7.5);
\filldraw[black] (-7.5,-8) rectangle (-7,-7.5);
\draw[black](-7,-8) rectangle (-6.5,-7.5);
\filldraw[red] (-6.5,-8) rectangle (-6,-7.5);
\filldraw[black] (-8,-8.5) rectangle (-7.5,-8);
\draw[black](-7.5,-8.5) rectangle (-7,-8);
\filldraw[black] (-7,-8.5) rectangle (-6.5,-8);
\draw[black] (-6.5,-8.5) rectangle (-6,-8);
\draw[black] (-8,-9)rectangle(-7.5,-8.5);
\filldraw[black] (-7.5,-9) rectangle (-7,-8.5);
\draw[black](-7,-9) rectangle (-6.5,-8.5);
\filldraw[black] (-6.5,-9) rectangle (-6,-8.5);
\filldraw[black] (-8,-9.5) rectangle (-7.5,-9);
\draw[black](-7.5,-9.5) rectangle (-7,-9);
\filldraw[black] (-7,-9.5) rectangle (-6.5,-9);
\draw[black] (-6.5,-9.5) rectangle (-6,-9);
\draw[black] (-8,-10)rectangle(-7.5,-9.5);
\filldraw[black] (-8,-10.5) rectangle (-7.5,-10);
\draw[black](-8,-11) rectangle (-7.5,-10.5);
\filldraw[black] (-8,-11.5) rectangle (-7.5,-11);
\draw[black](-8,-12) rectangle (-7.5,-11.5);
\node(a) at (-6,-5.5) [label=\large{$\Big\downarrow$}]{};
\node (a) at (-7,-5) [label=\small{$\psi^e_{19,4}$}]{};
\node(a) at (-4,-12) [label=\small{$\begin{pmatrix}
3&1&1&0\\
5&2&2&1
\end{pmatrix}$}]{};
\node (a) at (-10,-7)[label=\small{$n=19$}]{};
\node (a) at (-10,-7.5)[label=\small{$m=4$}]{};
\end{tikzpicture}
\caption{A state $\underline{\omega} \in \mathcal{H}^e$ and its image in $\text{GFP}_{D_2,D_2,0,4}(19)$ }
\end{subfigure}
    \caption{The bijections $\psi^e_{19,4}$ and $\psi_{19,4}^o$.}
\end{figure}
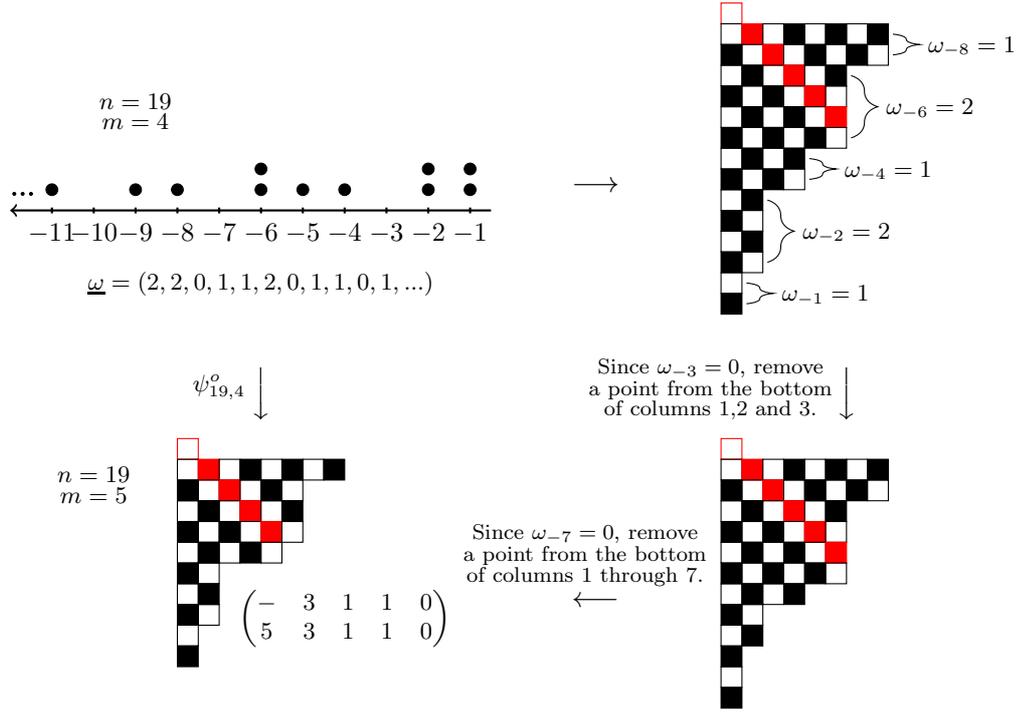
\begin{figure}[H]\ContinuedFloat
    \centering
\begin{subfigure}[b]{0.75\textwidth}
    \centering
    \begin{tikzpicture}[scale=0.55]
\draw[thick, <-] (-12,0)--(-0.5,0);
\foreach \x in {-11,-10,-9,-8,-7,-6,-5,-4,-3,-2,-1}
    \draw[thick, -](\x cm, 2pt)--(\x cm, -2pt) node[anchor=north]{$\x$};
 
\filldraw[black](-1,0.5) circle (4pt);   
\filldraw[black](-1,1) circle (4pt);  
\filldraw[black](-2,0.5) circle (4pt);
\filldraw[black](-2,1) circle (4pt);
\filldraw [black] (-4,0.5) circle (4pt);
\filldraw [black] (-5,0.5) circle (4pt);
\filldraw[black](-6,0.5) circle (4pt);
\filldraw[black](-6,1) circle (4pt);
\filldraw [black] (-8,0.5) circle (4pt);
\filldraw [black] (-9,0.5) circle (4pt);
\filldraw [black] (-11,0.5) circle (4pt);
\filldraw [black] (-11.5,0.4) circle (1pt);
\filldraw [black] (-11.7,0.4) circle (1pt);
\filldraw [black] (-11.9,0.4) circle (1pt);
\node (a) at (-6,-2.5) [label=\small{$\underline{\omega}=(2,2,0,1,1,2,0,1,1,0,1,...)$}]{};
\node(a) at(2,0)[label=\large{$\longrightarrow$}]{};
\node (a) at (-9,2)[label=\small{$n=19$}]{};
\node (a) at (-9,1.5)[label=\small{$m=4$}]{};

\draw [decorate,decoration={brace,amplitude=10pt},xshift=-4pt,yshift=0pt]
(9.25,4.25) -- (9.25,3.75) node [black,midway,xshift=-0.6cm] 
{}; 
\node(a) at (11,3.25) [label=\small{$ \omega_{-8}=1$}]{};
\draw[red] (5,4.5) rectangle (5.5,5);
\draw[black] (5,4) rectangle (5.5,4.5);
\filldraw[red](5.5,4) rectangle (6,4.5);
\draw[black] (6,4) rectangle (6.5,4.5);
\filldraw[black] (6.5,4) rectangle (7,4.5);
\draw[black] (7,4) rectangle (7.5,4.5);
\filldraw[black] (7.5,4) rectangle (8,4.5);
\draw[black] (8,4) rectangle (8.5,4.5);
\filldraw[black] (8.5,4) rectangle (9,4.5);
\filldraw[black] (5,3.5)rectangle(5.5,4);
\draw[black] (5.5,3.5) rectangle (6,4);
\filldraw[red](6,3.5) rectangle (6.5,4);
\draw[black] (6.5,3.5) rectangle (7,4);
\filldraw[black] (7,3.5) rectangle (7.5,4);
\draw[black] (7.5,3.5) rectangle (8,4);
\filldraw[black] (8,3.5) rectangle (8.5,4);
\draw[black] (8.5,3.5) rectangle (9,4);
\draw [decorate,decoration={brace,amplitude=10pt},xshift=-4pt,yshift=0pt]
(8.25,3.25) -- (8.25,1.75) node [black,midway,xshift=-0.6cm] 
{}; 
\node(a) at (10,1.75) [label=\small{$ \omega_{-6}=2$}]{};
\draw[black] (5,3) rectangle (5.5,3.5);
\filldraw[black](5.5,3) rectangle (6,3.5);
\draw[black] (6,3) rectangle (6.5,3.5);
\filldraw[red] (6.5,3) rectangle (7,3.5);
\draw[black] (7,3) rectangle (7.5,3.5);
\filldraw[black](7.5,3)rectangle(8,3.5);
\filldraw[black] (5,2.5)rectangle(5.5,3);
\draw[black] (5.5,2.5) rectangle (6,3);
\filldraw[black](6,2.5) rectangle (6.5,3);
\draw[black] (6.5,2.5) rectangle (7,3);
\filldraw[red] (7,2.5) rectangle (7.5,3);
\draw[black] (7.5,2.5) rectangle (8,3);
\draw[black] (5,2) rectangle (5.5,2.5);
\filldraw[black](5.5,2) rectangle (6,2.5);
\draw[black] (6,2) rectangle (6.5,2.5);
\filldraw[black] (6.5,2) rectangle (7,2.5);
\draw[black] (7,2) rectangle (7.5,2.5);
\filldraw[red](7.5,2) rectangle (8,2.5);
\filldraw[black] (5,1.5)rectangle(5.5,2);
\draw[black] (5.5,1.5) rectangle (6,2);
\filldraw[black](6,1.5) rectangle (6.5,2);
\draw[black] (6.5,1.5) rectangle (7,2);
\filldraw[black] (7,1.5) rectangle (7.5,2);
\draw[black] (7.5,1.5) rectangle (8,2);
\draw [decorate,decoration={brace,amplitude=10pt},xshift=-4pt,yshift=0pt]
(7.25,1.25) -- (7.25,0.75) node [black,midway,xshift=-0.6cm] 
{}; 
\node(a) at (9,0.25) [label=\small{$ \omega_{-4}=1$}]{};
\draw[black] (5,1) rectangle (5.5,1.5);
\filldraw[black](5.5,1) rectangle (6,1.5);
\draw[black] (6,1) rectangle (6.5,1.5);
\filldraw[black] (6.5,1) rectangle (7,1.5);
\filldraw[black](5,0.5) rectangle (5.5,1);
\draw[black] (5.5,0.5) rectangle (6,1);
\filldraw[black](6,0.5) rectangle (6.5,1);
\draw[black] (6.5,0.5) rectangle (7,1);
\draw [decorate,decoration={brace,amplitude=10pt},xshift=-4pt,yshift=0pt]
(6.25,0.25) -- (6.25,-1.25) node [black,midway,xshift=-0.6cm] 
{}; 
\node(a) at (8,-1.25) [label=\small{$ \omega_{-2}=2$}]{};
\draw[black] (5,0) rectangle (5.5,0.5);
\filldraw[black](5.5,0) rectangle (6,0.5);
\filldraw[black](5,-0.5) rectangle (5.5,0);
\draw[black] (5.5,-0.5) rectangle (6,0);
\draw[black] (5,-1) rectangle (5.5,-0.5);
\filldraw[black](5.5,-1)rectangle (6,-0.5);
\filldraw[black](5,-1.5) rectangle (5.5,-1);
\draw[black] (5.5,-1.5) rectangle (6,-1);
\draw [decorate,decoration={brace,amplitude=10pt},xshift=-4pt,yshift=0pt]
(5.75,-1.75) -- (5.75,-2.25) node [black,midway,xshift=-0.6cm] 
{}; 
\node(a) at (7.5,-2.75) [label=\small{$ \omega_{-1}=1$}]{};
\draw[black] (5,-2) rectangle (5.5,-1.5);
\filldraw[black](5,-2.5) rectangle (5.5,-2);
\node (a) at (8,-5.5) [label=\large{$\Big\downarrow$}]{};
\node(a) at (4.75,-4.5) [label=\footnotesize{Since $\omega_{-3}=0$, remove}]{};
\node(a) at (4.75,-5) [label=\footnotesize{a point from the bottom}]{};
\node(a) at (4.75,-5.5) [label=\footnotesize{of columns 1,2 and 3.}]{};

\draw[red] (5,-6) rectangle (5.5,-5.5);
\draw[black] (5,-6.5) rectangle (5.5,-6);
\filldraw[red](5.5,-6.5) rectangle (6,-6);
\draw[black] (6,-6.5) rectangle (6.5,-6);
\filldraw[black] (6.5,-6.5) rectangle (7,-6);
\draw[black] (7,-6.5) rectangle (7.5,-6);
\filldraw[black] (7.5,-6.5) rectangle (8,-6);
\draw[black] (8,-6.5) rectangle (8.5,-6);
\filldraw[black] (8.5,-6.5) rectangle (9,-6);
\filldraw[black] (5,-7)rectangle(5.5,-6.5);
\draw[black] (5.5,-7) rectangle (6,-6.5);
\filldraw[red](6,-7) rectangle (6.5,-6.5);
\draw[black] (6.5,-7) rectangle (7,-6.5);
\filldraw[black] (7,-7) rectangle (7.5,-6.5);
\draw[black] (7.5,-7) rectangle (8,-6.5);
\filldraw[black] (8,-7) rectangle (8.5,-6.5);
\draw[black] (8.5,-7) rectangle (9,-6.5);
\draw[black] (5,-7.5) rectangle (5.5,-7);
\filldraw[black](5.5,-7.5) rectangle (6,-7);
\draw[black] (6,-7.5) rectangle (6.5,-7);
\filldraw[red] (6.5,-7.5) rectangle (7,-7);
\draw[black] (7,-7.5) rectangle (7.5,-7);
\filldraw[black](7.5,-7.5)rectangle(8,-7);
\filldraw[black] (5,-8)rectangle(5.5,-7.5);
\draw[black] (5.5,-8) rectangle (6,-7.5);
\filldraw[black](6,-8) rectangle (6.5,-7.5);
\draw[black] (6.5,-8) rectangle (7,-7.5);
\filldraw[red] (7,-8) rectangle (7.5,-7.5);
\draw[black] (7.5,-8) rectangle (8,-7.5);
\draw[black] (5,-8.5) rectangle (5.5,-8);
\filldraw[black](5.5,-8.5) rectangle (6,-8);
\draw[black] (6,-8.5) rectangle (6.5,-8);
\filldraw[black] (6.5,-8.5) rectangle (7,-8);
\draw[black] (7,-8.5) rectangle (7.5,-8);
\filldraw[red](7.5,-8.5) rectangle (8,-8);
\filldraw[black] (5,-9)rectangle(5.5,-8.5);
\draw[black] (5.5,-9) rectangle (6,-8.5);
\filldraw[black](6,-9) rectangle (6.5,-8.5);
\draw[black] (6.5,-9) rectangle (7,-8.5);
\filldraw[black] (7,-9) rectangle (7.5,-8.5);
\draw[black] (7.5,-9) rectangle (8,-8.5);
\draw[black] (5,-9.5) rectangle (5.5,-9);
\filldraw[black](5.5,-9.5) rectangle (6,-9);
\draw[black] (6,-9.5) rectangle (6.5,-9);
\filldraw[black] (6.5,-9.5) rectangle (7,-9);
\filldraw[black](5,-10) rectangle (5.5,-9.5);
\draw[black] (5.5,-10) rectangle (6,-9.5);
\draw[black] (5,-10.5) rectangle (5.5,-10);
\filldraw[black](5.5,-10.5) rectangle (6,-10);
\filldraw[black](5,-11) rectangle (5.5,-10.5);
\draw[black] (5,-11.5) rectangle (5.5,-11);
\filldraw[black](5,-12) rectangle (5.5,-11.5);
\node(a) at (1.75,-8.5) [label=\footnotesize{Since $\omega_{-7}=0$, remove}]{};
\node(a) at (1.75,-9) [label=\footnotesize{a point from the bottom}]{};
\node(a) at (1.75,-9.5) [label=\footnotesize{of columns 1 through 7.}]{};
\node(a) at(2,-10)[label=\large{$\longleftarrow$}]{};
\draw[red](-8,-6) rectangle (-7.5,-5.5);
\draw[black] (-8,-6.5) rectangle (-7.5,-6);
\filldraw[red](-7.5,-6.5) rectangle (-7,-6);
\draw[black] (-7,-6.5) rectangle (-6.5,-6);
\filldraw[black] (-6.5,-6.5) rectangle (-6,-6);
\draw[black] (-6,-6.5) rectangle (-5.5,-6);
\filldraw[black] (-5.5,-6.5) rectangle (-5,-6);
\draw[black] (-5,-6.5) rectangle (-4.5,-6);
\filldraw[black] (-4.5,-6.5) rectangle (-4,-6);
\filldraw[black] (-8,-7)rectangle(-7.5,-6.5);
\draw[black] (-7.5,-7) rectangle (-7,-6.5);
\filldraw[red](-7,-7) rectangle (-6.5,-6.5);
\draw[black] (-6.5,-7) rectangle (-6,-6.5);
\filldraw[black] (-6,-7) rectangle (-5.5,-6.5);
\draw[black] (-5.5,-7) rectangle (-5,-6.5);
\draw[black] (-8,-7.5) rectangle (-7.5,-7);
\filldraw[black](-7.5,-7.5) rectangle (-7,-7);
\draw[black] (-7,-7.5) rectangle (-6.5,-7);
\filldraw[red] (-6.5,-7.5) rectangle (-6,-7);
\draw[black] (-6,-7.5) rectangle (-5.5,-7);
\filldraw[black] (-5.5,-7.5) rectangle (-5,-7);
\filldraw[black] (-8,-8)rectangle(-7.5,-7.5);
\draw[black] (-7.5,-8) rectangle (-7,-7.5);
\filldraw[black](-7,-8) rectangle (-6.5,-7.5);
\draw[black] (-6.5,-8) rectangle (-6,-7.5);
\filldraw[red](-6,-8) rectangle (-5.5,-7.5);
\draw[black] (-5.5,-8) rectangle (-5,-7.5);
\draw[black] (-8,-8.5) rectangle (-7.5,-8);
\filldraw[black](-7.5,-8.5) rectangle (-7,-8);
\draw[black] (-7,-8.5) rectangle (-6.5,-8);
\filldraw[black] (-6.5,-8.5) rectangle (-6,-8);
\draw[black] (-6,-8.5) rectangle (-5.5,-8);
\filldraw[black] (-8,-9)rectangle(-7.5,-8.5);
\draw[black] (-7.5,-9) rectangle (-7,-8.5);
\draw[black] (-8,-9.5) rectangle (-7.5,-9);
\filldraw[black](-7.5,-9.5) rectangle (-7,-9);
\filldraw[black] (-8,-10)rectangle(-7.5,-9.5);
\draw[black](-7.5,-10) rectangle (-7,-9.5);
\draw[black] (-8,-10.5) rectangle (-7.5,-10);
\filldraw[black](-8,-11) rectangle (-7.5,-10.5);
\node(a) at (-6,-5.5) [label=\large{$\Big\downarrow$}]{};
\node (a) at (-7,-5) [label=\small{$\psi^o_{19,4}$}]{};
\node(a) at (-4,-11) [label=\small{$\begin{pmatrix}
-&3&1&1&0\\
5&3&1&1&0
\end{pmatrix}$}]{};
\node (a) at (-10,-7)[label=\small{$n=19$}]{};
\node (a) at (-10,-7.5)[label=\small{$m=5$}]{};
\end{tikzpicture}
    \caption{A state $\underline{\omega} \in \mathcal{H}^o$ and its image in $\text{GFP}_{D_2,D_2,-1,5}(19)$ }
\end{subfigure}
    \caption{The bijections $\psi^e_{19,4}$ and $\psi_{19,4}^o$.}
    \label{bijections from omega to gfp}
\end{figure}

\par We now consider the other values of the offset $k$. To tackle these cases it suffices, as in the Jacobi triple product case, to prove the following equivalences of generating functions: \[f_{D_2,D_2,k}(\Tilde{q},t) = \begin{cases}f_{D_2,D_2,0}(\Tilde{q},t)\Tilde{q}^{\ell(\ell+1)} & \text{if } k=2\ell\\ f_{D_2,D_2,-1}(\Tilde{q},t)\Tilde{q}^{(\ell+1)^2} & \text{if } k=2\ell+1. \end{cases}\] These follow naturally from a generalisation of Wright's bijection $\phi_k$, i.e.\ bijections  \begin{align*}\phi^e_{\ell,n}: \text{GFP}_{D_2,D_2,0}(n) &\rightarrow \text{GFP}_{D_2,D_2,2\ell}(n+\ell(\ell+1))\\ \phi^o_{\ell,n}: \text{GFP}_{D_2,D_2,-1}(n) &\rightarrow \text{GFP}_{D_2,D_2,2\ell+1}(n+(\ell+1)^2)\end{align*} defined for each $n\geq 0$ and $\ell\in\mathbb{Z}$ as follows. Given an element of $\text{GFP}_{D_2,D_2,0}(n)$ the map $\phi^e_{\ell,n}$ adds the points inside the right angled triangle of size $\frac{|2\ell|(|2\ell|+1)}{2}$ to either the left or top edge of its generalised Young diagram, depending on whether $\ell\geq 0$ or $\ell<0$ respectively. It then uses the new leading diagonal implied by the triangle to read off an element of $\text{GFP}_{D_2,D_2,2\ell}(n+\ell(\ell+1))$. Given an element of $\text{GFP}_{D_2,D_2,-1}(n)$ the map $\phi^o_{\ell,n}$ adds the points inside the right angled triangle of size $\frac{|2\ell+1|(|2\ell+1|+1)}{2}$ to either the left or top edge of its generalised Young diagram, depending on whether $\ell\geq 0$ or $\ell<0$ respectively. It then uses the new leading diagonal implied by the triangle to read off an element of $\text{GFP}_{D_2,D_2,2\ell+1}(n+(\ell+1)^2)$. In both cases the offset is supplied by the empty rows/columns coming from the triangle (in a similar fashion to Wright's bijection). See Figure \ref{bijections from 0 offset to 4} for examples of $\phi^e_\ell$ and $\phi^o_\ell$. Note also that in both cases the number $m$ of distinct parts is clearly preserved and so the above maps restrict to give well defined bijections \begin{align*}\phi^e_{\ell,n,m}: \text{GFP}_{D_2,D_2,0,m}(n) &\rightarrow \text{GFP}_{D_2,D_2,2\ell,m}(n+\ell(\ell+1))\\ \phi^o_{\ell,n,m}: \text{GFP}_{D_2,D_2,-1,m}(n) &\rightarrow \text{GFP}_{D_2,D_2,2\ell+1,m}(n+(\ell+1)^2)\end{align*} for each $n,m\geq 0$ and $\ell\in\mathbb{Z}$. This proves the required equivalences of generating functions and hence completes the combinatorial justification of the identities in Theorem \ref{main}.
\begin{figure}[H]
    \centering
\begin{subfigure}[b]{0.75\textwidth}
    \centering
\begin{tikzpicture}[scale=0.275]
\draw[black](-4,0) rectangle (-3,1);
\filldraw[black] (-5,0) rectangle (-4,1);
\draw[black] (-6,0) rectangle (-5,1);
\filldraw[black](-7,0) rectangle (-6,1);
\filldraw[black](-4,-1)rectangle (-3,0);
\draw[black] (-5,-1) rectangle (-4,0);
\filldraw[black](-6,-1)rectangle(-5,0);
\draw[black](-4,-2) rectangle (-3,-1);
\filldraw[black] (-5,-2) rectangle (-4,-1);
\filldraw[black] (-4,-3) rectangle (-3,-2);
\node (a) at (-1.5,-2) [label=\large{+}]{};
\filldraw[red] (0,0) rectangle (1,1);
\draw[black] (1,0) rectangle (2,1);
\filldraw[black] (2,0) rectangle (3,1);
\draw[black] (3,0) rectangle (4,1);
\filldraw [black] (4,0) rectangle (5,1);
\draw[black] (5,0) rectangle (6,1);
\filldraw[black] (6,0) rectangle (7,1);
\draw[black] (7,0) rectangle (8,1);
\filldraw[black] (8,0) rectangle (9,1);
\draw[black](0,-1) rectangle (1,0);
\filldraw[red] (1,-1) rectangle (2,0);
\draw[black] (2,-1) rectangle (3,0);
\filldraw[black] (3,-1) rectangle (4,0);
\draw[black](4,-1) rectangle (5,0);
\filldraw[black] (5,-1) rectangle (6,0);
\draw[black] (6,-1) rectangle (7,0);
\filldraw[black] (0,-2) rectangle (1,-1);
\draw[black] (1,-2) rectangle (2,-1);
\filldraw[red] (2,-2) rectangle (3,-1);
\draw[black] (3,-2) rectangle (4,-1);
\filldraw[black] (4,-2) rectangle (5,-1);
\draw[black] (5,-2) rectangle (6,-1);
\filldraw[black] (6,-2) rectangle (7,-1);
\draw[black] (0,-3) rectangle (1,-2);
\filldraw[black](1,-3) rectangle (2,-2);
\filldraw[black] (0,-4) rectangle (1,-3);
\draw[black] (1,-4) rectangle (2,-3);
\draw[black] (0,-5) rectangle (1,-4);
\filldraw[black] (1,-5) rectangle (2,-4);
\node(a) at (5.75,-6.25)[label=\small{$\begin{pmatrix}
4&2&2\\
2&2&0
\end{pmatrix}$}]{};
\node (a) at (13,-2) [label=\large{$\overset{\phi_{2,15}^e}{\longrightarrow}$}]{};
\filldraw[red](18,0)rectangle(19,1);
\draw[black](19,0)rectangle (20,1);
\filldraw[black](20,0)rectangle(21,1);
\draw[black](21,0)rectangle(22,1);
\filldraw[black] (22,0) rectangle (23,1);
\draw[black] (23,0) rectangle (24,1);
\filldraw[black] (24,0) rectangle (25,1);
\draw[black] (25,0) rectangle (26,1);
\filldraw [black] (26,0) rectangle (27,1);
\draw[black] (27,0) rectangle (28,1);
\filldraw[black] (28,0) rectangle (29,1);
\draw[black] (29,0) rectangle (30,1);
\filldraw[black] (30,0) rectangle (31,1);
\filldraw[red](19,-1)rectangle(20,0);
\draw[black](20,-1)rectangle(21,0);
\filldraw[black](21,-1)rectangle (22,0);
\draw[black](22,-1) rectangle (23,0);
\filldraw[black] (23,-1) rectangle (24,0);
\draw[black] (24,-1) rectangle (25,0);
\filldraw[black] (25,-1) rectangle (26,0);
\draw[black](26,-1) rectangle (27,0);
\filldraw[black] (27,-1) rectangle (28,0);
\draw[black] (28,-1) rectangle (29,0);
\filldraw[red](20,-2) rectangle (21,-1);
\draw[black](21,-2)rectangle (22,-1);
\filldraw[black] (22,-2) rectangle (23,-1);
\draw[black] (23,-2) rectangle (24,-1);
\filldraw[black] (24,-2) rectangle (25,-1);
\draw[black] (25,-2) rectangle (26,-1);
\filldraw[black] (26,-2) rectangle (27,-1);
\draw[black] (27,-2) rectangle (28,-1);
\filldraw[black] (28,-2) rectangle (29,-1);
\filldraw[red](21,-3)rectangle(22,-2);
\draw[black] (22,-3) rectangle (23,-2);
\filldraw[black](23,-3) rectangle (24,-2);
\filldraw[red] (22,-4) rectangle (23,-3);
\draw[black] (23,-4) rectangle (24,-3);
\draw[black] (22,-5) rectangle (23,-4);
\filldraw[red] (23,-5) rectangle (24,-4);
\node (a) at (31.25,-6.25) [label=\small{$\begin{pmatrix}
6&4&4&1&0&0\\
-&-&-&-&0&0
\end{pmatrix}$}]{};
\end{tikzpicture}
    \caption{An element of $\text{GFP}_{D_2,D_2,0}(15)$ and its image in $\text{GFP}_{D_2,D_2,4}(21)$ }
\end{subfigure}

\begin{subfigure}[b]{0.75\textwidth}
    \centering
    \vspace{4mm}
    \begin{tikzpicture}[scale=0.275]
\filldraw[black](-4,0) rectangle (-3,1);
\draw[black] (-5,0) rectangle (-4,1);
\filldraw[black] (-6,0) rectangle (-5,1);
\draw[black](-4,-1)rectangle (-3,0);
\filldraw[black] (-5,-1) rectangle (-4,0);
\filldraw[black](-4,-2) rectangle (-3,-1);
\node (a) at (-1.5,-2) [label=\large{+}]{};
\draw[red] (0,1) rectangle (1,2);
\draw[black] (0,0) rectangle (1,1);
\filldraw[red] (1,0) rectangle (2,1);
\draw[black] (2,0) rectangle (3,1);
\filldraw[black] (3,0) rectangle (4,1);
\draw[black] (4,0) rectangle (5,1);
\filldraw[black] (5,0) rectangle (6,1);
\draw[black](6,0)rectangle(7,1);
\filldraw[black](7,0)rectangle(8,1);
\draw[black](8,0)rectangle(9,1);
\filldraw[black](9,0)rectangle(10,1);
\filldraw[black](0,-1) rectangle (1,0);
\draw[black](1,-1) rectangle (2,0);
\filldraw[red] (2,-1) rectangle (3,0);
\draw[black] (3,-1) rectangle (4,0);
\filldraw[black] (4,-1) rectangle (5,0);
\draw[black] (5,-1) rectangle (6,0);
\filldraw[black] (6,-1) rectangle (7,0);
\draw[black](7,-1) rectangle (8,0);
\draw[black] (0,-2) rectangle (1,-1);
\filldraw[black] (1,-2) rectangle (2,-1);
\draw[black] (2,-2) rectangle (3,-1);
\filldraw[red] (3,-2) rectangle (4,-1);
\draw[black] (4,-2) rectangle (5,-1);
\filldraw[black] (5,-2) rectangle (6,-1);
\draw[black] (6,-2) rectangle (7,-1);
\filldraw[black] (7,-2) rectangle (8,-1);
\filldraw[black](0,-3) rectangle (1,-2);
\draw[black] (1,-3) rectangle (2,-2);
\draw[black] (0,-4) rectangle (1,-3);
\filldraw[black] (1,-4) rectangle (2,-3);
\node (a) at (7,-6) [label=\small{$\begin{pmatrix}
-&4&2&2\\
2&2&0&0
\end{pmatrix}$}]{};
\node (a) at (13,-2) [label=\large{$\overset{\phi_{1,15}^o}{\longrightarrow}$}]{};
\filldraw[red](18,0)rectangle(19,1);
\draw[black](19,0)rectangle(20,1);
\filldraw[black](20,0)rectangle(21,1);
\draw[black] (21,0) rectangle (22,1);
\filldraw[black] (22,0) rectangle (23,1);
\draw[black] (23,0) rectangle (24,1);
\filldraw[black] (24,0) rectangle (25,1);
\draw[black] (25,0) rectangle (26,1);
\filldraw[black] (26,0) rectangle (27,1);
\draw[black](27,0)rectangle(28,1);
\filldraw[black](28,0)rectangle(29,1);
\draw[black](29,0)rectangle(30,1);
\filldraw[black](30,0)rectangle(31,1);
\filldraw[red](19,-1)rectangle(20,0);
\draw[black](20,-1)rectangle(21,0);
\filldraw[black](21,-1) rectangle (22,0);
\draw[black](22,-1) rectangle (23,0);
\filldraw[black] (23,-1) rectangle (24,0);
\draw[black] (24,-1) rectangle (25,0);
\filldraw[black] (25,-1) rectangle (26,0);
\draw[black] (26,-1) rectangle (27,0);
\filldraw[black] (27,-1) rectangle (28,0);
\draw[black](28,-1) rectangle (29,0);
\filldraw[red](20,-2)rectangle(21,-1);
\draw[black] (21,-2) rectangle (22,-1);
\filldraw[black] (22,-2) rectangle (23,-1);
\draw[black] (23,-2) rectangle (24,-1);
\filldraw[black] (24,-2) rectangle (25,-1);
\draw[black] (25,-2) rectangle (26,-1);
\filldraw[black] (26,-2) rectangle (27,-1);
\draw[black] (27,-2) rectangle (28,-1);
\filldraw[black] (28,-2) rectangle (29,-1);
\filldraw[red](21,-3) rectangle (22,-2);
\draw[black] (22,-3) rectangle (23,-2);
\draw[black] (21,-4) rectangle (22,-3);
\filldraw[red] (22,-4) rectangle (23,-3);
\node (a) at (29.5,-6) [label=\small{$\begin{pmatrix}
6&4&4&0&0\\
-&-&-&0&0
\end{pmatrix}$}]{};
    \end{tikzpicture}
    \caption{An element of $\text{GFP}_{D_2,D_2,-1}(15)$ and its image in $\text{GFP}_{D_2,D_2,3}(19)$ }
\end{subfigure}
    \caption{The bijections $\phi_{2,15}^e$ and $\phi_{1,15}^o$.}
    \label{bijections from 0 offset to 4}
\end{figure}

\section{Specialising to certain models} \label{specialising section}
In this section we specialise the work of Section \ref{general section} to the ASEP$(q,1)$, 2-exclusion and 3-state models. For each of these models the three variable identities in Theorem \ref{main} will specialise to give two variable identities related to other known Jacobi style identities of combinatorial significance.

\subsection{ASEP$(q,1)$}~
\par We will consider first the asymmetric simple exclusion process ASEP$(q,1)$ on $\Omega$, as described by Redig et al. in \cite{redig}. This has asymmetry parameter $0<q< 1$ and jump rates given by

$$p(\eta_i,\eta_{i+1})=q^{\eta_i-\eta_{i+1}-3}[\eta_i]_q[2-\eta_{i+1}]_q \hspace{5mm} \textrm{and} \hspace{5mm} q(\eta_i,\eta_{i+1})=q^{\eta_i-\eta_{i+1}+3}[2-\eta_i]_q[\eta_{i+1}]_q.$$ Here $[n]_q:=\frac{q^n-q^{-n}}{q-q^{-1}}$, a  $q$-deformation of $n$ (we have that $[n]_q\rightarrow n$ as $q\rightarrow 1$ ). 

The above rates might look strange at first sight but they are carefully constructed in order to equip the corresponding process with ``$\mathcal{U}_q(\mathfrak{gl}_2)$-symmetry" (here $\mathcal{U}_q(\mathfrak{gl}_2)$ is the quantum group associated to the Lie algebra $\mathfrak{gl}_2$). The original motivation for constructing processes with ``$\mathcal{U}_q(\mathfrak{g})$-symmetry" (for certain Lie algebras $\mathfrak{g}$) was the realisation that such processes have many explicit self-duality functions. The Markov generators for such processes are built by applying ground state transformations to quantum Hamiltonians associated to certain tensor products of representations of $\mathcal{U}_q(\mathfrak{g})$. Roughly speaking the dimension of the representation relates to the number of particles allowed at a site, the dimension of $\mathfrak{g}$ relates to the number of species of particle and tensors let us describe multi-site states. The Quantum Hamiltonian determines the jump rates. In the case of $\mathfrak{g}=\mathfrak{gl}_2$ the representations are the spin $2j$ representations for $j\in\frac{1}{2}\mathbb{Z}$ (up to twist) and so one gets a family ASEP$(q,j)$ of processes, each allowing up to $2j$ particles at a site. When $j=\frac{1}{2}$ we recover classical ASEP and when $j=1$ we get the process described above.

\par We show that ASEP$(q,1)$ is a member of the blocking family. Conditions $(B1)$ and $(B2)$ clearly hold and the jump rates satisfy: 
\begin{enumerate}[(a)]
    \item $p(y,z)>q(z,y)$ for all $y\in\{1,2\}$ and $z\in\{0,1\},$
    \item $\frac{p(1,0)}{q(0,1)}=q^{-4}=\frac{p(2,1)}{q(1,2)},$
    \item $\frac{p(1,0)p(2,1)q(1,1)q(0,2)}{q(0,1)q(1,2)p(2,0)p(1,1)}=q^{-8}\cdot\frac{q^4[2]_q^2}{ q^{-4}[2]_q^2}=1$.
\end{enumerate} 
\par \noindent Thus $(B3)$ is satisfied with the constants, 
  $$p_{\textrm{asym}}=\frac{q^{-2}[2]_q}{[2]_q\left(q^{-2}+q^2\right)}=\frac{q^{-4}}{1+q^{-4}} \hspace{5mm} \textrm{ and } \hspace{5mm} q_{\textrm{asym}}=\frac{q^2[2]_q}{[2]_q\left(q^{-2}+q^2\right)}=\frac{1}{1+q^{-4}}$$
\par \noindent and the functions
$$f(z)= \frac{[z]_q}{[3-z]_q} \hspace{10mm} s(y,z) = \frac{q^{y-z-2}(1+q^{-4})[3-y]_q[3-z]_q}{q^{-4}}.$$

\par In this case $\Tilde{q}=q^4,t=[2]_q$ and we have a one parameter family of product blocking measures of the form
$$\underline{\mu}^c(\underline{\eta})=\prod\limits_{i=-\infty}^0\frac{[2]_q^{\eta_i(2-\eta_i)}q^{-4\eta_i(i-c)}}{(1+[2]_q q^{-4(i-c)}+q^{-8(i-c)})}\prod\limits_{i=1}^\infty\frac{[2]_q^{\eta_i(2-\eta_i)}q^{4(2-\eta_i)(i-c)}}{(1+[2]_qq^{4(i-c)}+q^{8(i-c)})}.$$
\par Substituting the values of $\tilde{q}$ and $t$ into Theorem \ref{main} gives
\begin{align*}
  2\sum\limits_{\ell \in \mathbb{Z}} S_\textrm{even}(q^4,[2]_q)q^{4\ell(\ell+1)}z^{2\ell} &=
  \prod\limits_{i\geq1}(1+[2]_qq^{4i}z+q^{8i}z^2)(1+[2]_qq^{4(i-1)}z^{-1}+q^{8(i-1)}z^{-2})\\
  &+\prod\limits_{i\geq1}(1-[2]_qq^{4i}z+q^{8i}z^2)(1-[2]_qq^{4(i-1)}z^{-1}+q^{8(i-1)}z^{-2}))\\
  \\
    2[2]_q\sum\limits_{\ell\in \mathbb{Z}} S_\textrm{odd}(q^4,[2]_q)q^{4(\ell+1)^2}z^{2\ell +1}&=
 \prod\limits_{i\geq1}(1+[2]_qq^{4i}z+q^{8i}z^2)(1+[2]_qq^{4(i-1)}z^{-1}+q^{8(i-1)}z^{-2})\\
  &-\prod\limits_{i\geq1}(1-[2]_qq^{4i}z+q^{8i}z^2)(1-[2]_qq^{4(i-1)}z^{-1}+q^{8(i-1)}z^{-2})).
\end{align*}

Since $[2]_q = q+q^{-1}$ the quadratic terms on the RHS all factor and the products collapse as follows:

\begin{align*}
  &\prod\limits_{i\geq 1}(1+q^{4i-1}z)(1+q^{4i+1}z)(1+q^{4i-5}z^{-1})(1+q^{4i-3}z^{-1})\\&\pm\prod\limits_{i\geq 1}(1-q^{4i-1}z)(1-q^{4i+1}z)(1-q^{4i-5}z^{-1})(1-q^{4i-3}z^{-1})\\
  &=\frac{1+q^{-1}z^{-1}}{1+qz}\prod\limits_{i\geq1}(1+q^{2i-1}z)(1+q^{2i-1}z^{-1})\pm \frac{1-q^{-1}z^{-1}}{1-qz}\prod\limits_{i\geq 1}(1-q^{2i-1}z)(1-q^{2i-1}z^{-1})\\
  &=\frac{1}{qz}\left(\prod\limits_{i\geq1}(1+q^{2i-1}z)(1+q^{2i-1}z^{-1}) \mp \prod\limits_{i\geq1}(1-q^{2i-1}z)(1-q^{2i-1}z^{-1}) \right).
\end{align*}

We have therefore proved the following identities.
 
\begin{thm}\label{specialisation1}
\vspace{2mm}
\begin{align*}
   2\sum\limits_{\ell \in \mathbb{Z}}S_{\text{even}}(q^4,[2]_q)q^{(2\ell+1)^2}z^{2\ell+1}&=\prod\limits_{i\geq1}(1+q^{2i-1}z)(1+q^{2i-1}z^{-1})-\prod\limits_{i\geq1}(1-q^{2i-1}z)(1-q^{2i-1}z^{-1}) \\
    \\
      2(1+q^2)\sum\limits_{\ell \in \mathbb{Z}}S_{\text{odd}}(q^4,[2]_q)q^{(2\ell)^2}z^{2\ell}&=\prod\limits_{i\geq1}(1+q^{2i-1}z)(1+q^{2i-1}z^{-1})+\prod\limits_{i\geq1}(1-q^{2i-1}z)(1-q^{2i-1}z^{-1}).
\end{align*}
\end{thm}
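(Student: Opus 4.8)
The plan is to specialise the three-variable identities of Theorem \ref{main} directly, since ASEP$(q,1)$ has just been shown to be a blocking process with $\Tilde{q}=q^4$ and $t=[2]_q$. First I would substitute these two values into both lines of Theorem \ref{main}. This produces the pair of intermediate equalities displayed above, whose product sides are built from the quadratic factors $1\pm[2]_q q^{4i}z+q^{8i}z^2$ together with their $z^{-1}$-counterparts.

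The crucial observation is that the value $[2]_q=q+q^{-1}$ is exactly what makes each quadratic factor split into linear ones: one checks directly that
\[1\pm[2]_q q^{4i}z+q^{8i}z^2=(1\pm q^{4i-1}z)(1\pm q^{4i+1}z),\]
and likewise $1\pm[2]_q q^{4(i-1)}z^{-1}+q^{8(i-1)}z^{-2}=(1\pm q^{4i-5}z^{-1})(1\pm q^{4i-3}z^{-1})$. With each product side now written as a product of linear factors I would telescope. As $i$ ranges over $i\geq1$, the exponents $4i-1$ and $4i+1$ together run through every odd integer $\geq 3$, that is, all the exponents occurring in $\prod_{i\geq1}(1+q^{2i-1}z)$ save the $i=1$ term $qz$; similarly $4i-5$ and $4i-3$ run through every odd integer $\geq-1$, namely all those in $\prod_{i\geq1}(1+q^{2i-1}z^{-1})$ together with one extra factor $1+q^{-1}z^{-1}$. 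This recombination collapses each product to a Jacobi-triple-product-style product times the rational prefactor $\tfrac{1\pm q^{-1}z^{-1}}{1\pm qz}$, exactly as in the displayed computation preceding the statement.

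The final simplification is that $\tfrac{1+q^{-1}z^{-1}}{1+qz}=\tfrac{1}{qz}$ while $\tfrac{1-q^{-1}z^{-1}}{1-qz}=-\tfrac{1}{qz}$; this sign discrepancy is precisely what interchanges the two collapsed products between the $+$ and $-$ cases. Multiplying each of the two resulting identities through by $qz$ then yields the stated form. In the even case the constant prefactor stays $2$, the factor $z$ turns $z^{2\ell}$ into $z^{2\ell+1}$, and the factor $q$ completes the square through $q\cdot q^{4\ell(\ell+1)}=q^{(2\ell+1)^2}$, giving the first identity. In the odd case the factor $q$ is instead absorbed into the prefactor via $2[2]_q q=2(1+q^2)$, the factor $z$ turns $z^{2\ell+1}$ into $z^{2(\ell+1)}$, and $q^{4(\ell+1)^2}=q^{(2(\ell+1))^2}$ is already a perfect square, so after relabelling the summation index $\ell+1\mapsto\ell$ one obtains the second identity.

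I expect the only genuinely delicate point to be the bookkeeping in this last rearrangement: tracking how the leftover factor $1/(qz)$ flips the parity of the powers of $z$, so that $S_{\text{even}}$ (attached to even powers in Theorem \ref{main}) ends up multiplying odd powers here while $S_{\text{odd}}$ ends up multiplying even powers, and deciding in each case whether the stray $q$ completes the square in the $q$-exponent or is absorbed into the numerical prefactor. None of this is conceptually hard, but the sign conventions and the single index shift must be aligned exactly, which is where a slip would be most likely to occur.
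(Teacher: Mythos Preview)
Your proposal is correct and follows essentially the same approach as the paper: substitute $\Tilde{q}=q^4$, $t=[2]_q$ into Theorem~\ref{main}, factor the quadratics via $[2]_q=q+q^{-1}$, telescope the resulting linear factors into the Jacobi-type products with a leftover rational prefactor, then use $\frac{1\pm q^{-1}z^{-1}}{1\pm qz}=\pm\frac{1}{qz}$ and multiply through by $qz$. Your bookkeeping in the final step---including the sign flip that swaps which of $S_{\text{even}}$, $S_{\text{odd}}$ accompanies odd versus even powers of $z$, the completion of the square $q\cdot q^{4\ell(\ell+1)}=q^{(2\ell+1)^2}$, the absorption $2[2]_q\cdot q=2(1+q^2)$, and the index shift $\ell+1\mapsto\ell$---is exactly what the paper does.
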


The RHS of these identities should look familiar. The first term is part of the product side of the Jacobi triple product, as seen in the introduction (an alternative form is given in Section $3.3$). Indeed these two identities are isolating the odd/even terms respectively (as mentioned in Section $3.3$, this is the only reason for the sign changes on the RHS).

We can now proceed in two ways. If we assume the Jacobi triple product then Theorem \ref{specialisation1} proves the following closed forms for the specialised normalising factors:
\vspace{5mm} \begin{align*}S_{\text{even}}(q^4,[2]_q) &= \frac{1}{\prod_{j\geq 1}(1-q^{2j})} \quad(= f_{D_1,D_1,0}(q^2))\\
\\S_{\text{odd}}(q^4,[2]_q) &= \frac{1}{(1+q^2)\prod_{j\geq 1}(1-q^{2j})} \quad\left(= \frac{1}{(1+q^2)}f_{D_1,D_1,0}(q^2)\right)\end{align*}

\newpage On the other hand, if we were able to find probabilistic explanations for these two equalities then this would give a new probabilistic proof of the Jacobi triple product. However the authors were unable to find such an explanation. To elaborate further, ASEP$(q,1)$ is a particular $0$-$1$-$2$ system with blocking measure. However the above equalities suggest that this blocking measure (and the one for its stood up process) should be intimately related to that of the $0$-$1$ system ASEP$(q,\frac{1}{2})$ (i.e.\ classical ASEP) and its stood up process AZRP (which together prove the Jacobi triple product). Given the strange nature of the rates of ASEP$(q,1)$ it is not clear, at least to the authors, why this relationship should be expected.

We can give explicit combinatorial explanations for these equalities. Let us look at the even case first. We have already seen that $S_{\text{even}}(\Tilde{q},t) = f_{D_2,D_2,0}(\Tilde{q},t)$. Note that setting $\Tilde{q}=q^4$ and $t=q+q^{-1}$ sends $\Tilde{q}^nt^m$ to $q^{4n}(q+q^{-1})^m = q^{4n+m}+\binom{m}{1}q^{4n+(m-2)}+...+\binom{m}{m-1}q^{4n-(m-2)}+q^{4n-m}$ and so in order to prove the equality it suffices to show how elements of $\text{GFP}_{D_2,D_2,0,m}(n)$ can be used to construct $\binom{m}{i}$ unique ordinary partitions of $4n-(m-2i)$ into even parts for each $0\leq i\leq m$. Equivalently we must show how to obtain $\binom{m}{i}$ unique ordinary partitions of $2n-\frac{m-2i}{2}$. In order to do this we take the generalised Young diagram corresponding to such a GFP and for each square on the diagonal we colour in all intermediate white squares to the right and below. This almost creates a valid Young diagram but repeats in the rows of the GFP cause a problem, since the first entry in a repeat gives a row/column that is too short. To fix this we add an extra black square to the end of such rows/columns, creating a valid Young diagram for a partition of $2n-\frac{m}{2}$. The other required partitions are obtained by adding more dots to this Young diagram. The only rows/columns that we are guaranteed to add dots to and still get a valid Young diagram are those corresponding to distinct entries in the rows of the GFP. For each of the $\binom{m}{i}$ choices of $i$ such entries we can add a dot onto the corresponding rows/columns, giving unique partitions of $2n-\frac{m-2i}{2}$ (for each $0\leq i\leq m$). See Figure \ref{asep gfps even} for an example of the above construction (the colour grey is used to emphasise the squares that have been coloured black).

\begin{figure}[H]
    \centering
\begin{subfigure}[b]{0.75\textwidth}
    \centering
\begin{tikzpicture}[scale=0.35]
\filldraw[red] (0,0) rectangle (1,1);
\draw[black] (1,0) rectangle (2,1);
\filldraw[black] (2,0) rectangle (3,1);
\draw[black] (3,0) rectangle (4,1);
\filldraw [black] (4,0) rectangle (5,1);
\draw[black] (5,0) rectangle (6,1);
\filldraw[black] (6,0) rectangle (7,1);
\draw[black](0,-1) rectangle (1,0);
\filldraw[red] (1,-1) rectangle (2,0);
\draw[black] (2,-1) rectangle (3,0);
\filldraw[black] (3,-1) rectangle (4,0);
\draw[black](4,-1) rectangle (5,0);
\filldraw[black] (5,-1) rectangle (6,0);
\draw[black] (6,-1) rectangle (7,0);
\filldraw[black] (0,-2) rectangle (1,-1);
\draw[black] (1,-2) rectangle (2,-1);
\filldraw[red] (2,-2) rectangle (3,-1);
\draw[black] (3,-2) rectangle (4,-1);
\filldraw[black] (4,-2) rectangle (5,-1);
\draw[black] (5,-2) rectangle (6,-1);
\filldraw[black] (6,-2) rectangle (7,-1);
\draw[black] (0,-3) rectangle (1,-2);
\filldraw[black](1,-3) rectangle (2,-2);
\filldraw[black] (0,-4) rectangle (1,-3);
\draw[black] (1,-4) rectangle (2,-3);
\draw[black] (0,-5) rectangle (1,-4);
\filldraw[black] (1,-5) rectangle (2,-4);
\node(a) at (5,-6) [label=$\begin{pmatrix}
3&2&2\\
2&2&0
\end{pmatrix}$]{};
\node(a) at (9,-3) [label=$\longrightarrow$]{};

\filldraw[red] (13,0) rectangle (14,1);
\filldraw[lightgray] (14,0) rectangle (15,1);
\filldraw[black] (15,0) rectangle (16,1);
\filldraw[lightgray] (16,0) rectangle (17,1);
\filldraw [black] (17,0) rectangle (18,1);
\filldraw[lightgray] (18,0) rectangle (19,1);
\filldraw[black] (19,0) rectangle (20,1);
\filldraw[lightgray](13,-1) rectangle (14,0);
\filldraw[red] (14,-1) rectangle (15,0);
\filldraw[lightgray] (15,-1) rectangle (16,0);
\filldraw[black] (16,-1) rectangle (17,0);
\filldraw[lightgray](17,-1) rectangle (18,0);
\filldraw[black] (18,-1) rectangle (19,0);
\filldraw[lightgray] (19,-1) rectangle (20,0);
\filldraw[black] (13,-2) rectangle (14,-1);
\filldraw[lightgray] (14,-2) rectangle (15,-1);
\filldraw[red] (15,-2) rectangle (16,-1);
\filldraw[lightgray] (16,-2) rectangle (17,-1);
\filldraw[black] (17,-2) rectangle (18,-1);
\filldraw[lightgray] (18,-2) rectangle (19,-1);
\filldraw[black] (19,-2) rectangle (20,-1);
\filldraw[lightgray] (13,-3) rectangle (14,-2);
\filldraw[black](14,-3) rectangle (15,-2);
\filldraw[black] (13,-4) rectangle (14,-3);
\filldraw[lightgray] (14,-4) rectangle (15,-3);
\filldraw[lightgray](13,-5) rectangle (14,-4);
\filldraw[black] (14,-5) rectangle (15,-4);
\node(a) at (23,-3) [label=$\longrightarrow$]{};

\filldraw[red] (27,0.5) circle (5pt);
\filldraw[black] (28,0.5) circle (5pt);
\filldraw[black] (29,0.5) circle (5pt);
\filldraw[black] (30,0.5) circle (5pt);
\filldraw[black] (31,0.5) circle (5pt);
\filldraw[black] (32,0.5) circle (5pt);
\filldraw[black] (33,0.5) circle (5pt);
\filldraw[black] (27,-0.5) circle (5pt);
\filldraw[red] (28,-0.5) circle (5pt);
\filldraw[black] (29,-0.5) circle (5pt);
\filldraw[black] (30,-0.5) circle (5pt);
\filldraw[black] (31,-0.5) circle (5pt);
\filldraw[black] (32,-0.5) circle (5pt);
\filldraw[black] (33,-0.5) circle (5pt);
\filldraw[black] (27,-1.5) circle (5pt);
\filldraw[black] (28,-1.5) circle (5pt);
\filldraw[red] (29,-1.5) circle (5pt);
\filldraw[black] (30,-1.5) circle (5pt);
\filldraw[black] (31,-1.5) circle (5pt);
\filldraw[black] (32,-1.5) circle (5pt);
\filldraw[black] (33,-1.5) circle (5pt);
\filldraw[black] (27,-2.5) circle (5pt);
\filldraw[black] (28,-2.5) circle (5pt);
\filldraw[black] (27,-3.5) circle (5pt);
\filldraw[black] (28,-3.5) circle (5pt);
\filldraw[black] (27,-4.5) circle (5pt);
\filldraw[black] (28,-4.5) circle (5pt);
\end{tikzpicture}
\caption{An element of $\text{GFP}_{D_2,D_2,0,2}(14)$ and the associated partition of $2(14)-\frac{2}{2}=27$.}
\end{subfigure}
\vspace{5mm}
    
\begin{subfigure}[b]{0.75\textwidth}
    \centering
    \begin{tikzpicture}[scale=0.35]
    \filldraw[red] (0,0) circle (5pt);
    \filldraw[black] (1,0) circle (5pt);
    \filldraw[black] (2,0) circle (5pt);
    \filldraw[black] (3,0) circle (5pt);
    \filldraw[black] (4,0) circle (5pt);
    \filldraw[black] (5,0) circle (5pt);
    \filldraw[black] (6,0) circle (5pt);
    \filldraw[green] (7,0) circle (5pt);
    \filldraw[black] (0,-1) circle (5pt);
    \filldraw[red] (1,-1) circle (5pt);
    \filldraw[black] (2,-1) circle (5pt);
    \filldraw[black] (3,-1) circle (5pt);
    \filldraw[black] (4,-1) circle (5pt);
    \filldraw[black] (5,-1) circle (5pt);
    \filldraw[black] (6,-1) circle (5pt);
    \filldraw[black] (0,-2) circle (5pt);
    \filldraw[black] (1,-2) circle (5pt);
    \filldraw[red] (2,-2) circle (5pt);
    \filldraw[black] (3,-2) circle (5pt);
    \filldraw[black] (4,-2) circle (5pt);
    \filldraw[black] (5,-2) circle (5pt);
    \filldraw[black] (6,-2) circle (5pt);
     \filldraw[black] (0,-3) circle (5pt);
    \filldraw[black] (1,-3) circle (5pt);
     \filldraw[black] (0,-4) circle (5pt);
    \filldraw[black] (1,-4) circle (5pt);
     \filldraw[black] (0,-5) circle (5pt);
    \filldraw[black] (1,-5) circle (5pt);
    \node(a) at (4,-5) [label=$\{a_1\}$]{};
    
        \filldraw[red] (13,0) circle (5pt);
    \filldraw[black] (14,0) circle (5pt);
    \filldraw[black] (15,0) circle (5pt);
    \filldraw[black] (16,0) circle (5pt);
    \filldraw[black] (17,0) circle (5pt);
    \filldraw[black] (18,0) circle (5pt);
    \filldraw[black] (19,0) circle (5pt);
    \filldraw[green] (15,-3) circle (5pt);
    \filldraw[black] (13,-1) circle (5pt);
    \filldraw[red] (14,-1) circle (5pt);
    \filldraw[black] (15,-1) circle (5pt);
    \filldraw[black] (16,-1) circle (5pt);
    \filldraw[black] (17,-1) circle (5pt);
    \filldraw[black] (18,-1) circle (5pt);
    \filldraw[black] (19,-1) circle (5pt);
    \filldraw[black] (13,-2) circle (5pt);
    \filldraw[black] (14,-2) circle (5pt);
    \filldraw[red] (15,-2) circle (5pt);
    \filldraw[black] (16,-2) circle (5pt);
    \filldraw[black] (17,-2) circle (5pt);
    \filldraw[black] (18,-2) circle (5pt);
    \filldraw[black] (19,-2) circle (5pt);
     \filldraw[black] (13,-3) circle (5pt);
    \filldraw[black] (14,-3) circle (5pt);
     \filldraw[black] (13,-4) circle (5pt);
    \filldraw[black] (14,-4) circle (5pt);
     \filldraw[black] (13,-5) circle (5pt);
    \filldraw[black] (14,-5) circle (5pt);
    \node(a) at (17,-5) [label=$\{b_3\}$]{};
    
        \filldraw[red] (26,0) circle (5pt);
    \filldraw[black] (27,0) circle (5pt);
    \filldraw[black] (28,0) circle (5pt);
    \filldraw[black] (29,0) circle (5pt);
    \filldraw[black] (30,0) circle (5pt);
    \filldraw[black] (31,0) circle (5pt);
    \filldraw[black] (32,0) circle (5pt);
    \filldraw[green] (33,0) circle (5pt);
    \filldraw[black] (26,-1) circle (5pt);
    \filldraw[red] (27,-1) circle (5pt);
    \filldraw[black] (28,-1) circle (5pt);
    \filldraw[black] (29,-1) circle (5pt);
    \filldraw[black] (30,-1) circle (5pt);
    \filldraw[black] (31,-1) circle (5pt);
    \filldraw[black] (32,-1) circle (5pt);
    \filldraw[black] (26,-2) circle (5pt);
    \filldraw[black] (27,-2) circle (5pt);
    \filldraw[red] (28,-2) circle (5pt);
    \filldraw[black] (29,-2) circle (5pt);
    \filldraw[black] (30,-2) circle (5pt);
    \filldraw[black] (31,-2) circle (5pt);
    \filldraw[black] (32,-2) circle (5pt);
     \filldraw[black] (26,-3) circle (5pt);
    \filldraw[black] (27,-3) circle (5pt);
    \filldraw[green](28,-3) circle (5pt);
     \filldraw[black] (26,-4) circle (5pt);
    \filldraw[black] (27,-4) circle (5pt);
     \filldraw[black] (26,-5) circle (5pt);
    \filldraw[black] (27,-5) circle (5pt);
    \node(a) at (30,-5) [label=$\{a_1 \textrm{,} b_3\}$]{};
    \end{tikzpicture}
    \vspace{1mm}
    \caption{Partitions of 28 and 29 coming from subsets of distinct entries of the GFP.}
    \end{subfigure}
    \caption{Partitions of 27, 28 and 29 coming from an element of $\text{GFP}_{D_2,D_2,0,2}(14)$.}
    \label{asep gfps even}
\end{figure}
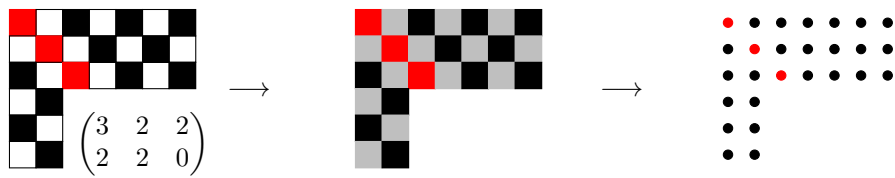
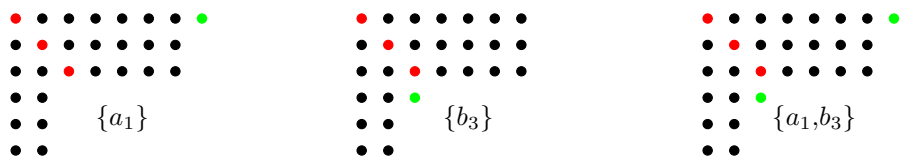
The odd offset case is similar. We have already seen that $tS_{\text{odd}}(\Tilde{q},t)=f_{D_2,D_2,-1}(\Tilde{q},t)$. Setting $\Tilde{q}=q^4$ and $t=q+q^{-1}$ sends $\Tilde{q}^nt^m$ to $q^{4n+m}+\binom{m}{1}q^{4n+(m-2)}+...+\binom{m}{m-1}q^{4n-(m-2)}+q^{4n-m}$ as before. Multiplying by an extra $q$ (to make the factor $(1+q^2)$ on the LHS) gives $q^{4n+(m+1)}+\binom{m}{1}q^{4n+(m-1)}+...+\binom{m}{m-1}q^{4n-(m-3)}+q^{4n-(m-1)}$. In order to prove the equality it suffices to show how elements of $\text{GFP}_{D_2,D_2,0,-1}(n)$ can be used to construct $\binom{m}{i}$ unique ordinary partitions of $4n-(m-1-2i)$ into even parts for each $0\leq i\leq m$. Equivalently we must show how to obtain $\binom{m}{i}$ unique ordinary partitions of $2n-\frac{m-1-2i}{2}$. We do this in the same way as the even case, but being careful to also colour the intermediate squares in the first column and add a dot to the first column of the Young diagram if $b_1$ is a distinct entry in row $2$ of the GFP (the first column doesn't correspond to a diagonal square). See Figure \ref{asep gfps odd} for an example.

\newpage \begin{figure}[H]
    \centering
\begin{subfigure}[b]{0.75\textwidth}
    \centering
\begin{tikzpicture}[scale=0.35]
\draw[red] (0,1) rectangle (1,2);
\draw[black] (0,0) rectangle (1,1);
\filldraw[red] (1,0) rectangle (2,1);
\draw[black] (2,0) rectangle (3,1);
\filldraw[black] (3,0) rectangle (4,1);
\draw[black] (4,0) rectangle (5,1);
\filldraw[black] (5,0) rectangle (6,1);
\draw[black](6,0)rectangle(7,1);
\filldraw[black](7,0)rectangle(8,1);
\draw[black](8,0)rectangle(9,1);
\filldraw[black](9,0)rectangle(10,1);
\filldraw[black](0,-1) rectangle (1,0);
\draw[black](1,-1) rectangle (2,0);
\filldraw[red] (2,-1) rectangle (3,0);
\draw[black] (3,-1) rectangle (4,0);
\filldraw[black] (4,-1) rectangle (5,0);
\draw[black] (5,-1) rectangle (6,0);
\filldraw[black] (6,-1) rectangle (7,0);
\draw[black](7,-1) rectangle (8,0);
\draw[black] (0,-2) rectangle (1,-1);
\filldraw[black] (1,-2) rectangle (2,-1);
\draw[black] (2,-2) rectangle (3,-1);
\filldraw[red] (3,-2) rectangle (4,-1);
\draw[black] (4,-2) rectangle (5,-1);
\filldraw[black] (5,-2) rectangle (6,-1);
\draw[black] (6,-2) rectangle (7,-1);
\filldraw[black] (7,-2) rectangle (8,-1);
\filldraw[black](0,-3) rectangle (1,-2);
\draw[black] (1,-3) rectangle (2,-2);
\draw[black] (0,-4) rectangle (1,-3);
\filldraw[black] (1,-4) rectangle (2,-3);
\filldraw[black](0,-5) rectangle (1,-4);
\draw[black](1,-5) rectangle (2,-4);
\node (a) at (6,-5.5) [label=\small{$\begin{pmatrix}
-&4&2&2\\
3&2&0&0
\end{pmatrix}$}]{};
\node(a) at (11,-3) [label=$\longrightarrow$]{};

\draw[red] (13,1) rectangle (14,2);
\filldraw[lightgray] (13,0) rectangle (14,1);
\filldraw[red] (14,0) rectangle (15,1);
\filldraw[lightgray] (15,0) rectangle (16,1);
\filldraw[black] (16,0) rectangle (17,1);
\filldraw[lightgray] (17,0) rectangle (18,1);
\filldraw[black] (18,0) rectangle (19,1);
\filldraw[lightgray](19,0)rectangle(20,1);
\filldraw[black](20,0)rectangle(21,1);
\filldraw[lightgray](21,0)rectangle(22,1);
\filldraw[black](22,0)rectangle(23,1);
\filldraw[black](13,-1) rectangle (14,0);
\filldraw[lightgray](14,-1) rectangle (15,0);
\filldraw[red] (15,-1) rectangle (16,0);
\filldraw[lightgray] (16,-1) rectangle (17,0);
\filldraw[black] (17,-1) rectangle (18,0);
\filldraw[lightgray] (18,-1) rectangle (19,0);
\filldraw[black] (19,-1) rectangle (20,0);
\filldraw[lightgray](20,-1) rectangle (21,0);
\filldraw[lightgray] (13,-2) rectangle (14,-1);
\filldraw[black] (14,-2) rectangle (15,-1);
\filldraw[lightgray] (15,-2) rectangle (16,-1);
\filldraw[red] (16,-2) rectangle (17,-1);
\filldraw[lightgray] (17,-2) rectangle (18,-1);
\filldraw[black] (18,-2) rectangle (19,-1);
\filldraw[lightgray] (19,-2) rectangle (20,-1);
\filldraw[black] (20,-2) rectangle (21,-1);
\filldraw[black](13,-3) rectangle (14,-2);
\filldraw[lightgray] (14,-3) rectangle (15,-2);
\filldraw[lightgray] (13,-4) rectangle (14,-3);
\filldraw[black] (14,-4) rectangle (15,-3);
\filldraw[black](13,-5) rectangle (14,-4);

\node(a) at (25,-3) [label=$\longrightarrow$]{};

\filldraw[black] (27,0.5) circle (5pt);
\filldraw[red] (28,0.5) circle (5pt);
\filldraw[black] (29,0.5) circle (5pt);
\filldraw[black] (30,0.5) circle (5pt);
\filldraw[black] (31,0.5) circle (5pt);
\filldraw[black] (32,0.5) circle (5pt);
\filldraw[black] (33,0.5) circle (5pt);
\filldraw[black] (34,0.5) circle (5pt);
\filldraw[black] (35,0.5) circle (5pt);
\filldraw[black] (36,0.5) circle (5pt);
\filldraw[black] (27,-0.5) circle (5pt);
\filldraw[black] (28,-0.5) circle (5pt);
\filldraw[red] (29,-0.5) circle (5pt);
\filldraw[black] (30,-0.5) circle (5pt);
\filldraw[black] (31,-0.5) circle (5pt);
\filldraw[black] (32,-0.5) circle (5pt);
\filldraw[black] (33,-0.5) circle (5pt);
\filldraw[black] (34,-0.5) circle (5pt);
\filldraw[black] (27,-1.5) circle (5pt);
\filldraw[black] (28,-1.5) circle (5pt);
\filldraw[black] (29,-1.5) circle (5pt);
\filldraw[red] (30,-1.5) circle (5pt);
\filldraw[black] (31,-1.5) circle (5pt);
\filldraw[black] (32,-1.5) circle (5pt);
\filldraw[black] (33,-1.5) circle (5pt);
\filldraw[black] (34,-1.5) circle (5pt);
\filldraw[black] (27,-2.5) circle (5pt);
\filldraw[black] (28,-2.5) circle (5pt);
\filldraw[black] (27,-3.5) circle (5pt);
\filldraw[black] (28,-3.5) circle (5pt);
\filldraw[black] (27,-4.5) circle (5pt);
\end{tikzpicture}
\caption{An element of $\text{GFP}_{D_2,D_2,-1,3}(16)$ and the associated partition of $2(16)-\frac{3-1}{2}=31$.}
\end{subfigure}
\vspace{5mm}
    
\begin{subfigure}[b]{0.75\textwidth}
    \centering
    \begin{tikzpicture}[scale=0.275]
    \filldraw[black] (-7,0) circle (5pt);
    \filldraw[red] (-6,0) circle (5pt);
    \filldraw[black] (-5,0) circle (5pt);
    \filldraw[black] (-4,0) circle (5pt);
    \filldraw[black] (-3,0) circle (5pt);
    \filldraw[black] (-2,0) circle (5pt);
    \filldraw[black] (-1,0) circle (5pt);
    \filldraw[black] (0,0) circle (5pt);
    \filldraw[black] (1,0) circle (5pt);
    \filldraw[black] (2,0) circle (5pt);
    \filldraw[green] (3,0) circle (5pt);
    \filldraw[black] (-7,-1) circle (5pt);
    \filldraw[black] (-6,-1) circle (5pt);
    \filldraw[red] (-5,-1) circle (5pt);
    \filldraw[black] (-4,-1) circle (5pt);
    \filldraw[black] (-3,-1) circle (5pt);
    \filldraw[black] (-2,-1) circle (5pt);
    \filldraw[black] (-1,-1) circle (5pt);
    \filldraw[black] (0,-1) circle (5pt);
    \filldraw[black] (-7,-2) circle (5pt);
    \filldraw[black] (-6,-2) circle (5pt);
    \filldraw[black] (-5,-2) circle (5pt);
    \filldraw[red] (-4,-2) circle (5pt);
    \filldraw[black] (-3,-2) circle (5pt);
    \filldraw[black] (-2,-2) circle (5pt);
    \filldraw[black] (-1,-2) circle (5pt);
    \filldraw[black] (0,-2) circle (5pt);
     \filldraw[black] (-7,-3) circle (5pt);
    \filldraw[black] (-6,-3) circle (5pt);
     \filldraw[black] (-7,-4) circle (5pt);
    \filldraw[black] (-6,-4) circle (5pt);
     \filldraw[black] (-7,-5) circle (5pt);
    \node(a) at (-3,-6) [label=$\{a_1\}$]{};
    
    \filldraw[black] (6,0) circle (5pt);
    \filldraw[red] (7,0) circle (5pt);
    \filldraw[black] (8,0) circle (5pt);
    \filldraw[black] (9,0) circle (5pt);
    \filldraw[black] (10,0) circle (5pt);
    \filldraw[black] (11,0) circle (5pt);
    \filldraw[black] (12,0) circle (5pt);
    \filldraw[black] (13,0) circle (5pt);
    \filldraw[black] (14,0) circle (5pt);
    \filldraw[black] (15,0) circle (5pt);
    \filldraw[black] (6,-1) circle (5pt);
    \filldraw[black] (7,-1) circle (5pt);
    \filldraw[red] (8,-1) circle (5pt);
    \filldraw[black] (9,-1) circle (5pt);
    \filldraw[black] (10,-1) circle (5pt);
    \filldraw[black] (11,-1) circle (5pt);
    \filldraw[black] (12,-1) circle (5pt);
    \filldraw[black] (13,-1) circle (5pt);
    \filldraw[black] (6,-2) circle (5pt);
    \filldraw[black] (7,-2) circle (5pt);
    \filldraw[black] (8,-2) circle (5pt);
    \filldraw[red] (9,-2) circle (5pt);
    \filldraw[black] (10,-2) circle (5pt);
    \filldraw[black] (11,-2) circle (5pt);
    \filldraw[black] (12,-2) circle (5pt);
    \filldraw[black] (13,-2) circle (5pt);
     \filldraw[black] (6,-3) circle (5pt);
    \filldraw[black] (7,-3) circle (5pt);
     \filldraw[black] (6,-4) circle (5pt);
    \filldraw[black] (7,-4) circle (5pt);
     \filldraw[black] (6,-5) circle (5pt);
     \filldraw[green] (6,-6) circle (5pt);
    \node(a) at (10,-6) [label=$\{b_1\}$]{};
    
    \filldraw[black] (18,0) circle (5pt);
    \filldraw[red] (19,0) circle (5pt);
    \filldraw[black] (20,0) circle (5pt);
    \filldraw[black] (21,0) circle (5pt);
    \filldraw[black] (22,0) circle (5pt);
    \filldraw[black] (23,0) circle (5pt);
    \filldraw[black] (24,0) circle (5pt);
    \filldraw[black] (25,0) circle (5pt);
    \filldraw[black] (26,0) circle (5pt);
    \filldraw[black] (27,0) circle (5pt);
    \filldraw[black] (18,-1) circle (5pt);
    \filldraw[black] (19,-1) circle (5pt);
    \filldraw[red] (20,-1) circle (5pt);
    \filldraw[black] (21,-1) circle (5pt);
    \filldraw[black] (22,-1) circle (5pt);
    \filldraw[black] (23,-1) circle (5pt);
    \filldraw[black] (24,-1) circle (5pt);
    \filldraw[black] (25,-1) circle (5pt);
    \filldraw[black] (18,-2) circle (5pt);
    \filldraw[black] (19,-2) circle (5pt);
    \filldraw[black] (20,-2) circle (5pt);
    \filldraw[red] (21,-2) circle (5pt);
    \filldraw[black] (22,-2) circle (5pt);
    \filldraw[black] (23,-2) circle (5pt);
    \filldraw[black] (24,-2) circle (5pt);
    \filldraw[black] (25,-2) circle (5pt);
     \filldraw[black] (18,-3) circle (5pt);
    \filldraw[black] (19,-3) circle (5pt);
     \filldraw[black] (18,-4) circle (5pt);
    \filldraw[black] (19,-4) circle (5pt);
     \filldraw[black] (18,-5) circle (5pt);
     \filldraw[green] (19,-5) circle (5pt);
    \node(a) at (22,-6) [label=$\{b_2\}$]{};
    
    \filldraw[black] (31,0) circle (5pt);
    \filldraw[red] (32,0) circle (5pt);
    \filldraw[black] (33,0) circle (5pt);
    \filldraw[black] (34,0) circle (5pt);
    \filldraw[black] (35,0) circle (5pt);
    \filldraw[black] (36,0) circle (5pt);
    \filldraw[black] (37,0) circle (5pt);
    \filldraw[black] (38,0) circle (5pt);
    \filldraw[black] (39,0) circle (5pt);
    \filldraw[black] (40,0) circle (5pt);
    \filldraw[green] (41,0) circle (5pt);
    \filldraw[black] (31,-1) circle (5pt);
    \filldraw[black] (32,-1) circle (5pt);
    \filldraw[red] (33,-1) circle (5pt);
    \filldraw[black] (34,-1) circle (5pt);
    \filldraw[black] (35,-1) circle (5pt);
    \filldraw[black] (36,-1) circle (5pt);
    \filldraw[black] (37,-1) circle (5pt);
    \filldraw[black] (38,-1) circle (5pt);
    \filldraw[black] (31,-2) circle (5pt);
    \filldraw[black] (32,-2) circle (5pt);
    \filldraw[black] (33,-2) circle (5pt);
    \filldraw[red] (34,-2) circle (5pt);
    \filldraw[black] (35,-2) circle (5pt);
    \filldraw[black] (36,-2) circle (5pt);
    \filldraw[black] (37,-2) circle (5pt);
    \filldraw[black] (38,-2) circle (5pt);
     \filldraw[black] (31,-3) circle (5pt);
    \filldraw[black] (32,-3) circle (5pt);
     \filldraw[black] (31,-4) circle (5pt);
    \filldraw[black] (32,-4) circle (5pt);
     \filldraw[black] (31,-5) circle (5pt);
     \filldraw[green] (31,-6) circle (5pt);
    \node(a) at (35,-6) [label=$\{a_1 \textrm{,} b_1\}$]{};
    
    \filldraw[black] (0,-10) circle (5pt);
    \filldraw[red] (1,-10) circle (5pt);
    \filldraw[black] (2,-10) circle (5pt);
    \filldraw[black] (3,-10) circle (5pt);
    \filldraw[black] (4,-10) circle (5pt);
    \filldraw[black] (5,-10) circle (5pt);
    \filldraw[black] (6,-10) circle (5pt);
    \filldraw[black] (7,-10) circle (5pt);
    \filldraw[black] (8,-10) circle (5pt);
    \filldraw[black] (9,-10) circle (5pt);
    \filldraw[green] (10,-10) circle (5pt);
    \filldraw[black] (0,-11) circle (5pt);
    \filldraw[black] (1,-11) circle (5pt);
    \filldraw[red] (2,-11) circle (5pt);
    \filldraw[black] (3,-11) circle (5pt);
    \filldraw[black] (4,-11) circle (5pt);
    \filldraw[black] (5,-11) circle (5pt);
    \filldraw[black] (6,-11) circle (5pt);
    \filldraw[black] (7,-11) circle (5pt);
    \filldraw[black] (0,-12) circle (5pt);
    \filldraw[black] (1,-12) circle (5pt);
    \filldraw[black] (2,-12) circle (5pt);
    \filldraw[red] (3,-12) circle (5pt);
    \filldraw[black] (4,-12) circle (5pt);
    \filldraw[black] (5,-12) circle (5pt);
    \filldraw[black] (6,-12) circle (5pt);
    \filldraw[black] (7,-12) circle (5pt);
     \filldraw[black] (0,-13) circle (5pt);
    \filldraw[black] (1,-13) circle (5pt);
     \filldraw[black] (0,-14) circle (5pt);
    \filldraw[black] (1,-14) circle (5pt);
     \filldraw[black] (0,-15) circle (5pt);
     \filldraw[green](1,-15) circle (5pt);
    \node(a) at (4,-16) [label=$\{a_1 \textrm{,} b_2\}$]{};
    
    \filldraw[black] (13,-10) circle (5pt);
    \filldraw[red] (14,-10) circle (5pt);
    \filldraw[black] (15,-10) circle (5pt);
    \filldraw[black] (16,-10) circle (5pt);
    \filldraw[black] (17,-10) circle (5pt);
    \filldraw[black] (18,-10) circle (5pt);
    \filldraw[black] (19,-10) circle (5pt);
    \filldraw[black] (20,-10) circle (5pt);
    \filldraw[black] (21,-10) circle (5pt);
    \filldraw[black] (22,-10) circle (5pt);
    \filldraw[black] (13,-11) circle (5pt);
    \filldraw[black] (14,-11) circle (5pt);
    \filldraw[red] (15,-11) circle (5pt);
    \filldraw[black] (16,-11) circle (5pt);
    \filldraw[black] (17,-11) circle (5pt);
    \filldraw[black] (18,-11) circle (5pt);
    \filldraw[black] (19,-11) circle (5pt);
    \filldraw[black] (20,-11) circle (5pt);
    \filldraw[black] (13,-12) circle (5pt);
    \filldraw[black] (14,-12) circle (5pt);
    \filldraw[black] (15,-12) circle (5pt);
    \filldraw[red] (16,-12) circle (5pt);
    \filldraw[black] (17,-12) circle (5pt);
    \filldraw[black] (18,-12) circle (5pt);
    \filldraw[black] (19,-12) circle (5pt);
    \filldraw[black] (20,-12) circle (5pt);
     \filldraw[black] (13,-13) circle (5pt);
    \filldraw[black] (14,-13) circle (5pt);
     \filldraw[black] (13,-14) circle (5pt);
    \filldraw[black] (14,-14) circle (5pt);
     \filldraw[black] (13,-15) circle (5pt);
     \filldraw[green] (14,-15) circle (5pt);
     \filldraw[green] (13,-16) circle (5pt);
    \node(a) at (17,-16) [label=$\{b_1 \textrm{,} b_2\}$]{};
    
    \filldraw[black] (26,-10) circle (5pt);
    \filldraw[red] (27,-10) circle (5pt);
    \filldraw[black] (28,-10) circle (5pt);
    \filldraw[black] (29,-10) circle (5pt);
    \filldraw[black] (30,-10) circle (5pt);
    \filldraw[black] (31,-10) circle (5pt);
    \filldraw[black] (32,-10) circle (5pt);
    \filldraw[black] (33,-10) circle (5pt);
    \filldraw[black] (34,-10) circle (5pt);
    \filldraw[black] (35,-10) circle (5pt);
    \filldraw[green] (36,-10) circle (5pt);
    \filldraw[black] (26,-11) circle (5pt);
    \filldraw[black] (27,-11) circle (5pt);
    \filldraw[red] (28,-11) circle (5pt);
    \filldraw[black] (29,-11) circle (5pt);
    \filldraw[black] (30,-11) circle (5pt);
    \filldraw[black] (31,-11) circle (5pt);
    \filldraw[black] (32,-11) circle (5pt);
    \filldraw[black] (33,-11) circle (5pt);
    \filldraw[black] (26,-12) circle (5pt);
    \filldraw[black] (27,-12) circle (5pt);
    \filldraw[black] (28,-12) circle (5pt);
    \filldraw[red] (29,-12) circle (5pt);
    \filldraw[black] (30,-12) circle (5pt);
    \filldraw[black] (31,-12) circle (5pt);
    \filldraw[black] (32,-12) circle (5pt);
    \filldraw[black] (33,-12) circle (5pt);
     \filldraw[black] (26,-13) circle (5pt);
    \filldraw[black] (27,-13) circle (5pt);
     \filldraw[black] (26,-14) circle (5pt);
    \filldraw[black] (27,-14) circle (5pt);
     \filldraw[black] (26,-15) circle (5pt);
     \filldraw[green] (27,-15) circle (5pt);
     \filldraw[green] (26,-16) circle (5pt);
    \node(a) at (31,-16) [label=$\{a_1 \textrm{,} b_1 \textrm{,} b_2\}$]{};
    
    \end{tikzpicture}
    \vspace{1mm}
    \caption{Partitions of 32, 33 and 34 coming from subsets of distinct entries of the GFP.}
    \end{subfigure}
    \caption{Partitions of 31, 32, 33 and 34 coming from an element of $\text{GFP}_{D_2,D_2,-1,3}(16)$.}
    \label{asep gfps odd}
\end{figure}
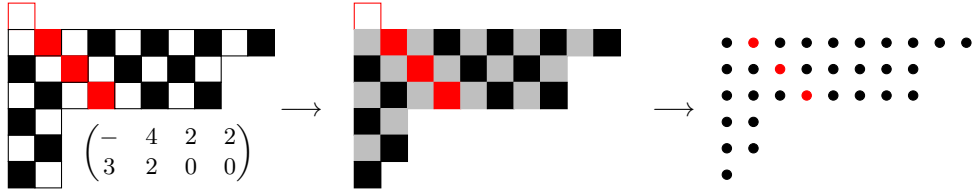
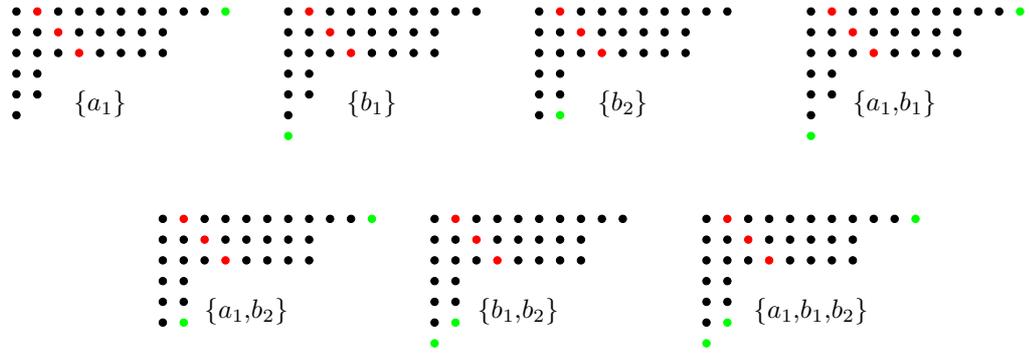
 
\subsection{Asymmetric particle-antiparticle exclusion process (3-state model)}~
\par We now consider the asymmetric particle-antiparticle process with $I=\{-1,0,1\}, \Lambda=\mathbb{Z}$, asymmetry parameter $0<q<1$, annihilation parameters $a,a'>0$, creation parameters $\gamma,\gamma' >0$ and jump rates as follows: 
\begin{itemize}
    \item Particles ($\eta_i=1$) jump left with rate $q$ or right with rate $1$ to empty sites,
    \item Antiparticles ($\eta_i=-1$) jump left with rate $1$ or right with rate $q$ to empty sites,
    \item A particle-antiparticle neighbouring pair can be annihilated with rate $a$ if $\eta_i=1$ and $\eta_{i+1}=-1$ or rate $a'q$ if $\eta_i=-1$ and $\eta_{i+1}=1$,
    \item A particle-antiparticle pair can be created at two neighbouring empty sites with rate $\gamma $ if $\eta_i^{(i,i+1)}=-1$ and $\eta_{i+1}^{(i,i+1)}=1$ or rate $\gamma'q$ if $\eta_i^{(i,i+1)}=1$ and $\eta_{i+1}^{(i,i+1)}=-1$.
\end{itemize}
\begin{remark}
This process has been studied in the literature (see \cite{3 state} for example) and it is known that the system can only have an i.i.d stationary distribution when the annihilation rate is twice the jump rate of a particle, i.e.\ when $a=a'=2$. We will consider this case from now on.
\end{remark}
\par \noindent We can view the $3$-state model as a process on $\Omega$, with annihilation being a jump from a two particle site to an empty site and creation a jump from a one particle site to another one particle site. When thought of in this way we have the following jump rates:

\begin{table}[H]
\centering
\begin{tabular}{cc}
\begin{tabular}{|c||c|c|c|}
         \hline
         & $\eta_{i+1}=0$ & $\eta_{i+1}=1$ & $\eta_{i+1}=2$\\
         \hline \hline
        $\eta_i=0$ & $0$ & $0$ &$0$ \\
        \hline
        $\eta_i=1$ & $1$ & $\gamma $ & $0$\\
        \hline
        $\eta_i=2$ & $2$ & $1$ & $0$ \\
        \hline
    \end{tabular}&
    \begin{tabular}{|c||c|c|c|}
         \hline
         & $\eta_{i+1}=0$ & $\eta_{i+1}=1$ & $\eta_{i+1}=2$\\
         \hline \hline
        $\eta_i=0$ & $0$ & $q$ &$2q$ \\
        \hline
        $\eta_i=1$ & $0$ & $\gamma'q$ & $q$\\
        \hline
        $\eta_i=2$ & $0$ & $0$ & $0$ \\
        \hline
    \end{tabular}\\
\end{tabular}
\caption{The jump rates $p(\eta_i,\eta_{i+1})$ and $q(\eta_i,\eta_{i+1})$ respectively of the 3- state model.}\label{3 state}
\end{table}
In order to be a member of the blocking family we see that $(B1)$ gives no constraints, $(B2)$ is satisfied if and only if $0< \gamma,\gamma' \leq 1$ and the following must be satisfied:
 
\begin{enumerate}[(a)]
\item $p(y,z)>q(z,y)$ for all $y\in\{1,2\}$ and $z\in\{0,1\}$,
\item $\frac{p(1,0)}{q(0,1)}=q^{-1}=\frac{p(2,1)}{q(1,2)}$,
\item $1=\frac{p(1,0)p(2,1)q(1,1)q(0,2)}{q(0,1)q(1,2)p(2,0)p(1,1)}=\frac{\gamma'}{\gamma},$ i.e.\ $0<\gamma = \gamma'\leq 1$.
\end{enumerate}
Under this assumption $(B3)$ is then satisfied with the constants,
   $$ p_\textrm{asym}=\frac{1}{1+q}\hspace{5mm} \textrm{and} \hspace{5mm}
       q_\textrm{asym}=\frac{q}{1+q}$$
\par \noindent and the functions,
$$f(z):=
    \begin{cases}
     0 \hspace{2mm} &\textrm{if } z=0  \\
     \\
    \sqrt{\frac{\gamma}{2}} &\textrm{if } z=1  \\
      \\
    \sqrt{\frac{2}{\gamma}} &\textrm{if } z=2
    \end{cases} 
    \qquad
    s(y,z):=
    \begin{cases}
     \sqrt{\frac{2}{\gamma}}(1+q) &\textrm{if } y=z=1  \\
       \\
      \sqrt{2\gamma}(1+q) &\textrm{if } y=1 \textrm{ and } z=2 \\
       \\
       \sqrt{2\gamma}(1+q)  &\textrm{if } y=2 \textrm{ and } z=1 \\
       \\
       \sqrt{\frac{\gamma}{2}}(1+q) &\textrm{if } y=z=2 \\
       \\
       0 &\textrm{if } y=3 \textrm{ or } z=3.
    \end{cases}$$

\par \noindent In this case $\Tilde{q}=q$ and $t=\sqrt{\frac{2}{\gamma}}$ and we have a one-parameter family of blocking measures of the form
$$\underline{\mu}^c(\underline{\eta})=\prod\limits_{i=-\infty}^0 \frac{\left(\frac{2}{\gamma}\right)^{\frac{1}{2}\mathbb{I}\{\eta_i=1\}}q^{-(i-c)\eta_i}}{1+\sqrt{\frac{2}{\gamma}}q^{-(i-c)}+q^{-2(i-c)}}\prod\limits_{i=1}^\infty \frac{\left(\frac{2}{\gamma}\right)^{\frac{1}{2}\mathbb{I}\{\eta_i=1\}}q^{(2-\eta_i)(i-c)}}{1+\sqrt{\frac{2}{\gamma}}q^{(i-c)}+q^{2(i-c)}}.$$

Since $\gamma$ can be almost freely chosen (there is only the constraint that $0< \gamma\leq 1$) applying Theorem \ref{main} to this subfamily will not tell us anything new (we will get the same three variable identity but with domain restricted to $t\geq \sqrt{2}$). However certain members of this subfamily can give interesting two variable identities. As an example we will choose the creation parameter to be $\gamma=\frac{1}{2}$ (the inverse of the annihilation parameter $a=2$). For this process we then have $\Tilde{q}=q$ and $t=2$.

Substituting these values of $\Tilde{q}$ and $t$ into Theorem \ref{main} gives

\begin{align*}
  2\sum\limits_{\ell \in \mathbb{Z}} S_\textrm{even}(q,2)q^{\ell(\ell+1)}z^{2\ell} &=
  \prod\limits_{i\geq1}(1+2q^{i}z+q^{2i}z^2)(1+2q^{i-1}z^{-1}+q^{2(i-1)}z^{-2})\\
  &+\prod\limits_{i\geq1}(1-2q^{i}z+q^{2i}z^2)(1-2q^{i-1}z^{-1}+q^{2(i-1)}z^{-2}))\\
  \\
    4\sum\limits_{\ell\in \mathbb{Z}} S_\textrm{odd}(q,2)q^{(\ell+1)^2}z^{2\ell +1}&=
 \prod\limits_{i\geq1}(1+2q^{i}z+q^{2i}z^2)(1+2q^{i-1}z^{-1}+q^{2(i-1)}z^{-2})\\
  &-\prod\limits_{i\geq1}(1-2q^{i}z+q^{2i}z^2)(1-2q^{i-1}z^{-1}+q^{2(i-1)}z^{-2})).
\end{align*}

Once again the quadratic terms on the RHS all factor and so we have proved the following identities

\begin{thm}\label{specialisation2}
\begin{align*}2\sum_{\ell\in\mathbb{Z}}S_{\text{even}}(q,2)q^{\ell(\ell+1)}z^{2\ell}&=\prod_{i\geq 1}((1+q^{i}z)(1+q^{i-1}z^{-1}))^2\\ &+\prod_{i\geq 1}((1-q^{i}z)(1-q^{i-1}z^{-1}))^2\\ 4\sum_{\ell\in\mathbb{Z}}S_{\text{odd}}(q,2)q^{(\ell+1)^2}z^{2\ell+1}&=\prod_{i\geq 1}((1+q^{i}z)(1+q^{i-1}z^{-1}))^2\\ &-\prod_{i\geq 1}((1-q^{i}z)(1-q^{i-1}z^{-1}))^2.\end{align*}\end{thm}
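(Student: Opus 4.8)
The plan is to obtain these identities as a direct specialisation of the three-variable identities of Theorem \ref{main}, exactly in the manner already carried out for the ASEP$(q,1)$ model. The text immediately preceding the statement establishes that the $3$-state model with creation parameter $\gamma = \tfrac{1}{2}$ has $\tilde q = q$ and $t = 2$, and that $t=2$ lies in the admissible range $t \geq 1$ of Theorem \ref{main} (indeed the entire subfamily satisfies $t \geq \sqrt 2$). So the first step is simply to substitute $\tilde q = q$ and $t = 2$ into the two identities of Theorem \ref{main}, leaving the sums $S_{\text{even}}(q,2)$ and $S_{\text{odd}}(q,2)$ on the left untouched and working only on the product side.

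The key observation is that the choice $t = 2$ turns each quadratic factor on the product side into a perfect square. Explicitly,
\[
1 + 2zq^i + z^2 q^{2i} = (1 + zq^i)^2, \qquad 1 + 2z^{-1}q^{i-1} + z^{-2}q^{2(i-1)} = (1 + z^{-1}q^{i-1})^2,
\]
together with the sign-reversed analogues $(1-zq^i)^2$ and $(1-z^{-1}q^{i-1})^2$. Hence the first product on the right collapses to $\prod_{i\geq 1}\bigl((1+q^i z)(1+q^{i-1}z^{-1})\bigr)^2$ and the sign-reversed product collapses to $\prod_{i\geq 1}\bigl((1-q^i z)(1-q^{i-1}z^{-1})\bigr)^2$. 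Substituting these back, the even identity is immediate (the leading factor $2$ is unchanged), while in the odd identity the prefactor $2t$ becomes $2\cdot 2 = 4$, producing the stated coefficient.

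There is no substantive obstacle here. Unlike the ASEP$(q,1)$ case, where $t = [2]_q = q + q^{-1}$ split each quadratic into two \emph{distinct} linear factors whose index shifts then had to be reconciled against a telescoping ratio (producing the extra factor $\tfrac{1}{qz}$), the value $t = 2$ yields a genuine perfect square and the simplification is purely formal. The only routine bookkeeping is tracking the constant prefactors ($2$ in the even case, $2t=4$ in the odd case) and confirming that $\tilde q = q \in (0,1)$ and $t = 2 \geq 1$ meet the hypotheses of Theorem \ref{main}; both are immediate.
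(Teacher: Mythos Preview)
Your proposal is correct and follows exactly the paper's approach: substitute $\tilde q = q$, $t = 2$ into Theorem~\ref{main} and factor each quadratic $1 \pm 2zq^i + z^2q^{2i}$ as the perfect square $(1 \pm zq^i)^2$, tracking the prefactor $2t = 4$ in the odd case. Your remark contrasting this with the ASEP$(q,1)$ specialisation (distinct linear factors and a telescoping ratio versus a clean perfect square here) is also apt and matches the paper's commentary.
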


Again the first term on the RHS looks familiar and is part of the square of the product side of the Jacobi triple product. Indeed if we assume Jacobi triple product then we find that \begin{align*}\prod_{i\geq 1}((1+q^{i}z)(1+q^{i-1}z^{-1}))^2 &= \left(\sum_{k\in\mathbb{Z}}\frac{1}{\prod_{i\geq 1}(1-q^{i})}q^{\frac{k(k+1)}{2}}z^k\right)^2\\ &=\sum_{k\in\mathbb{Z}}\left(\frac{1}{\prod_{i\geq 1}(1-q^{i})^2}\sum_{k'\in\mathbb{Z}}q^{\frac{k'(k'+1)}{2} + \frac{(k-k')(k-k'+1)}{2}}\right)z^k.\end{align*}
The term corresponding to $k=2\ell$ can be written as (using Jacobi triple product in the second equality, written as in the introduction but setting $z=1$) \begin{align*}\left(\frac{1}{\prod_{i\geq 1}(1-q^{i})^2}\sum_{k'\in\mathbb{Z}}q^{\frac{k'(k'+1)}{2} + \frac{(2\ell-k')(2\ell-k'+1)}{2}}\right)z^{2\ell} &=\left(\frac{1}{\prod_{i\geq 1}(1-q^{i})^2}\sum_{k'\in\mathbb{Z}}q^{(k'-\ell)^2}\right)q^{\ell(\ell+1)}z^{2\ell}\\ &= \left(\prod_{i\geq 1}\frac{(1+q^{2i-1})^2(1-q^{2i})}{(1-q^{i})^2}\right)q^{\ell(\ell+1)}z^{2\ell}\\ &= \left(\prod_{i\geq 1}\frac{(1+q^{2i-1})^2(1+q^{i})}{(1-q^{i})}\right)q^{\ell(\ell+1)}z^{2\ell}.\end{align*}
Similarly the term corresponding to $k=2\ell+1$ can be written as (using Jacobi triple product in the second equality, written as in Section $3.3$ but setting $z=1$) \begin{align*}\left(\frac{1}{\prod_{i\geq 1}(1-q^{i})^2}\sum_{k'\in\mathbb{Z}}q^{\frac{k'(k'+1)}{2} + \frac{(2\ell-k'+1)(2\ell-k'+2)}{2}}\right)z^{2\ell+1} &=\left(\frac{1}{\prod_{i\geq 1}(1-q^{i})^2}\sum_{k'\in\mathbb{Z}}q^{(k'-\ell)(k'-\ell-1)}\right)q^{(\ell+1)^2}z^{2\ell+1}\\ &= 2\left(\prod_{i\geq 1}\frac{(1+q^{2i})^2(1-q^{2i})}{(1-q^{i})^2}\right)q^{(\ell+1)^2}z^{2\ell+1}\\ &= 2\left(\prod_{i\geq 1}\frac{(1+q^{2i})^2(1+q^{i})}{(1-q^{i})}\right)q^{(\ell+1)^2}z^{2\ell+1}.\end{align*}
Thus we see that, assuming Jacobi triple product, the identities are equivalent to the equalities \begin{align*}S_{\text{even}}(q,2) &= \prod_{i\geq 1}\frac{(1+q^{2i-1})^2(1+q^{i})}{(1-q^{i})}\\ S_{\text{odd}}(q,2) &= \prod_{i\geq 1}\frac{(1+q^{2i})^2(1+q^{i})}{(1-q^{i})}. \end{align*}

\par \noindent As in the previous section it would be interesting to find purely probabilistic proofs of these product forms.  The more complicated nature of the products suggests that this is non-trivial.

Alternatively one could have not assumed Jacobi triple product in the above, but instead assumed Theorem \ref{specialisation1}. The identities would then give (non-trivial) relations between the four specialised normalising factors $S_{\text{even}}(q,2), S_{\text{odd}}(q,2), S_{\text{even}}(q^4,[2]_q)$ and $S_{\text{odd}}(q^4,[2]_q)$ that we would have proved probabilistically without additional assumptions.

While the identities in Theorem \ref{specialisation2} might seem unnatural they do have a combinatorial interpretation in terms of $2$-coloured GFP's. The product $\prod_{i\geq 1}(1+q^i)^2$ is the generating function for coloured partitions of $n$ into red/blue parts where each red/blue part appears at most once (for example we allow $5={\color{red}2}+{\color{blue}2}+{\color{blue}1}$ but not $5={\color{red}2}+{\color{red}2}+{\color{blue}1}$). The pair of $2$-coloured partitions ${\color{red}2}+{\color{blue}2}+{\color{blue}1}$ and ${\color{blue}2}+{\color{red}2}+{\color{blue}1}$ are counted as the same in the above product and so to avoid overcounting we favour a particular colour when listing repeats. 

By the ``General Principle" of Andrews we have that: \[\prod_{i\geq 1}((1+q^{i}z)(1+q^{i-1}z^{-1}))^2 = \sum_{k\in\mathbb{Z}}f_{C_2,C_2,k}(q)z^k,\] where $f_{C_2,C_2,k}(q)$ is the generating function for the sets $\text{GFP}_{C_2,C_2,k}(n)$ of GFP's of $n$ with offset $k$ and each row being a $2$-coloured partition.

The content of the above identities is then that \[f_{C_2,C_2,k}(q) = \begin{cases}S_{\text{even}}(q,2)q^{\ell(\ell+1)} & \text{if } k=2\ell\\ 2S_{\text{odd}}(q,2)q^{(\ell+1)^2} &\text{if } k=2\ell+1.\end{cases}\] Thus the two specialised normalising factors satisfy $f_{C_2,C_2,-1}(q) = 2S_{\text{odd}}(q,2)$ and $f_{C_2,C_2,0}(q) = S_{\text{even}}(q,2)$ and so have a natural combinatorial interpretation. These two base cases are both explicitly clear since we know from Section $3.3$ that $S_{\text{even}}(\Tilde{q},t) = f_{D_2,D_2,0}(\Tilde{q},t)$ and $tS_{\text{odd}}(\Tilde{q},t) = f_{D_2,D_2,-1}(\Tilde{q},t)$, and each element of $\text{GFP}_{D_2,D_2,0,m}(n)$ and $\text{GFP}_{D_2,D_2,-1,m}(n)$ can be $2$-coloured in $2^m$ ways (exactly what is counted when setting $t=2$). The other offset cases can be proved in the usual fashion, by the equalities of generating functions: \[f_{C_2,C_2,k}(q) = \begin{cases}f_{C_2,C_2,0}(q)q^{\ell(\ell+1)} & \text{if } k=2\ell,\\ f_{C_2,C_2,-1}(q)q^{(\ell+1)^2} &\text{if } k=2\ell+1.\end{cases}\] (The proof follows from the maps $\phi^e_{\ell,n}, \phi^o_{\ell,n}$ of Section $3.3$, but by using coloured generalised Young diagrams).

The function $f_{C_2,C_2,0}(q)$ is the function $C\Phi_2(q)$ defined by Andrews on p.$7$ of \cite{frobenius}. Indeed the specialised normalising factor is: \[S_{\text{even}}(q,2) = 1+4q+9q^2+20q^3+42q^4+80q^5+147q^6+260q^7+445q^8+...\] which agrees with the expansion of $C\Phi_2(q)$ found on p.$8$ of the same book. In Corollary $5.2$ of this book Andrews uses Jacobi triple product to prove a product formula for $C\Phi_2(q)$, which is equivalent to the product formula we found above for $S_{\text{even}}(q,2)$. Andrews' book only considers GFP's with offset $0$ and so does not give a similar analysis of the function $f_{C_2,C_2,-1}(q)$. However, using MAGMA we were able to compute that \[f_{C_2,C_2,-1}(q) = 2+4q+12q^2+24q^3+50q^4+92q^5+172q^6+296q^7+510q^8+...\] which agrees with $2S_{\text{odd}}(q,2)$, as expected, and appears to have the same coefficients as twice OEIS sequence A$137829$ \cite{OEIS1}, implying the product form we derived above.

The function $C\Phi_2(q)$ is one of a family of functions $C\Phi_k(q)$, counting GFP's of offset $0$ with rows having a similar condition to the above but with $k$ colours. The whole family of functions is studied in Andrews' book. In general they are not given by products but can be shown to be sums of products. It would be interesting to know whether there exists a $k$-state model for each $k$, whose stood up process and stationary blocking measures provide normalising factors relating to these functions (and their corresponding non-zero offset analogues).

\subsection{Asymmetric 2-exclusion}~
\par Now we consider the asymmetric 2-exclusion process on $\Omega$ with asymmetry parameter $0<q<1$. The non-zero left/right jump rates are $$p(\eta_i,\eta_{i+1})=\mathbb{I}\{\eta_i\neq 0\}\mathbb{I}\{\eta_{i+1}\neq 2\} \hspace{5mm} \textrm{and} \hspace{5mm} q(\eta_i,\eta_{i+1})=q\mathbb{I}\{\eta_{i+1}\neq 0\}\mathbb{I}\{\eta_{i}\neq 2\}.$$

We show that this is a member of the blocking family. Conditions $(B1)$ and $(B2)$ clearly hold and the jump rates satisfy:
\begin{enumerate}[(a)]
\item $p(y,z)>q(z,y)$ for all $y\in\{1,2\}$ and $z\in\{0,1\},$
\item $\frac{p(1,0)}{q(0,1)}=q^{-1}=\frac{p(2,1)}{q(1,2)},$
\item $\frac{p(1,0)p(2,1)q(1,1)q(0,2)}{q(0,1)q(1,2)p(2,0)p(1,1)}=1.$
\end{enumerate}
\par \noindent Thus $(B3)$ is satisfied with the constants
$$p_\textrm{asym}=\frac{1}{1+q} \hspace{5mm} \textrm{and} \hspace{5mm} q_\textrm{asym}=\frac{q}{1+q},$$
\par \noindent and the functions,
$$f(z)=\mathbb{I}\{z \neq 0\} \hspace{10mm}  s(y,z)= \begin{cases}
       (1+q) &\textrm{for } y,z \in \{1,2\} \\
       0 &\textrm{if } y=3 \textrm{ or } z=3.
       \end{cases}$$
\par \noindent In this case $\Tilde{q}=q$, $t=1$ and we have a one-parameter family of product stationary blocking measures of the form
$$\underline{\mu}^c(\underline{\eta})=\prod\limits_{i \leq0}\frac{q^{-\eta_i(i-c)}}{(1+q^{-(i-c)}+q^{-2(i-c)})}\prod\limits_{i \geq 1}\frac{q^{(2-\eta_i)(i-c)}}{(1+q^{(i-c)}+q^{2(i-c)})}.$$
\par Substituting the values of $\Tilde{q}$ and $t$ into Theorem \ref{main} gives

\begin{thm}\label{2-exc explicit}
\begin{align*}
    2\sum\limits_{\ell \in \mathbb{Z}} S_\textrm{even}(q,1)q^{\ell(\ell+1)}z^{2\ell} &=
  \prod\limits_{i\geq1} \left(1+q^{i}z+q^{2i}z^2\right)\left(1+q^{i-1}z^{-1}+q^{2(i-1)}z^{-2}\right)\\ &+\prod\limits_{i\geq1} \left(1-q^{i}z+q^{2i}z^2\right)\left(1-q^{i-1}z^{-1}+q^{2(i-1)}z^{-2}\right)\\
\\
    2 \sum\limits_{\ell\in \mathbb{Z}} S_\textrm{odd}(q,1)q^{(\ell+1)^2}z^{2\ell+1}&= 
    \prod\limits_{i=1}^\infty \left(1+q^{i}z+q^{2i}z^2\right)\left(1+q^{i-1}z^{-1}+q^{2(i-1)}z^{-2}\right)\\&-\prod\limits_{i=1}^\infty \left(1-q^{i}z+q^{2i}z^2\right)\left(1-q^{i-1}z^{-1}+q^{2(i-1)}z^{-2}\right).
\end{align*}
\end{thm}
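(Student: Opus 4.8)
The plan is to obtain this statement as a direct specialisation of Theorem \ref{main}, requiring no new probabilistic or combinatorial input beyond identifying the correct parameter values for the asymmetric $2$-exclusion process. The only genuine content is the verification — already carried out in the preceding discussion — that this process is a member of the blocking family, so that Theorem \ref{main} applies to it.

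First I would confirm the two parameter identifications from the rates. One reads off $p(1,0) = 1$, $q(0,1) = q$, $p(1,1) = 1$ and $q(0,2) = q$, so that the asymmetry parameter is $\Tilde{q} = q(0,1)/p(1,0) = q$, while the formula from the Remark following $(B3)$ gives $t = \left(p(1,0)q(0,2)/(q(0,1)p(1,1))\right)^{1/2} = (q/q)^{1/2} = 1$. Conditions $(a)$, $(b)$, $(c)$ together with $(B1)$ and $(B2)$ have been checked above, and the explicit $f$, $s$, $p_{\text{asym}}$, $q_{\text{asym}}$ exhibited; hence the process genuinely belongs to the blocking family. In particular the product stationary blocking measures $\underline{\mu}^c$ and the entire standing-up construction of Section \ref{stand up} are available, so Theorem \ref{main} holds verbatim with the above values of $\Tilde{q}$ and $t$.

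It then remains only to substitute $\Tilde{q} = q$ and $t = 1$ into both identities of Theorem \ref{main}. Setting $t = 1$ removes the $t$-weighting from every factor, so each quadratic $1 + tz\Tilde{q}^i + z^2\Tilde{q}^{2i}$ becomes $1 + zq^i + z^2q^{2i}$ (and likewise for the minus-sign factors and the $z^{-1}$ factors), while on the left-hand side of the odd identity the prefactor $2t$ collapses to $2$. This produces exactly the two displayed identities, with normalising factors $S_{\text{even}}(q,1)$ and $S_{\text{odd}}(q,1)$. I do not expect any obstacle: $t = 1$ lies on the boundary of the admissible range $t \geq 1$ for which Theorem \ref{gen identities} was stated, so the substitution is legitimate, and $S_{\text{even}}(q,1)$, $S_{\text{odd}}(q,1)$ simply retain their interpretation as partition functions on the irreducible components $\mathcal{H}^e$ and $\mathcal{H}^o$ of the stood-up process.
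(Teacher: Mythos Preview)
Your proposal is correct and matches the paper's own argument exactly: the paper also derives Theorem~\ref{2-exc explicit} by verifying that asymmetric $2$-exclusion is a blocking process with $\Tilde{q}=q$ and $t=1$, and then directly substituting these values into Theorem~\ref{main}. There is nothing to add.
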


It is clear that this specialisation has a natural combinatorial meaning. Recall that in Section $3.3$ we used the ``General Principle" to expand \[\prod_{i\geq 1}(1+tz\Tilde{q}^i+z^2\Tilde{q}^{2i})(1+tz^{-1}\Tilde{q}^{i-1}+z^{-2}\Tilde{q}^{2(i-1)}) = \sum_{k\in\mathbb{Z}}f_{D_2,D_2,k}(\Tilde{q},t)z^k\] with $f_{D_2,D_2,k}(\Tilde{q},t)$ being the two variable generating function for the sets $\text{GFP}_{D_2,D_2,k,m}(n)$ defined earlier. Setting $t=1$ is naturally interpreted as not distinguishing GFP's by their number of distinct parts per row, i.e.\ $f_{D_2,D_2,k}(\Tilde{q},1) = f_{D_2,D_2,k}(\Tilde{q})$, the one variable generating function for the sets GFP$_{D_2,D_2,k}(n)$. The content of the above identities is then that 

\[f_{D_2,D_2,k}(q) = \begin{cases}S_{\text{even}}(q,1)q^{\ell(\ell+1)} & \text{if } k=2\ell\\ S_{\text{odd}}(q,1)q^{(\ell+1)^2} & \text{if } k=2\ell+1. \end{cases}\]

\par \noindent The maps $\psi^e_{n}, \psi^o_{n},\phi^e_{\ell,n}$ and $\phi^o_{\ell,n}$ of Section $3.3$ give explicit proofs for all of these equalities, as expected.

Let's consider the two base cases in more detail. The function $f_{D_2,D_2,0}(q)$ is the function $\Phi_2(q)$ defined by Andrews on p.$6$ of \cite{frobenius}. The specialised even normalising factor is: \[S_{\text{even}}(q,1) = 1+q+3q^2+5q^3+9q^4+14q^5+24q^6+35q^7+55q^8+...\] which agrees with the expansion of $\Phi_2(q)$ on p.$7$ of the same book, as expected. An interesting result in this book is Corollary $5.1$, which uses the Jacobi triple product and other results to prove that \[\Phi_2(q) = \frac{1}{\prod_{i\geq 1}(1-q^i)(1-q^{12i-10})(1-q^{12i-9})(1-q^{12i-3})(1-q^{12i-2})}.\] So the specialised even normalising factor can be expressed as the above product.

Andrews' book only considers GFP's with offset $0$ and so does not give a similar analysis of the function $f_{D_2,D_2,-1}(q)$. However, using MAGMA we were able to compute that \[f_{D_2,D_2,-1}(q) = 1+2q+3q^2+6q^3+10q^4+16q^5+26q^6+40q^7+60q^8+...\] which agrees with $S_\textrm{odd}(q,1)$ and appears to have the same coefficients as OEIS sequence A$201077$ \cite{OEIS2}, implying that \[S_\textrm{odd}(q,1)=f_{D_2,D_2,-1}(q) = \frac{1}{\prod_{i\geq 1}(1-q^{2i-1})^2(1-q^{12i-8})(1-q^{12i-6})(1-q^{12i-4})(1-q^{12i})}.\] 

This discussion raises two interesting questions. Firstly, is there a probabilistic explanation for the above product forms? These products are quite complicated and it is not clear a priori that we should expect such a factorisation. It could be possible that there is an alternative way to stand up the $2$-exclusion process, giving a more natural normalising factor. Secondly, in this case, the ASEP$(q,1)$ case and the $3$-state model case the specialised normalising factors appear to be products. Could it be that the unspecialised normalising factors $S_{\text{even}}(\Tilde{q},t)$ and $S_{\text{odd}}(\Tilde{q},t)$ are products?

 \section{Asymmetric $k$-Exclusion}\label{k-exc section}
 It is natural to ask whether we can generalise the results of this paper to higher order particle systems with blocking measure. Unfortunately, as remarked in Section \ref{stand up} the ``stood up" process in general is not guaranteed to have a product stationary blocking measure. However we will now see that the asymmetric $k$-exclusion processes for $k\geq 1$ are sufficiently well behaved and lead to an interesting family of combinatorial identities, generalising the ones found in the $2$-exclusion section.
 
 \par The asymmetric $k$-exclusion process is the particle system with $I=\{0,1,...,k\}, \Lambda=\mathbb{Z}$, asymmetry parameter $0<q<1$ and jump rates 
 $$p(\eta_i,\eta_{i+1})=\mathbb{I}\{\eta_i\neq 0\}\mathbb{I}\{\eta_{i+1}\neq k\} \hspace{5mm} \textrm{and} \hspace{5mm} q(\eta_i,\eta_{i+1})=q\mathbb{I}\{\eta_{i+1}\neq 0\}\mathbb{I}\{\eta_{i}\neq k\}.$$

 \par  We now show that this is member of the blocking family. Conditions $(B1)$ and $(B2)$ clearly hold so it suffices to check $(B3)$. We see that the jump rates are described by the constants
   $$   p_\textrm{asym}=\frac{1}{1+q}\hspace{10mm}
       q_\textrm{asym}=\frac{q}{1+q}$$
\par \noindent and functions
$$ f(z)=\mathbb{I}\{z\neq 0\} \hspace{5mm} \textrm{and} \hspace{5mm} s(y,z)= \begin{cases}
       (1+q) \hspace{5mm} \textrm{for } y,z \in \{1,2,..,k\} \\
       0 \hspace{17mm} \textrm{if } y=k+1 \textrm{ or } z=k+1.
       \end{cases} $$

\par \noindent Thus $k$-exclusion is a member of the blocking family and by Theorem \ref{stationary dist} has one parameter family of product stationary blocking measures
$$\underline{\mu}^c(\underline{\eta})=\prod\limits_{i=-\infty}^0\frac{q^{-(i-c)\eta_i}}{Z_i^c(q)}\prod\limits_{i=1}^\infty \frac{q^{(k-\eta_i)(i-c)}}{q^{k(i-c)}Z_i^c(q)}$$
\par \noindent where $Z_i^c(q)=\sum\limits_{y=0}^k q^{-(i-c)y}$ is the normalising factor.

By definition of the state space, every $\underline{\eta}\in\Omega$ has a left most particle and a right most hole, i.e.\ $\eta_i=0$ for small enough $i$ and $\eta_i=k$ for big enough $i$. By asymmetry it then follows that the ground states of $\Omega$ are all shifts of the following $k$ ground states $\underline{\eta}^{-m}$, for $m \in \{0,1,...,k-1\}$ with
$$ \eta_i^{-m}=
    \begin{cases}
      0 &\textrm{  if } i<0  \\
      m &\textrm{  if } i=0  \\
      k &\textrm{ if } i>0.
    \end{cases} $$
      \vspace{5mm}
\begin{figure}[H]
    \centering
           \hspace{5mm}
       \begin{subfigure}[b]{0.35\textwidth}
       \centering
    \begin{tikzpicture}[scale=0.5]
    \draw[thick, <->] (-6,-1)--(6,-1);
\foreach \x in {-5,-4,-3,-2,-1,0,1,2,3,4,5}
    \draw[thick, -](\x cm, -1.1)--(\x cm, -0.9) node[anchor=north]{$\x$};
\filldraw [black] (-5.5,-0.7) circle (1pt);
\filldraw [black] (-5.7,-0.7) circle (1pt);
\filldraw [black] (-5.9,-0.7) circle (1pt); 
\filldraw [black] (1,-0.5) circle (4pt);
\filldraw [black] (2,-0.5) circle (4pt);
\filldraw [black] (3,-0.5) circle (4pt);
\filldraw [black] (4,-0.5) circle (4pt);
\filldraw [black] (5,-0.5) circle (4pt);   
\filldraw [black] (1,0) circle (4pt);
\filldraw [black] (2,0) circle (4pt);
\filldraw [black] (3,0) circle (4pt);
\filldraw [black] (4,0) circle (4pt);
\filldraw [black] (5,0) circle (4pt);
\filldraw [black] (1,0.5) circle (4pt);
\filldraw [black] (2,0.5) circle (4pt);
\filldraw [black] (3,0.5) circle (4pt);
\filldraw [black] (4,0.5) circle (4pt);
\filldraw [black] (5,0.5) circle (4pt); 
\filldraw [black] (1,1) circle (4pt);
\filldraw [black] (2,1) circle (4pt);
\filldraw [black] (3,1) circle (4pt);
\filldraw [black] (4,1) circle (4pt);
\filldraw [black] (5,1) circle (4pt);
\filldraw [black] (5.5,-0.7) circle (1pt);
\filldraw [black] (5.7,-0.7) circle (1pt);
\filldraw [black] (5.9,-0.7) circle (1pt); 
    \end{tikzpicture}
    \caption{Ground state $\underline{\eta}^0$}
    \end{subfigure}
    \hspace{10mm}
       \begin{subfigure}[b]{0.35\textwidth}
       \centering
    \begin{tikzpicture}[scale=0.5]
    \draw[thick, <->] (-6,-1)--(6,-1);
\foreach \x in {-5,-4,-3,-2,-1,0,1,2,3,4,5}
    \draw[thick, -](\x cm, -1.1)--(\x cm, -0.9) node[anchor=north]{$\x$};
\filldraw [black] (-5.5,-0.7) circle (1pt);
\filldraw [black] (-5.7,-0.7) circle (1pt);
\filldraw [black] (-5.9,-0.7) circle (1pt); 
\filldraw [black] (0,-0.5) circle (4pt);
\filldraw [black] (1,-0.5) circle (4pt);
\filldraw [black] (2,-0.5) circle (4pt);
\filldraw [black] (3,-0.5) circle (4pt);
\filldraw [black] (4,-0.5) circle (4pt);
\filldraw [black] (5,-0.5) circle (4pt);   
\filldraw [black] (1,0) circle (4pt);
\filldraw [black] (2,0) circle (4pt);
\filldraw [black] (3,0) circle (4pt);
\filldraw [black] (4,0) circle (4pt);
\filldraw [black] (5,0) circle (4pt);
\filldraw [black] (1,0.5) circle (4pt);
\filldraw [black] (2,0.5) circle (4pt);
\filldraw [black] (3,0.5) circle (4pt);
\filldraw [black] (4,0.5) circle (4pt);
\filldraw [black] (5,0.5) circle (4pt); 
\filldraw [black] (1,1) circle (4pt);
\filldraw [black] (2,1) circle (4pt);
\filldraw [black] (3,1) circle (4pt);
\filldraw [black] (4,1) circle (4pt);
\filldraw [black] (5,1) circle (4pt);
\filldraw [black] (5.5,-0.7) circle (1pt);
\filldraw [black] (5.7,-0.7) circle (1pt);
\filldraw [black] (5.9,-0.7) circle (1pt); 
    \end{tikzpicture}
    \caption{Ground state $\underline{\eta}^{-1}$}
    \end{subfigure}
    \hspace{5mm}
        \par\bigskip
        \hspace{5mm}
            \begin{subfigure}[b]{0.35\textwidth}
       \centering
    \begin{tikzpicture}[scale=0.5]
    \draw[thick, <->] (-6,-1)--(6,-1);
\foreach \x in {-5,-4,-3,-2,-1,0,1,2,3,4,5}
    \draw[thick, -](\x cm, -1.1)--(\x cm, -0.9) node[anchor=north]{$\x$};
\filldraw [black] (-5.5,-0.7) circle (1pt);
\filldraw [black] (-5.7,-0.7) circle (1pt);
\filldraw [black] (-5.9,-0.7) circle (1pt); 
\filldraw [black] (0,-0.5) circle (4pt);
\filldraw [black] (1,-0.5) circle (4pt);
\filldraw [black] (2,-0.5) circle (4pt);
\filldraw [black] (3,-0.5) circle (4pt);
\filldraw [black] (4,-0.5) circle (4pt);
\filldraw [black] (5,-0.5) circle (4pt);  
\filldraw [black] (0,0) circle (4pt);
\filldraw [black] (1,0) circle (4pt);
\filldraw [black] (2,0) circle (4pt);
\filldraw [black] (3,0) circle (4pt);
\filldraw [black] (4,0) circle (4pt);
\filldraw [black] (5,0) circle (4pt);
\filldraw [black] (1,0.5) circle (4pt);
\filldraw [black] (2,0.5) circle (4pt);
\filldraw [black] (3,0.5) circle (4pt);
\filldraw [black] (4,0.5) circle (4pt);
\filldraw [black] (5,0.5) circle (4pt); 
\filldraw [black] (1,1) circle (4pt);
\filldraw [black] (2,1) circle (4pt);
\filldraw [black] (3,1) circle (4pt);
\filldraw [black] (4,1) circle (4pt);
\filldraw [black] (5,1) circle (4pt);
\filldraw [black] (5.5,-0.7) circle (1pt);
\filldraw [black] (5.7,-0.7) circle (1pt);
\filldraw [black] (5.9,-0.7) circle (1pt); 
    \end{tikzpicture}
    \caption{Ground state $\underline{\eta}^{-2}$}
    \end{subfigure}
    \hspace{10mm}
           \begin{subfigure}[b]{0.35\textwidth}
       \centering
    \begin{tikzpicture}[scale=0.5]
    \draw[thick, <->] (-6,-1)--(6,-1);
\foreach \x in {-5,-4,-3,-2,-1,0,1,2,3,4,5}
    \draw[thick, -](\x cm, -1.1)--(\x cm, -0.9) node[anchor=north]{$\x$};
\filldraw [black] (-5.5,-0.7) circle (1pt);
\filldraw [black] (-5.7,-0.7) circle (1pt);
\filldraw [black] (-5.9,-0.7) circle (1pt); 
\filldraw [black] (0,-0.5) circle (4pt);
\filldraw [black] (1,-0.5) circle (4pt);
\filldraw [black] (2,-0.5) circle (4pt);
\filldraw [black] (3,-0.5) circle (4pt);
\filldraw [black] (4,-0.5) circle (4pt);
\filldraw [black] (5,-0.5) circle (4pt); 
\filldraw [black] (0,0) circle (4pt);
\filldraw [black] (1,0) circle (4pt);
\filldraw [black] (2,0) circle (4pt);
\filldraw [black] (3,0) circle (4pt);
\filldraw [black] (4,0) circle (4pt);
\filldraw [black] (5,0) circle (4pt);
\filldraw [black] (0,0.5) circle (4pt);
\filldraw [black] (1,0.5) circle (4pt);
\filldraw [black] (2,0.5) circle (4pt);
\filldraw [black] (3,0.5) circle (4pt);
\filldraw [black] (4,0.5) circle (4pt);
\filldraw [black] (5,0.5) circle (4pt); 
\filldraw [black] (1,1) circle (4pt);
\filldraw [black] (2,1) circle (4pt);
\filldraw [black] (3,1) circle (4pt);
\filldraw [black] (4,1) circle (4pt);
\filldraw [black] (5,1) circle (4pt);
\filldraw [black] (5.5,-0.7) circle (1pt);
\filldraw [black] (5.7,-0.7) circle (1pt);
\filldraw [black] (5.9,-0.7) circle (1pt); 
    \end{tikzpicture}
    \caption{Ground state $\underline{\eta}^{-3}$}
    \end{subfigure}
\hspace{5mm}
    \caption{The four ground states of 4-exclusion.}
    \label{k-exclus ground states}
\end{figure}
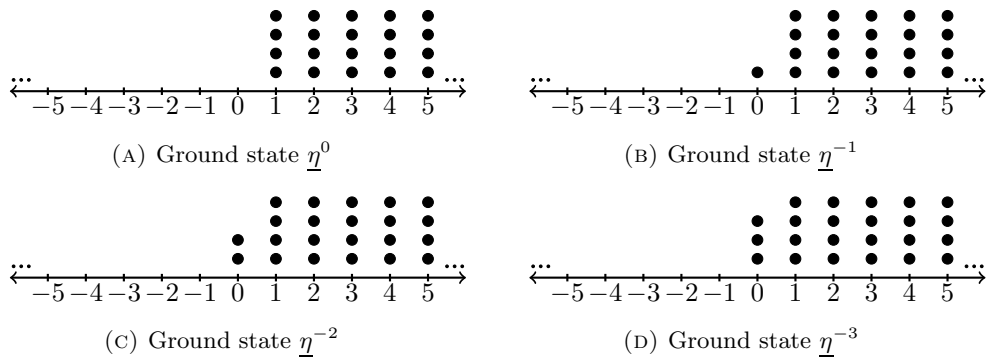
\subsection{Ergodic decomposition of $\Omega$} ~ 
\par The quantity $N(\underline{\eta}):=\sum\limits_{i=1}^\infty (k-\eta_i)-\sum\limits_{i=-\infty}^0 \eta_i$ is finite and is conserved by the dynamics of the system. Thus just as in Section \ref{decomp} we can decompose $\Omega=\bigcup_{n \in \mathbb{Z}}\Omega^n$, into irreducible components $\Omega^n:=\{\underline{\eta} \in \Omega: N(\underline{\eta})=n\}$. Note now that the left shift operator $\tau$ gives a bijection $\Omega^n \xrightarrow[]{\tau} \Omega^{n-k}$ (i.e.\ if $\underline{\eta} \in \Omega^n$ then, $N(\tau \underline{\eta})=n-k$).
\begin{remark}
Since $N(\underline{\eta}^{-m})=-m$ for each $m \in \{0,1,..,k-1\}$ the shifts of $\underline{\eta}^{-m}$ have conserved quantity in $k\mathbb{Z}-m $ and give the ground states for the $(-m \bmod k)$ part $\bigcup_{n\in k\mathbb{Z}-m}\Omega^n$ of $\Omega$. 
\end{remark}
\par We now calculate  $\underline{\nu}^{n,c}(\cdot):=\underline{\mu}^c(\cdot|N(\cdot)=n)$, the unique stationary distribution on $\Omega^n$.
\begin{lem}\label{mu tau k-exc}
The following relation holds, 
$$\underline{\mu}^c(\tau \underline{\eta})=q^{kc-N(\underline{\eta})}\underline{\mu}^c( \underline{\eta}).$$
\par \noindent This gives the recursion,
$$\underline{\mu}^c(\{N=n\})=q^{n-kc}\underline{\mu}^c(\{N=n-k\}).$$
\end{lem}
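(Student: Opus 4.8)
The plan is to carry out the computation of Lemma \ref{mu tau} verbatim, with $k$ playing the role of $2$ throughout and with no $t$-factor present. Since the normalising factors $Z_i^c(q)$ and the deterministic weights $q^{k(i-c)}$ do not depend on the configuration, applying the shift $(\tau\underline{\eta})_i=\eta_{i+1}$ changes only the $\eta$-dependent numerators, leaving the denominators attached to their original sites. I would therefore begin from
$$\underline{\mu}^c(\tau\underline{\eta})=\prod\limits_{i=-\infty}^0\frac{q^{-(i-c)\eta_{i+1}}}{Z_i^c(q)}\prod\limits_{i=1}^\infty \frac{q^{(k-\eta_{i+1})(i-c)}}{q^{k(i-c)}Z_i^c(q)}$$
and reindex the numerators by $j=i+1$ (keeping the denominators indexed by $i$). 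Using $-(j-1-c)=-(j-c)+1$ in the particle-type numerators and $(k-\eta_j)(j-1-c)=(k-\eta_j)(j-c)-(k-\eta_j)$ in the hole-type numerators extracts two shift corrections, $q^{\sum_{j=-\infty}^1\eta_j}$ and $q^{-\sum_{j=2}^\infty(k-\eta_j)}$.

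The only genuinely delicate point is the boundary site $j=1$. After reindexing, the particle-type numerators run over $j\le 1$ while the hole-type numerators run over $j\ge 2$, whereas $\underline{\mu}^c(\underline{\eta})$ uses particle-type factors for $j\le 0$ and hole-type factors for $j\ge 1$. I would resolve this by rewriting the $j=1$ particle factor as a hole factor times a correction,
$$q^{-(1-c)\eta_1}=q^{-k+kc}\,q^{(k-\eta_1)(1-c)},$$
an identity that holds precisely because $\eta_1+(k-\eta_1)=k$. This moves site $1$ into the hole-type product (completing it to $\prod_{j\ge 1}$) and produces the prefactor $q^{-k+kc}$, which is the exact analogue of the $\Tilde{q}^{2c}$ appearing in the $0$-$1$-$2$ case. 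Collecting the two products that now reconstitute $\underline{\mu}^c(\underline{\eta})$, I am left with the exponent
$$-k+kc+\sum_{j=-\infty}^1\eta_j-\sum_{j=2}^\infty(k-\eta_j).$$

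To finish, I would simplify this exponent by splitting off the $j=1$ terms and using $\sum_{j=-\infty}^0\eta_j-\sum_{j=1}^\infty(k-\eta_j)=-N(\underline{\eta})$, which gives $\sum_{j=-\infty}^1\eta_j-\sum_{j=2}^\infty(k-\eta_j)=-N(\underline{\eta})+k$, so the total exponent collapses to $kc-N(\underline{\eta})$ and proves the first relation. The recursion then follows exactly as in Lemma \ref{mu tau}: writing $\underline{\mu}^c(\{N=n-k\})=\sum_{\underline{\eta}:N(\underline{\eta})=n-k}\underline{\mu}^c(\underline{\eta})$ and using the bijection $\tau:\Omega^n\to\Omega^{n-k}$ (so that $N(\tau\hat{\underline{\eta}})=n-k$ exactly when $N(\hat{\underline{\eta}})=n$), then substituting $N(\hat{\underline{\eta}})=n$ into the exponent, yields $\underline{\mu}^c(\{N=n-k\})=q^{kc-n}\underline{\mu}^c(\{N=n\})$, which rearranges to the stated recursion. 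I expect no obstacle beyond the boundary bookkeeping at $j=1$; that is the single step where the correct power of $q$ must be generated, and everything else is routine reindexing.
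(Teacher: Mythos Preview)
Your proof is correct and follows precisely the approach the paper intends: it simply says the proof is similar to that of Lemma~\ref{mu tau} (and cites Lemma~6.1 and Corollary~6.2 of \cite{blocking}), and you have carried out that computation with $k$ in place of $2$ and no $t$-factor. The only cosmetic difference is the order of operations---the paper's Lemma~\ref{mu tau} moves the boundary $j=1$ term first (producing the prefactor $\Tilde{q}^{2c}$, here $q^{kc}$) and then extracts the shift corrections, whereas you extract the shift corrections first and then move the boundary term (producing $q^{kc-k}$, later cancelled by the $+k$ in the exponent); the algebra is identical.
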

\par \noindent The proof is similar to that of Lemma \ref{mu tau} (both claims are special cases of Lemma 6.1 and Corollary 6.2 in \cite{blocking}).

The general solution of this recursion is
$$\underline{\mu}^c(\{N=n\})= q^{\frac{(n+m)(n+k-m)}{2k}-(n+m)c}\underline{\mu}^c(\{N=-m\}) \quad\textrm{ if } n \in k\mathbb{Z}-m \textrm{ with } m \in \{0,1,..,k-1\}.$$
\par \vspace{1mm}\noindent Since there is a dependence on the class of $n$ modulo $k$ we will need to calculate the probabilities $\underline{\mu}^c(\{N(\underline{\eta}) \equiv -m \bmod k\})$ for $m \in \{0,1,...,k-1\}$ in order to finish our calculation of $\underline{\nu}^{n,c}$.
\begin{lem}\label{N mod k}
$$\underline{\mu}^c(\{N\equiv -m \bmod k\})=\frac{1}{k}\left(1+\sum_{r=1}^{k-1}\zeta_k^{-rm}\left(\prod_{i=-\infty}^{\infty}\left(1+\sum_{j=1}^{k-1}(\zeta_{k}^{rj}-1)\mu_i^c(j)\right)\right)\right),$$
\par \noindent where $\zeta_k$ is a primitive $k^\textrm{th}$ root of unity.
\end{lem}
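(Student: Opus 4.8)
The plan is to generalise the argument of Lemma \ref{prob odd} by using a roots-of-unity filter in place of the simple sign $(-1)^{N}$. First I would record the elementary identity
$$\mathbb{I}\{N(\underline{\eta})\equiv -m \bmod k\}=\frac{1}{k}\sum_{r=0}^{k-1}\zeta_k^{-r(N(\underline{\eta})+m)}=\frac{1}{k}\sum_{r=0}^{k-1}\zeta_k^{-rm}\zeta_k^{-rN(\underline{\eta})},$$
valid because $\sum_{r=0}^{k-1}\zeta_k^{-r\ell}$ equals $k$ when $k\mid\ell$ and $0$ otherwise. Taking expectations with respect to $\underline{\mu}^c$ then reduces the whole computation to evaluating the characteristic-function-type quantities $\mathbb{E}^c[\zeta_k^{-rN}]$ for each $r\in\{0,1,\dots,k-1\}$; the $r=0$ term will contribute the leading $1$, and the remaining terms must be shown to equal the stated infinite products.

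To compute $\mathbb{E}^c[\zeta_k^{-rN}]$ I would reuse the truncation $N_a(\underline{\eta})=\sum_{i=1}^a(k-\eta_i)-\sum_{i=-a}^0\eta_i$ of Lemma \ref{prob odd}, noting that $N_a\to N$ pointwise (indeed $N_a=N$ for $a$ large, since $\eta_i=0$ for $i$ small and $\eta_i=k$ for $i$ large). The key algebraic observation is that $\zeta_k^{rk}=1$, so $\zeta_k^{-r(k-\eta_i)}=\zeta_k^{r\eta_i}$ and hence
$$\zeta_k^{-rN_a(\underline{\eta})}=\prod_{i=1}^a\zeta_k^{r\eta_i}\cdot\prod_{i=-a}^0\zeta_k^{r\eta_i}=\prod_{i=-a}^a\zeta_k^{r\eta_i}.$$
Since $|\zeta_k^{-rN_a}|=1$, dominated convergence applies exactly as in the $k=2$ case, and the product structure of $\underline{\mu}^c$ gives
$$\mathbb{E}^c[\zeta_k^{-rN}]=\lim_{a\to\infty}\prod_{i=-a}^a\mathbb{E}^c[\zeta_k^{r\eta_i}]=\prod_{i=-\infty}^{\infty}\sum_{j=0}^{k}\zeta_k^{rj}\mu_i^c(j).$$

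Then I would simplify each factor using $\sum_{j=0}^k\mu_i^c(j)=1$ together with $\zeta_k^{r\cdot 0}=\zeta_k^{rk}=1$, so that the $j=0$ and $j=k$ terms drop out of the correction and
$$\sum_{j=0}^{k}\zeta_k^{rj}\mu_i^c(j)=1+\sum_{j=1}^{k-1}\left(\zeta_k^{rj}-1\right)\mu_i^c(j).$$
Substituting this back and isolating the $r=0$ summand (which gives $1$, since $\mathbb{E}^c[1]=1$) yields the claimed formula. The only point requiring genuine care is the convergence of the two-sided infinite product: one checks that the factors tend to $1$ at both ends --- as $i\to-\infty$ we have $\mu_i^c(j)\to 0$ for $j\geq 1$, while as $i\to+\infty$ we have $\mu_i^c(j)\to 0$ for $j\leq k-1$ and $\mu_i^c(k)\to 1$ --- and that the deviations from $1$ decay geometrically in $|i|$ by the explicit form of the marginals, giving absolute convergence. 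This convergence check is the main (though mild) obstacle; the remainder is the root-of-unity bookkeeping described above.
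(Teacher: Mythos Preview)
Your proposal is correct and follows essentially the same route as the paper's proof. The only cosmetic difference is that the paper phrases the argument in the language of the character group $\widehat{\mathbb{Z}/k\mathbb{Z}}$, computing the moments $\mathbb{E}^c[Y^r]$ with $Y=\chi_1(-N)$ and then inverting the resulting linear system via orthogonality of characters, whereas you apply the roots-of-unity filter to the indicator function directly; these are of course the same manipulation, and both hinge on the truncation $N_a$, dominated convergence, and the product structure of $\underline{\mu}^c$.
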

\begin{proof}~

\par \noindent Define the partial conserved quantity
$$N_a(\underline{\eta})=\sum\limits_{i=1}^a(k-\eta_i)-\sum\limits_{i=-a}^0\eta_i \hspace{10mm} \text{for } a\geq 1$$

\par \noindent and note that $N_a(\underline{\eta})\rightarrow N(\underline{\eta})$ as $a \rightarrow \infty$. 
\par \vspace{2mm} \noindent Consider the character group
$$\widehat{\mathbb{Z}/k\mathbb{Z}}:=\textrm{Hom}\big(\mathbb{Z}/k\mathbb{Z}, \mathbb{C}^\times\big)=\{\chi_0,\chi_1,..,\chi_{k-1}\}$$
\par \noindent where $\chi_i(1)=\zeta_k^i$ for $i \in \{0,1,..,k-1\}$ and $\zeta_k$ a primitive $k$-th root of unity. Note that $\chi_i= \chi_1^i$.
\par \vspace{2mm} \noindent For each $a\geq 1$ we define the random variables
$$Y_a:=\chi_1(-N_a(\underline{\eta}))=\chi_1\big(\sum\limits_{i=-a}^a \eta_i\big)=\prod\limits_{i=-a}^a\chi_1(\eta_i).$$
\par \noindent Since $\eta_i  \rightarrow 0$ as $i\rightarrow -\infty$ and $\eta_i \rightarrow k$ as $i\rightarrow \infty$ we have that $Y_a\rightarrow Y$ as $a \rightarrow \infty$, where 
$$Y:=\chi_1(-N(\underline{\eta}))=\chi_1(\sum\limits_{i=-\infty}^\infty \eta_i)=\prod\limits_{i=-\infty}^\infty \chi_1(\eta_i).$$
\par \noindent We now compute the moments $\mathbb{E}^c[Y^r]$ for $r \in \{0,1,..,k-1\}$ (the expectation being with respect to $\underline{\mu}^c$).  Since $|Y_a|=1$ for all $a\geq 1$, dominated convergence applies and by the product structure of $\underline{\mu}^c$ we have that
$$\mathbb{E}^c[Y^r]=\lim\limits_{a\rightarrow \infty}\mathbb{E}^c[Y_a^r]=\prod\limits_{i=-\infty}^\infty\Big(\sum\limits_{j=0}^k\chi_1(j)^r\mu_i^c(j)\Big)=\prod\limits_{i=-\infty}^\infty\Big(\sum\limits_{j=0}^k\chi_r(j)\mu_i^c(j)\Big).$$
\par \noindent On the other hand we have that
\begin{align*}
    \mathbb{E}^c[Y^r]&=\sum\limits_{m=0}^{k-1}\chi_1(m)^r  \underline{\mu}^c\big(-N(\underline{\eta}) \equiv m \bmod k\big)\\
    &=\sum\limits_{m=0}^{k-1}\chi_r(m)\underline{\mu}^c\big(N(\underline{\eta}) \equiv -m \bmod k\big).
\end{align*}
\par \noindent Hence we have the linear system of equations 
$$\prod\limits_{i=-\infty}^\infty\Big(\sum\limits_{j=0}^k\chi_r(j)\mu_i^c(j)\Big)=\sum\limits_{m=0}^{k-1}\chi_r(m)\underline{\mu}^c\big(N(\underline{\eta}) \equiv -m \bmod k\big).$$
\par \noindent By orthogonality of characters we get 
\begin{align*}
\underline{\mu}^c(N(\underline{\eta})\equiv - m \bmod k)&=\frac{1}{k}\sum\limits_{r=0}^{k-1}\overline{\chi_r(m)}\Bigg(\prod\limits_{i=-\infty}^\infty\Big(\sum\limits_{j=0}^k\chi_r(j)\mu_i^c(j)\Big)\Bigg)\\
&=\frac{1}{k}\sum\limits_{r=0}^{k-1}\chi_r(-m)\Bigg(\prod\limits_{i=-\infty}^\infty\Big(\sum\limits_{j=0}^k\chi_r(j)\mu_i^c(j)\Big)\Bigg)\\ &= \frac{1}{k}\sum_{r=0}^{k-1}\zeta_k^{-rm}\left(\prod_{i=-\infty}^{\infty}\left(\sum_{j=0}^{k}\zeta_k^{rj}\mu_i^c(j)\right)\right)\\ &=\frac{1}{k}\left(1+\sum_{r=1}^{k-1}\zeta_k^{-rm}\left(\prod_{i=-\infty}^{\infty}\left(1+\sum_{j=1}^{k-1}(\zeta_{k}^{rj}-1)\mu_i^c(j)\right)\right)\right).
\end{align*}
\end{proof}
\begin{remark}
A priori this appears to be complex-valued. However it is in fact real-valued since it is fixed by complex conjugation. When $k=2$ this is clearly real valued and agrees with Lemma \ref{prob odd} (recalling that $t=1$ for $2$-exclusion).
\end{remark}

\par If we combine Lemma \ref{mu tau k-exc} and Lemma \ref{N mod k} we find the form of the unique stationary distribution $\underline{\nu}^{n,c}$ on $\Omega^n$.
\begin{prop}\label{nu n k-exc}
For $n \in k\mathbb{Z}-m$ with $m \in \{0,1,...,k-1\}$ the stationary distribution on $\Omega^n$ is given by $$\underline{\nu}^{n,c}(\underline{\eta})=\frac{k\sum\limits_{\ell\in \mathbb{Z}} q^{\frac{k\ell(\ell+1)}{2}-m\ell-k\ell c} \prod\limits_{i=-\infty}^0\frac{q^{-(i-c)\eta_i}}{Z_i^c(q)}\prod\limits_{i=1}^\infty \frac{q^{(k-\eta_i)(i-c)}}{q^{k(i-c)}Z_i^c(q)} \mathbb{I}\{N(\underline{\eta})=n\}}{q^{\frac{(n+m)(n+k-m)}{2k}-(n+m)c}\left(1+\sum_{r=1}^{k-1}\zeta_k^{-rm}\left(\prod_{i=-\infty}^{\infty}\left(1+\sum_{j=1}^{k-1}(\zeta_{k}^{rj}-1)\mu_i^c(j)\right)\right)\right)}.$$
\end{prop}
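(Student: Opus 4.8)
The plan is to follow verbatim the strategy used for Proposition \ref{nu n} in the $0$-$1$-$2$ case, combining the recursion of Lemma \ref{mu tau k-exc} with the modular probabilities of Lemma \ref{N mod k}. Since $\underline{\nu}^{n,c}(\cdot)=\underline{\mu}^c(\cdot\mid N(\cdot)=n)$, for any $\underline{\eta}\in\Omega^n$ we have $\underline{\nu}^{n,c}(\underline{\eta})=\underline{\mu}^c(\underline{\eta})\,\mathbb{I}\{N(\underline{\eta})=n\}/\underline{\mu}^c(\{N=n\})$, so the entire task reduces to computing the denominator $\underline{\mu}^c(\{N=n\})$ in the two equivalent forms demanded by the statement.

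First I would substitute $n=k\ell-m$ into the given general solution $\underline{\mu}^c(\{N=n\})=q^{\frac{(n+m)(n+k-m)}{2k}-(n+m)c}\,\underline{\mu}^c(\{N=-m\})$ and simplify the exponent. Writing $u=n+m=k\ell$ one finds
\[
\frac{(n+m)(n+k-m)}{2k}-(n+m)c=\frac{k\ell(\ell+1)}{2}-m\ell-k\ell c,
\]
so that $\underline{\mu}^c(\{N=k\ell-m\})=q^{\frac{k\ell(\ell+1)}{2}-m\ell-k\ell c}\,\underline{\mu}^c(\{N=-m\})$. Summing over $\ell\in\mathbb{Z}$ (the series converges for $0<q<1$, being theta-type with positive quadratic leading term) yields the key identity
\[
\underline{\mu}^c(\{N\equiv -m\bmod k\})=\Big(\sum_{\ell\in\mathbb{Z}}q^{\frac{k\ell(\ell+1)}{2}-m\ell-k\ell c}\Big)\,\underline{\mu}^c(\{N=-m\}),
\]
which lets me eliminate the awkward quantity $\underline{\mu}^c(\{N=-m\})$ in favour of the closed form supplied by Lemma \ref{N mod k}.

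Next I would feed Lemma \ref{N mod k} into this identity: solving for $\underline{\mu}^c(\{N=-m\})$ and reinserting into the general solution gives
\[
\underline{\mu}^c(\{N=n\})=\frac{q^{\frac{(n+m)(n+k-m)}{2k}-(n+m)c}\cdot\frac1k\big(1+\sum_{r=1}^{k-1}\zeta_k^{-rm}\prod_{i}(1+\sum_{j=1}^{k-1}(\zeta_k^{rj}-1)\mu_i^c(j))\big)}{\sum_{\ell\in\mathbb{Z}}q^{\frac{k\ell(\ell+1)}{2}-m\ell-k\ell c}}.
\]
Dividing $\underline{\mu}^c(\underline{\eta})\,\mathbb{I}\{N=n\}$ by this, the factor $\tfrac1k$ inverts to the $k$ appearing in the numerator of the claim, and replacing $\underline{\mu}^c(\underline{\eta})$ by its explicit product form $\prod_{i\le 0}q^{-(i-c)\eta_i}/Z_i^c(q)\cdot\prod_{i\ge 1}q^{(k-\eta_i)(i-c)}/(q^{k(i-c)}Z_i^c(q))$ produces the stated expression verbatim.

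This argument is essentially bookkeeping, and I do not anticipate a genuine obstacle: the only place demanding care is the exponent computation $g(k\ell-m)=\frac{k\ell(\ell+1)}{2}-m\ell-k\ell c$ together with the cross-check that the cited general solution really solves the recursion of Lemma \ref{mu tau k-exc} (verified by confirming $g(n)-g(n-k)=n-kc$, where $g(n)=\frac{(n+m)(n+k-m)}{2k}-(n+m)c$). I would also note in passing that the resulting measure is genuinely independent of $c$, exactly as in the $0$-$1$-$2$ case, even though $c$ is retained in the notation.
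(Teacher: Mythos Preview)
Your proposal is correct and follows exactly the approach the paper intends: the paper merely states that the proposition follows by combining Lemma \ref{mu tau k-exc} (the recursion and its general solution) with Lemma \ref{N mod k} (the modular probabilities), and you have carried out precisely that combination with the correct exponent bookkeeping. Your added verification that $g(n)-g(n-k)=n-kc$ and your remark on independence from $c$ are consistent with the paper's treatment of the analogous $0$-$1$-$2$ case.
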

\begin{remark}
As in Section $3.1$ these distributions are independent of $c$ but we will need to stress the dependence of both the numerator and denominator on $c$.
\end{remark}

Notice that $1+\sum_{j=1}^{k-1}(\zeta_k^{rj}-1)\mu_i^c(j) = \frac{\sum_{j=0}^{k-1}\zeta_k^{rj}q^{-j(i-c)}}{Z_i^c(q)}$ for each $r\in\{1,...,k-1\}$. Defining $W_{i,r}^c(q) := \sum_{j=0}^{k-1}\zeta_k^{rj}q^{-j(i-c)}$ for each such $r$ we can write $\underline{\nu}^{n,c}$ as 

\Small{\[\underline{\nu}^{n,c}(\underline{\eta})=\frac{k\sum\limits_{\ell \in \mathbb{Z}} q^{\frac{k\ell(\ell+1)}{2}-m\ell-k\ell c} \prod\limits_{i=-\infty}^0 q^{-(i-c)\eta_i}\prod\limits_{i=1}^\infty q^{(k-\eta_i)(i-c)}\mathbb{I}\{N(\underline{\eta})=n\}}{q^{\frac{(n+m)(n+k-m)}{2k}-(n+m)c}\left(\prod_{i\geq 1}q^{k(i-c)}Z_{-i+1}^c(q)Z_i^c(q)+\sum_{r=1}^{k-1}\zeta_k^{-rm}\left(\prod_{i\geq 1}q^{k(i-c)}W_{-i+1,r}^c(q)W_{i,r}^c(q)\right)\right)}.\]}\normalsize

\par \noindent when $n\in k\mathbb{Z}-m$ with $m\in\{0,1,...,k-1\}$.

\begin{remark}
When $k=2$ there is only the possibility $r=1$ and $W_{i,1}^c(q) = W_i^c(q,1)$, as expected (here $W_i^c(q,t)$ is the function defined in Section $3.1$). The corresponding stationary distributions then agree with the ones for $2$-exclusion given by Proposition \ref{nu n} (after setting $t=1$).
\end{remark}

\subsection{Standing up / Laying Down}~
\par In this section we transfer the dynamics on $\Omega^n$ to that of a restricted particle system on $\mathbb{Z}_{\geq 0}^{\mathbb{Z}_{< 0}}$ by using the same ``standing up'' method as described in Section \ref{stand up}. By doing this we obtain an alternative characterisation of the stationary distributions given in Proposition \ref{nu n k-exc}.

We recall the standing up map. Given $\underline{\eta} \in \Omega^n$ let $S_r$ being the site of the $r^\textrm{th}$ particle when reading left to right, bottom to top. The corresponding stood up state is then  $T^n(\underline{\eta})=\underline{\omega}\in \mathbb{Z}_{\geq 0}^{\mathbb{Z}_{< 0}}$, with $\omega_{-r}=S_{r+1}-S_r$. See Figure \ref{standing up K} for an example with $k=4$.

\vspace{5mm}
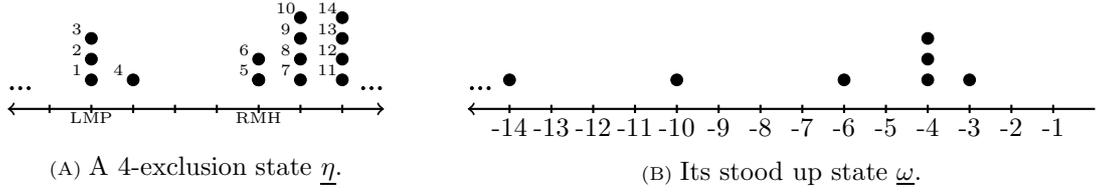
\begin{figure}[H]
\centering
    \begin{tikzpicture}[scale=0.55]
    \draw[thick, <->] (-4,0.8)--(5,0.8);
\foreach \x in {-3,-2,-1,0,1,2,3,4}
    \draw[thick, -](\x cm, 0.9)--(\x cm, 0.7) node[anchor=north]{};
 
 \filldraw [black] (-3.5,1.3) circle (1pt);
\filldraw [black] (-3.7,1.3) circle (1pt);
\filldraw [black] (-3.9,1.3) circle (1pt);   
\filldraw [black] (-2,1.5) circle (4pt);
\node (a) at (-2,0) [label=\tiny{LMP}]{};
\node (a) at (-2.35,1.15) [label=\tiny{1}]{};
\filldraw [black] (-2,2) circle (4pt);
\filldraw [black] (-2,2.5) circle (4pt);
\filldraw [black] (-1,1.5) circle (4pt);
\node (a) at (-2.35,1.65) [label=\tiny{2}]{};
\filldraw [black] (2,1.5) circle (4pt);
\node (a) at (-2.35,2.15) [label=\tiny{3}]{};
\filldraw [black] (2,2) circle (4pt);
\node (a) at (-1.35,1.15) [label=\tiny{4}]{};
\filldraw [black] (2,1.5) circle (4pt);
\node (a) at (1.65,1.15) [label=\tiny{5}]{};
\filldraw [black] (3,1.5) circle (4pt);
\node (a) at (1.65,1.65) [label=\tiny{6}]{};
\node (a) at (2,0) [label=\tiny{RMH}]{};
\filldraw [black] (2,1.5) circle (4pt);
\node (a) at (2.65,1.15) [label=\tiny{7}]{};
\filldraw [black] (3,2) circle (4pt);
\node (a) at (2.65,1.65) [label=\tiny{8}]{};
\filldraw [black] (3,2.5) circle (4pt);
\node (a) at (2.65,2.15) [label=\tiny{9}]{};
\filldraw [black] (3,3) circle (4pt);
\node (a) at (2.65,2.65) [label=\tiny{10}]{};
\filldraw [black] (4,2) circle (4pt);
\node (a) at (3.65,1.65) [label=\tiny{12}]{};
\filldraw [black] (4,1.5) circle (4pt);
\filldraw [black] (4,2.5) circle (4pt);
\node (a) at (3.65,1.15) [label=\tiny{11}]{};
\node (a) at (3.65,2.15) [label=\tiny{13}]{};
\filldraw [black] (4,3) circle (4pt);
\node (a) at (3.65,2.65) [label=\tiny{14}]{};
 \filldraw [black] (4.5,1.3) circle (1pt);
\filldraw [black] (4.7,1.3) circle (1pt);
\filldraw [black] (4.9,1.3) circle (1pt); 
\node (a) at (0.5,-1.5) [label=\scriptsize{(A)} \normalsize A $4$-exclusion state $\underline{\eta}$.]{};
    \draw[thick, <-] (7,0.8)--(22,0.8);
\foreach \x in {8,9,10,11,12,13,14,15,16,17,18,19,20,21}
    \draw[thick, -](\x cm, 0.9)--(\x cm, 0.7);
        \node (a) at (21,-0.3) [label={-1}]{};
    \node (a) at (20,-0.3) [label={-2}]{};
    \node (a) at (19,-0.3) [label={-3}]{};
    \node (a) at (18,-0.3) [label={-4}]{};
    \node (a) at (17,-0.3) [label={-5}]{};
    \node (a) at (16,-0.3) [label={-6}]{};
    \node (a) at (15,-0.3) [label={-7}]{};
    \node (a) at (14,-0.3) [label={-8}]{};
    \node (a) at (13,-0.3) [label={-9}]{};
    \node (a) at (12,-0.3) [label={-10}]{};
    \node (a) at (11,-0.3) [label={-11}]{};
    \node (a) at (10,-0.3) [label={-12}]{};
    \node (a) at (9,-0.3) [label={-13}]{};
    \node (a) at (8,-0.3) [label={-14}]{};
    
\filldraw [black] (19,1.5) circle (4pt);
\filldraw [black] (18,2.5) circle (4pt);
\filldraw [black] (18,2) circle (4pt);
\filldraw [black] (18,1.5) circle (4pt);
\filldraw [black] (16,1.5) circle (4pt);
\filldraw [black] (12,1.5) circle (4pt);
\filldraw [black] (8,1.5) circle (4pt);
\filldraw [black] (7.5,1.3) circle (1pt);
\filldraw [black] (7.3,1.3) circle (1pt);
\filldraw [black] (7.1,1.3) circle (1pt); 
\node (a) at (14.5,-1.5) [label= \scriptsize{(B)} \normalsize Its stood up state $\underline{\omega}$.]{};

    \end{tikzpicture}
    \caption{The standing up map on $4$-exclusion.}
    \label{standing up K}
\end{figure}
\par A priori the ``standing up" map $T^n$ is an injection into $\mathbb{Z}_{\geq 0}^{\mathbb{Z}_{<0}}$. However since $\eta_i\leq k$ for all $i$ the image of $T^n$ lies in the restricted state space
$$\mathcal{H}':=\{\underline{\omega} \in \mathbb{Z}_{\geq 0}^{\mathbb{Z}_{< 0}}: \hspace{2mm} \omega_{-i}=\omega_{-i-1}=...=\omega_{-i-(k-2)}=0 \Rightarrow \omega_{-i-(k-1)} \neq0, \forall i>0\}.$$
\par \noindent Since $\eta_i=k$ for $i$ large $\underline{\omega}$ must coincide far to the left with one of the following states $\underline{\omega}^{-m}$, for $m \in \{0,1,..,k-1\}$, defined by $\omega^{-m}_{-i}=\mathbb{I}\{i \in k\mathbb{Z}+m\}$. More precisely $m$ is uniquely determined by $n\in k\mathbb{Z}-m$.
\vspace{5mm}
\begin{figure}[H]
    \centering
       \begin{subfigure}[b]{0.4\textwidth}
       \centering
    \begin{tikzpicture}[scale=0.55]
    \draw[thick, <-] (-12,-1)--(0,-1);
\foreach \x in {-11,-10,-9,-8,-7,-6,-5,-4,-3,-2,-1}
    \draw[thick, -](\x cm, -1.1)--(\x cm, -0.9) node[anchor=north]{$\x$};
\filldraw [black] (-11.5,-0.7) circle (1pt);
\filldraw [black] (-11.7,-0.7) circle (1pt);
\filldraw [black] (-11.9,-0.7) circle (1pt); 
\filldraw [black] (-8,-0.5) circle (4pt);
\filldraw [black] (-4,-0.5) circle (4pt);

    \end{tikzpicture}
    \caption{ $\underline{\omega}^0$}
    \end{subfigure}
    \hspace{10mm}
       \begin{subfigure}[b]{0.4\textwidth}
       \centering
    \begin{tikzpicture}[scale=0.55]
    \draw[thick, <-] (-12,-1)--(0,-1);
\foreach \x in {-11,-10,-9,-8,-7,-6,-5,-4,-3,-2,-1}
    \draw[thick, -](\x cm, -1.1)--(\x cm, -0.9) node[anchor=north]{$\x$};
\filldraw [black] (-11.5,-0.7) circle (1pt);
\filldraw [black] (-11.7,-0.7) circle (1pt);
\filldraw [black] (-11.9,-0.7) circle (1pt); 
\filldraw [black] (-9,-0.5) circle (4pt);
\filldraw [black] (-5,-0.5) circle (4pt);
\filldraw [black] (-1,-0.5) circle (4pt);

    \end{tikzpicture}
    \caption{ $\underline{\omega}^{-1}$}
    \end{subfigure}
        \par\bigskip \vspace{5mm}
            \begin{subfigure}[b]{0.4\textwidth}
       \centering
    \begin{tikzpicture}[scale=0.55]
    \draw[thick, <-] (-12,-1)--(0,-1);
\foreach \x in {-11,-10,-9,-8,-7,-6,-5,-4,-3,-2,-1}
    \draw[thick, -](\x cm, -1.1)--(\x cm, -0.9) node[anchor=north]{$\x$};
\filldraw [black] (-11.5,-0.7) circle (1pt);
\filldraw [black] (-11.7,-0.7) circle (1pt);
\filldraw [black] (-11.9,-0.7) circle (1pt); 
\filldraw [black] (-10,-0.5) circle (4pt);
\filldraw [black] (-6,-0.5) circle (4pt);
\filldraw [black] (-2,-0.5) circle (4pt);
    \end{tikzpicture}
    \caption{$\underline{\omega}^{-2}$}
    \end{subfigure}
    \hspace{10mm}
           \begin{subfigure}[b]{0.4\textwidth}
       \centering
    \begin{tikzpicture}[scale=0.55]
    \draw[thick, <-] (-12,-1)--(0,-1);
\foreach \x in {-11,-10,-9,-8,-7,-6,-5,-4,-3,-2,-1}
    \draw[thick, -](\x cm, -1.1)--(\x cm, -0.9) node[anchor=north]{$\x$};
\filldraw [black] (-11.5,-0.7) circle (1pt);
\filldraw [black] (-11.7,-0.7) circle (1pt);
\filldraw [black] (-11.9,-0.7) circle (1pt); 
\filldraw [black] (-11,-0.5) circle (4pt);
\filldraw [black] (-7,-0.5) circle (4pt);
\filldraw [black] (-3,-0.5) circle (4pt);
    \end{tikzpicture}
    \caption{$\underline{\omega}^{-3}$}
    \end{subfigure}
    
    \caption{The four ground states of stood up 4-exclusion.}
    \label{k-exclus stood up ground states}
\end{figure}
\begin{remark}
Note that all shifts of the ground state $\underline{\eta}^{-m}\in \Omega^{-m}$ stand up to give $\underline{\omega}^{-m}$ for each $m \in \{0,1,..,k-1\}$. This is in direct analogy with the $2$-exclusion case.
\end{remark}
\par \noindent We now see that the image of $T^n$ lies in $\mathcal{H}:=\bigcup\limits_{m =0}^{k-1}\mathcal{H}^{-m}$, where the disjoint sets $\mathcal{H}^{-m}$ are defined as,
$$\mathcal{H}^{-m} := \{\underline{\omega} \in \mathcal{H}' : \exists N > 0 \hspace{2mm}\textrm{ s.t } \omega_{-i}=\omega_{-i}^{-m} \hspace{2mm} \forall i \geq N\}.$$
\par \noindent Given $\underline{\omega} \in \mathcal{H}^{-m}$ we let $D_{-m}(\underline{\omega})$ be the minimum such $N$ in the above. 
\begin{lem}\label{bijec k-exc}
For $n\in k\mathbb{Z}-m$ with $m\in\{0,1,...,k-1\}$ we have $T^n(\Omega^n) = \mathcal{H}^{-m}$.
\end{lem}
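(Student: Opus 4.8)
The plan is to follow the proof of Lemma \ref{im T}, of which the present statement is the $k$-fold generalisation (that lemma is the case $k=2$, $m\in\{0,1\}$). The discussion preceding the lemma already shows that $T^n$ is injective and that its image is contained in $\mathcal{H}^{-m}$ whenever $n\in k\mathbb{Z}-m$, so it suffices to prove surjectivity of $T^n\colon\Omega^n\to\mathcal{H}^{-m}$. I would do this by exhibiting an explicit \emph{laying down} inverse and checking that it lands in the correct irreducible component.

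Concretely, given $\underline{\omega}\in\mathcal{H}^{-m}$, write $D=D_{-m}(\underline{\omega})$ for its stabilisation index and declare the leftmost particle of $\underline{\eta}$ to sit at the site \[S_1=\frac{n+m}{k}+\kappa(D,m)-\sum_{i=1}^{D-1}\omega_{-i},\] with $S_r=S_{r-1}+\omega_{1-r}$ for $r\geq2$, where $\kappa(D,m)\in\mathbb{Z}$ is the explicit $D$-dependent integer correction generalising the term $\tfrac{E(\underline{\omega})+\mathbb{I}\{E(\underline{\omega})\in 2\mathbb{Z}+1\}}{2}$ of Lemma \ref{im T} (note $\tfrac{n+m}{k}\in\mathbb{Z}$ since $n\equiv -m \bmod k$). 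The no-$k$-consecutive-zeros condition built into $\mathcal{H}'$ guarantees that every occupied column of $\underline{\eta}$ contains between $1$ and $k$ particles, so $\underline{\eta}$ is a genuine configuration; and since $\underline{\omega}$ agrees with $\underline{\omega}^{-m}$ far to the left, $\underline{\eta}$ agrees far out with a shift of the ground state $\underline{\eta}^{-m}$, forcing $N(\underline{\eta})$ to be finite. That $T^n(\underline{\eta})=\underline{\omega}$ is then immediate from $\omega_{-r}=S_{r+1}-S_r$, which holds by construction.

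The only substantive point left is to verify $N(\underline{\eta})=n$ exactly, and this is where $\kappa(D,m)$ must be calibrated correctly; I expect this bookkeeping to be the main obstacle. The natural strategy is to compute $N(\underline{\eta})=\sum_{i\geq1}(k-\eta_i)-\sum_{i\leq0}\eta_i$ by comparison with the relevant shift of $\underline{\eta}^{-m}$, where the difference is supported on finitely many columns and telescopes. Rather than attack a general $\underline{\omega}$ directly, I would first confirm $N=n$ on the ground state $\underline{\omega}^{-m}$, where both sides are transparent, and then observe that raising a single coordinate $\omega_{-j}$ by one shifts all particles to its left by one site, changing $N(\underline{\eta})$ and the prescribed position in lockstep; an induction on the total deviation $\sum_i(\omega_{-i}-\omega_{-i}^{-m})$ then extends the identity to all of $\mathcal{H}^{-m}$. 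Here the analysis of the $k$ residue classes modulo $k$ replaces the even/odd dichotomy of Lemma \ref{im T}, and fixing $\kappa(D,m)$ consistently across these $k$ classes is the one place where genuine care is needed.
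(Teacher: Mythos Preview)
Your approach is essentially the same as the paper's: reduce to surjectivity and exhibit the laying-down inverse by specifying $S_1$ and then $S_r=S_{r-1}+\omega_{1-r}$. The only difference is presentation. The paper does not bother with your inductive verification; it simply writes down the closed form
\[
S_1=\frac{n+D_{-m}(\underline{\omega})-\mathbb{I}\{D_{-m}(\underline{\omega})\notin k\mathbb{Z}-m\}}{k}+1-\sum_{i=1}^{D_{-m}(\underline{\omega})}\omega_{-i}
\]
and declares ``it is clear that $T^n(\underline{\eta})=\underline{\omega}$''. So your $\kappa(D,m)$ is not left to be discovered by induction; it is read off directly from where the configuration has already stabilised to $\underline{\omega}^{-m}$. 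Your more careful route (check on the ground state, then propagate by one-step increments of $\omega_{-j}$) is a perfectly valid way to \emph{derive} or \emph{verify} this formula, and is arguably more honest than the paper's assertion, but it is not needed once the formula is in hand.
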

\begin{proof} It suffices to show surjectivity of $T^n$ for each $n$.
\par \noindent  Take $n\in k\mathbb{Z}-m$ and $\omega\in\mathcal{H}^{-m}$, then construct the state $\eta\in\Omega^n$ having LMP at site $$S_1=\frac{n+D_{-m}(\underline{\omega})-\mathbb{I}\{D_{-m}(\underline{\omega})\notin k\mathbb{Z}-m\}}{k}+1-\sum\limits_{i=1}^{D_{-m}(\underline{\omega})}\omega_{-i}$$ 
\par \noindent and $r^\textrm{th}$ particle at site $S_r=S_{r-1}+\omega_{1-r}$. It is clear that $T^n(\underline{\eta})=\underline{\omega}$ and hence $T^n$ is surjective.
\end{proof}
\par \noindent Just as before we call these inverse maps the ``laying down`` maps.
\par Using the ``standing up" maps we define a particle system on $\mathcal{H}$ whose dynamics are inherited from those on $\Omega$. In particular right jumps in $\underline{\eta}$ correspond to right jumps in $\underline{\omega}$ and similarly for left jumps.  The explicit right/left jump rates are given in Table \ref{stood up rates} for $r \geq 2$.
\begin{table}[H]
\centering
\begin{tabular}{cc}
 \begin{tabular}{|c||c|}
    \hline& $\omega_{-r+1} \geq 0$\\
    \hline\hline
        $\omega_{-r}=0$ & $0$  \\
        \hline
        $\omega_{-r}=1$ & $1-\prod\limits_{j=1}^{k-1}\mathbb{I}\{\omega_{-r-j}=0\}$ \\
        \hline
        $\omega_{-r} \geq 2$ & $1$ \\
        \hline
    \end{tabular}\\ \vspace{1mm}\\
\begin{tabular}{|c||c|c|c|}
    \hline
         &$\omega_{r+1}=0$ &$\omega_{-r+1}=1$ &$\omega_{-r+1} \geq 2$  \\
         \hline \hline
         $\omega_{-r}\geq0$&$0$& $q(1-\prod\limits_{j=1}^{k-1}\mathbb{I}\{\omega_{-r+1+j}=0\})$ & $q$\\
         \hline
    \end{tabular}\\
\end{tabular}
\caption{The jump rates $p_\omega(\omega_{-r}, \omega_{-r+1})$ and $q_\omega(\omega_{-r},\omega_{-r+1})$ respectively.}
\label{stood up rates}
\end{table}
\par Since the ``stood up " process is only defined on the negative half integer line we must consider what happens at the boundary site. As in Section $3.2$ we will consider an open infinite type boundary.
\begin{table}[H]
    \centering
    \begin{tabular}{|c||c|c|}
         \hline
         &  Rate into the boundary &  Rate out of the boundary \\
         \hline \hline
          $\omega_{-1}=0$&$0$ &  \multirow{3}{*}{$q$}\\
         \cline{1-2}
       \multicolumn{1}{|c||}{ $\omega_{-1}=1$} & \multicolumn{1}{|c|}{$1-\prod\limits_{j=1}^{k-1}\mathbb{I}\{\omega_{-r-j}=0\}$} &  \\ \cline{1-2}
        \multicolumn{1}{|c||}{$\omega_{-1}\geq2$} & \multicolumn{1}{|c|}{$1$} &\\
        \hline
    \end{tabular}
    \caption{Boundary jump rates for the stood up process, $\underline{\omega}$.}
    \label{boundary rates k}
\end{table}
\par To find the stationary distribution for the ``stood up" process we first consider the unrestricted process $\underline{\omega}^* \in \mathbb{Z}_{\geq 0}^{\mathbb{Z}_{<0}}$, i.e.\ the process described by the same jump rates as $\underline{\omega}$ but where the number of consecutive zeros is not restricted. It is clear that the unrestricted process is simply the zero-range process which is a member of the blocking family \cite{blocking}, with one parameter family of blocking measures given by the marginals
$$\pi_{-i}^{*,\hat{c}}(z)=q^{(i+\hat{c})z}(1-q^{(i+\hat{c})}).$$
\par \noindent  By considering dynamics at the boundary we can fix the value of $\hat{c}$ and so have a single product blocking measure. We suppose that $\underline{\pi}^{*,\hat{c}}$ satisfies detailed balance over this boundary edge, i.e.\
$$\pi_{-1}^{*,\hat{c}}(y)\cdot \textrm{``rate into the boundary"}=\pi_{-1}^{*,\hat{c}}(y-1)\cdot \textrm{`` rate out of the boundary"} \hspace{5mm} \text{for all } y\geq 1.$$
\par \noindent Thus for all $y \geq 1$ we have $$q^{(1+\hat{c})y}(1-q^{(i-c)})=q^{(1+\hat{c})(y-1)}(1-q^{(i-c)})q$$ and hence $\hat{c} = 0$, giving the stationary blocking measure 
$$\underline{\pi}^*(\underline{\omega}^*)=\prod\limits_{i \geq 1}q^{i\omega^*_{-i}}(1-q^{i}).$$
\par Now that we have the stationary distribution for the unrestricted process we consider the restriction to $\mathcal{H}$ and find the stationary measure. Recall that $\mathcal{H}=\bigcup\limits_{m=0}^{k-1}\mathcal{H}^{-m}$, and note that for each $\mathcal{H}^{-m}$ is the irreducible component of the ground state $\underline{\omega}^{-m}$. We define stationary measures $\underline{\pi}^{-m}$ on these irreducible components in terms of $\underline{\pi}^*$. It seems natural to define these measures as $\underline{\pi}^{-m}(\cdot)=\underline{\pi}^*(\cdot|\cdot \in \mathcal{H}^{-m})$. However w.r.t $\underline{\pi}^*$ the probability of being in any irreducible component is zero and so these quantities are undefined. To rectify this we use a similar formal reasoning as we did in Section \ref{stand up} in order to give the stationary distributions in the following form: 
$$\underline{\pi}^{-m}(\underline{\omega})=\frac{\prod\limits_{i\geq 1}\phi_{-i}^{-m}(\omega_{-i})\mathbb{I}\{\underline{\omega}\in \mathcal{H}^{-m}\}}{\sum\limits_{\underline{\omega}'\in \mathcal{H}^{-m}}\prod\limits_{i\geq 1}\phi_{-i}^{-m}(\omega'_{-i})},$$
\par \noindent where
$$\phi_{-i}^{-m}(\omega_{-i}) = \frac{\pi_{-i}^*(\omega_{-i})}{\pi_{-i}^*(\omega_{-i}^{-m})}= \begin{cases}
q^{i(\omega_{-i}-1)} &\textrm{if } i \in k\mathbb{Z}+m \\
q^{i\omega_{-i}} &\textrm{otherwise}.
\end{cases}$$ These are given explicitly in the following Proposition.
\begin{prop}\label{pi ell}
For each $m\in\{0,1,...,k-1\}$ the unique stationary measure on $\mathcal{H}^{-m}$ is
$$\underline{\pi}^{-m}(\underline{\omega})=\frac{q^{\sum\limits_{i \notin k \mathbb{Z}+m}i\omega_{-i}+\sum\limits_{i \in k \mathbb{Z}+m}i(\omega_{-i}-1)}}{S_{-m}^{(k)}(q)},$$
\par \noindent where $S_{-m}^{(k)}(q)=\sum\limits_{\underline{\omega}'\in\mathcal{H}^{-m}}q^{\sum\limits_{i \notin k \mathbb{Z}+m}i\omega_{-i}'+\sum\limits_{i \in k \mathbb{Z}+m}i(\omega_{-i}'-1)}$ is the normalising factor with respect to $\mathcal{H}^{-m}$.
\end{prop}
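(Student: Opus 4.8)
The plan is to mirror exactly the proof of Proposition \ref{2pi}, invoking the general fact that the restriction of a reversible stationary measure to an irreducible component is again reversible stationary (Proposition 5.10 of \cite{liggett}). First I would recall that the unrestricted process $\underline{\omega}^*$ is the ordinary zero-range process, which is a member of the blocking family \cite{blocking} and is reversible with respect to the product measure $\underline{\pi}^*$ whose marginals are $\pi_{-i}^*(z)=q^{iz}(1-q^i)$. Since each $\mathcal{H}^{-m}$ is an irreducible component of the restricted dynamics and $\underline{\pi}^{-m}$ is (formally) the restriction of $\underline{\pi}^*$ to $\mathcal{H}^{-m}$, reversibility and stationarity of $\underline{\pi}^{-m}$ follow immediately, and uniqueness is a consequence of irreducibility of the component.

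The only genuine subtlety is the same one encountered in Section \ref{stand up}: with respect to $\underline{\pi}^*$ the event $\{\underline{\omega}\in\mathcal{H}^{-m}\}$ has probability zero (every such configuration must agree with $\underline{\omega}^{-m}$ far to the left, a measure-zero constraint), so one cannot condition directly. I would resolve this exactly as before, by replacing $\pi_{-i}^*(\omega_{-i})$ with the ratio $\phi_{-i}^{-m}(\omega_{-i})=\pi_{-i}^*(\omega_{-i})/\pi_{-i}^*(\omega_{-i}^{-m})$. Because every $\underline{\omega}\in\mathcal{H}^{-m}$ satisfies $\omega_{-i}=\omega_{-i}^{-m}$ for all $i\geq D_{-m}(\underline{\omega})$, all but finitely many factors $\phi_{-i}^{-m}(\omega_{-i})$ equal $1$; hence the infinite product $\prod_{i\geq 1}\phi_{-i}^{-m}(\omega_{-i})$ is a finite positive number and the normalising sum $S_{-m}^{(k)}(q)$ is finite and strictly positive, making $\underline{\pi}^{-m}$ a well-defined probability measure.

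Finally I would obtain the explicit form by substituting $\pi_{-i}^*(z)=q^{iz}(1-q^i)$ into the ratio. The factors $(1-q^i)$ cancel, and since $\omega_{-i}^{-m}=\mathbb{I}\{i\in k\mathbb{Z}+m\}$ we get $\phi_{-i}^{-m}(\omega_{-i})=q^{i\omega_{-i}}$ when $i\notin k\mathbb{Z}+m$ and $\phi_{-i}^{-m}(\omega_{-i})=q^{i(\omega_{-i}-1)}$ when $i\in k\mathbb{Z}+m$, precisely as recorded before the statement. Collecting the exponents over all $i$ turns $\prod_{i\geq 1}\phi_{-i}^{-m}(\omega_{-i})$ into $q^{\sum_{i\notin k\mathbb{Z}+m}i\omega_{-i}+\sum_{i\in k\mathbb{Z}+m}i(\omega_{-i}-1)}$, yielding the claimed formula for $\underline{\pi}^{-m}$ together with the stated $S_{-m}^{(k)}(q)$. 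The main obstacle is not the computation but the rigorous justification of the formal conditioning and the convergence of the products; however this is identical to the argument already carried out for $\underline{\pi}^e$ and $\underline{\pi}^o$ in Section \ref{stand up}, so it requires no new ideas.
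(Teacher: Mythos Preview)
Your proposal is correct and follows exactly the same approach as the paper: the paper's proof is the single sentence ``The result once again follows from Proposition 5.10 of \cite{liggett} since each $\underline{\pi}^{-m}$ is a restriction of $\underline{\pi}^*$.'' Your write-up simply unpacks this in more detail, including the formal conditioning and the explicit computation of $\phi_{-i}^{-m}$, which the paper already recorded in the paragraphs preceding the proposition.
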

\begin{proof}
The result once again follows from Proposition 5.10 of \cite{liggett} since each $\underline{\pi}^{-m}$ is a restriction of $\underline{\pi}^*$.
\end{proof}

\subsection{Identities}~
\par By Lemma \ref{bijec k-exc} the standing up transformation $T^n$ describes a bijection between $\Omega^n$ and $\mathcal{H}^{-m}$, for the unique value $m\in\{0,1,...,k-1\}$ such that $n\in k\mathbb{Z}-m$. Since $T^n$ preserves the dynamics of the corresponding processes we get an equality of measures,  $\underline{\nu}^{n,c}(\underline{\eta}) =  \underline{\pi}^{-m}(T^n(\underline{\eta}))$ for this value of $m$.
\par Recall that $k$-exclusion has $k$ ground states up to shift; $\underline{\eta}^{-m}\in\Omega^{-m}$ for $m \in \{0,1,..,k-1\}$, satisfying $T^{-m}(\underline{\eta}^{-m}) = \underline{\omega}^{-m}$. Thus $\underline{\nu}^{-m,c}(\underline{\eta}^{-m}) = \underline{\pi}^{-m}(\underline{\omega}^{-m})$ and so by Proposition \ref{nu n k-exc} and Proposition \ref{pi ell} we have the following identities for $m\in \{0,1,..,k-1\}$ (after rearrangement):

    \begin{align*}k\sum\limits_{\ell \in \mathbb{Z}}S_{-m}^{(k)}(q) q^{\frac{k\ell(\ell+1)}{2}-m\ell-(k\ell-m)c}&=\prod_{i\geq1}q^{k(i-c)}Z_{-i+1}^c(q)Z_i^c(q)\\ &+ \sum_{r=1}^{k-1}\zeta_k^{-rm}\left(\prod_{i\geq 1}q^{k(i-c)}W_{-i+1,r}^c(q)W_{i,r}^c(q)\right).\end{align*}
Writing $Z_{i}^c(q)$ and $W_{i,r}^c(q)$ explicitly and letting $z=q^{-c}$ proves the following identities. 

\kexc*
\vspace{5mm}
\begin{remark}
When $k=2$ these agree with the two identities coming from $2$-exclusion in Theorem \ref{2-exc explicit}.
\end{remark}

We now discuss the combinatorial significance of these identities.  Note first that $\prod_{i\geq 1}(1+q^i+q^{2i}+...q^{k})$ is the generating function for ordinary partitions of $n$ with each part appearing at most $k$ times. By the ``General Principle" of Andrews we can then interpret the first term on the RHS as:
\vspace{5mm}
$$\prod_{i\geq 1}(1+q^{i}z+q^{2i}z^2+...+q^{ki}z^k)(1+q^{i-1}z^{-1}+q^{2(i-1)}z^{-2}+...+q^{k(i-1)}z^{-k}) = \sum_{k'\in\mathbb{Z}}f_{D_k,D_k,k'}(q)z^{k'}$$ 
\vspace{5mm} \par \noindent where $f_{D_k,D_k,k'}(q)$ is the generating function for $\text{GFP}_{D_k,D_k,k'}(n)$. The function $f_{D_k,D_k,0}(q)$ is the function $\Phi_k(q)$ defined by Andrews on p.$6$ of \cite{frobenius} (the special case $k=2$ appeared earlier when considering $2$-exclusion). The case of non-zero offset is not considered in the book.

\newpage The other terms on the RHS might look unwieldy at first glance but are in fact the above but with change of variables $z \mapsto \zeta_k^{-r}z$. We saw this in all previous examples; $\zeta_2 = -1$ and this was providing the sign changes in the identities, the purpose of which were to isolate odd/even terms of another identity. Here we have the same behaviour but now we are using $k$th roots of unity to isolate the $z^{k'}$ terms where $k'$ lies in a fixed class mod $k$. To justify this we expand the whole RHS; for $m\in\{0,1,...,k-1\}$ we have
\vspace{5mm}
\begin{align*}
\sum_{r=0}^{k-1}\zeta_k^{-rm}\left(\prod_{i\geq 1}\left(\sum_{\alpha=0}^k \zeta_k^{-\alpha r}q^{\alpha i}z^{\alpha}\right)\left(\sum_{\alpha=0}^k \zeta_k^{\alpha r}q^{\alpha (i-1)}z^{-\alpha}\right)\right) &= \sum_{r=0}^{k-1}\zeta_k^{-rm}\left(\sum_{k'\in\mathbb{Z}}\zeta_k^{-rk'}f_{D_k,D_k,k'}(q)z^{k'}\right)\\ &= \sum_{k'\in\mathbb{Z}}\left(\sum_{r=0}^{k-1}\zeta_k^{-r(m+k')}\right)f_{D_k,D_k,k'}(q)z^{k'}\\ &= k\sum_{k'\equiv -m \bmod k}f_{D_k,D_k,k'}(q)z^{k'}.\end{align*}\vspace{5mm}

\par\noindent Given the above we now see that our identities are equivalent to the equalities \[f_{D_k,D_k,k'}(q) =S_{-m}^{(k)}(q)q^{\frac{k\ell(\ell+1)}{2}-m\ell}\quad\text{ if } k'=k\ell-m \text{ with } m\in\{0,1,...k-1\}.\]

\vspace{5mm}\par \noindent Once again, we have proved this probabilistically but are also interested in explicit combinatorial explanations. In a similar vein to $2$-exclusion we find that there are $k$ base cases, $f_{D_k,D_k,-m}(q) = S_{-m}^{(k)}(q)$ for $m\in\{0,1,...,k-1\}$. These can be proved by adapting the maps $\psi^e_{n}, \psi^o_{n}$ of Section $3.3$ to give maps: \vspace{5mm}
\[\psi^{(m)}_{n} : \left\{\underline{\omega}\in\mathcal{H}^{-m}\,:\,\sum_{i\notin k\mathbb{Z}+m}i\omega_{-i} + \sum_{i \in k\mathbb{Z}+m}i(\omega_{-i}-1) = n\right\} \rightarrow \text{GFP}_{D_k,D_k,-m}(n),\] 
\vspace{5mm}\par\noindent for $n\geq 0$ and $m\in\{0,1,...,k-1\}$. 

We must first describe the process of assigning generalised Young diagrams to GFP's with the $k$-repetition condition. An element of $\text{GFP}_{D_k,D_k,-m}(n)$ with $m\in\{0,1,...,k-1\}$ can be assigned a generalised Young diagram on the set of points $C_{m}=\{(n_1,n_2)\in\mathbb{Z}^2\,|\, n_1+n_2\equiv m \bmod k\}$ as follows. The subset corresponding to such an element with $s_1$ entries on the top row consists of the $s_1$ leading diagonal points $(m+1,-1),(m+2,-2),...,(m+s_1,-s_1)$, the first $a_i$ points of $C_{m}$ to the right of $(m+i,-i)$ and the first $b_i$ points of $C_{m}$ under $(i,m-i)$. Note that the points $(i,m-i)$ for $1\leq i \leq m$ are not included.

The maps $\psi^{(m)}_n$ are then defined in a similar fashion to $\psi^{e}_n, \psi^{o}_n$. In the following we use the notation $r_{(m)}$ to stand for a wave of length $r$ on $C_m$, a shift of the points of $C_m$ enclosed in the rectangle with opposite corners $(1,-1),(r,-k)$. Given $\underline{\omega}\in\mathcal{H}^{-m}$ the map $\psi^{(m)}_n$ stacks $(\omega_{-i}-\mathbb{I}\{i\equiv m \bmod k\})$ copies of the wave $i_{(m)}$ (whenever this is non-negative) vertically in increasing order, and then removes a point from the bottom of each of the columns $1,2,...,i$ for each $i\equiv m \bmod k$ such that $\omega_{-i} = 0$ (giving the generalised Young diagram of an element of $\text{GFP}_{D_k,D_k,-m}(n)$). See Figure \ref{k-exc bijections from omega to gfp} for an example when $k=3$ (as earlier, points are labelled as black squares). It is possible to check that these maps are bijections but as before this is tedious and is left to the reader.
\newpage
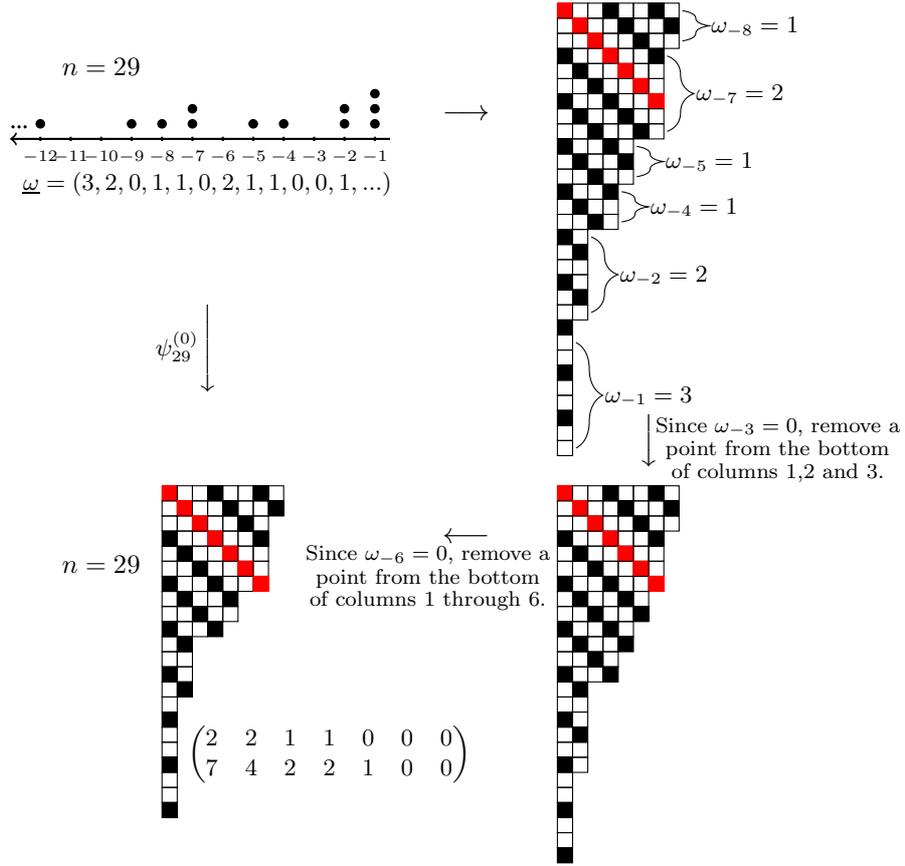
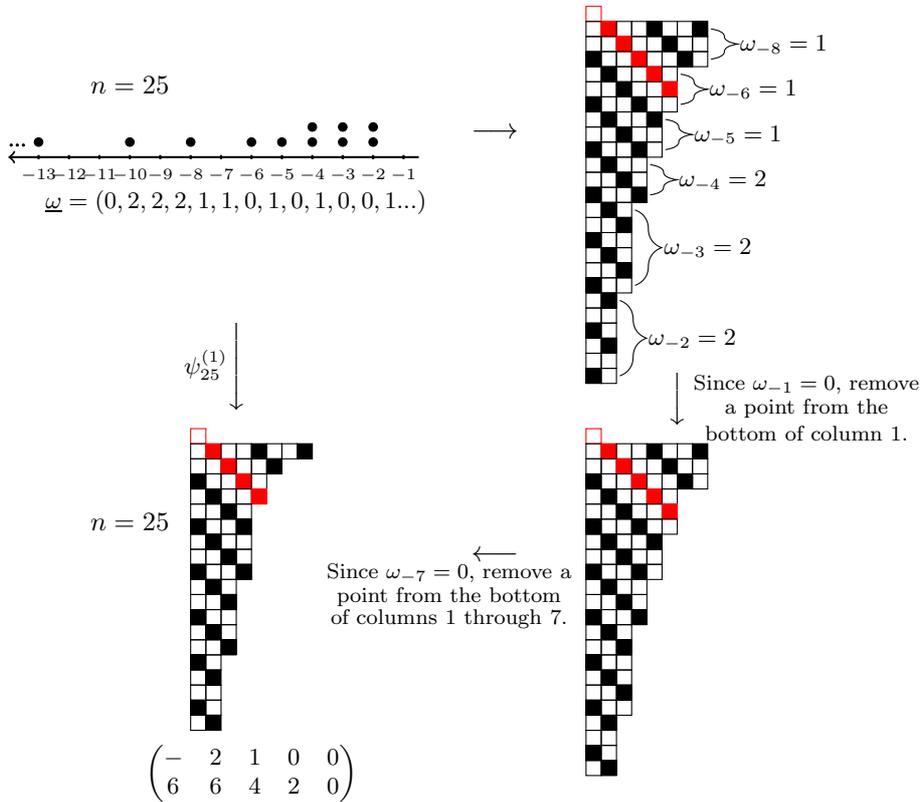
\begin{figure}[H]
    \centering
\begin{subfigure}[b]{0.75\textwidth}
    \centering
\begin{tikzpicture}[scale=0.4]
\draw[thick, <-] (-13,0)--(-0.5,0);
\foreach \x in {-12,-11,-10,-9,-8,-7,-6,-5,-4,-3,-2,-1}
    \draw[thick, -](\x cm, 2pt)--(\x cm, -2pt) node[anchor=north]{\tiny{$\x$}};
 
\filldraw[black](-1,0.5) circle (4pt);   
\filldraw[black](-1,1) circle (4pt);  
\filldraw[black](-1,1.5) circle (4pt);
\filldraw[black](-2,0.5) circle (4pt);   
\filldraw[black](-2,1) circle (4pt);  
\filldraw [black] (-4,0.5) circle (4pt);
\filldraw [black] (-5,0.5) circle (4pt);
\filldraw [black] (-7,0.5) circle (4pt);
\filldraw[black](-7,1) circle (4pt);  
\filldraw [black] (-8,0.5) circle (4pt);
\filldraw[black](-9,0.5) circle (4pt);
\filldraw[black](-12,0.5) circle (4pt);   
\filldraw [black] (-12.5,0.4) circle (1pt);
\filldraw [black] (-12.7,0.4) circle (1pt);
\filldraw [black] (-12.9,0.4) circle (1pt);
\node (a) at (-6.5,-2.5) [label=\small{$\underline{\omega}=(3,2,0,1,1,0,2,1,1,0,0,1,...)$}]{};
\node(a) at(2,0)[label=\large{$\longrightarrow$}]{};
\node(a) at(-10,1.5)[label={$n=29$}]{};

\filldraw[red] (5,4) rectangle (5.5,4.5);
\draw[black](5.5,4) rectangle (6,4.5);
\draw[black] (6,4) rectangle (6.5,4.5);
\filldraw[black] (6.5,4) rectangle (7,4.5);
\draw[black] (7,4) rectangle (7.5,4.5);
\draw[black] (7.5,4) rectangle (8,4.5);
\filldraw[black] (8,4) rectangle (8.5,4.5);
\draw[black] (8.5,4) rectangle (9,4.5);
\draw[black] (5,3.5)rectangle(5.5,4);
\filldraw[red] (5.5,3.5) rectangle (6,4);
\draw[black](6,3.5) rectangle (6.5,4);
\draw[black] (6.5,3.5) rectangle (7,4);
\filldraw[black] (7,3.5) rectangle (7.5,4);
\draw[black] (7.5,3.5) rectangle (8,4);
\draw[black] (8,3.5) rectangle (8.5,4);
\filldraw[black] (8.5,3.5) rectangle (9,4);
\draw[black] (5,3) rectangle (5.5,3.5);
\draw[black](5.5,3) rectangle (6,3.5);
\filldraw[red] (6,3) rectangle (6.5,3.5);
\draw[black] (6.5,3) rectangle (7,3.5);
\draw[black] (7,3) rectangle (7.5,3.5);
\filldraw[black] (7.5,3) rectangle (8,3.5);
\draw[black] (8,3) rectangle (8.5,3.5);
\draw[black] (8.5,3) rectangle (9,3.5);
\draw [decorate,decoration={brace,amplitude=10pt},xshift=-4pt,yshift=0pt]
(9.25,4.25) -- (9.25,3.25) node [black,midway,xshift=-0.6cm] 
{}; 
\node(a) at (11.5,2.75) [label=\small{$ \omega_{-8}=1$}]{};
\filldraw[black] (5,2.5)rectangle(5.5,3);
\draw[black] (5.5,2.5) rectangle (6,3);
\draw[black](6,2.5) rectangle (6.5,3);
\filldraw[red] (6.5,2.5) rectangle (7,3);
\draw[black] (7,2.5) rectangle (7.5,3);
\draw[black] (7.5,2.5) rectangle (8,3);
\filldraw[black] (8,2.5) rectangle (8.5,3);
\draw[black] (5,2) rectangle (5.5,2.5);
\filldraw[black](5.5,2) rectangle (6,2.5);
\draw[black] (6,2) rectangle (6.5,2.5);
\draw[black] (6.5,2) rectangle (7,2.5);
\filldraw[red] (7,2) rectangle (7.5,2.5);
\draw[black] (7.5,2) rectangle (8,2.5);
\draw[black] (8,2) rectangle (8.5,2.5);
\draw[black] (5,1.5)rectangle(5.5,2);
\draw[black] (5.5,1.5) rectangle (6,2);
\filldraw[black](6,1.5) rectangle (6.5,2);
\draw[black] (6.5,1.5) rectangle (7,2);
\draw[black] (7,1.5) rectangle (7.5,2);
\filldraw[red](7.5,1.5) rectangle (8,2);
\draw[black] (8,1.5) rectangle (8.5,2);
\filldraw[black] (5,1)rectangle(5.5,1.5);
\draw[black] (5.5,1) rectangle (6,1.5);
\draw[black](6,1) rectangle (6.5,1.5);
\filldraw[black] (6.5,1) rectangle (7,1.5);
\draw[black] (7,1) rectangle (7.5,1.5);
\draw[black] (7.5,1) rectangle (8,1.5);
\filldraw[red] (8,1) rectangle (8.5,1.5);
\draw[black] (5,0.5) rectangle (5.5,1);
\filldraw[black](5.5,0.5) rectangle (6,1);
\draw[black] (6,0.5) rectangle (6.5,1);
\draw[black] (6.5,0.5) rectangle (7,1);
\filldraw[black] (7,0.5) rectangle (7.5,1);
\draw[black] (7.5,0.5) rectangle (8,1);
\draw[black] (8,0.5) rectangle (8.5,1);
\draw[black] (5,0)rectangle(5.5,0.5);
\draw[black] (5.5,0) rectangle (6,0.5);
\filldraw[black](6,0) rectangle (6.5,0.5);
\draw[black] (6.5,0) rectangle (7,0.5);
\draw[black] (7,0) rectangle (7.5,0.5);
\filldraw[black](7.5,0) rectangle (8,0.5);
\draw[black] (8,0) rectangle (8.5,0.5);
\draw [decorate,decoration={brace,amplitude=10pt},xshift=-4pt,yshift=0pt]
(8.75,2.75) -- (8.75,0.25) node [black,midway,xshift=-0.6cm] 
{}; 
\node(a) at (11,0.5) [label=\small{$ \omega_{-7}=2$}]{};
\filldraw[black](5,-0.5) rectangle (5.5,0);
\draw[black] (5.5,-0.5) rectangle (6,0);
\draw[black](6,-0.5) rectangle (6.5,0);
\filldraw[black](6.5,-0.5) rectangle (7,0);
\draw[black] (7,-0.5) rectangle (7.5,0);
\draw[black] (5,-1) rectangle (5.5,-0.5);
\filldraw[black](5.5,-1) rectangle (6,-0.5);
\draw[black] (6,-1) rectangle (6.5,-0.5);
\draw[black] (6.5,-1) rectangle (7,-0.5);
\filldraw[black](7,-1) rectangle (7.5,-0.5);
\draw[black](5,-1.5) rectangle (5.5,-1);
\draw[black](5.5,-1.5) rectangle (6,-1);
\filldraw[black](6,-1.5) rectangle (6.5,-1);
\draw[black](6.5,-1.5) rectangle (7,-1);
\draw[black](7,-1.5) rectangle (7.5,-1);
\draw [decorate,decoration={brace,amplitude=10pt},xshift=-4pt,yshift=0pt]
(7.75,-0.25) -- (7.75,-1.25) node [black,midway,xshift=-0.6cm] 
{}; 
\node(a) at (10,-1.75) [label=\small{$ \omega_{-5}=1$}]{};
\filldraw[black] (5,-2) rectangle (5.5,-1.5);
\draw[black] (5.5,-2) rectangle (6,-1.5);
\draw[black] (6,-2) rectangle (6.5,-1.5);
\filldraw[black] (6.5,-2) rectangle (7,-1.5);
\draw[black] (5,-2.5) rectangle (5.5,-2);
\filldraw[black] (5.5,-2.5) rectangle (6,-2);
\draw[black] (6,-2.5) rectangle (6.5,-2);
\draw[black] (6.5,-2.5) rectangle (7,-2);
\draw[black] (5,-3) rectangle (5.5,-2.5);
\draw[black] (5.5,-3) rectangle (6,-2.5);
\filldraw[black] (6,-3) rectangle (6.5,-2.5);
\draw[black] (6.5,-3) rectangle (7,-2.5);
\draw [decorate,decoration={brace,amplitude=10pt},xshift=-4pt,yshift=0pt]
(7.25,-1.75) -- (7.25,-2.75) node [black,midway,xshift=-0.6cm] 
{}; 
\node(a) at (9.5,-3.25) [label=\small{$ \omega_{-4}=1$}]{};
\filldraw[black] (5,-3.5) rectangle (5.5,-3);
\draw[black] (5.5,-3.5) rectangle (6,-3);
\draw[black] (5,-4) rectangle (5.5,-3.5);
\filldraw[black] (5.5,-4) rectangle (6,-3.5);
\draw[black] (5,-4.5) rectangle (5.5,-4);
\draw[black] (5.5,-4.5) rectangle (6,-4);
\filldraw[black] (5,-5) rectangle (5.5,-4.5);
\draw[black] (5.5,-5) rectangle (6,-4.5);
\draw[black] (5,-5.5) rectangle (5.5,-5);
\filldraw[black] (5.5,-5.5) rectangle (6,-5);
\draw[black] (5,-6) rectangle (5.5,-5.5);
\draw[black] (5.5,-6) rectangle (6,-5.5);
\draw [decorate,decoration={brace,amplitude=10pt},xshift=-4pt,yshift=0pt]
(6.25,-3.25) -- (6.25,-5.75) node [black,midway,xshift=-0.6cm] 
{}; 
\node(a) at (8.5,-5.5) [label=\small{$ \omega_{-2}=2$}]{};
\filldraw[black](5,-6.5)rectangle (5.5,-6);
\draw[black](5,-7)rectangle(5.5,-6.5);
\draw[black] (5,-7.5) rectangle (5.5,-7);
\filldraw[black](5,-8)rectangle (5.5,-7.5);
\draw[black](5,-8.5)rectangle(5.5,-8);
\draw[black] (5,-9) rectangle (5.5,-8.5);
\filldraw[black](5,-9.5)rectangle (5.5,-9);
\draw[black](5,-10)rectangle(5.5,-9.5);
\draw[black] (5,-10.5) rectangle (5.5,-10);
\draw [decorate,decoration={brace,amplitude=10pt},xshift=-4pt,yshift=0pt]
(5.75,-6.75) -- (5.75,-10.25) node [black,midway,xshift=-0.6cm] 
{}; 
\node(a) at (8,-9.5) [label=\small{$ \omega_{-1}=3$}]{};

\node(a) at (12.25,-10.5) [label=\footnotesize{Since $\omega_{-3}=0$, remove a}]{};
\node(a) at (12.25,-11.25) [label=\footnotesize{point from the bottom}]{};
\node(a) at (12.25,-12) [label=\footnotesize{of columns 1,2 and 3.}]{};
\node (a) at (8,-11.5) [label=\large{$\Big\downarrow$}]{};
\filldraw[red] (5,-12) rectangle (5.5,-11.5);
\draw[black](5.5,-12) rectangle (6,-11.5);
\draw[black] (6,-12) rectangle (6.5,-11.5);
\filldraw[black] (6.5,-12) rectangle (7,-11.5);
\draw[black] (7,-12) rectangle (7.5,-11.5);
\draw[black] (7.5,-12) rectangle (8,-11.5);
\filldraw[black] (8,-12) rectangle (8.5,-11.5);
\draw[black] (8.5,-12) rectangle (9,-11.5);
\draw[black] (5,-12.5)rectangle(5.5,-12);
\filldraw[red] (5.5,-12.5) rectangle (6,-12);
\draw[black](6,-12.5) rectangle (6.5,-12);
\draw[black] (6.5,-12.5) rectangle (7,-12);
\filldraw[black] (7,-12.5) rectangle (7.5,-12);
\draw[black] (7.5,-12.5) rectangle (8,-12);
\draw[black] (8,-12.5) rectangle (8.5,-12);
\filldraw[black] (8.5,-12.5) rectangle (9,-12);
\draw[black] (5,-13) rectangle (5.5,-12.5);
\draw[black](5.5,-13) rectangle (6,-12.5);
\filldraw[red] (6,-13) rectangle (6.5,-12.5);
\draw[black] (6.5,-13) rectangle (7,-12.5);
\draw[black] (7,-13) rectangle (7.5,-12.5);
\filldraw[black] (7.5,-13) rectangle (8,-12.5);
\draw[black] (8,-13) rectangle (8.5,-12.5);
\draw[black] (8.5,-13) rectangle (9,-12.5);
\filldraw[black] (5,-13.5)rectangle(5.5,-13);
\draw[black] (5.5,-13.5) rectangle (6,-13);
\draw[black](6,-13.5) rectangle (6.5,-13);
\filldraw[red] (6.5,-13.5) rectangle (7,-13);
\draw[black] (7,-13.5) rectangle (7.5,-13);
\draw[black] (7.5,-13.5) rectangle (8,-13);
\filldraw[black] (8,-13.5) rectangle (8.5,-13);
\draw[black] (5,-14) rectangle (5.5,-13.5);
\filldraw[black](5.5,-14) rectangle (6,-13.5);
\draw[black] (6,-14) rectangle (6.5,-13.5);
\draw[black] (6.5,-14) rectangle (7,-13.5);
\filldraw[red] (7,-14) rectangle (7.5,-13.5);
\draw[black] (7.5,-14) rectangle (8,-13.5);
\draw[black] (8,-14) rectangle (8.5,-13.5);
\draw[black] (5,-14.5)rectangle(5.5,-14);
\draw[black] (5.5,-14.5) rectangle (6,-14);
\filldraw[black](6,-14.5) rectangle (6.5,-14);
\draw[black] (6.5,-14.5) rectangle (7,-14);
\draw[black] (7,-14.5) rectangle (7.5,-14);
\filldraw[red](7.5,-14.5) rectangle (8,-14);
\draw[black] (8,-14.5) rectangle (8.5,-14);
\filldraw[black] (5,-15)rectangle(5.5,-14.5);
\draw[black] (5.5,-15) rectangle (6,-14.5);
\draw[black](6,-15) rectangle (6.5,-14.5);
\filldraw[black] (6.5,-15) rectangle (7,-14.5);
\draw[black] (7,-15) rectangle (7.5,-14.5);
\draw[black] (7.5,-15) rectangle (8,-14.5);
\filldraw[red] (8,-15) rectangle (8.5,-14.5);
\draw[black] (5,-15.5) rectangle (5.5,-15);
\filldraw[black](5.5,-15.5) rectangle (6,-15);
\draw[black] (6,-15.5) rectangle (6.5,-15);
\draw[black] (6.5,-15.5) rectangle (7,-15);
\filldraw[black] (7,-15.5) rectangle (7.5,-15);
\draw[black] (7.5,-15.5) rectangle (8,-15);
\draw[black] (5,-16)rectangle(5.5,-15.5);
\draw[black] (5.5,-16) rectangle (6,-15.5);
\filldraw[black](6,-16) rectangle (6.5,-15.5);
\draw[black] (6.5,-16) rectangle (7,-15.5);
\draw[black] (7,-16) rectangle (7.5,-15.5);
\filldraw[black](7.5,-16) rectangle (8,-15.5);
\filldraw[black](5,-16.5) rectangle (5.5,-16);
\draw[black] (5.5,-16.5) rectangle (6,-16);
\draw[black](6,-16.5) rectangle (6.5,-16);
\filldraw[black](6.5,-16.5) rectangle (7,-16);
\draw[black] (7,-16.5) rectangle (7.5,-16);
\draw[black] (5,-17) rectangle (5.5,-16.5);
\filldraw[black](5.5,-17) rectangle (6,-16.5);
\draw[black] (6,-17) rectangle (6.5,-16.5);
\draw[black] (6.5,-17) rectangle (7,-16.5);
\filldraw[black](7,-17) rectangle (7.5,-16.5);
\draw[black](5,-17.5) rectangle (5.5,-17);
\draw[black](5.5,-17.5) rectangle (6,-17);
\filldraw[black](6,-17.5) rectangle (6.5,-17);
\draw[black](6.5,-17.5) rectangle (7,-17);
\filldraw[black] (5,-18) rectangle (5.5,-17.5);
\draw[black] (5.5,-18) rectangle (6,-17.5);
\draw[black] (6,-18) rectangle (6.5,-17.5);
\filldraw[black] (6.5,-18) rectangle (7,-17.5);
\draw[black] (5,-18.5) rectangle (5.5,-18);
\filldraw[black] (5.5,-18.5) rectangle (6,-18);
\draw[black] (5,-19) rectangle (5.5,-18.5);
\draw[black] (5.5,-19) rectangle (6,-18.5);
\filldraw[black] (5,-19.5) rectangle (5.5,-19);
\draw[black] (5.5,-19.5) rectangle (6,-19);
\draw[black] (5,-20) rectangle (5.5,-19.5);
\filldraw[black] (5.5,-20) rectangle (6,-19.5);
\draw[black] (5,-20.5) rectangle (5.5,-20);
\draw[black] (5.5,-20.5) rectangle (6,-20);
\filldraw[black] (5,-21) rectangle (5.5,-20.5);
\draw[black] (5.5,-21) rectangle (6,-20.5);
\draw[black](5,-21.5) rectangle (5.5,-21);
\draw[black](5,-22) rectangle (5.5,-21.5);
\filldraw[black](5,-22.5) rectangle (5.5,-22);
\draw[black](5,-23) rectangle (5.5,-22.5);
\draw[black](5,-23.5) rectangle (5.5,-23);
\filldraw[black](5,-24) rectangle (5.5,-23.5);
\node(a) at (0.75,-14.75) [label=\footnotesize{Since $\omega_{-6}=0$, remove a}]{};
\node(a) at (0.75,-15.5) [label=\footnotesize{point from the bottom}]{};
\node(a) at (0.75,-16.25) [label=\footnotesize{of columns 1 through 6.}]{};

\node(a) at(2,-14)[label=\large{$\longleftarrow$}]{};
\filldraw[red] (-8,-12) rectangle (-7.5,-11.5);
\draw[black](-7.5,-12) rectangle (-7,-11.5);
\draw[black] (-7,-12) rectangle (-6.5,-11.5);
\filldraw[black] (-6.5,-12) rectangle (-6,-11.5);
\draw[black] (-6,-12) rectangle (-5.5,-11.5);
\draw[black] (-5.5,-12) rectangle (-5,-11.5);
\filldraw[black] (-5,-12) rectangle (-4.5,-11.5);
\draw[black] (-4.5,-12) rectangle (-4,-11.5);
\draw[black] (-8,-12.5)rectangle(-7.5,-12);
\filldraw[red] (-7.5,-12.5) rectangle (-7,-12);
\draw[black](-7,-12.5) rectangle (-6.5,-12);
\draw[black] (-6.5,-12.5) rectangle (-6,-12);3
\filldraw[black] (-6,-12.5) rectangle (-5.5,-12);
\draw[black] (-5.5,-12.5) rectangle (-5,-12);
\draw[black] (-5,-12.5) rectangle (-4.5,-12);
\filldraw[black] (-4.5,-12.5) rectangle (-4,-12);
\draw[black] (-8,-13) rectangle (-7.5,-12.5);
\draw[black](-7.5,-13) rectangle (-7,-12.5);
\filldraw[red] (-7,-13) rectangle (-6.5,-12.5);
\draw[black] (-6.5,-13) rectangle (-6,-12.5);
\draw[black] (-6,-13) rectangle (-5.5,-12.5);
\filldraw[black] (-5.5,-13) rectangle (-5,-12.5);
\draw[black] (-5,-13) rectangle (-4.5,-12.5);
\filldraw[black] (-8,-13.5)rectangle(-7.5,-13);
\draw[black] (-7.5,-13.5) rectangle (-7,-13);
\draw[black](-7,-13.5) rectangle (-6.5,-13);
\filldraw[red] (-6.5,-13.5) rectangle (-6,-13);
\draw[black] (-6,-13.5) rectangle (-5.5,-13);
\draw[black] (-5.5,-13.5) rectangle (-5,-13);
\filldraw[black] (-5,-13.5) rectangle (-4.5,-13);
\draw[black] (-8,-14) rectangle (-7.5,-13.5);
\filldraw[black](-7.5,-14) rectangle (-7,-13.5);
\draw[black] (-7,-14) rectangle (-6.5,-13.5);
\draw[black] (-6.5,-14) rectangle (-6,-13.5);
\filldraw[red] (-6,-14) rectangle (-5.5,-13.5);
\draw[black] (-5.5,-14) rectangle (-5,-13.5);
\draw[black] (-5,-14) rectangle (-4.5,-13.5);
\draw[black] (-8,-14.5)rectangle(-7.5,-14);
\draw[black] (-7.5,-14.5) rectangle (-7,-14);
\filldraw[black](-7,-14.5) rectangle (-6.5,-14);
\draw[black] (-6.5,-14.5) rectangle (-6,-14);
\draw[black] (-6,-14.5) rectangle (-5.5,-14);
\filldraw[red](-5.5,-14.5) rectangle (-5,-14);
\draw[black] (-5,-14.5) rectangle (-4.5,-14);
\filldraw[black] (-8,-15)rectangle(-7.5,-14.5);
\draw[black] (-7.5,-15) rectangle (-7,-14.5);
\draw[black](-7,-15) rectangle (-6.5,-14.5);
\filldraw[black] (-6.5,-15) rectangle (-6,-14.5);
\draw[black] (-6,-15) rectangle (-5.5,-14.5);
\draw[black] (-5.5,-15) rectangle (-5,-14.5);
\filldraw[red] (-5,-15) rectangle (-4.5,-14.5);
\draw[black] (-8,-15.5) rectangle (-7.5,-15);
\filldraw[black](-7.5,-15.5) rectangle (-7,-15);
\draw[black] (-7,-15.5) rectangle (-6.5,-15);
\draw[black] (-6.5,-15.5) rectangle (-6,-15);
\filldraw[black] (-6,-15.5) rectangle (-5.5,-15);
\draw[black] (-8,-16)rectangle(-7.5,-15.5);
\draw[black] (-7.5,-16) rectangle (-7,-15.5);
\filldraw[black](-7,-16) rectangle (-6.5,-15.5);
\draw[black] (-6.5,-16) rectangle (-6,-15.5);
\draw[black] (-6,-16) rectangle (-5.5,-15.5);
\filldraw[black](-8,-16.5) rectangle (-7.5,-16);
\draw[black] (-7.5,-16.5) rectangle (-7,-16);
\draw[black](-7,-16.5) rectangle (-6.5,-16);
\filldraw[black](-6.5,-16.5) rectangle (-6,-16);
\draw[black] (-8,-17) rectangle (-7.5,-16.5);
\filldraw[black](-7.5,-17) rectangle (-7,-16.5);
\draw[black](-8,-17.5) rectangle (-7.5,-17);
\draw[black](-7.5,-17.5) rectangle (-7,-17);
\filldraw[black] (-8,-18) rectangle (-7.5,-17.5);
\draw[black] (-7.5,-18) rectangle (-7,-17.5);
\draw[black] (-8,-18.5) rectangle (-7.5,-18);
\filldraw[black] (-7.5,-18.5) rectangle (-7,-18);
\draw[black] (-8,-19) rectangle (-7.5,-18.5);
\filldraw[black] (-8,-19.5) rectangle (-7.5,-19);
\draw[black](-8,-20) rectangle (-7.5,-19.5);
\draw[black](-8,-20.5) rectangle (-7.5,-20);
\filldraw[black](-8,-21) rectangle (-7.5,-20.5);
\draw[black](-8,-21.5) rectangle (-7.5,-21);
\draw[black](-8,-22) rectangle (-7.5,-21.5);
\filldraw[black](-8,-22.5) rectangle (-7.5,-22);
 \node (a) at (-6.5,-9)[label=\large{$\Bigg\downarrow$}]{};
\node (a) at (-7.5,-8) [label=\small{$\psi^{(0)}_{29}$}]{};
\node(a) at (-2.5,-22) [label=\small{$\begin{pmatrix}
2&2&1&1&0&0&0\\
7&4&2&2&1&0&0
\end{pmatrix}$}]{};
\node(a) at(-10,-15)[label={$n=29$}]{};
\end{tikzpicture}
\caption{A state $\underline{\omega} \in \mathcal{H}^0$ and its image in $\text{GFP}_{D_3,D_3,0}(29)$ }
\end{subfigure}
\vspace{5mm}

\begin{subfigure}[b]{0.75\textwidth}
    \centering
    \vspace{8mm}
    \begin{tikzpicture}[scale=0.4]
\draw[thick, <-] (-14,0)--(-0.5,0);
\foreach \x in {-13,-12,-11,-10,-9,-8,-7,-6,-5,-4,-3,-2,-1}
    \draw[thick, -](\x cm, 2pt)--(\x cm, -2pt) node[anchor=north]{\tiny{$\x$}};
 
\filldraw[black](-2,0.5) circle (4pt);   
\filldraw[black](-2,1) circle (4pt);  
\filldraw[black](-3,0.5) circle (4pt);   
\filldraw[black](-3,1) circle (4pt);  
\filldraw [black] (-4,0.5) circle (4pt);
\filldraw [black] (-4,1) circle (4pt);
\filldraw [black] (-5,0.5) circle (4pt);
\filldraw [black] (-6,0.5) circle (4pt);
\filldraw [black] (-8,0.5) circle (4pt);
\filldraw[black](-10,0.5) circle (4pt);
\filldraw[black](-13,0.5) circle (4pt);
\filldraw [black] (-13.5,0.4) circle (1pt);
\filldraw [black] (-13.7,0.4) circle (1pt);
\filldraw [black] (-13.9,0.4) circle (1pt);
\node (a) at (-6.5,-2.5) [label=\small{$\underline{\omega}=(0,2,2,2,1,1,0,1,0,1,0,0,1...)$}]{};
\node(a) at(2,0)[label=\large{$\longrightarrow$}]{};
\node(a) at(-10,1.5)[label={$n=25$}]{};
\draw [decorate,decoration={brace,amplitude=10pt},xshift=-4pt,yshift=0pt]
(9.25,4.25) -- (9.25,3.25) node [black,midway,xshift=-0.6cm] 
{}; 
\node(a) at (11.5,2.75) [label=\small{$ \omega_{-8}=1$}]{};
\draw[red](5,4.5) rectangle (5.5,5);
\draw[black] (5,4) rectangle (5.5,4.5);
\filldraw[red](5.5,4) rectangle (6,4.5);
\draw[black] (6,4) rectangle (6.5,4.5);
\draw[black] (6.5,4) rectangle (7,4.5);
\filldraw[black] (7,4) rectangle (7.5,4.5);
\draw[black] (7.5,4) rectangle (8,4.5);
\draw[black] (8,4) rectangle (8.5,4.5);
\filldraw[black] (8.5,4) rectangle (9,4.5);
\draw[black] (5,3.5)rectangle(5.5,4);
\draw[black] (5.5,3.5) rectangle (6,4);
\filldraw[red](6,3.5) rectangle (6.5,4);
\draw[black] (6.5,3.5) rectangle (7,4);
\draw[black] (7,3.5) rectangle (7.5,4);
\filldraw[black] (7.5,3.5) rectangle (8,4);
\draw[black] (8,3.5) rectangle (8.5,4);
\draw[black] (8.5,3.5) rectangle (9,4);
\filldraw[black] (5,3) rectangle (5.5,3.5);
\draw[black](5.5,3) rectangle (6,3.5);
\draw[black] (6,3) rectangle (6.5,3.5);
\filldraw[red] (6.5,3) rectangle (7,3.5);
\draw[black] (7,3) rectangle (7.5,3.5);
\draw[black] (7.5,3) rectangle (8,3.5);
\filldraw[black] (8,3) rectangle (8.5,3.5);
\draw[black] (8.5,3) rectangle (9,3.5);
\draw [decorate,decoration={brace,amplitude=10pt},xshift=-4pt,yshift=0pt]
(8.25,2.75) -- (8.25,1.75) node [black,midway,xshift=-0.6cm] 
{}; 
\node(a) at (10.5,1.25) [label=\small{$ \omega_{-6}=1$}]{};
\draw[black] (5,2.5)rectangle(5.5,3);
\filldraw[black] (5.5,2.5) rectangle (6,3);
\draw[black](6,2.5) rectangle (6.5,3);
\draw[black] (6.5,2.5) rectangle (7,3);
\filldraw[red] (7,2.5) rectangle (7.5,3);
\draw[black] (7.5,2.5) rectangle (8,3);
\draw[black] (5,2) rectangle (5.5,2.5);
\draw[black](5.5,2) rectangle (6,2.5);
\filldraw[black] (6,2) rectangle (6.5,2.5);
\draw[black] (6.5,2) rectangle (7,2.5);
\draw[black] (7,2) rectangle (7.5,2.5);
\filldraw[red] (7.5,2) rectangle (8,2.5);
\filldraw[black] (5,1.5)rectangle(5.5,2);
\draw[black] (5.5,1.5) rectangle (6,2);
\draw[black](6,1.5) rectangle (6.5,2);
\filldraw[black] (6.5,1.5) rectangle (7,2);
\draw[black] (7,1.5) rectangle (7.5,2);
\draw[](7.5,1.5) rectangle (8,2);
\draw [decorate,decoration={brace,amplitude=10pt},xshift=-4pt,yshift=0pt]
(7.75,1.25) -- (7.75,0.25) node [black,midway,xshift=-0.6cm] 
{}; 
\node(a) at (10,-0.25) [label=\small{$ \omega_{-5}=1$}]{};
\draw[black] (5,1)rectangle(5.5,1.5);
\filldraw[black] (5.5,1) rectangle (6,1.5);
\draw[black](6,1) rectangle (6.5,1.5);
\draw[black] (6.5,1) rectangle (7,1.5);
\filldraw[black] (7,1) rectangle (7.5,1.5);
\draw[black] (5,0.5) rectangle (5.5,1);
\draw[black](5.5,0.5) rectangle (6,1);
\filldraw[black] (6,0.5) rectangle (6.5,1);
\draw[black] (6.5,0.5) rectangle (7,1);
\draw[black] (7,0.5) rectangle (7.5,1);
\filldraw[black] (5,0)rectangle(5.5,0.5);
\draw[black] (5.5,0) rectangle (6,0.5);
\draw[black](6,0) rectangle (6.5,0.5);
\filldraw[black] (6.5,0) rectangle (7,0.5);
\draw[black] (7,0) rectangle (7.5,0.5);
\draw [decorate,decoration={brace,amplitude=10pt},xshift=-4pt,yshift=0pt]
(7.25,-0.25) -- (7.25,-1.25) node [black,midway,xshift=-0.6cm] 
{}; 
\node(a) at (9.5,-1.75) [label=\small{$ \omega_{-4}=2$}]{};
\draw[black](5,-0.5) rectangle (5.5,0);
\filldraw[black] (5.5,-0.5) rectangle (6,0);
\draw[black](6,-0.5) rectangle (6.5,0);
\draw[black](6.5,-0.5) rectangle (7,0);
\draw[black] (5,-1) rectangle (5.5,-0.5);
\draw[black](5.5,-1) rectangle (6,-0.5);
\filldraw[black] (6,-1) rectangle (6.5,-0.5);
\draw[black] (6.5,-1) rectangle (7,-0.5);
\filldraw[black](5,-1.5) rectangle (5.5,-1);
\draw[black](5.5,-1.5) rectangle (6,-1);
\draw[black](6,-1.5) rectangle (6.5,-1);
\filldraw[black](6.5,-1.5) rectangle (7,-1);
\draw [decorate,decoration={brace,amplitude=10pt},xshift=-4pt,yshift=0pt]
(6.75,-1.75) -- (6.75,-4.25) node [black,midway,xshift=-0.6cm] 
{}; 
\node(a) at (9,-4) [label=\small{$ \omega_{-3}=2$}]{};
\draw[black] (5,-2) rectangle (5.5,-1.5);
\filldraw[black] (5.5,-2) rectangle (6,-1.5);
\draw[black] (6,-2) rectangle (6.5,-1.5);
\draw[black] (5,-2.5) rectangle (5.5,-2);
\draw[black] (5.5,-2.5) rectangle (6,-2);
\filldraw[black] (6,-2.5) rectangle (6.5,-2);
\filldraw[black] (5,-3) rectangle (5.5,-2.5);
\draw[black] (5.5,-3) rectangle (6,-2.5);
\draw[black] (6,-3) rectangle (6.5,-2.5);
\draw[black] (5,-3.5) rectangle (5.5,-3);
\filldraw[black] (5.5,-3.5) rectangle (6,-3);
\draw[black] (6,-3.5) rectangle (6.5,-3);
\draw[black] (5,-4) rectangle (5.5,-3.5);
\draw[black] (5.5,-4) rectangle (6,-3.5);
\filldraw[black] (6,-4) rectangle (6.5,-3.5);
\filldraw[black] (5,-4.5) rectangle (5.5,-4);
\draw[black] (5.5,-4.5) rectangle (6,-4);
\draw[black] (6,-4.5) rectangle (6.5,-4);

\draw[black] (5,-5) rectangle (5.5,-4.5);
\filldraw[black] (5.5,-5) rectangle (6,-4.5);
\draw[black] (5,-5.5) rectangle (5.5,-5);
\draw[black] (5.5,-5.5) rectangle (6,-5);
\filldraw[black] (5,-6) rectangle (5.5,-5.5);
\draw[black] (5.5,-6) rectangle (6,-5.5);
\draw[black](5,-6.5)rectangle (5.5,-6);
\filldraw[black](5.5,-6.5)rectangle (6,-6);
\draw[black](5,-7)rectangle(5.5,-6.5);
\draw[black](5.5,-7)rectangle(6,-6.5);
\filldraw[black] (5,-7.5) rectangle (5.5,-7);
\draw[black] (5.5,-7.5) rectangle (6,-7);
\draw [decorate,decoration={brace,amplitude=10pt},xshift=-4pt,yshift=0pt]
(6.25,-4.75) -- (6.25,-7.25) node [black,midway,xshift=-0.6cm] 
{}; 
\node(a) at (8.5,-7) [label=\small{$ \omega_{-2}=2$}]{};

\node (a) at (8,-9.5) [label=\large{$\Big\downarrow$}]{};
\node(a) at (12.25,-8.5) [label=\footnotesize{Since $\omega_{-1}=0$, remove}]{};
\node(a) at (12.25,-9.25) [label=\footnotesize{a point from the }]{};
\node(a) at (12.25,-10) [label=\footnotesize{bottom of column 1.}]{};
\draw[red](5,-9.5) rectangle (5.5,-9);
\draw[black] (5,-10) rectangle (5.5,-9.5);
\filldraw[red](5.5,-10) rectangle (6,-9.5);
\draw[black] (6,-10) rectangle (6.5,-9.5);
\draw[black] (6.5,-10) rectangle (7,-9.5);
\filldraw[black] (7,-10) rectangle (7.5,-9.5);
\draw[black] (7.5,-10) rectangle (8,-9.5);
\draw[black] (8,-10) rectangle (8.5,-9.5);
\filldraw[black] (8.5,-10) rectangle (9,-9.5);
\draw[black] (5,-10.5)rectangle(5.5,-10);
\draw[black] (5.5,-10.5) rectangle (6,-10);
\filldraw[red](6,-10.5) rectangle (6.5,-10);
\draw[black] (6.5,-10.5) rectangle (7,-10);
\draw[black] (7,-10.5) rectangle (7.5,-10);
\filldraw[black] (7.5,-10.5) rectangle (8,-10);
\draw[black] (8,-10.5) rectangle (8.5,-10);
\draw[black] (8.5,-10.5) rectangle (9,-10);
\filldraw[black] (5,-11) rectangle (5.5,-10.5);
\draw[black](5.5,-11) rectangle (6,-10.5);
\draw[black] (6,-11) rectangle (6.5,-10.5);
\filldraw[red] (6.5,-11) rectangle (7,-10.5);
\draw[black] (7,-11) rectangle (7.5,-10.5);
\draw[black] (7.5,-11) rectangle (8,-10.5);
\filldraw[black] (8,-11) rectangle (8.5,-10.5);
\draw[black] (8.5,-11) rectangle (9,-10.5);
\draw[black] (5,-11.5)rectangle(5.5,-11);
\filldraw[black] (5.5,-11.5) rectangle (6,-11);
\draw[black](6,-11.5) rectangle (6.5,-11);
\draw[black] (6.5,-11.5) rectangle (7,-11);
\filldraw[red] (7,-11.5) rectangle (7.5,-11);
\draw[black] (7.5,-11.5) rectangle (8,-11);
\draw[black] (5,-12) rectangle (5.5,-11.5);
\draw[black](5.5,-12) rectangle (6,-11.5);
\filldraw[black] (6,-12) rectangle (6.5,-11.5);
\draw[black] (6.5,-12) rectangle (7,-11.5);
\draw[black] (7,-12) rectangle (7.5,-11.5);
\filldraw[red] (7.5,-12) rectangle (8,-11.5);
\filldraw[black] (5,-12.5)rectangle(5.5,-12);
\draw[black] (5.5,-12.5) rectangle (6,-12);
\draw[black](6,-12.5) rectangle (6.5,-12);
\filldraw[black] (6.5,-12.5) rectangle (7,-12);
\draw[black] (7,-12.5) rectangle (7.5,-12);
\draw[](7.5,-12.5) rectangle (8,-12);
\draw[black] (5,-13)rectangle(5.5,-12.5);
\filldraw[black] (5.5,-13) rectangle (6,-12.5);
\draw[black](6,-13) rectangle (6.5,-12.5);
\draw[black] (6.5,-13) rectangle (7,-12.5);
\filldraw[black] (7,-13) rectangle (7.5,-12.5);
\draw[black] (5,-13.5) rectangle (5.5,-13);
\draw[black](5.5,-13.5) rectangle (6,-13);
\filldraw[black] (6,-13.5) rectangle (6.5,-13);
\draw[black] (6.5,-13.5) rectangle (7,-13);
\draw[black] (7,-13.5) rectangle (7.5,-13);
\filldraw[black] (5,-14)rectangle(5.5,-13.5);
\draw[black] (5.5,-14) rectangle (6,-13.5);
\draw[black](6,-14) rectangle (6.5,-13.5);
\filldraw[black] (6.5,-14) rectangle (7,-13.5);
\draw[black] (7,-14) rectangle (7.5,-13.5);
\draw[black](5,-14.5) rectangle (5.5,-14);
\filldraw[black] (5.5,-14.5) rectangle (6,-14);
\draw[black](6,-14.5) rectangle (6.5,-14);
\draw[black](6.5,-14.5) rectangle (7,-14);
\draw[black] (5,-15) rectangle (5.5,-14.5);
\draw[black](5.5,-15) rectangle (6,-14.5);
\filldraw[black] (6,-15) rectangle (6.5,-14.5);
\draw[black] (6.5,-15) rectangle (7,-14.5);
\filldraw[black](5,-15.5) rectangle (5.5,-15);
\draw[black](5.5,-15.5) rectangle (6,-15);
\draw[black](6,-15.5) rectangle (6.5,-15);
\filldraw[black](6.5,-15.5) rectangle (7,-15);
\draw[black] (5,-16) rectangle (5.5,-15.5);
\filldraw[black] (5.5,-16) rectangle (6,-15.5);
\draw[black] (6,-16) rectangle (6.5,-15.5);
\draw[black] (5,-16.5) rectangle (5.5,-16);
\draw[black] (5.5,-16.5) rectangle (6,-16);
\filldraw[black] (6,-16.5) rectangle (6.5,-16);
\filldraw[black] (5,-17) rectangle (5.5,-16.5);
\draw[black] (5.5,-17) rectangle (6,-16.5);
\draw[black] (6,-17) rectangle (6.5,-16.5);
\draw[black] (5,-17.5) rectangle (5.5,-17);
\filldraw[black] (5.5,-17.5) rectangle (6,-17);
\draw[black] (6,-17.5) rectangle (6.5,-17);
\draw[black] (5,-18) rectangle (5.5,-17.5);
\draw[black] (5.5,-18) rectangle (6,-17.5);
\filldraw[black] (6,-18) rectangle (6.5,-17.5);
\filldraw[black] (5,-18.5) rectangle (5.5,-18);
\draw[black] (5.5,-18.5) rectangle (6,-18);
\draw[black] (6,-18.5) rectangle (6.5,-18);
\draw[black] (5,-19) rectangle (5.5,-18.5);
\filldraw[black] (5.5,-19) rectangle (6,-18.5);
\draw[black] (5,-19.5) rectangle (5.5,-19);
\draw[black] (5.5,-19.5) rectangle (6,-19);
\filldraw[black] (5,-20) rectangle (5.5,-19.5);
\draw[black] (5.5,-20) rectangle (6,-19.5);
\draw[black](5,-20.5)rectangle (5.5,-20);
\filldraw[black](5.5,-20.5)rectangle (6,-20);
\node(a) at(2,-14)[label=\large{$\longleftarrow$}]{};
\node(a) at (0.5,-14.75) [label=\footnotesize{Since $\omega_{-7}=0$, remove a}]{};
\node(a) at (0.5,-15.5) [label=\footnotesize{point from the bottom}]{};
\node(a) at (0.5,-16.25) [label=\footnotesize{of columns 1 through 7.}]{};
\draw[red](-8,-9.5) rectangle (-7.5,-9);
\draw[black] (-8,-10) rectangle (-7.5,-9.5);
\filldraw[red](-7.5,-10) rectangle (-7,-9.5);
\draw[black] (-7,-10) rectangle (-6.5,-9.5);
\draw[black] (-6.5,-10) rectangle (-6,-9.5);
\filldraw[black] (-6,-10) rectangle (-5.5,-9.5);
\draw[black] (-5.5,-10) rectangle (-5,-9.5);
\draw[black] (-5,-10) rectangle (-4.5,-9.5);
\filldraw[black] (-4.5,-10) rectangle (-4,-9.5);
\draw[black] (-8,-10.5)rectangle(-7.5,-10);
\draw[black] (-7.5,-10.5) rectangle (-7,-10);
\filldraw[red](-7,-10.5) rectangle (-6.5,-10);
\draw[black] (-6.5,-10.5) rectangle (-6,-10);
\draw[black] (-6,-10.5) rectangle (-5.5,-10);
\filldraw[black] (-5.5,-10.5) rectangle (-5,-10);
\filldraw[black] (-8,-11) rectangle (-7.5,-10.5);
\draw[black](-7.5,-11) rectangle (-7,-10.5);
\draw[black] (-7,-11) rectangle (-6.5,-10.5);
\filldraw[red] (-6.5,-11) rectangle (-6,-10.5);
\draw[black] (-6,-11) rectangle (-5.5,-10.5);
\draw[black] (-8,-11.5)rectangle(-7.5,-11);
\filldraw[black] (-7.5,-11.5) rectangle (-7,-11);
\draw[black](-7,-11.5) rectangle (-6.5,-11);
\draw[black] (-6.5,-11.5) rectangle (-6,-11);
\filldraw[red] (-6,-11.5) rectangle (-5.5,-11);
\draw[black] (-8,-12) rectangle (-7.5,-11.5);
\draw[black](-7.5,-12) rectangle (-7,-11.5);
\filldraw[black] (-7,-12) rectangle (-6.5,-11.5);
\draw[black] (-6.5,-12) rectangle (-6,-11.5);
\filldraw[black] (-8,-12.5)rectangle(-7.5,-12);
\draw[black] (-7.5,-12.5) rectangle (-7,-12);
\draw[black](-7,-12.5) rectangle (-6.5,-12);
\filldraw[black] (-6.5,-12.5) rectangle (-6,-12);
\draw[black] (-8,-13)rectangle(-7.5,-12.5);
\filldraw[black] (-7.5,-13) rectangle (-7,-12.5);
\draw[black](-7,-13) rectangle (-6.5,-12.5);
\draw[black] (-6.5,-13) rectangle (-6,-12.5);
\draw[black] (-8,-13.5) rectangle (-7.5,-13);
\draw[black](-7.5,-13.5) rectangle (-7,-13);
\filldraw[black] (-7,-13.5) rectangle (-6.5,-13);
\draw[black] (-6.5,-13.5) rectangle (-6,-13);
\filldraw[black] (-8,-14)rectangle(-7.5,-13.5);
\draw[black] (-7.5,-14) rectangle (-7,-13.5);
\draw[black](-7,-14) rectangle (-6.5,-13.5);
\filldraw[black] (-6.5,-14) rectangle (-6,-13.5);
\draw[black](-8,-14.5) rectangle (-7.5,-14);
\filldraw[black] (-7.5,-14.5) rectangle (-7,-14);
\draw[black](-7,-14.5) rectangle (-6.5,-14);
\draw[black] (-8,-15) rectangle (-7.5,-14.5);
\draw[black](-7.5,-15) rectangle (-7,-14.5);
\filldraw[black] (-7,-15) rectangle (-6.5,-14.5);
\filldraw[black](-8,-15.5) rectangle (-7.5,-15);
\draw[black](-7.5,-15.5) rectangle (-7,-15);
\draw[black](-7,-15.5) rectangle (-6.5,-15);
\draw[black] (-8,-16) rectangle (-7.5,-15.5);
\filldraw[black] (-7.5,-16) rectangle (-7,-15.5);
\draw[black] (-7,-16) rectangle (-6.5,-15.5);
\draw[black] (-8,-16.5) rectangle (-7.5,-16);
\draw[black] (-7.5,-16.5) rectangle (-7,-16);
\filldraw[black] (-7,-16.5) rectangle (-6.5,-16);
\filldraw[black] (-8,-17) rectangle (-7.5,-16.5);
\draw[black] (-7.5,-17) rectangle (-7,-16.5);
\draw[black] (-8,-17.5) rectangle (-7.5,-17);
\filldraw[black] (-7.5,-17.5) rectangle (-7,-17);
\draw[black] (-8,-18) rectangle (-7.5,-17.5);
\draw[black] (-7.5,-18) rectangle (-7,-17.5);
\filldraw[black] (-8,-18.5) rectangle (-7.5,-18);
\draw[black] (-7.5,-18.5) rectangle (-7,-18);
\draw[black] (-8,-19) rectangle (-7.5,-18.5);
\filldraw[black] (-7.5,-19) rectangle (-7,-18.5);

 \node (a) at (-6.5,-9)[label=\large{$\Bigg\downarrow$}]{};
\node (a) at (-7.5,-8) [label=\small{$\psi^{(1)}_{25}$}]{};
\node(a) at (-6,-22) [label=\small{$\begin{pmatrix}
-&2&1&0&0\\
6&6&4&2&0
\end{pmatrix}$}]{};
\node(a) at(-10,-13)[label={$n=25$}]{};
\end{tikzpicture}
    \caption{A state $\underline{\omega} \in \mathcal{H}^{-1}$ and its image in $\text{GFP}_{D_3,D_3,-1}(25)$ }
\end{subfigure}
\caption{The bijections $\psi^{(0)}_{29}$, $\psi^{(1)}_{25}$ and $\psi_{19}^{(2)}$.}
    \label{k-exc bijections from omega to gfp}
\end{figure}

\newpage
\begin{figure}[H]\ContinuedFloat
\centering
\vspace{5mm}
\begin{subfigure}[b]{0.75\textwidth}
    \centering
   \begin{tikzpicture}[scale=0.4]
\draw[thick, <-] (-13,0)--(-0.5,0);
\foreach \x in {-12,-11,-10,-9,-8,-7,-6,-5,-4,-3,-2,-1}
    \draw[thick, -](\x cm, 2pt)--(\x cm, -2pt) node[anchor=north]{\tiny{$\x$}};
 
\filldraw[black](-1,0.5) circle (4pt);   
\filldraw[black](-1,1) circle (4pt);  
\filldraw[black](-1,1.5) circle (4pt);
\filldraw[black](-3,0.5) circle (4pt);   
\filldraw[black](-3,1) circle (4pt);  
\filldraw [black] (-4,0.5) circle (4pt);
\filldraw [black] (-6,0.5) circle (4pt);
\filldraw [black] (-7,0.5) circle (4pt);
\filldraw [black] (-8,0.5) circle (4pt);
\filldraw[black](-11,0.5) circle (4pt);
\filldraw [black] (-12.5,0.4) circle (1pt);
\filldraw [black] (-12.7,0.4) circle (1pt);
\filldraw [black] (-12.9,0.4) circle (1pt);
\node (a) at (-6.5,-2.5) [label=\small{$\underline{\omega}=(3,0,2,1,0,1,1,1,0,0,1,0,...)$}]{};
\node(a) at(2,0)[label=\large{$\longrightarrow$}]{};
\node(a) at(-10,1.5)[label={$n=19$}]{};
\draw [decorate,decoration={brace,amplitude=10pt},xshift=-4pt,yshift=0pt]
(8.75,4.25) -- (8.75,3.25) node [black,midway,xshift=-0.6cm] 
{}; 
\node(a) at (11,2.75) [label=\small{$ \omega_{-7}=1$}]{};
\draw[red](5,5)rectangle(5.5,5.5);
\draw[black](5,4.5)rectangle(5.5,5);
\draw[red](5.5,4.5)rectangle (6,5);
\draw[black] (5,4) rectangle (5.5,4.5);
\draw[black](5.5,4) rectangle (6,4.5);
\filldraw[red] (6,4) rectangle (6.5,4.5);
\draw[black] (6.5,4) rectangle (7,4.5);
\draw[black] (7,4) rectangle (7.5,4.5);
\filldraw[black] (7.5,4) rectangle (8,4.5);
\draw[black] (8,4) rectangle (8.5,4.5);
\filldraw[black] (5,3.5)rectangle(5.5,4);
\draw[black] (5.5,3.5) rectangle (6,4);
\draw[black](6,3.5) rectangle (6.5,4);
\filldraw[red] (6.5,3.5) rectangle (7,4);
\draw[black] (7,3.5) rectangle (7.5,4);
\draw[black] (7.5,3.5) rectangle (8,4);
\filldraw[black] (8,3.5) rectangle (8.5,4);
\draw[black] (5,3) rectangle (5.5,3.5);
\filldraw[black](5.5,3) rectangle (6,3.5);
\draw[black] (6,3) rectangle (6.5,3.5);
\draw[black] (6.5,3) rectangle (7,3.5);
\filldraw[red] (7,3) rectangle (7.5,3.5);
\draw[black] (7.5,3) rectangle (8,3.5);
\draw[black] (8,3) rectangle (8.5,3.5);
\draw [decorate,decoration={brace,amplitude=10pt},xshift=-4pt,yshift=0pt]
(8.25,2.75) -- (8.25,1.75) node [black,midway,xshift=-0.6cm] 
{}; 
\node(a) at (10.5,1.25) [label=\small{$ \omega_{-6}=1$}]{};
\draw[black] (5,2.5)rectangle(5.5,3);
\draw[black] (5.5,2.5) rectangle (6,3);
\filldraw[black](6,2.5) rectangle (6.5,3);
\draw[black] (6.5,2.5) rectangle (7,3);
\draw[black] (7,2.5) rectangle (7.5,3);
\filldraw[red] (7.5,2.5) rectangle (8,3);
\filldraw[black] (5,2) rectangle (5.5,2.5);
\draw[black](5.5,2) rectangle (6,2.5);
\draw[black] (6,2) rectangle (6.5,2.5);
\filldraw[black] (6.5,2) rectangle (7,2.5);
\draw[black] (7,2) rectangle (7.5,2.5);
\draw[black] (7.5,2) rectangle (8,2.5);
\draw[black] (5,1.5)rectangle(5.5,2);
\filldraw[black] (5.5,1.5) rectangle (6,2);
\draw[black](6,1.5) rectangle (6.5,2);
\draw[black] (6.5,1.5) rectangle (7,2);
\filldraw[black] (7,1.5) rectangle (7.5,2);
\draw[black](7.5,1.5) rectangle (8,2);
\draw [decorate,decoration={brace,amplitude=10pt},xshift=-4pt,yshift=0pt]
(7.25,1.25) -- (7.25,0.25) node [black,midway,xshift=-0.6cm] 
{}; 
\node(a) at (9.5,-0.25) [label=\small{$ \omega_{-4}=1$}]{};
\draw[black] (5,1)rectangle(5.5,1.5);
\draw[black] (5.5,1) rectangle (6,1.5);
\filldraw[black](6,1) rectangle (6.5,1.5);
\draw[black] (6.5,1) rectangle (7,1.5);
\filldraw[black] (5,0.5) rectangle (5.5,1);
\draw[black](5.5,0.5) rectangle (6,1);
\draw[black] (6,0.5) rectangle (6.5,1);
\filldraw[black] (6.5,0.5) rectangle (7,1);
\draw[black] (5,0)rectangle(5.5,0.5);
\filldraw[black] (5.5,0) rectangle (6,0.5);
\draw[black](6,0) rectangle (6.5,0.5);
\draw[black] (6.5,0) rectangle (7,0.5);
\draw [decorate,decoration={brace,amplitude=10pt},xshift=-4pt,yshift=0pt]
(6.75,-0.25) -- (6.75,-2.75) node [black,midway,xshift=-0.6cm] 
{}; 
\node(a) at (9,-2.5) [label=\small{$ \omega_{-3}=2$}]{};
\draw[black](5,-0.5) rectangle (5.5,0);
\draw[black] (5.5,-0.5) rectangle (6,0);
\filldraw[black](6,-0.5) rectangle (6.5,0);
\filldraw[black] (5,-1) rectangle (5.5,-0.5);
\draw[black](5.5,-1) rectangle (6,-0.5);
\draw[black] (6,-1) rectangle (6.5,-0.5);
\draw[black](5,-1.5) rectangle (5.5,-1);
\filldraw[black](5.5,-1.5) rectangle (6,-1);
\draw[black](6,-1.5) rectangle (6.5,-1);
\draw[black] (5,-2) rectangle (5.5,-1.5);
\draw[black] (5.5,-2) rectangle (6,-1.5);
\filldraw[black] (6,-2) rectangle (6.5,-1.5);
\filldraw[black] (5,-2.5) rectangle (5.5,-2);
\draw[black] (5.5,-2.5) rectangle (6,-2);
\draw[black] (6,-2.5) rectangle (6.5,-2);
\draw[black] (5,-3) rectangle (5.5,-2.5);
\filldraw[black] (5.5,-3) rectangle (6,-2.5);
\draw[black] (6,-3) rectangle (6.5,-2.5);
\draw [decorate,decoration={brace,amplitude=10pt},xshift=-4pt,yshift=0pt]
(5.75,-3.25) -- (5.75,-7.25) node [black,midway,xshift=-0.6cm] 
{}; 
\node(a) at (8,-6.25) [label=\small{$ \omega_{-1}=3$}]{};
\draw[black] (5,-3.5) rectangle (5.5,-3);
\filldraw[black] (5,-4) rectangle (5.5,-3.5);
\draw[black] (5,-4.5) rectangle (5.5,-4);
\draw[black] (5,-5) rectangle (5.5,-4.5);
\filldraw[black] (5,-5.5) rectangle (5.5,-5);
\draw[black] (5,-6) rectangle (5.5,-5.5);
\draw[black](5,-6.5)rectangle (5.5,-6);
\filldraw[black](5,-7)rectangle(5.5,-6.5);
\draw[black] (5,-7.5) rectangle (5.5,-7);
\node (a) at (8,-9.5) [label=\large{$\Big\downarrow$}]{};
\node(a) at (12.25,-8.5) [label=\footnotesize{Since $\omega_{-2}=0$, remove a}]{};
\node(a) at (12.25,-9.25) [label=\footnotesize{point from the bottom}]{};
\node(a) at (12.25,-10) [label=\footnotesize{of columns 1 and 2.}]{};
\draw[red](5,-11)rectangle(5.5,-10.5);
\draw[black](5,-11.5)rectangle(5.5,-11);
\draw[red](5.5,-11.5)rectangle (6,-11);
\draw[black] (5,-12) rectangle (5.5,-11.5);
\draw[black](5.5,-12) rectangle (6,-11.5);
\filldraw[red] (6,-12) rectangle (6.5,-11.5);
\draw[black] (6.5,-12) rectangle (7,-11.5);
\draw[black] (7,-12) rectangle (7.5,-11.5);
\filldraw[black] (7.5,-12) rectangle (8,-11.5);
\draw[black] (8,-12) rectangle (8.5,-11.5);
\filldraw[black] (5,-12.5)rectangle(5.5,-12);
\draw[black] (5.5,-12.5) rectangle (6,-12);
\draw[black](6,-12.5) rectangle (6.5,-12);
\filldraw[red] (6.5,-12.5) rectangle (7,-12);
\draw[black] (7,-12.5) rectangle (7.5,-12);
\draw[black] (7.5,-12.5) rectangle (8,-12);
\filldraw[black] (8,-12.5) rectangle (8.5,-12);
\draw[black] (5,-13) rectangle (5.5,-12.5);
\filldraw[black](5.5,-13) rectangle (6,-12.5);
\draw[black] (6,-13) rectangle (6.5,-12.5);
\draw[black] (6.5,-13) rectangle (7,-12.5);
\filldraw[red] (7,-13) rectangle (7.5,-12.5);
\draw[black] (7.5,-13) rectangle (8,-12.5);
\draw[black] (8,-13) rectangle (8.5,-12.5);
\draw[black] (5,-13.5)rectangle(5.5,-13);
\draw[black] (5.5,-13.5) rectangle (6,-13);
\filldraw[black](6,-13.5) rectangle (6.5,-13);
\draw[black] (6.5,-13.5) rectangle (7,-13);
\draw[black] (7,-13.5) rectangle (7.5,-13);
\filldraw[red] (7.5,-13.5) rectangle (8,-13);
\filldraw[black] (5,-14) rectangle (5.5,-13.5);
\draw[black](5.5,-14) rectangle (6,-13.5);
\draw[black] (6,-14) rectangle (6.5,-13.5);
\filldraw[black] (6.5,-14) rectangle (7,-13.5);
\draw[black] (7,-14) rectangle (7.5,-13.5);
\draw[black] (7.5,-14) rectangle (8,-13.5);
\draw[black] (5,-14.5)rectangle(5.5,-14);
\filldraw[black] (5.5,-14.5) rectangle (6,-14);
\draw[black](6,-14.5) rectangle (6.5,-14);
\draw[black] (6.5,-14.5) rectangle (7,-14);
\filldraw[black] (7,-14.5) rectangle (7.5,-14);
\draw[black](7.5,-14.5) rectangle (8,-14);
\draw[black] (5,-15)rectangle(5.5,-14.5);
\draw[black] (5.5,-15) rectangle (6,-14.5);
\filldraw[black](6,-15) rectangle (6.5,-14.5);
\draw[black] (6.5,-15) rectangle (7,-14.5);
\filldraw[black] (5,-15.5) rectangle (5.5,-15);
\draw[black](5.5,-15.5) rectangle (6,-15);
\draw[black] (6,-15.5) rectangle (6.5,-15);
\filldraw[black] (6.5,-15.5) rectangle (7,-15);
\draw[black] (5,-16)rectangle(5.5,-15.5);
\filldraw[black] (5.5,-16) rectangle (6,-15.5);
\draw[black](6,-16) rectangle (6.5,-15.5);
\draw[black] (6.5,-16) rectangle (7,-15.5);
\draw[black](5,-16.5) rectangle (5.5,-16);
\draw[black] (5.5,-16.5) rectangle (6,-16);
\filldraw[black](6,-16.5) rectangle (6.5,-16);
\filldraw[black] (5,-17) rectangle (5.5,-16.5);
\draw[black](5.5,-17) rectangle (6,-16.5);
\draw[black] (6,-17) rectangle (6.5,-16.5);
\draw[black](5,-17.5) rectangle (5.5,-17);
\filldraw[black](5.5,-17.5) rectangle (6,-17);
\draw[black](6,-17.5) rectangle (6.5,-17);
\draw[black] (5,-18) rectangle (5.5,-17.5);
\draw[black] (5.5,-18) rectangle (6,-17.5);
\filldraw[black] (6,-18) rectangle (6.5,-17.5);
\filldraw[black] (5,-18.5) rectangle (5.5,-18);
\draw[black] (5,-19) rectangle (5.5,-18.5);
\draw[black] (5,-19.5) rectangle (5.5,-19);
\filldraw[black] (5,-20) rectangle (5.5,-19.5);
\draw[black] (5,-20.5) rectangle (5.5,-20);
\draw[black] (5,-21) rectangle (5.5,-20.5);
\filldraw[black] (5,-21.5) rectangle (5.5,-21);

\node (a) at (8,-20.5) [label=\small{$ \omega_{-5}=0$}]{};
\node(a) at (0.5,-14.75) [label=\footnotesize{Since $\omega_{-5}=0$, remove a}]{};
\node(a) at (0.5,-15.5) [label=\footnotesize{point from the bottom}]{};
\node(a) at (0.5,-16.25) [label=\footnotesize{of columns 1 through 5.}]{};
\node(a) at(2,-14)[label=\large{$\longleftarrow$}]{};
\draw[red](-8,-11)rectangle(-7.5,-10.5);
\draw[black](-8,-11.5)rectangle(-7.5,-11);
\draw[red](-7.5,-11.5)rectangle (-7,-11);
\draw[black] (-8,-12) rectangle (-7.5,-11.5);
\draw[black](-7.5,-12) rectangle (-7,-11.5);
\filldraw[red] (-7,-12) rectangle (-6.5,-11.5);
\draw[black] (-6.5,-12) rectangle (-6,-11.5);
\draw[black] (-6,-12) rectangle (-5.5,-11.5);
\filldraw[black] (-5.5,-12) rectangle (-5,-11.5);
\draw[black] (-5,-12) rectangle (-4.5,-11.5);
\filldraw[black] (-8,-12.5)rectangle(-7.5,-12);
\draw[black] (-7.5,-12.5) rectangle (-7,-12);
\draw[black](-7,-12.5) rectangle (-6.5,-12);
\filldraw[red] (-6.5,-12.5) rectangle (-6,-12);
\draw[black] (-6,-12.5) rectangle (-5.5,-12);
\draw[black] (-5.5,-12.5) rectangle (-5,-12);
\filldraw[black] (-5,-12.5) rectangle (-4.5,-12);
\draw[black] (-8,-13) rectangle (-7.5,-12.5);
\filldraw[black](-7.5,-13) rectangle (-7,-12.5);
\draw[black] (-7,-13) rectangle (-6.5,-12.5);
\draw[black] (-6.5,-13) rectangle (-6,-12.5);
\filldraw[red] (-6,-13) rectangle (-5.5,-12.5);
\draw[black] (-5.5,-13) rectangle (-5,-12.5);
\draw[black] (-8,-13.5)rectangle(-7.5,-13);
\draw[black] (-7.5,-13.5) rectangle (-7,-13);
\filldraw[black](-7,-13.5) rectangle (-6.5,-13);
\draw[black] (-6.5,-13.5) rectangle (-6,-13);
\draw[black] (-6,-13.5) rectangle (-5.5,-13);
\filldraw[red] (-5.5,-13.5) rectangle (-5,-13);
\filldraw[black] (-8,-14) rectangle (-7.5,-13.5);
\draw[black](-7.5,-14) rectangle (-7,-13.5);
\draw[black] (-7,-14) rectangle (-6.5,-13.5);
\filldraw[black] (-6.5,-14) rectangle (-6,-13.5);
\draw[black] (-8,-14.5)rectangle(-7.5,-14);
\filldraw[black] (-7.5,-14.5) rectangle (-7,-14);
\draw[black](-7,-14.5) rectangle (-6.5,-14);
\draw[black] (-8,-15)rectangle(-7.5,-14.5);
\draw[black] (-7.5,-15) rectangle (-7,-14.5);
\filldraw[black](-7,-15) rectangle (-6.5,-14.5);
\filldraw[black] (-8,-15.5) rectangle (-7.5,-15);
\draw[black](-7.5,-15.5) rectangle (-7,-15);
\draw[black] (-7,-15.5) rectangle (-6.5,-15);
\draw[black] (-8,-16)rectangle(-7.5,-15.5);
\filldraw[black] (-7.5,-16) rectangle (-7,-15.5);
\draw[black](-7,-16) rectangle (-6.5,-15.5);
\draw[black](-8,-16.5) rectangle (-7.5,-16);
\draw[black] (-7.5,-16.5) rectangle (-7,-16);
\filldraw[black](-7,-16.5) rectangle (-6.5,-16);
\filldraw[black] (-8,-17) rectangle (-7.5,-16.5);
\draw[black](-8,-17.5) rectangle (-7.5,-17);
\draw[black] (-8,-18) rectangle (-7.5,-17.5);
\filldraw[black] (-8,-18.5) rectangle (-7.5,-18);
\draw[black] (-8,-19) rectangle (-7.5,-18.5);
\draw[black] (-8,-19.5) rectangle (-7.5,-19);
\filldraw[black] (-8,-20) rectangle (-7.5,-19.5);

 \node (a) at (-6.5,-9)[label=\large{$\Bigg\downarrow$}]{};
\node (a) at (-7.5,-8) [label=\small{$\psi^{(2)}_{19}$}]{};
\node(a) at (-6,-23) [label=\small{$\begin{pmatrix}
-&-&1&1&0&0\\
6&3&3&1&0&0
\end{pmatrix}$}]{};
\node(a) at(-10,-14)[label={$n=19$}]{};
\end{tikzpicture}
    \caption{A state $\underline{\omega} \in \mathcal{H}^{-2}$ and its image in $\text{GFP}_{D_3,D_3,-2}(19)$ }
\end{subfigure}
    \caption{The bijections $\psi^{(0)}_{29}$, $\psi^{(1)}_{25}$ and $\psi_{19}^{(2)}$ (cont.).}
    \label{k-exc bijections from omega to gfp2}
\end{figure}
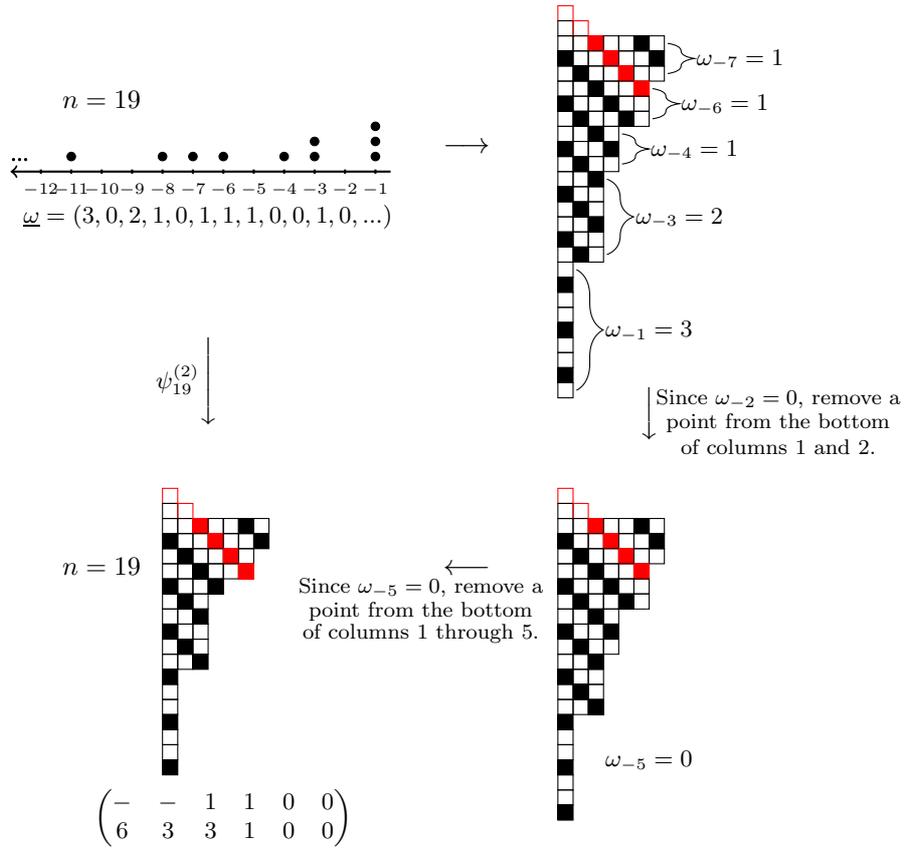
\vspace{5mm}\par\noindent The cases for other offset can be proved in the usual fashion, by proving the equalities of generating functions: \[f_{D_k,D_k,k'}(q) = f_{D_k,D_k,-m}(q)q^{\frac{k\ell(\ell+1)}{2}-m\ell}\quad\text{ if } k'=k\ell-m \text{ with } m\in\{0,1,...k-1\}.\]
\vspace{2mm} \par \noindent These follow naturally from a generalisation of Wright's bijection and the bijections $\phi^e_{\ell,n}, \phi^o_{\ell,n}$ of Section $3.3$, i.e.\ bijections 
\vspace{2mm}
\[\phi_{\ell,n}^{(m)}:\text{GFP}_{D_k,D_k,-m}(n)\rightarrow \text{GFP}_{D_k,D_k,k\ell-m}\left(n+\frac{k\ell(\ell+1)}{2}-m\ell\right)\]
\vspace{2mm} \par \noindent for each $\ell\in\mathbb{Z}$ and $m\in\{1,2,...,k-1\}$ as follows. Given an element $\text{GFP}_{D_k,D_k,-m}(n)$ the map $\phi^{(m)}_{\ell,n}$ adds the points inside the right angled triangle of size $\frac{|k\ell-m|(|k\ell-m|+1)}{2}$ attached to either the left or top edge of its generalised Young diagram, depending on whether $\ell\geq 0$ or $\ell<0$ respectively. It then uses the new leading diagonal implied by the triangle to read off an element of $\text{GFP}_{D_k,D_k,k\ell-m}\left(n+\frac{k\ell(\ell+1)}{2}-m\ell\right)$. See Figure \ref{k-exc bijections offset} for examples when $k=3$.
\newpage \begin{figure}[H]
    \centering
\begin{subfigure}[b]{0.75\textwidth}
    \centering
\begin{tikzpicture}[scale=0.25]
\draw[black](-4,0) rectangle (-3,1);
\draw[black] (-5,0) rectangle (-4,1);
\filldraw[black] (-6,0) rectangle (-5,1);
\draw[black](-7,0) rectangle (-6,1);
\draw[black](-8,0)rectangle(-7,1);
\filldraw[black](-9,0)rectangle (-8,1);
\draw[black](-4,-1)rectangle (-3,0);
\filldraw[black] (-5,-1) rectangle (-4,0);
\draw[black](-6,-1)rectangle(-5,0);
\draw[black](-7,-1)rectangle(-6,0);
\filldraw[black] (-8,-1) rectangle (-7,0);
\filldraw[black](-4,-2) rectangle (-3,-1);
\draw[black] (-5,-2) rectangle (-4,-1);
\draw[black](-6,-2)rectangle(-5,-1);
\filldraw[black](-7,-2)rectangle(-6,-1);
\draw[black] (-4,-3) rectangle (-3,-2);
\draw[black](-5,-3)rectangle(-4,-2);
\filldraw[black](-6,-3)rectangle(-5,-2);
\draw[black](-4,-4) rectangle (-3,-3);
\filldraw[black](-5,-4)rectangle(-4,-3);
\filldraw[black](-4,-5)rectangle(-3,-4);
\node (a) at (-1.5,-2) [label=\large{+}]{};
\filldraw[red] (0,0) rectangle (1,1);
\draw[black] (1,0) rectangle (2,1);
\draw[black] (2,0) rectangle (3,1);
\filldraw[black] (3,0) rectangle (4,1);
\draw [black] (4,0) rectangle (5,1);
\draw[black] (5,0) rectangle (6,1);
\filldraw[black] (6,0) rectangle (7,1);
\draw[black] (7,0) rectangle (8,1);
\draw[black] (8,0) rectangle (9,1);
\filldraw[black](9,0)rectangle(10,1);
\draw[black] (10,0) rectangle (11,1);
\draw[black] (11,0) rectangle (12,1);
\filldraw[black](12,0)rectangle(13,1);
\draw[black](0,-1) rectangle (1,0);
\filldraw[red] (1,-1) rectangle (2,0);
\draw[black] (2,-1) rectangle (3,0);
\draw[black] (3,-1) rectangle (4,0);
\filldraw[black](4,-1) rectangle (5,0);
\draw[black] (5,-1) rectangle (6,0);
\draw[black] (6,-1) rectangle (7,0);
\filldraw[black](7,-1)rectangle (8,0);
\draw[black] (8,-1) rectangle (9,0);
\draw[black] (9,-1) rectangle (10,0);
\draw[black] (0,-2) rectangle (1,-1);
\draw[black] (1,-2) rectangle (2,-1);
\filldraw[red] (2,-2) rectangle (3,-1);
\draw[black] (3,-2) rectangle (4,-1);
\draw[black] (4,-2) rectangle (5,-1);
\filldraw[black] (5,-2) rectangle (6,-1);
\draw[black] (6,-2) rectangle (7,-1);
\draw[black] (7,-2) rectangle (8,-1);
\filldraw[black] (8,-2) rectangle (9,-1);
\draw[black] (9,-2) rectangle (10,-1);
\filldraw[black] (0,-3) rectangle (1,-2);
\draw[black](1,-3) rectangle (2,-2);
\draw[black](2,-3) rectangle (3,-2);
\filldraw[red](3,-3) rectangle (4,-2);
\draw[black](4,-3) rectangle (5,-2);
\draw[black](5,-3) rectangle (6,-2);
\filldraw[black](6,-3) rectangle (7,-2);
\draw[black](7,-3) rectangle (8,-2);
\draw[black](8,-3) rectangle (9,-2);
\filldraw[black](9,-3) rectangle (10,-2);
\draw[black] (0,-4) rectangle (1,-3);
\filldraw[black] (1,-4) rectangle (2,-3);
\draw[black](2,-4)rectangle(3,-3);
\draw[black](3,-4)rectangle(4,-3);
\filldraw[red](4,-4)rectangle(5,-3);
\draw[black](5,-4)rectangle(6,-3);
\draw[black](6,-4)rectangle(7,-3);
\filldraw[black](7,-4)rectangle(8,-3);
\draw[black] (0,-5) rectangle (1,-4);
\draw[black] (1,-5) rectangle (2,-4);
\filldraw[black] (2,-5) rectangle (3,-4);
\filldraw[black] (0,-6) rectangle (1,-5);
\draw[black] (1,-6) rectangle (2,-5);
\draw[black] (0,-7) rectangle (1,-6);
\filldraw[black] (1,-7) rectangle (2,-6);
\draw[black] (0,-8) rectangle (1,-7);
\draw[black] (1,-8) rectangle (2,-7);
\filldraw[black] (0,-9) rectangle (1,-8);
\draw[black] (1,-9) rectangle (2,-8);
\draw[black] (0,-10) rectangle (1,-9);
\filldraw[black] (1,-10) rectangle (2,-9);

\node(a) at (5,-15)[label=\small{$\begin{pmatrix}
4&2&2&2&1\\
3&3&1&0&0
\end{pmatrix}$}]{};
\node (a) at (15,-4) [label=\large{$\overset{\phi_{2,23}^{(0)}}{\longrightarrow}$}]{};
\filldraw[red](18,0)rectangle(19,1);
\draw[black](19,0)rectangle (20,1);
\draw[black](20,0)rectangle(21,1);
\filldraw[black](21,0)rectangle(22,1);
\draw[black] (22,0) rectangle (23,1);
\draw[black] (23,0) rectangle (24,1);
\filldraw[black] (24,0) rectangle (25,1);
\draw[black] (25,0) rectangle (26,1);
\draw [black] (26,0) rectangle (27,1);
\filldraw[black] (27,0) rectangle (28,1);
\draw[black] (28,0) rectangle (29,1);
\draw[black] (29,0) rectangle (30,1);
\filldraw[black] (30,0) rectangle (31,1);
\draw[black] (31,0) rectangle (32,1);
\draw[black] (32,0) rectangle (33,1);
\filldraw[black] (33,0) rectangle (34,1);
\draw[black] (34,0) rectangle (35,1);
\draw[black] (35,0) rectangle (36,1);
\filldraw[black] (36,0) rectangle (37,1);
\filldraw[red](19,-1)rectangle(20,0);
\draw[black](20,-1)rectangle(21,0);
\draw[black](21,-1)rectangle (22,0);
\filldraw[black](22,-1) rectangle (23,0);
\draw[black] (23,-1) rectangle (24,0);
\draw[black] (24,-1) rectangle (25,0);
\filldraw[black] (25,-1) rectangle (26,0);
\draw[black](26,-1) rectangle (27,0);
\draw[black] (27,-1) rectangle (28,0);
\filldraw[black] (28,-1) rectangle (29,0);
\draw[black](29,-1) rectangle (30,0);
\draw[black] (30,-1) rectangle (31,0);
\filldraw[black] (31,-1) rectangle (32,0);
\draw[black](32,-1) rectangle (33,0);
\draw[black] (33,-1) rectangle (34,0);
\filldraw[red](20,-2) rectangle (21,-1);
\draw[black](21,-2)rectangle (22,-1);
\draw[black] (22,-2) rectangle (23,-1);
\filldraw[black] (23,-2) rectangle (24,-1);
\draw[black] (24,-2) rectangle (25,-1);
\draw[black] (25,-2) rectangle (26,-1);
\filldraw[black] (26,-2) rectangle (27,-1);
\draw[black] (27,-2) rectangle (28,-1);
\draw[black] (28,-2) rectangle (29,-1);
\filldraw[black] (29,-2) rectangle (30,-1);
\draw[black] (30,-2) rectangle (31,-1);
\draw[black] (31,-2) rectangle (32,-1);
\filldraw[black] (32,-2) rectangle (33,-1);
\draw[black] (33,-2) rectangle (34,-1);
\filldraw[red](21,-3)rectangle(22,-2);
\draw[black] (22,-3) rectangle (23,-2);
\draw[black](23,-3) rectangle (24,-2);
\filldraw[black](24,-3)rectangle(25,-2);
\draw[black] (25,-3) rectangle (26,-2);
\draw[black](26,-3) rectangle (27,-2);
\filldraw[black](27,-3)rectangle(28,-2);
\draw[black] (28,-3) rectangle (29,-2);
\draw[black](29,-3) rectangle (30,-2);
\filldraw[black](30,-3)rectangle(31,-2);
\draw[black] (31,-3) rectangle (32,-2);
\draw[black](32,-3) rectangle (33,-2);
\filldraw[black](33,-3)rectangle(34,-2);
\filldraw[red] (22,-4) rectangle (23,-3);
\draw[black] (23,-4) rectangle (24,-3);
\draw[black] (24,-4) rectangle (25,-3);
\filldraw[black] (25,-4) rectangle (26,-3);
\draw[black] (26,-4) rectangle (27,-3);
\draw[black] (27,-4) rectangle (28,-3);
\filldraw[black] (28,-4) rectangle (29,-3);
\draw[black] (29,-4) rectangle (30,-3);
\draw[black] (30,-4) rectangle (31,-3);
\filldraw[black] (31,-4) rectangle (32,-3);
\filldraw[red] (23,-5) rectangle (24,-4);
\draw[black](24,-5)rectangle(25,-4);
\draw[black](25,-5)rectangle(26,-4);
\filldraw[black](26,-5)rectangle(27,-4);
\draw[black](25,-6)rectangle(26,-5);
\filldraw[red](24,-6)rectangle(25,-5);
\filldraw[red](25,-7)rectangle(26,-6);
\draw[black](24,-7)rectangle(25,-6);
\draw[black](25,-8)rectangle(26,-7);
\draw[black](24,-8)rectangle(25,-7);
\draw[black](25,-9)rectangle(26,-8);
\draw[black](24,-10)rectangle(25,-9);
\filldraw[black](25,-10) rectangle (26,-9);
\filldraw[black](24,-9)rectangle(25,-8);

\node (a) at (26,-15) [label=\small{$\begin{pmatrix}
6&4&4&4&3&1&0&0\\
-&-&-&-&-&-&1&1
\end{pmatrix}$}]{};
\end{tikzpicture}
    \caption{An element of $\text{GFP}_{D_3,D_3,0}(23)$ and its image in $\text{GFP}_{D_3,D_3,6}(32)$ }
\end{subfigure}

\begin{subfigure}[b]{0.75\textwidth}
    \centering
    \vspace{4mm}
    \begin{tikzpicture}[scale=0.25]
\draw[black](-4,0) rectangle (-3,1);
\filldraw[black] (-5,0) rectangle (-4,1);
\draw[black] (-6,0) rectangle (-5,1);
\draw[black](-7,0) rectangle (-6,1);
\filldraw[black] (-8,0) rectangle (-7,1);
\filldraw[black](-4,-1)rectangle (-3,0);
\draw[black] (-5,-1) rectangle (-4,0);
\draw[black] (-6,-1) rectangle (-5,0);
\filldraw[black] (-7,-1) rectangle (-6,0);
\draw[black](-4,-2) rectangle (-3,-1);
\draw[black](-5,-2) rectangle (-4,-1);
\filldraw[black](-6,-2) rectangle (-5,-1);
\draw[black](-4,-3) rectangle (-3,-2);
\filldraw[black](-5,-3) rectangle (-4,-2);
\filldraw[black](-4,-4) rectangle (-3,-3);
\node (a) at (-1.5,-2) [label=\large{+}]{};
\draw[red] (0,1) rectangle (1,2);
\draw[black] (0,0) rectangle (1,1);
\filldraw[red] (1,0) rectangle (2,1);
\draw[black] (2,0) rectangle (3,1);
\draw[black] (3,0) rectangle (4,1);
\filldraw[black] (4,0) rectangle (5,1);
\draw[black] (5,0) rectangle (6,1);
\draw[black](6,0)rectangle(7,1);
\filldraw[black](7,0)rectangle(8,1);
\draw[black](8,0)rectangle(9,1);
\draw[black](9,0)rectangle(10,1);
\filldraw[black](10,0)rectangle(11,1);
\draw[black](11,0)rectangle(12,1);
\draw[black](12,0)rectangle(13,1);
\filldraw[black](13,0)rectangle(14,1);
\draw[black](14,0)rectangle(15,1);
\draw[black](15,0)rectangle(16,1);
\filldraw[black](16,0)rectangle(17,1);
\draw[black](0,-1) rectangle (1,0);
\draw[black](1,-1) rectangle (2,0);
\filldraw[red] (2,-1) rectangle (3,0);
\draw[black] (3,-1) rectangle (4,0);
\draw[black] (4,-1) rectangle (5,0);
\filldraw[black] (5,-1) rectangle (6,0);
\draw[black] (6,-1) rectangle (7,0);
\draw[black](7,-1) rectangle (8,0);
\filldraw[black] (8,-1) rectangle (9,0);
\draw[black] (9,-1) rectangle (10,0);
\draw[black](10,-1) rectangle (11,0);
\filldraw[black] (11,-1) rectangle (12,0);
\draw[black] (12,-1) rectangle (13,0);
\draw[black](13,-1) rectangle (14,0);
\filldraw[black] (14,-1) rectangle (15,0);
\filldraw[black] (0,-2) rectangle (1,-1);
\draw[black] (1,-2) rectangle (2,-1);
\draw[black] (2,-2) rectangle (3,-1);
\filldraw[red] (3,-2) rectangle (4,-1);
\draw[black] (4,-2) rectangle (5,-1);
\draw[black] (5,-2) rectangle (6,-1);
\filldraw[black] (6,-2) rectangle (7,-1);
\draw[black] (7,-2) rectangle (8,-1);
\draw[black] (8,-2) rectangle (9,-1);
\filldraw[black] (9,-2) rectangle (10,-1);
\draw[black](0,-3) rectangle (1,-2);
\filldraw[black] (1,-3) rectangle (2,-2);
\draw[black](2,-3) rectangle (3,-2);
\draw[black](3,-3) rectangle (4,-2);
\filldraw[red](4,-3) rectangle (5,-2);
\draw[black](5,-3)rectangle(6,-2);
\draw[black](6,-3) rectangle (7,-2);
\draw[black] (0,-4) rectangle (1,-3);
\draw[black] (1,-4) rectangle (2,-3);
\filldraw[black] (2,-4) rectangle (3,-3);
\draw[black] (3,-4) rectangle (4,-3);
\draw[black] (4,-4) rectangle (5,-3);
\filldraw[red] (5,-4) rectangle (6,-3);
\draw[black](6,-4) rectangle (7,-3);
\filldraw[black](0,-5)rectangle(1,-4);
\draw[black](1,-5)rectangle(2,-4);
\draw[black](2,-5)rectangle(3,-4);
\filldraw[black](3,-5)rectangle(4,-4);
\draw[black](4,-5)rectangle(5,-4);
\draw[black](5,-5)rectangle(6,-4);
\filldraw[red](6,-5)rectangle(7,-4);
\draw[black](0,-6)rectangle (1,-5);
\filldraw[black](1,-6)rectangle (2,-5);

\node (a) at (7,-11) [label=\small{$\begin{pmatrix}
-&5&4&2&0&0&0\\
2&2&1&1&0&0&0
\end{pmatrix}$}]{};
\node (a) at (20,-2) [label=\large{$\overset{\phi_{1,23}^{(1)}}{\longrightarrow}$}]{};
\filldraw[red](22,0)rectangle(23,1);
\draw[black](23,0)rectangle(24,1);
\draw[black](24,0)rectangle(25,1);
\filldraw[black] (25,0) rectangle (26,1);
\draw[black] (26,0) rectangle (27,1);
\draw[black] (27,0) rectangle (28,1);
\filldraw[black] (28,0) rectangle (29,1);
\draw[black] (29,0) rectangle (30,1);
\draw[black] (30,0) rectangle (31,1);
\filldraw[black](31,0)rectangle(32,1);
\draw[black](32,0)rectangle(33,1);
\draw[black](33,0)rectangle(34,1);
\filldraw[black](34,0)rectangle(35,1);
\draw[black](35,0)rectangle(36,1);
\draw[black](36,0)rectangle(37,1);
\filldraw[black](37,0)rectangle(38,1);
\draw[black](38,0)rectangle(39,1);
\draw[black](39,0)rectangle(40,1);
\filldraw[black](40,0)rectangle(41,1);
\draw[black](41,0)rectangle(42,1);
\draw[black](42,0)rectangle(43,1);
\filldraw[black](43,0)rectangle(44,1);
\filldraw[red](23,-1)rectangle(24,0);
\draw[black](24,-1)rectangle(25,0);
\draw[black](25,-1) rectangle (26,0);
\filldraw[black](26,-1) rectangle (27,0);
\draw[black] (27,-1) rectangle (28,0);
\draw[black] (28,-1) rectangle (29,0);
\filldraw[black] (29,-1) rectangle (30,0);
\draw[black] (30,-1) rectangle (31,0);
\draw[black] (31,-1) rectangle (32,0);
\filldraw[black](32,-1) rectangle (33,0);
\draw[black](33,-1) rectangle (34,0);
\draw[black] (34,-1) rectangle (35,0);
\filldraw[black] (35,-1) rectangle (36,0);
\draw[black] (36,-1) rectangle (37,0);
\draw[black] (37,-1) rectangle (38,0);
\filldraw[black] (38,-1) rectangle (39,0);
\draw[black] (39,-1) rectangle (40,0);
\draw[black] (40,-1) rectangle (41,0);
\filldraw[black] (41,-1) rectangle (42,0);
\filldraw[red](24,-2)rectangle(25,-1);
\draw[black] (25,-2) rectangle (26,-1);
\draw[black] (26,-2) rectangle (27,-1);
\filldraw[black] (27,-2) rectangle (28,-1);
\draw[black] (28,-2) rectangle (29,-1);
\draw[black] (29,-2) rectangle (30,-1);
\filldraw[black] (30,-2) rectangle (31,-1);
\draw[black] (31,-2) rectangle (32,-1);
\draw[black] (32,-2) rectangle (33,-1);
\filldraw[black] (33,-2) rectangle (34,-1);
\draw[black] (34,-2) rectangle (35,-1);
\draw[black] (35,-2) rectangle (36,-1);
\filldraw[black] (36,-2) rectangle (37,-1);
\filldraw[red](25,-3) rectangle (26,-2);
\draw[black] (26,-3) rectangle (27,-2);
\draw[black] (27,-3) rectangle (28,-2);
\filldraw[black] (28,-3) rectangle (29,-2);
\draw[black] (29,-3) rectangle (30,-2);
\draw[black] (30,-3) rectangle (31,-2);
\filldraw[black] (31,-3) rectangle (32,-2);
\draw[black] (32,-3)rectangle(33,-2);
\draw[black] (33,-3) rectangle (34,-2);
\filldraw[red] (26,-4) rectangle (27,-3);
\draw[black] (27,-4) rectangle (28,-3);
\draw[black] (28,-4) rectangle (29,-3);
\filldraw[black] (29,-4) rectangle (30,-3);
\draw[black] (30,-4) rectangle (31,-3);
\draw[black] (31,-4) rectangle (32,-3);
\filldraw[black] (32,-4) rectangle (33,-3);
\draw[black](33,-4) rectangle (34,-3);
\filldraw[red] (27,-5) rectangle (28,-4);
\draw[black] (28,-5) rectangle (29,-4);
\draw[black] (29,-5) rectangle (30,-4);
\filldraw[black] (30,-5) rectangle (31,-4);
\draw[black] (31,-5) rectangle (32,-4);
\draw[black] (32,-5) rectangle (33,-4);
\filldraw[black] (33,-5) rectangle (34,-4);
\draw[black](27,-6)rectangle(28,-5);
\filldraw[red](28,-6)rectangle(29,-5);

\node (a) at (31.5,-11) [label=\small{$\begin{pmatrix}
7&6&4&2&2&2&0\\
-&-&-&-&-&0&0
\end{pmatrix}$}]{};
    \end{tikzpicture}
    \caption{An element of $\text{GFP}_{D_3,D_3,-1}(23)$ and its image in $\text{GFP}_{D_3,D_3,5}(30)$ }
\end{subfigure}
\begin{subfigure}[b]{0.75\textwidth}
    \centering
    \vspace{4mm}
        \begin{tikzpicture}[scale=0.25]
\filldraw[black](-4,0) rectangle (-3,1);
\draw[black] (-5,0) rectangle (-4,1);
\draw[black] (-6,0) rectangle (-5,1);
\filldraw[black](-7,0)rectangle(-6,1);
\draw[black](-4,-1)rectangle (-3,0);
\draw[black] (-5,-1) rectangle (-4,0);
\filldraw[black](-6,-1)rectangle(-5,0);
\draw[black](-4,-2) rectangle (-3,-1);
\filldraw[black](-5,-2)rectangle(-4,-1);
\filldraw[black](-4,-3)rectangle(-3,-2);
\node (a) at (-1.5,-2) [label=\large{+}]{};

\draw[red] (0,2) rectangle (1,3);
\draw[black] (0,1) rectangle (1,2);
\draw[red] (1,1) rectangle (2,2);
\draw[black] (0,0) rectangle (1,1);
\draw[black] (1,0) rectangle (2,1);
\filldraw[red] (2,0) rectangle (3,1);
\draw[black] (3,0) rectangle (4,1);
\draw[black] (4,0) rectangle (5,1);
\filldraw[black] (5,0) rectangle (6,1);
\draw[black](6,0)rectangle(7,1);
\draw[black](7,0)rectangle(8,1);
\filldraw[black](8,0)rectangle(9,1);
\draw[black](9,0)rectangle(10,1);
\draw[black](10,0)rectangle(11,1);
\filldraw[black](11,0)rectangle(12,1);
\draw[black](12,0)rectangle(13,1);
\draw[black](13,0)rectangle(14,1);
\filldraw[black](14,0)rectangle(15,1);
\draw[black](15,0)rectangle(16,1);
\draw[black](16,0)rectangle(17,1);
\filldraw[black](17,0)rectangle(18,1);

\filldraw[black](0,-1) rectangle (1,0);
\draw[black](1,-1) rectangle (2,0);
\draw[black] (2,-1) rectangle (3,0);
\filldraw[red] (3,-1) rectangle (4,0);
\draw[black] (4,-1) rectangle (5,0);
\draw[black] (5,-1) rectangle (6,0);
\filldraw[black] (6,-1) rectangle (7,0);
\draw[black](7,-1) rectangle (8,0);
\draw[black] (8,-1) rectangle (9,0);
\filldraw[black] (9,-1) rectangle (10,0);
\draw[black](10,-1) rectangle (11,0);
\draw[black] (11,-1) rectangle (12,0);
\filldraw[black] (12,-1) rectangle (13,0);
\draw[black] (0,-2) rectangle (1,-1);
\filldraw[black] (1,-2) rectangle (2,-1);
\draw[black] (2,-2) rectangle (3,-1);
\draw[black] (3,-2) rectangle (4,-1);
\filldraw[red] (4,-2) rectangle (5,-1);
\draw[black] (5,-2) rectangle (6,-1);
\draw[black] (6,-2) rectangle (7,-1);
\filldraw[black] (7,-2) rectangle (8,-1);
\draw[black](8,-2) rectangle (9,-1);
\draw[black](0,-3) rectangle (1,-2);
\draw[black] (1,-3) rectangle (2,-2);
\filldraw[black] (2,-3) rectangle (3,-2);
\draw[black](3,-3) rectangle (4,-2);
\draw[black] (4,-3) rectangle (5,-2);
\filldraw[red] (5,-3) rectangle (6,-2);
\draw[black](6,-3) rectangle (7,-2);
\draw[black] (7,-3) rectangle (8,-2);
\filldraw[black] (8,-3) rectangle (9,-2);

\filldraw[black] (0,-4) rectangle (1,-3);
\draw[black] (1,-4) rectangle (2,-3);
\draw[black] (2,-4) rectangle (3,-3);
\filldraw[black] (3,-4) rectangle (4,-3);
\draw[black] (4,-4) rectangle (5,-3);
\draw[black] (5,-4) rectangle (6,-3);
\filldraw[red] (6,-4) rectangle (7,-3);

\draw[black](0,-5)rectangle(1,-4);
\filldraw[black](1,-5)rectangle(2,-4);
\draw[black](2,-5)rectangle(3,-4);
\draw[black](3,-5)rectangle(4,-4);
\filldraw[black](4,-5)rectangle(5,-4);

\draw[black](0,-6)rectangle(1,-5);
\draw[black](1,-6)rectangle(2,-5);
\filldraw[black](2,-6)rectangle(3,-5);

\node (a) at (7,-11) [label=\small{$\begin{pmatrix}
-&-&5&3&1&1&0\\
2&2&2&1&1&0&0
\end{pmatrix}$}]{};
\node (a) at (20,-2) [label=\large{$\overset{\phi_{1,23}^{(2)}}{\longrightarrow}$}]{};
\filldraw[red](22,0)rectangle(23,1);
\draw[black](23,0)rectangle(24,1);
\draw[black](24,0)rectangle(25,1);
\filldraw[black] (25,0) rectangle (26,1);
\draw[black] (26,0) rectangle (27,1);
\draw[black] (27,0) rectangle (28,1);
\filldraw[black] (28,0) rectangle (29,1);
\draw[black] (29,0) rectangle (30,1);
\draw[black] (30,0) rectangle (31,1);
\filldraw[black](31,0)rectangle(32,1);
\draw[black](32,0)rectangle(33,1);
\draw[black](33,0)rectangle(34,1);
\filldraw[black](34,0)rectangle(35,1);
\draw[black](35,0)rectangle(36,1);
\draw[black](36,0)rectangle(37,1);
\filldraw[black](37,0)rectangle(38,1);
\draw[black](38,0)rectangle(39,1);
\draw[black](39,0)rectangle(40,1);
\filldraw[black](40,0)rectangle(41,1);
\draw[black](41,0)rectangle(42,1);
\draw[black](42,0)rectangle(43,1);
\filldraw[black](43,0)rectangle(44,1);
\filldraw[red](23,-1)rectangle(24,0);
\draw[black](24,-1)rectangle(25,0);
\draw[black](25,-1) rectangle (26,0);
\filldraw[black](26,-1) rectangle (27,0);
\draw[black] (27,-1) rectangle (28,0);
\draw[black] (28,-1) rectangle (29,0);
\filldraw[black] (29,-1) rectangle (30,0);
\draw[black] (30,-1) rectangle (31,0);
\draw[black] (31,-1) rectangle (32,0);
\filldraw[black](32,-1) rectangle (33,0);
\draw[black] (33,-1) rectangle (34,0);
\draw[black] (34,-1) rectangle (35,0);
\filldraw[black](35,-1) rectangle (36,0);
\draw[black] (36,-1) rectangle (37,0);
\draw[black] (37,-1) rectangle (38,0);
\filldraw[black](38,-1) rectangle (39,0);

\filldraw[red](24,-2)rectangle(25,-1);
\draw[black] (25,-2) rectangle (26,-1);
\draw[black] (26,-2) rectangle (27,-1);
\filldraw[black] (27,-2) rectangle (28,-1);
\draw[black] (28,-2) rectangle (29,-1);
\draw[black] (29,-2) rectangle (30,-1);
\filldraw[black] (30,-2) rectangle (31,-1);
\draw[black] (31,-2) rectangle (32,-1);
\draw[black] (32,-2) rectangle (33,-1);
\filldraw[black] (33,-2) rectangle (34,-1);
\draw[black] (34,-2) rectangle (35,-1);

\filldraw[red](25,-3) rectangle (26,-2);
\draw[black] (26,-3) rectangle (27,-2);
\draw[black] (27,-3) rectangle (28,-2);
\filldraw[black] (28,-3) rectangle (29,-2);
\draw[black] (29,-3) rectangle (30,-2);
\draw[black] (30,-3) rectangle (31,-2);
\filldraw[black] (31,-3) rectangle (32,-2);
\draw[black] (32,-3) rectangle (33,-2);
\draw[black] (33,-3) rectangle (34,-2);
\filldraw[black] (34,-3) rectangle (35,-2);

\filldraw[red] (26,-4) rectangle (27,-3);
\draw[black] (27,-4) rectangle (28,-3);
\draw[black] (28,-4) rectangle (29,-3);
\filldraw[black] (29,-4) rectangle (30,-3);
\draw[black] (30,-4) rectangle (31,-3);
\draw[black] (31,-4) rectangle (32,-3);
\filldraw[black] (32,-4) rectangle (33,-3);

\draw[black](26,-5) rectangle (27,-4);
\filldraw[red](27,-5) rectangle (28,-4);
\draw[black](28,-5) rectangle (29,-4);
\draw[black](29,-5) rectangle (30,-4);
\filldraw[black](30,-5) rectangle (31,-4);

\draw[black](26,-6) rectangle (27,-5);
\draw[black](27,-6) rectangle (28,-5);
\filldraw[red] (28,-6) rectangle (29,-5);

\node (a) at (31.5,-11) [label=\small{$\begin{pmatrix}
7&5&3&3&2&1&0\\
-&-&-&-&0&0&0
\end{pmatrix}$}]{};

    \end{tikzpicture}
        \caption{An element of $\text{GFP}_{D_3,D_3,-2}(23)$ and its image in $\text{GFP}_{D_3,D_3,4}(28)$ }
\end{subfigure}
    \caption{The bijections $\phi_{2,23}^{(0)}$, $\phi_{1,23}^{(1)}$ and $\phi_{1,23}^{(2)}$.}
    \label{k-exc bijections offset}
\end{figure}
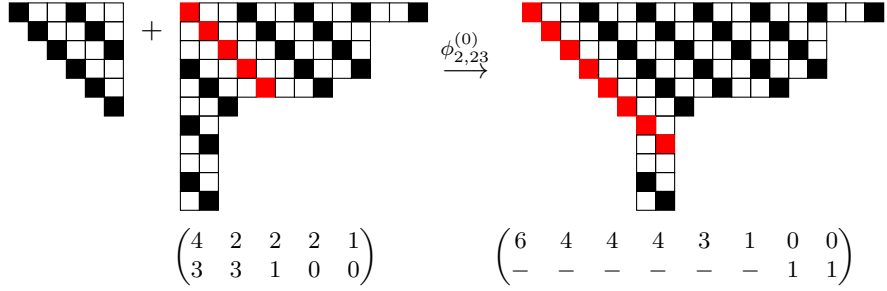
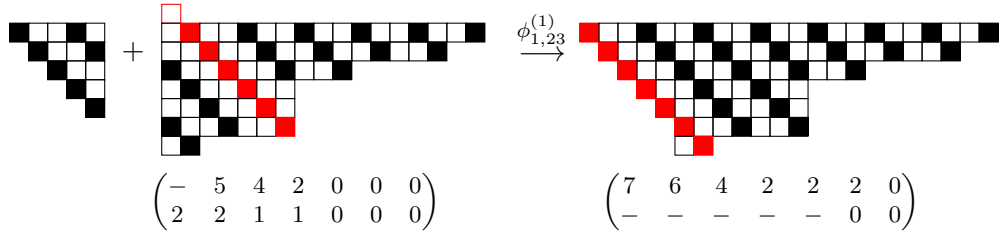
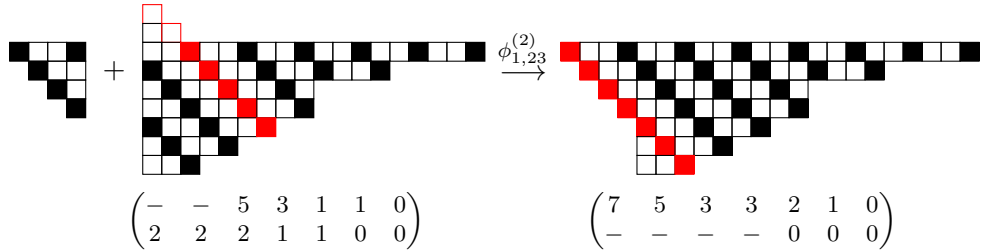
In general not much is known about the functions $f_{D_k,D_k,k'}(q)$ for arbitrary $k$ and $k'$. As mentioned earlier the functions $\Phi_k(q) (=f_{D_k,D_k,0}(q))$ are studied in Andrews' book \cite{frobenius}, and we saw that $\Phi_1(q)$ and $\Phi_2(q)$ have product expansions (which gave us product expansions for the specialised normalizing factors). It turns out that $\Phi_3(q)$ also has a product expansion (proved in Corollary $5.1$ of \cite{frobenius}, assuming Jacobi triple product) and so $S_0^{(3)}(q)$ can be written as: \begin{align*}S_0^{(3)}(q)=\Phi_3(q) &= 1+q+3q^2+6q^3+11q^4+18q^5+31q^6+49q^7+78q^8+...\\&=\prod_{i\geq 1}\frac{(1-q^{12i-6})}{(1-q^{6i-5})(1-q^{6i-4})^2(1-q^{6i-3})^3(1-q^{6i-2})^2(1-q^{6i-1})(1-q^{12i})}\end{align*}

As in the $2$-exclusion case it would be interesting to know if this could be proved using purely probabilistic methods. It would also be interesting to know whether the other functions $f_{D_3,D_3,-1}(q)$ and $f_{D_3,D_3,-2}(q)$ are products, as this would imply that the normalising factors $S_{-1}^{(3)}(q)$ and $S_{-2}^{(3)}(q)$ are products.

For $k\geq 4$ the functions $\Phi_k(q)$ are not expected to be products, but can be shown to be sums of products. However the formulae are very tedious to write down and probably not too illuminating.

We finish with another natural question. We have seen that the family of (non-degenerate) nearest neighbour interacting $0$-$1$-$2$ systems on $\mathbb{Z}$ satisfying the blocking measure axioms can be stood up in a uniform way, leading to three variable Jacobi style identities. By specialising to the case of $2$-exclusion we then get two variable identities that are a special case of Theorem \ref{k-exc main} (setting $k=2$). Is there a family of $0$-$1$-...-$k$ processes for each $k$ that is nicely behaved (i.e.\ has blocking measure and can be uniformly stood up) giving a multivariable identity that explains all cases of Theorem \ref{k-exc main}? In general we cannot expect the whole family of $0$-$1$...-$k$ systems on $\mathbb{Z}$ to be this well behaved (the stood up processes no longer have nearest neighbour interactions). However, there could be a low dimensional subfamily of processes for each $k$ that works.

\section*{Appendix}

The following are the non-empty sets $\text{GFP}_{D_2,D_2,0,m}(n)$ for $0\leq n\leq 8$, whose sizes agree with the coefficients in the expansion of $S_{\text{even}}(\Tilde{q},t)$ given in Section $3.3$.

\[\text{GFP}_{D_2,D_2,0,0}(0) = \left\{\binom{-}{-}\right\},\]
\[\text{GFP}_{D_2,D_2,0,2}(1) = \left\{\binom{0}{0}\right\},\]
\[\text{GFP}_{D_2,D_2,0,0}(2) = \left\{\binom{0\,0}{0\,0}\right\},\] 
\[\text{GFP}_{D_2,D_2,0,2}(2) = \left\{\binom{1}{0}, \binom{0}{1}\right\},\]
\[\text{GFP}_{D_2,D_2,0,2}(3) = \left\{\binom{2}{0}, \binom{1}{1}, \binom{0}{2}, \binom{1\,0}{0\,0}, \binom{0\,0}{1\,0}\right\},\]
\[\text{GFP}_{D_2,D_2,0,0}(4) = \left\{\binom{1\,1}{0\,0}, \binom{0\,0}{1\,1}\right\},\] 
\[\text{GFP}_{D_2,D_2,0,2}(4) = \left\{\binom{3}{0}, \binom{2}{1}, \binom{1}{2}, \binom{0}{3},  \binom{2\,0}{0\,0}, \binom{0\,0}{2\,0}\right\},\] 
\[\text{GFP}_{D_2,D_2,0,4}(4) = \left\{\binom{1\,0}{1\,0}\right\},\]
\[\text{GFP}_{D_2,D_2,0,2}(5) = \left\{\binom{4}{0}, \binom{3}{1}, \binom{2}{2}, \binom{1}{3}, \binom{0}{4},  \binom{3\,0}{0\,0}, \binom{2\,1}{0\,0}, \binom{1\,1}{1\,0}, \binom{1\,0}{1\,1},  \binom{0\,0}{2\,1}, \binom{0\,0}{3\,0}, \binom{1\,0\,0}{1\,0\,0} \right\},\] 
\[\text{GFP}_{D_2,D_2,0,4}(5) = \left\{\binom{2\,0}{1\,0}, \binom{1\,0}{2\,0}\right\},\]
\[\text{GFP}_{D_2,D_2,0,0}(6) = \left\{\binom{2\,2}{0\,0}, \binom{1\,1}{1\,1}, \binom{0\,0}{2\,2}\right\},\] 
\begin{align*}\text{GFP}_{D_2,D_2,0,2}(6) = \bigg\{&\binom{5}{0}, \binom{4}{1}, \binom{3}{2}, \binom{2}{3}, \binom{1}{4}, \binom{0}{5},  \binom{4\,0}{0\,0}, \binom{3\,1}{0\,0}, \binom{2\,0}{1\,1},\\ &\binom{1\,1}{2\,0}, \binom{0\,0}{3\,1}, \binom{0\,0}{4\,0}, \binom{2\,0\,0}{1\,0\,0}, \binom{1\,1\,0}{1\,0\,0}, \binom{1\,0\,0}{1\,1\,0}, \binom{1\,0\,0}{2\,0\,0}\bigg\},\end{align*}
\[\text{GFP}_{D_2,D_2,0,4}(6) = \left\{\binom{3\,0}{1\,0},\binom{2\,1}{1\,0}, \binom{2\,0}{2\,0}, \binom{1\,0}{2\,1}, \binom{1\,0}{3\,0}\right\},\]
\begin{align*}\text{GFP}_{D_2,D_2,0,2}(7) = \bigg\{&\binom{6}{0},\binom{5}{1}, \binom{4}{2}, \binom{3}{3}, \binom{2}{4}, \binom{1}{5}, \binom{0}{6}, \binom{5\,0}{0\,0}, \binom{4\,1}{0\,0}, \binom{3\,2}{0\,0},\\&\binom{2\,2}{1\,0}, \binom{3\,0}{1\,1}, \binom{2\,1}{1\,1},\binom{1\,1}{2\,1},\binom{1\,1}{3\,0}, \binom{1\,0}{2\,2}, \binom{0\,0}{3\,2}, \binom{0\,0}{4\,1}, \\&\binom{0\,0}{5\,0},
\binom{3\,0\,0}{1\,0\,0},\binom{2\,0\,0}{2\,0\,0}, \binom{2\,0\,0}{1\,1\,0}, \binom{1\,1\,0}{2\,0\,0}, \binom{1\,1\,0}{1\,1\,0}, \binom{1\,0\,0}{3\,0\,0} \bigg\},\end{align*}
\[\text{GFP}_{D_2,D_2,0,4}(7) = \left\{\binom{4\,0}{1\,0}, \binom{3\,1}{1\,0}, \binom{3\,0}{2\,0}, \binom{2\,1}{2\,0}, \binom{2\,0}{2\,1}, \binom{2\,0}{3\,0}, \binom{1\,0}{3\,1}, \binom{1\,0}{4\,0}, \binom{2\,1\,0}{1\,0\,0}, \binom{1\,0\,0}{2\,1\,0}\right\},\]
\[\text{GFP}_{D_2,D_2,0,0}(8) = \left\{\binom{3\,3}{0\,0},\binom{2\,2}{1\,1}, \binom{1\,1}{2\,2}, \binom{0\,0}{3\,3}, \binom{1\,1\,0\,0}{1\,1\,0\,0}\right\},\]
\begin{align*}\text{GFP}_{D_2,D_2,0,2}(8) = \bigg\{&\binom{7}{0}, \binom{6}{1}, \binom{5}{2},\binom{4}{3}, \binom{3}{4}, \binom{2}{5}, \binom{1}{6}, \binom{0}{7}, \binom{6\,0}{0\,0}, \binom{5\,1}{0\,0}, \binom{4\,2}{0\,0}, \binom{4\,0}{1\,1},\\ &\binom{3\,1}{1\,1}, \binom{2\,2}{2\,0}, \binom{2\,0}{2\,2}, \binom{1\,1}{3\,1}, \binom{1\,1}{4\,0}, \binom{0\,0}{4\,2}, \binom{0\,0}{5\,1}, \binom{0\,0}{6\,0}, \binom{1\,0\,0}{4\,0\,0}, \binom{2\,2\,0}{1\,0\,0},\\ &\binom{2\,1\,1}{1\,0\,0}, \binom{1\,0\,0}{2\,1\,1}, \binom{3\,0\,0}{2\,0\,0}, \binom{2\,0\,0}{3\,0\,0}, \binom{3\,0\,0}{1\,1\,0}, \binom{1\,1\,0}{3\,0\,0}, \binom{1\,0\,0}{2\,2\,0}, \binom{4\,0\,0}{1\,0\,0} \bigg\},\end{align*}
\begin{align*}\text{GFP}_{D_2,D_2,0,4}(8) = \bigg\{&\binom{5\,0}{1\,0}, \binom{4\,1}{1\,0}, \binom{3\,2}{1\,0}, \binom{4\,0}{2\,0}, \binom{3\,1}{2\,0}, \binom{3\,0}{3\,0}, \binom{2\,1}{3\,0}, \binom{2\,1}{2\,1}, \binom{3\,0}{2\,1} \binom{2\,0}{3\,1}, \binom{2\,0}{4\,0},\\ &\binom{1\,0}{3\,2}, \binom{1\,0}{4\,1}, \binom{1\,0}{5\,0}, \binom{3\,1\,0}{1\,0\,0}, \binom{2\,1\,0}{2\,0\,0}, \binom{2\,1\,0}{1\,1\,0}, \binom{1\,1\,0}{2\,1\,0}, \binom{2\,0\,0}{2\,1\,0}, \binom{1\,0\,0}{3\,1\,0}\bigg\}.\end{align*}

The following are the non-empty sets $\text{GFP}_{D_2,D_2,-1,m}(n)$ for $0\leq n\leq 8$, whose sizes agree with the coefficients in the expansion of $S_{\text{odd}}(\Tilde{q},t)$ given in Section $3.3$.

\[\text{GFP}_{D_2,D_2,-1,1}(0) = \left\{\binom{-}{0}\right\},\]
\[\text{GFP}_{D_2,D_2,-1,1}(1)  = \left\{\binom{-}{1},\binom{-\,0}{0\,\,0}\right\},\]
\[\text{GFP}_{D_2,D_2,-1,1}(2)  = \left\{\binom{-}{2}, \binom{-\,1}{0\,\,0}\right\},\] 
\[\text{GFP}_{D_2,D_2,-1,3}(2)  = \left\{\binom{-\,0}{1\,\,0}\right\},\]
\[\text{GFP}_{D_2,D_2,-1,1}(3)  = \left\{\binom{-}{3}, \binom{-\,2}{0\,\,0}, \binom{-\,0}{1\,\,1}, \binom{-\,0\,0}{1\,\,0\,0}\right\},\]
\[\text{GFP}_{D_2,D_2,-1,3}(3)  = \left\{\binom{-\,1}{1\,\,0}, \binom{-\,0}{2\,\,0}\right\},\] 
\[\text{GFP}_{D_2,D_2,-1,1}(4)  = \left\{\binom{-}{4}, \binom{-\,3}{0\,\,0}, \binom{-\,1}{1\,\,1}, \binom{-\,0\,0}{2\,\,0\,0},  \binom{-\,0\,0}{1\,\,1\,0}\right\},\] 
\[\text{GFP}_{D_2,D_2,-1,3}(4)  = \left\{\binom{-\,2}{1\,\,0},\binom{-\,1}{2\,\,0},\binom{-\,0}{2\,\,1},\binom{-\,0}{3\,\,0},\binom{-\,1\,0}{1\,\,0\,0}\right\},\]
\[\text{GFP}_{D_2,D_2,-1,1}(5)  = \left\{\binom{-}{5}, \binom{-\,4}{0\,\,0}, \binom{-\,2}{1\,\,1}, \binom{-\,0}{2\,\,2}, \binom{-\,1\,1}{1\,\,0\,0},  \binom{-\,0\,0}{3\,\,0\,0} \right\},\] 
\begin{align*}\text{GFP}_{D_2,D_2,-1,3}(5)  = \bigg\{&\binom{-\,3}{1\,\,0}, \binom{-\,2}{2\,\,0},\binom{-\,1}{2\,\,1},\binom{-\,1}{3\,\,0},\binom{-\,0}{3\,\,1},\binom{-\,0}{4\,\,0},\\&\binom{-\,2\,0}{1\,\,0\,0},\binom{-\,1\,0}{1\,\,1\,0},\binom{-\,1\,0}{2\,\,0\,0},\binom{-\,0\,0}{2\,\,1\,0}\bigg\},\end{align*}
\begin{align*}\text{GFP}_{D_2,D_2,-1,1}(6)  = \bigg\{&\binom{-}{6}, \binom{-\,5}{0\,\,0}, \binom{-\,3}{1\,\,1},\binom{-\,1}{2\,\,2},\binom{-\,1\,1}{1\,\,1\,0},\binom{-\,1\,1}{2\,\,0\,0},\\&\binom{-\,0\,0}{2\,\,1\,1},\binom{-\,0\,0}{2\,\,2\,0},\binom{-\,0\,0}{4\,\,0\,0},\binom{-\,1\,0\,0}{1\,\,1\,0\,0}\bigg\},\end{align*} 
\begin{align*}\text{GFP}_{D_2,D_2,-1,3}(6)  = \bigg\{&\binom{-\,4}{1\,\,0}, \binom{-\,3}{2\,\,0}, \binom{-\,2}{2\,\,1}, \binom{-\,2}{3\,\,0}, \binom{-\,1}{3\,\,1}, \binom{-\,1}{4\,\,0},  \binom{-\,0}{3\,\,2}, \binom{-\,0}{4\,\,1}, \\&\binom{-\,0}{5\,\,0},\binom{-\,3\,0}{1\,\,0\,0}, \binom{-\,2\,1}{1\,\,0\,0}, \binom{-\,2\,0}{1\,\,1\,0}, \binom{-\,2\,0}{2\,\,0\,0}, \binom{-\,1\,0}{3\,\,0\,0}, \binom{-\,0\,0}{3\,\,1\,0}\bigg\},\end{align*}
\[\text{GFP}_{D_2,D_2,-1,5}(6)  = \left\{\binom{-\,1\,0}{2\,\,1\,0}\right\},\]
\begin{align*}\text{GFP}_{D_2,D_2,-1,1}(7)  = \bigg\{&\binom{-}{7},\binom{-\,6}{0\,\,0}, \binom{-\,4}{1\,\,1}, \binom{-\,2}{2\,\,2}, \binom{-\,0}{3\,\,3}, \binom{-\,2\,2}{1\,\,0\,0}, \binom{-\,1\,1}{3\,\,0\,0},\\& \binom{-\,0\,0}{2\,\,2\,1}, \binom{-\,0\,0}{3\,\,1\,1}, \binom{-\,0\,0}{5\,\,0\,0},\binom{-\,2\,0\,0}{1\,\,1\,0\,0}, \binom{-\,1\,1\,0}{1\,\,1\,0\,0} \bigg\},\end{align*}
 \begin{align*}\text{GFP}_{D_2,D_2,-1,3}(7)  =\bigg\{&\binom{-\,5}{1\,\,0}, \binom{-\,4}{2\,\,0}, \binom{-\,3}{2\,\,1}, \binom{-\,3}{3\,\,0}, \binom{-\,2}{3\,\,1}, \binom{-\,2}{4\,\,0}, \binom{-\,1}{3\,\,2}, \binom{-\,1}{4\,\,1}, \binom{-\,1}{5\,\,0}, \binom{-\,0}{4\,\,2},\\&\binom{-\,0}{5\,\,1},\binom{-\,0}{6\,\,0},\binom{-\,4\,0}{1\,\,0\,0},\binom{-\,3\,1}{1\,\,0\,0},\binom{-\,3\,0}{1\,\,1\,0},\binom{-\,3\,0}{2\,\,0\,0},\binom{-\,2\,1}{1\,\,1\,0},\binom{-\,2\,1}{2\,\,0\,0},\\&\binom{-\,2\,0}{3\,\,0\,0},\binom{-\,1\,1}{2\,\,1\,0},\binom{-\,1\,0}{2\,\,1\,1},\binom{-\,1\,0}{2\,\,2\,0},\binom{-\,1\,0}{4\,\,0\,0},\binom{-\,0\,0}{3\,\,2\,0},\binom{-\,0\,0}{4\,\,1\,0},\binom{-\,1\,0\,0}{2\,\,1\,0\,0} \bigg\},\end{align*}
\[\text{GFP}_{D_2,D_2,-1,5}(7)  = \left\{\binom{-\,1\,0}{3\,\,1\,0},\binom{-\,2\,0}{2\,\,1\,0}, \right\},\]
\begin{align*}\text{GFP}_{D_2,D_2,-1,1}(8)  = \bigg\{&\binom{-}{8}, \binom{-\,7}{0\,\,0}, \binom{-\,5}{1\,\,1},\binom{-\,3}{2\,\,2}, \binom{-\,1}{3\,\,3}, \binom{-\,2\,2}{1\,\,1\,0}, \binom{-\,2\,2}{2\,\,0\,0}, \binom{-\,1\,1}{2\,\,1\,1}, \\ &\binom{-\,1\,1}{2\,\,2\,0}, \binom{-\,1\,1}{4\,\,0\,0}, \binom{-\,0\,0}{3\,\,3\,0}, \binom{-\,0\,0}{4\,\,1\,1
},\binom{-\,0\,0}{6\,\,0\,0},\binom{-\,3\,0\,0}{1\,\,1\,0\,0}, \binom{-\,1\,0\,0}{2\,\,2\,0\,0} \bigg\},\end{align*}
\begin{align*}\text{GFP}_{D_2,D_2,-1,3}(8)  = \bigg\{&\binom{-\,6}{1\,\,0}, \binom{-\,5}{2\,\,0}, \binom{-\,4}{2\,\,1}, \binom{-\,4}{3\,\,0}, \binom{-\,3}{3\,\,1}, \binom{-\,3}{4\,\,0}, \binom{-\,2}{3\,\,2}, \binom{-\,2}{4\,\,1}, \binom{-\,2}{5\,\,0} \binom{-\,1}{4\,\,2}, \\&\binom{-\,1}{5\,\,1},\binom{-\,1}{6\,\,0},\binom{-\,0}{4\,\,3}, \binom{-\,0}{5\,\,2}, \binom{-\,0}{6\,\,1}, \binom{-\,0}{7\,\,0}, \binom{-\,5\,0}{1\,0\,0}, \binom{-\,4\,1}{1\,\,0\,0}, \binom{-\,3\,2}{1\,\,0\,0}, \\&\binom{-\,4\,0}{1\,\,1\,0},\binom{-\,4\,0}{2\,\,0\,0}, \binom{-\,3\,1}{1\,\,1\,0}, \binom{-\,3\,1}{2\,\,0\,0},  \binom{-\,3\,0}{3\,\,0\,0},\binom{-\,2\,1}{3\,\,0\,0},\binom{-\,2\,0}{2\,\,1\,1}, \binom{-\,2\,0}{2\,\,2\,0},\\&\binom{-\,2\,0}{4\,\,0\,0},\binom{-\,1\,1}{3\,\,1\,0}, \binom{-\,1\,0}{2\,\,2\,1}, \binom{-\,1\,0}{3\,\,1\,1},\binom{-\,1\,0}{5\,\,0\,0},\binom{-\,0\,0}{3\,\,2\,1},\binom{-\,0\,0}{4\,\,2\,0},\binom{-\,0\,0}{5\,\,1\,0},\\&\binom{-\,2\,1\,0}{1\,\,1\,0\,0},\binom{-\,2\,0\,0}{2\,\,1\,0\,0},\binom{-\,1\,1\,0}{2\,\,1\,0\,0},\binom{-\,1\,0\,0}{2\,\,1\,1\,0},\binom{-\,1\,0\,0}{3\,\,1\,0\,0}\bigg\}.\end{align*}
\[\text{GFP}_{D_2,D_2,-1,5}(8)  = \left\{\binom{-\,3\,0}{2\,\,1\,0},\binom{-\,2\,1}{2\,\,1\,0},\binom{-\,2\,0}{3\,\,1\,0},\binom{-\,1\,0}{3\,\,2\,0},\binom{-\,1\,0}{4\,\,1\,0} \right\}.\]

\end{document}